\documentclass[reqno,12pt]{amsart}
\usepackage{amssymb,amsmath,amsthm,amsxtra,mathrsfs,calc,bm, color}
\usepackage{enumerate}
\usepackage[margin=1in]{geometry}

\usepackage{mathtools}

\usepackage{mleftright}
\mleftright

\usepackage{nccmath}
\usepackage{cases}

\usepackage{calc}
\usepackage{graphicx}

\usepackage{hyperref}

\DeclareRobustCommand{\SkipTocEntry}[5]{}

\let\stdthebibliography\thebibliography
\let\stdendthebibliography\endthebibliography

\def\Z{\mathbb{Z}}

\def\R{\mathbb{R}}
\def\H{\mathbb{H}}

\def\C{\mathbb{C}}

\DeclareMathOperator{\sh}{sh}
\DeclareMathOperator{\ch}{ch}
\let\th\undefined
\DeclareMathOperator{\th}{th}
\DeclareMathOperator{\im}{Im}
\DeclareMathOperator{\re}{Re}

\DeclareMathOperator{\Ai}{Ai}
\DeclareMathOperator{\arccosh}{arccosh}
\def\SL{{\rm SL}}
\def\GL{{\rm GL}}

\newcommand{\pfrac}[2]{\left(\frac{#1}{#2}\right)}
\newcommand{\pmfrac}[2]{\left(\mfrac{#1}{#2}\right)}
\newcommand{\ptfrac}[2]{\left(\tfrac{#1}{#2}\right)}
\newcommand{\pMatrix}[4]{\left(\begin{matrix}#1 & #2 \\ #3 & #4\end{matrix}\right)}
\newcommand{\ppMatrix}[4]{\left(\!\pMatrix{#1}{#2}{#3}{#4}\!\right)}
\renewcommand{\pmatrix}[4]{\left(\begin{smallmatrix}#1 & #2 \\ #3 & #4\end{smallmatrix}\right)}
\renewcommand{\bar}[1]{\overline{#1}}

\newcommand{\leg}[2] {\left(\frac{#1}{#2}\right)}
\newcommand{\tleg}[2] {\left(\tfrac{#1}{#2}\right)}
\renewcommand{\(}{\left(}
\renewcommand{\)}{\right)}

\renewcommand{\a}{\mathfrak{a}}

\renewcommand{\hat}{\widehat}
\renewcommand{\tilde}{\widetilde}

\setlength{\arraycolsep}{4pt}

\renewcommand{\sl}{\big|}

\DeclareMathOperator{\sgn}{sgn}
\DeclareMathOperator{\var}{var}

\def\ep{\epsilon}

\newtheorem{theorem}{Theorem}[section]
\newtheorem{lemma}[theorem]{Lemma}
\newtheorem{corollary}[theorem]{Corollary}
\newtheorem{proposition}[theorem]{Proposition}

\theoremstyle{remark}
\newtheorem*{remark}{Remark}

\newtheorem*{notation}{Notation}

\numberwithin{equation}{section}

\mathtoolsset{showonlyrefs}

\title[Kloosterman sums and Maass cusp forms]{Kloosterman sums and Maass cusp forms of half integral weight for the modular group}

\date{\today}

\author{Scott Ahlgren}
\address{Department of Mathematics\\
University of Illinois\\
Urbana, IL 61801} 
\email{sahlgren@illinois.edu} 

\author{Nickolas Andersen}
\address{Department of Mathematics\\
University of Illinois\\
Urbana, IL 61801} 
\email{nandrsn4@illinois.edu}
 
\thanks{The first author was supported by a grant from the Simons Foundation (\#208525 to Scott Ahlgren).}

\begin{document}

\begin{abstract}
We estimate the sums
\[
	\sum_{c\leq x} \frac{S(m,n,c,\chi)}{c},
\] 
where the $S(m,n,c,\chi)$ are Kloosterman sums of half-integral weight on the modular group.
Our estimates are uniform in $m,n,$ and $x$ in analogy with Sarnak and Tsimerman's improvement of Kuznetsov's bound for the ordinary Kloosterman sums.  
Among other things this requires us to develop
mean value estimates for coefficients of Maass cusp forms of weight $1/2$ and  uniform estimates
for $K$-Bessel integral transforms.
As an application, we obtain an improved estimate for the classical problem of estimating the size of the error term in 
 Rademacher's formula for the partition function $p(n)$.
\end{abstract}

\maketitle

\setcounter{tocdepth}{1}
\tableofcontents

%%%%%%%%%%%%%%%%%%%%%%%%%%%%%%%%%%%%%%%%%%%%%%%%%%%%%%%%%%%%%%%%
%%%%%%%%%%%%%%%%%%%%%%%%%%%%%%%%%%%%%%%%%%%%%%%%%%%%%%%%%%%%%%%%
%%%%%%%% INTRODUCTION %%%%%%%%%%%%%%%%%%%%%%%%%%%%%%%%%%%%%%%%%%
%%%%%%%%%%%%%%%%%%%%%%%%%%%%%%%%%%%%%%%%%%%%%%%%%%%%%%%%%%%%%%%%
%%%%%%%%%%%%%%%%%%%%%%%%%%%%%%%%%%%%%%%%%%%%%%%%%%%%%%%%%%%%%%%%

\section{Introduction and statement of results}

The Kloosterman sum
\[
	S(m,n,c) = \sum_{\substack{d\bmod c \\ (d,c)=1}} e\pfrac{m \bar d + n d}{c}, \quad\quad e(x):=e^{2\pi i x}
\]
plays a leading part in  analytic number theory.
Indeed, many problems can be reduced to estimates for sums of Kloosterman sums (see, for example, \cite{heath-brown-survey} or \cite{sarnak-additive}).

In this paper we study sums of generalized Kloosterman sums attached to a multiplier system of weight $1/2$ (the sums $S(m,n,c)$ are attached to the trivial integral weight multiplier system).
Kloosterman sums with general multipliers have been studied by Bruggeman \cite{bruggeman-survey}, Goldfeld-Sarnak \cite{goldfeld-sarnak}, and Pribitkin \cite{pribitkin}, among others.
We focus  on the multiplier system $\chi$ for the Dedekind eta function and the associated Kloosterman sums
\begin{equation} \label{eq:kloo-def-1}
	S(m,n,c,\chi) = \sum_{\substack{0\leq a,d<c \\ \pmatrix abcd\in \SL_2(\Z)}} \bar\chi \ppMatrix abcd \, e\pfrac{\tilde m a + \tilde n d}c, \qquad \tilde m := m-\mfrac{23}{24}.
\end{equation}
These sums are intimately connected to the partition function $p(n)$, and we begin by discussing an application of our main theorem to a classical problem.

In the first of countless important applications of the circle method, Hardy and Ramanujan \cite{hardy-ramanujan} proved the asymptotic formula
\[
	p(n) \sim \mfrac 1{4\sqrt{3} \, n} e^{\pi \sqrt{2n/3}}
\]
(and in fact developed an asymptotic series for $p(n)$).
Perfecting their method, Rademacher \cite{rademacher-partition-function, rademacher-partition-series} proved that
\begin{equation} \label{eq:rademacher}
	p(n) = \frac{2\pi}{(24n-1)^{\frac34}} \sum_{c=1}^\infty \frac{A_c(n)}{c} I_{\frac32}\pfrac{\pi\sqrt{24n-1}}{6c},
\end{equation}
where $I_{\frac32}$ is the $I$-Bessel function,
\begin{equation*}
A_c(n):=\sum_{\substack{d \bmod c\\ (d,c)=1}}e^{\pi i s(d,c)}e\(-\frac{dn}c\),
\end{equation*}
and $s(d,c)$ is a Dedekind sum (see \eqref{eq:ded-sum-def} below).
The Kloosterman sum $A_c(n)$ is a special case of \eqref{eq:kloo-def-1}; in particular we have (see \S \ref{sec:gen-kloo-sums} below)
\begin{equation}\label{eq:a_to_s}
	A_c(n) = \sqrt{-i}S(1,1-n,c,\chi).
\end{equation}
The series \eqref{eq:rademacher} converges very rapidly.  
For example, the first four terms give 
\begin{align*}
	p(100) 	&\approx 190\,568\,944.783 + 348.872 - 2.598 + 0.685 \\
			&= 190 \, 569 \, 291.742, 
\end{align*}
while the actual value is $p(100) = 190 \, 569 \, 292$.

A natural  problem is to estimate the error which results from truncating the series \eqref{eq:rademacher} after the $N$th term, or in other words to estimate the quantity $R(n, N)$ defined by 
\begin{equation} \label{raderrorterm}
	p(n) = \frac{2\pi}{(24n-1)^{\frac34}} \sum_{c=1}^N \frac{A_c(n)}{c} I_{\frac32}\pfrac{\pi\sqrt{24n-1}}{6c}+R(n, N).
\end{equation}
Since $I_\nu(x)$ grows exponentially as $x\to\infty$, one must assume that $N\gg\sqrt{n}$
in order to obtain reasonable estimates.
For $\alpha>0$,
Rademacher \cite[(8.1)]{rademacher-partition-function} showed that
\[R(n, \alpha\sqrt n)\ll_\alpha n^{-\frac14}.\]
Lehmer \cite[Theorem 8]{lehmer-series} proved the sharp Weil-type bound 
\begin{equation} \label{eq:super_weil} 
	|A_c(n)|<2^{\omega_o(c)} \sqrt c,
\end{equation}
where $\omega_o(c)$ is the number of distinct odd primes dividing $c$.
Shortly thereafter \cite{lehmer-remainders} he used this bound to prove that 
\begin{equation}\label{eq:lehmer}
	R(n, \alpha\sqrt n)\ll_\alpha n^{-\frac12}\log n.
\end{equation}
In 1938, values of $p(n)$ had been tabulated only for $n\leq 600$.
Using \eqref{eq:super_weil}, Lehmer showed \cite[Theorem 13]{lehmer-series} that 
$p(n)$ is the nearest integer to the series \eqref{eq:rademacher} truncated at $\frac 23\sqrt{n}$ for all $n>600$.

Building on work of Selberg and Whiteman \cite{whiteman}, Rademacher \cite{rademacher-indian}  later simplified Lehmer's treatment of the sums $A_c(n)$.
Rademacher's book \cite[Chapter IV]{rademacher-book} gives a relatively simple derivation of the error bound $n^{-\frac38}$ using these ideas.

Using  equidistribution results for  Heegner points on the modular curve $X_0(6)$, Folsom and Masri \cite{folsom-masri}  improved Lehmer's estimate. 
They proved that if $24n-1$ is squarefree, then
\begin{equation} \label{eq:folsom-masri}
	R\left(n, \sqrt{\mfrac n6} \right)\ll n^{-\frac12-\delta}\ \ \ \text{for some $\delta>0$}.
\end{equation}

Using the estimates for sums of Kloosterman sums below, we obtain an improvement 
in the exponent, and  we (basically) remove the assumption 
that $24n-1$ is squarefree.
For simplicity in stating the results, we will assume that 
\begin{equation} \label{eq:n-5-7-11-intro}
	24n-1 \text{ is not divisible by $5^4$ or $7^4$.}
\end{equation}
As the proof will show (c.f. Section~\ref{sec:avgduke})
 the exponent $4$ in \eqref{eq:n-5-7-11-intro} can be replaced by any positive integer $m$;
such a change would have the effect of changing the implied constants in \eqref{eq:raderror} and \eqref{eq:raderror1}.

\begin{theorem}\label{raderror}
Suppose that  $\alpha>0$. For $n\geq1$ satisfying \eqref{eq:n-5-7-11-intro} we have
\begin{equation}\label{eq:raderror}
	R\big(n, \alpha \, n^{\frac 12}\big) \ll_{\alpha,\epsilon} n^{-\frac12-\frac1{168}+\epsilon}.
\end{equation}
\end{theorem}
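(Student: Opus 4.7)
The plan is to apply Abel summation to the tail of Rademacher's series \eqref{eq:rademacher} with $N = \alpha n^{1/2}$, feeding in the main theorem of this paper to control the partial sums of the relevant half-integral-weight Kloosterman sums. Substituting \eqref{eq:a_to_s} into \eqref{raderrorterm} rewrites
\[
R(n,N) = \frac{2\pi\sqrt{-i}}{(24n-1)^{3/4}} \sum_{c > N} \frac{S(1, 1-n, c, \chi)}{c} \, I_{3/2}\!\left(\frac{\pi\sqrt{24n-1}}{6c}\right),
\]
so the task reduces to bounding this tail.

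First I would pin down the Bessel factor. For $c > \alpha n^{1/2}$ the argument $x_c := \pi\sqrt{24n-1}/(6c)$ is $O_\alpha(1)$, and the small-argument expansion $I_{3/2}(x) = x^{3/2}/(2^{3/2}\Gamma(5/2)) + O(x^{7/2})$, together with its differentiated version, yields uniformly
\[
F(c) := I_{3/2}(x_c) \ll \frac{n^{3/4}}{c^{3/2}}, \qquad F'(c) \ll \frac{n^{3/4}}{c^{5/2}}.
\]
Writing $\mathcal{T}(t) := \sum_{c \leq t} S(1,1-n,c,\chi)/c$ and noting that $\mathcal{T}(X)F(X) \to 0$ under any subquadratic bound on $\mathcal{T}$, Abel summation gives
\[
\sum_{c>N} \frac{S(1,1-n,c,\chi)}{c}\, F(c) = -\mathcal{T}(N) F(N) - \int_N^{\infty} \mathcal{T}(t) F'(t)\, dt.
\]

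Then I would feed in the main theorem, which I expect to take the shape $\mathcal{T}(t) \ll_\epsilon (nt)^{\epsilon} \, t^{1/2 - 1/84}$ for $n$ satisfying \eqref{eq:n-5-7-11-intro}. This is the Sarnak--Tsimerman-style uniform estimate announced in the abstract, and the saving $1/84$ is exactly what the target exponent $1/168$ demands after partial summation. Both the boundary term and the integral then contribute $\ll n^{3/4}\, N^{-1 - 1/84 + \epsilon} = n^{1/4 - 1/168 + \epsilon}$; multiplying by the prefactor $\asymp n^{-3/4}$ yields
\[
R(n, \alpha n^{1/2}) \ll_{\alpha,\epsilon} n^{-1/2 - 1/168 + \epsilon}.
\]

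The heavy lifting is of course contained inside the main theorem itself, whose bound on $\mathcal{T}(t)$ will require the mean value estimates for coefficients of weight-$1/2$ Maass cusp forms and uniform $K$-Bessel integral bounds advertised in the abstract. Beyond that, the deduction of Theorem \ref{raderror} is essentially bookkeeping via Abel summation; the hypothesis \eqref{eq:n-5-7-11-intro} enters only to control the cusp-form coefficients at the ramified small primes $5$ and $7$, where a convenient Ramanujan-type bound is not available, and as noted after \eqref{eq:n-5-7-11-intro} the exponent $4$ there can be replaced by any positive integer $m$ at the cost of a worse implied constant.
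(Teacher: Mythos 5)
Your skeleton — rewrite the tail via \eqref{eq:a_to_s}, bound $I_{3/2}$ and its derivative by $n^{3/4}c^{-3/2}$ and $n^{3/4}c^{-5/2}$ in the range $c>\alpha n^{1/2}$, then Abel summation against the partial sums $\mathcal{T}(t)$ — is exactly the paper's proof in Section~\ref{sec:partitionproof}. The problem is the input you feed into it. You ``expect'' the main theorem to read $\mathcal{T}(t)\ll_\epsilon (nt)^\epsilon t^{1/2-1/84}$, i.e.\ a pure power saving in $t$ with only $\epsilon$-dependence on $n$. That is not Theorem~\ref{sumofks-intro}, and it is not available: the actual bound is
\[
\sum_{c\leq x}\frac{A_c(n)}{c}\ll_{\delta,\epsilon} n^{\frac14-\frac1{56}+\epsilon}x^{\frac34\delta}+\left(n^{\frac14-\frac1{168}+\epsilon}+x^{\frac12-\delta}\right)\log x,
\]
which carries genuine polynomial $n$-dependence (the term $n^{1/4-1/168+\epsilon}$ alone dominates your guessed bound for every $x\ll n^{1/2-1/84}$, so your stated estimate is false for $x$ small relative to $n$). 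The saving here is calibrated in the $n$-aspect, not the $x$-aspect; the paper's pure $x$-aspect result, Theorem~\ref{thm:x-1/6}, has the much better exponent $x^{1/6}$ but an $|mn|^{1/4}$ term that only recovers Lehmer's $n^{-1/2}\log n$.

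The gap is repairable, and this is why I would not call the approach wrong: in the range actually used, $t\geq\alpha n^{1/2}$, the three-term bound with $\delta=1/84$ does imply $\mathcal{T}(t)\ll_{\alpha,\epsilon}(nt)^\epsilon t^{1/2-1/84}$, so your arithmetic then goes through. But establishing that implication is precisely the balancing act the paper performs: one must carry all three terms through the boundary term and the integral $\int_N^\infty\mathcal{T}(t)F'(t)\,dt$, obtaining (after multiplying by $n^{-3/4}$) the three exponents $\frac38\delta-\frac{29}{56}$, $-\frac{85}{168}$, and $-\frac12-\frac12\delta$, and then check that any $\delta\in[\frac1{84},\frac2{63}]$ makes each of them $\leq-\frac12-\frac1{168}$. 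This optimization over $\delta$ is the substance of the deduction, not bookkeeping, and your write-up omits it by asserting an input stronger than what the paper proves.
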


In fact, our method  optimizes when $N$ is slightly larger with respect to $n$.
\begin{theorem}\label{raderror1}
Suppose that  $\alpha>0$. For $n\geq1$ satisfying \eqref{eq:n-5-7-11-intro} we have
\begin{equation}\label{eq:raderror1}
	R\big(n, \alpha \, n^{\frac 12+\frac5{252}}\big) \ll_{\alpha,\epsilon} n^{-\frac12-\frac1{28}+\epsilon}.
\end{equation}
\end{theorem}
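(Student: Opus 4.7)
Combining \eqref{raderrorterm}, \eqref{eq:rademacher}, and \eqref{eq:a_to_s} one has
\[
	R(n,N) = \frac{2\pi\sqrt{-i}}{(24n-1)^{3/4}} \sum_{c>N} \frac{S(1,1-n,c,\chi)}{c}\, I_{3/2}\!\left(\frac{\pi\sqrt{24n-1}}{6c}\right),
\]
so the task reduces to estimating a tail of sums of half-integral-weight Kloosterman sums twisted by a Bessel factor. Because $N \gg n^{1/2+5/252}$, the Bessel argument $y_c := \pi\sqrt{24n-1}/(6c)$ is $\ll n^{-5/252}$ throughout the sum; using $I_{3/2}(y) = y^{3/2}/\Gamma(5/2) + O(y^{7/2})$, one checks that $g(c) := I_{3/2}(y_c)$ is a smooth weight with $g(c) \ll n^{3/4}/c^{3/2}$ and $|g'(c)| \ll n^{3/4}/c^{5/2}$ on $(N,\infty)$.

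Next I would apply Abel summation to transfer the oscillation from the Kloosterman sums onto $g$. Writing $T(x) := \sum_{c\leq x} S(1,1-n,c,\chi)/c$, one obtains
\[
	\sum_{c > N} \frac{S(1,1-n,c,\chi)}{c}\, g(c) = -T(N)\,g(N) - \int_N^\infty T(x)\, g'(x)\, dx,
\]
the boundary contribution at infinity vanishing thanks to the decay of $g$. The main Kloosterman-sum estimate of the paper supplies a uniform Sarnak--Tsimerman-type bound for $|T(x)|$, schematically of the form $(x^\alpha + n^\beta)(xn)^\epsilon$, with the $x^\alpha$-piece coming from the Kuznetsov spectral side and the $n^\beta$-piece from mean-value estimates for Fourier coefficients of weight-$1/2$ Maass cusp forms. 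Inserting this into both the boundary term and the integral produces an expression that decreases in one regime and increases in another as $N$ varies, and the balance point is precisely $N \asymp n^{1/2+5/252}$, yielding the exponent $-1/2-1/28$ in \eqref{eq:raderror1}.

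The heart of the matter is not this deduction but the uniform Kloosterman-sum estimate itself: that is where the spectral theory of weight-$1/2$ Maass forms enters, together with the hypothesis \eqref{eq:n-5-7-11-intro}, which serves to exclude unwanted oldform contributions at levels divisible by $5^4$ or $7^4$ (the exponent $4$ being inessential, as remarked before the statement). Compared to Theorem \ref{raderror}, the present statement gains by pushing $N$ slightly past $n^{1/2}$, which makes $g'$ decay faster and so amplifies whatever bound for $T(x)$ is available at the integration point; this is what drives the improvement from $n^{-1/2-1/168}$ to $n^{-1/2-1/28}$ at the cost of the slightly weaker constraint on $N$.
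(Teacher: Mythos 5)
Your plan matches the paper's proof: the paper likewise writes $R(1-n,N)$ as the tail sum, applies partial summation against $I_{3/2}(a/c)$ using the bounds $I_{3/2}(a/N)\ll n^{3/4}N^{-3/2}$ and $\big(I_{3/2}(a/t)\big)'\ll n^{3/4}t^{-5/2}$, inserts the uniform estimate of Theorem~\ref{sumofks-intro}, and balances the resulting three exponents by choosing $N=\alpha n^{\frac12+\frac5{252}}$ together with the free parameter $\delta=\frac4{131}$ in that theorem. The one inaccuracy is your gloss on hypothesis \eqref{eq:n-5-7-11-intro}: it is not about excluding oldforms, but is used in Theorem~\ref{thm:avgduke} to propagate Duke's bound from squarefree to general indices via the Hecke recursion at $p=5,7$; this does not affect the deduction itself.
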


\begin{remark}  Suppose, for example, that 
$24n-1$ is divisible by $5^4$.
Then $n$ has the form $n = 625 m + 599$,
so that $p(n)\equiv 0\pmod{625}$ by work of Ramanujan  \cite[Section 22]{berndt-ono}.
A sharp estimate for $R(n,N)$ for such $n$ is less important   since 
$p(n)$ can be determined by 
 showing that
$|R(n,N)| < 312.5$.
The situation is  similar for  any power of  $5$ or $7$.
\end{remark}

\begin{remark}
The estimates in Theorems \ref{raderror} and \ref{raderror1} depend on progress toward the Ramanujan--Lindel\"of conjecture for coefficients of certain Maass cusp forms of weight 1/2.
Assuming the conjecture, we can use the present methods to prove
\[
	R(n,\alpha n^{\frac 12}) \ll_{\alpha,\epsilon} n^{-\frac 12-\frac 1{16}+\epsilon}.
\]
The analogue of Linnik and Selberg's conjecture (\eqref{eq:linnik-selberg} below) would give 
\[
	R(n,\alpha n^\frac12) \ll_{\alpha,\epsilon} n^{-\frac 34+\epsilon}.
\]
Computations suggest that this bound would be optimal.
\end{remark}

Theorems \ref{raderror} and \ref{raderror1} follow from estimates for weighted sums of the Kloosterman sums \eqref{eq:kloo-def-1}.
For the ordinary Kloosterman sum $S(m,n,c)$, Linnik \cite{linnik} and Selberg \cite{selberg-estimation} conjectured that there should be considerable cancellation in the sums
\begin{equation} \label{eq:sum-weighted-kloo}
	\sum_{c\leq x} \frac{S(m,n,c)}c.
\end{equation}
Sarnak and Tsimerman \cite{sarnak-tsimerman} studied these weighted sums for varying $m,n$ and put forth the following modification of Linnik and Selberg's conjecture with an ``$\epsilon$-safety valve'' in $m,n$:
\begin{equation} \label{eq:linnik-selberg}
	\sum_{c\leq x} \frac{S(m,n,c)}c \ll_{\epsilon} (|mn|x)^{\epsilon}.
\end{equation}
Using the Weil bound \cite{weil}
\begin{equation}\label{eq:weil-ordinary}
	|S(m,n,c)| \leq \tau(c) (m,n,c)^{\frac 12} \sqrt c,
\end{equation}
where $\tau(c)$ is the number of divisors of $c$, one obtains
\[
	\sum_{c\leq x} \frac{S(m,n,c)}c \ll  \tau((m,n))x^\frac12 \log x.
\]
Thus the conjecture \eqref{eq:linnik-selberg} represents full square-root cancellation.

The best current bound in the $x$-aspect for the sums \eqref{eq:sum-weighted-kloo} was obtained by 
Kuznetsov \cite{kuznetsov}, who proved for $m,n>0$ that
\[
	\sum_{c\leq x}\frac{S(m,n,c)}c\ll_{m,n} x^\frac16(\log x)^\frac13.
\]
Recently, Sarnak and Tsimerman \cite{sarnak-tsimerman} refined Kuznetsov's method, making the dependence on $m$ and $n$ explicit. They proved that for 
$m$, $n>0$ we have 
\[
	\sum_{c\leq x}\frac{S(m,n,c)}c\ll_{\ep} \(x^\frac16+(mn)^\frac16+(m+n)^\frac18(mn)^{\frac \theta2}\)(mnx)^\ep,
\]
where $\theta$ is an admissible exponent toward the Ramanujan--Petersson conjecture for coefficients of weight 0 Maass cusp forms. 
By work of Kim and Sarnak \cite[Appendix 2]{kim-sarnak}, the exponent $\theta=7/64$ is available.
A generalization of their results to sums taken over $c$ which are divisible by a fixed integer $q$  is given by Ganguly and Sengupta   \cite{ganguly-sengupta}.

We study sums of the Kloosterman sums $S(m,n,c,\chi)$ defined in \eqref{eq:kloo-def-1}.
In view of \eqref{eq:a_to_s} we consider the case when $m$ and $n$ have mixed sign
(as will be seen in the proof, this brings up a number of difficulties which are not present in the case when $m$, $n>0$).
\begin{theorem} \label{thm:x-1/6}
For $m>0$, $n<0$ we have
\begin{equation*} 
	\sum_{c\leq x} \frac{S(m,n,c, \chi)}{c} \ll_\ep \left(x^{\frac 16} + |mn|^{\frac 14}\right) |mn|^{\epsilon} \log x.
\end{equation*}
\end{theorem}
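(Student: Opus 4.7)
The plan is to apply the Kuznetsov trace formula of weight $1/2$ for the eta multiplier $\chi$, which converts a smoothly weighted version of the Kloosterman sum average into a sum over the discrete spectrum of the Laplacian on weight $1/2$ automorphic forms on $\SL_2(\Z)$, together with contributions from Eisenstein series and the residual spectrum (in particular Shimura's theta function). Because $m > 0$ and $n < 0$, the relevant integral transform on the spectral side is a $K$-Bessel transform rather than a $J$-Bessel one, so the mixed-sign form of the formula is the appropriate tool.

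I would first replace the sharp cutoff $c \leq x$ by a smooth weight $\phi_T$ approximating the characteristic function of $[0,x]$, with transition regions of width controlled by an auxiliary parameter $T \geq 1$ to be optimized at the end. The smoothing error is absorbed using the Weil-type bound $|S(m,n,c,\chi)| \ll_\ep c^{\frac12+\ep}(m,n,c)^{\frac12}$ (the analogue of \eqref{eq:super_weil}).  Kuznetsov's formula then yields an identity of the form
\[
\sum_c \frac{S(m,n,c,\chi)}{c}\, \phi_T\!\left(\mfrac{4\pi\sqrt{|mn|}}{c}\right) = \sum_j \bar\rho_j(m)\rho_j(n)\, \check\phi_T(r_j) + \mathcal{E}(\phi_T),
\]
where the $\rho_j(m)$ are Fourier coefficients of an orthonormal basis of weight $1/2$ Maass cusp forms with spectral parameters $r_j$, the function $\check\phi_T$ is the $K$-Bessel transform of $\phi_T$, and $\mathcal{E}(\phi_T)$ absorbs the Eisenstein and theta contributions.

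Next, uniform estimates on $\check\phi_T(r)$ (obtained by stationary phase on the $K$-Bessel integral representation) show that this transform is essentially supported in a short window around $|r| \asymp 2\pi\sqrt{|mn|}/c_0$ with $c_0 \asymp x$, with the window length governed by $T$; such estimates must be delicate both for tempered $r \in \R$ and for the exceptional $r \in i\R$, since the trivial bounds on $K_{2ir}$ are too weak. With these in hand, Cauchy--Schwarz reduces the spectral sum to a large-sieve-type mean value of the shape
\[
\sum_{|r_j|\leq R} |\rho_j(m)|^2 \ll_\ep \bigl(R^2 + |m|^{\frac12}\bigr)(R|m|)^\ep
\]
for coefficients of weight $1/2$ Maass cusp forms, and an analogous bound for $n$. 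Choosing $T$ to balance the smoothing error against the spectral contribution then produces the two competing terms $x^{1/6}$ and $|mn|^{1/4}$, up to $|mn|^\ep\log x$.

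The main obstacle is twofold.  First, one must prove uniform $K$-Bessel integral transform estimates that remain sharp both at small real $r$ and on the exceptional line $r \in i\R$, where the transform degenerates; these are nontrivial because the mixed-sign transform is not as benign as the $J$-Bessel transform used by Sarnak--Tsimerman.  Second, one must establish the mean-value estimate above for weight $1/2$ coefficients, which is weaker than in the integral-weight case due to the lack of a Rankin--Selberg-type square-mean identity at half integral weight; the residual theta contribution and any exceptional eigenvalues must also be subtracted off and shown to be absorbed into the stated bound.
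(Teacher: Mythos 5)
Your overall strategy is the same as the paper's: smooth the cutoff with a parameter $T$, absorb the smoothing error with the Weil-type bound, apply the mixed-sign Kuznetsov formula, estimate the $K$-Bessel transform $\check\phi(r)$ uniformly, and close with Cauchy--Schwarz against a mean value estimate for the coefficients (the paper does this dyadically in $c$ with $T\asymp x^{2/3}$, and its mean value theorem is normalized as $|\tilde n|\sum_{r_j\le x}|\rho_j(n)|^2/\ch\pi r_j \asymp x^{3/2}$ or $x^{5/2}$ according to the sign of $n$, but these are details of bookkeeping). Two of your structural worries are actually vacuous here: for the multiplier $\chi$ on $\SL_2(\Z)$ the unique cusp is not singular, so there is no Eisenstein contribution at all, and the residual form at the bottom of the spectrum is $y^{1/4}\eta(\tau)$ (spectral parameter $r=i/4$), whose $n$-th coefficient vanishes for $n<0$, so it drops out of $\sum_j\overline{\rho_j(m)}\rho_j(n)\check\phi(r_j)$ automatically.

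The genuine gap is your treatment of the exceptional spectrum. You propose to control $\check\phi(r)$ for $r\in i\R$ and then show the exceptional terms are ``absorbed,'' but this cannot work as stated: for $r=i\rho$ with $0<\rho<1/4$ one has $K_{2ir}(u)=K_{-2\rho}(u)\sim c\,u^{-2\rho}$ as $u\to0$, so with $\phi$ supported near $u\asymp a/x$ (where $a=4\pi\sqrt{\tilde m|\tilde n|}$) the transform satisfies $\check\phi(i\rho)\gg (x/a)^{2\rho}$, and the resulting contribution $\sqrt{\tilde m|\tilde n|}\,(x/a)^{2\rho}$ can be as large as a power of $x$ approaching $x^{1/2}$, overwhelming the target $x^{1/6}$. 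No transform estimate can rescue this; one must prove that $\mathcal S_{\frac12}(1,\chi)$ has no exceptional eigenvalues beyond $r=i/4$. The paper does this by constructing a Shimura-type theta lift $\mathcal S_{\frac12}(1,\chi,r)\to\mathcal S_0(6,\bm 1,2r)$ and then invoking an explicit Selberg trace formula computation at level $6$ to show $r_1>1.9$ (Corollary~\ref{cor:shimura}); this occupies an entire section and is an essential input, not a routine absorption. You should either supply such a lift (or cite Bruggeman's bound $\lambda_1>1/4$ for this multiplier) before the Cauchy--Schwarz step, or your argument does not close.
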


While Theorem \ref{thm:x-1/6} matches \cite{sarnak-tsimerman} in the $x$ aspect, it falls short in the $mn$ aspect.
As we discuss in more detail below, this is due to the unsatisfactory Hecke theory in half-integral weight.
This bound is also insufficient to improve the  estimate \eqref{eq:lehmer} for $R(n,N)$; for this we sacrifice the bound in the $x$-aspect for an improvement in the $n$-aspect.  The resulting theorem, which for convenience we state in terms of $A_c(n)$,  leads to  the estimates of Theorems~\ref{raderror} and \ref{raderror1}.
\begin{theorem}\label{sumofks-intro}
Fix $0<\delta<1/2$.  Suppose that $n\geq 1$ satisfies \eqref{eq:n-5-7-11-intro}.
Then
\begin{equation*}
\sum_{c\leq x}\frac{A_c(n)}{c}\ll_{\delta,\ep} n^{\frac 14-\frac1{56}+\epsilon}x^{\frac 34\delta} + \left( n^{\frac 14-\frac1{168}+\epsilon} + x^{\frac 12-\delta} \right) \log x.
\end{equation*}
\end{theorem}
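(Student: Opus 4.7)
By \eqref{eq:a_to_s} the sum equals $\sqrt{-i}\sum_{c \leq x}S(1,1-n,c,\chi)/c$. Since $n\geq 1$, we have $1-n\leq 0$, so we are in the mixed-sign regime of the Kuznetsov trace formula for the eta multiplier $\chi$. Following the template of Sarnak--Tsimerman adapted to half-integral weight, my plan is: replace $\mathbf{1}_{c\leq x}$ by a smooth cutoff $\phi$, apply Kuznetsov to open the smoothed sum spectrally over weight-$1/2$ Maass cusp forms (plus an Eisenstein contribution), and handle the boundary discrepancy by the Lehmer--Weil bound \eqref{eq:super_weil}. The parameter $\delta$ of the theorem governs the scale of this smoothing: the transition region will have width of order $x^{1-\delta}$, so that summing $2^{\omega_o(c)}/\sqrt c$ over it contributes exactly the $x^{1/2-\delta}\log x$ term.

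\emph{Spectral side.} Kuznetsov produces
\[
\sum_{r_j}\rho_j(1)\,\overline{\rho_j(1-n)}\,\hat\phi(r_j) \;+\; (\text{Eisenstein contribution}),
\]
where $\hat\phi$ is the $K$-Bessel integral transform of $\phi$ appropriate to the mixed-sign case. I would partition dyadically in the spectral parameter $|r_j|$, apply Cauchy--Schwarz in the $\rho_j(1)$ and $\rho_j(1-n)$ slots, and feed in the mean-value estimate for Fourier coefficients of weight-$1/2$ Maass cusp forms promised in the abstract. In the $\rho_j(1-n)$ slot I would then upgrade the convexity bound $|n|^{1/4+\epsilon}$ to an individual subconvexity bound $|\rho_j(1-n)|\ll_{\epsilon}|n|^{1/4-\eta+\epsilon}$; this is where the hypothesis \eqref{eq:n-5-7-11-intro} is crucial, since it feeds the Duke-type estimate of Section~\ref{sec:avgduke}, which routes $|\rho_j(1-n)|^2$ through a Shimura-type correspondence to the central value of an $L$-function of the lifted weight-$0$ form and invokes subconvexity there.

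\emph{Balancing.} Carrying out the spectral estimate in two different ways --- one in which the subconvexity exponent is inserted inside Cauchy--Schwarz and one in which it is applied directly to $|\rho_j(1-n)|^2$ --- produces the two contributions $n^{1/4-1/56+\epsilon}x^{3\delta/4}$ and $n^{1/4-1/168+\epsilon}\log x$, with the arithmetic $1/56 = 3\cdot(1/168)$ reflecting how many times the pointwise saving $\eta$ is deployed. Adding these to the boundary estimate from \eqref{eq:super_weil} yields the three-term bound of the theorem, with $\delta$ free.

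\emph{Main obstacle.} The hardest ingredient is the uniform analysis of $\hat\phi(r_j)$ in the mixed-sign case, where the kernel is a $K$-Bessel function rather than a $J$-Bessel function: $\hat\phi(r_j)$ can attain its maximum at exceptional small spectral parameters, so the Ramanujan--Lindel\"of input must be pushed as far as it will go precisely there, where the dearth of Hecke theory in weight $1/2$ bites the hardest. Every individual bound on $\rho_j(1-n)$ has to be obtained indirectly via the Shimura lift, and \eqref{eq:n-5-7-11-intro} is what keeps the local parameters of that lift under control.
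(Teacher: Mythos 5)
Your overall skeleton---pass to $S(1,1-n,c,\chi)$, smooth the cutoff at scale $T\asymp x^{1-\delta}$ so that the boundary discrepancy is controlled by \eqref{eq:super_weil} and contributes $x^{\frac12-\delta}\log x$, open the smoothed sum with the mixed-sign Kuznetsov formula, and estimate the spectral side dyadically in $r_j$ by Cauchy--Schwarz against mean-value bounds for the coefficients---is the paper's route (Proposition~\ref{dyadicprop} and Theorem~\ref{sumofks}). A small structural slip: there is no Eisenstein contribution here, since the unique cusp of $\SL_2(\Z)$ is not singular for $\chi$; harmless, but it is part of why the formula closes up so cleanly.

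The genuine gap is in the key coefficient input. You propose to insert an \emph{individual} subconvexity bound $|\rho_j(1-n)|\ll |n|^{\frac14-\eta+\epsilon}$, obtained by routing $|\rho_j(1-n)|^2$ through the Shimura lift to a central $L$-value and invoking subconvexity there. This fails for the reason the paper flags explicitly in the introduction: the known individual bounds (Duke's $|a(n)|\ll_\epsilon |n|^{-\frac27+\epsilon}|r|^{\frac52-\frac{\sgn n}{4}}\ch(\pi r/2)$, and Baruch--Mao's) lose too much in the $r$-aspect. After normalizing by $\ch\pi r_j$, the individual bound costs $r_j^{5-\frac{\sgn n}{2}}$ per term, whereas by Weyl's law a dyadic block $A\leq r_j\leq 2A$ has $\asymp A^2$ terms; the quantity actually needed is the \emph{spectrally averaged} Duke estimate of Theorem~\ref{thm:avgduke}, $\sum_{0<r_j\leq x}|\rho_j(n)|^2/\ch\pi r_j\ll |n|^{-\frac47+\epsilon}x^{5-\frac{\sgn n}{2}}$, which saves a factor of $A^2$ over summing the pointwise bound and is what produces the exponents $\frac{13}{56}=\frac12(\frac14+\frac{3}{14})$ and $\frac{41}{168}$ after interpolating with Theorem~\ref{thm:mve} via $\min(B,C)\leq\sqrt{BC}$ and balancing against the Lehmer-bounded initial segment $c\leq |n|^{41/84}$. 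Moreover that average estimate is \emph{not} proved via $L$-values and subconvexity: it is Duke's original argument (theta kernels, Sali\'e-type sums, and Iwaniec's averaging over the level $pN$, $P<p\leq 2P$, with $P=n^{1/7}$). The Shimura lift of Section~\ref{sect:shimura} enters for two different purposes: to eliminate exceptional and small eigenvalues (Corollary~\ref{cor:shimura}, $r_1>1.9$, which is what lets one discard your worry about $\check\phi$ peaking at exceptional spectral parameters), and, via Hecke equivariance (Corollary~\ref{cor:shim_hecke_commute}) and Kim--Sarnak, to propagate the squarefree-index case of the average Duke bound to general index. The hypothesis \eqref{eq:n-5-7-11-intro} is exactly the point where that Hecke recursion fails to close: for $p=5,7$ the bound $p^{-\frac{57}{64}}+p^{-\frac{71}{64}}+p^{-\frac32}$ exceeds $p^{-\frac47}$, so only boundedly many powers of $5$ and $7$ can be absorbed---it has nothing to do with local parameters of the lift. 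Finally, the three-range decomposition of the spectral sum according to $r_j\lessgtr a/8x$ and $a/x$ (decaying, transitional, oscillatory ranges of $K_{2ir}$, Theorem~\ref{thm:phicheck}) is needed to get the stated $x$-dependence and is absent from your plan.
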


As in \cite{kuznetsov} and \cite{sarnak-tsimerman}, 
the basic tool is a version of Kuznetsov's trace formula.
This relates sums of the Kloosterman sums $S(m,n,c,\chi)$ weighted by a suitable test function to sums involving Fourier coefficients of Maass cusp forms of weight $1/2$ and multiplier $\chi$.
Proskurin \cite{proskurin-new} proved such a formula for general weight and multiplier when $mn>0$; in Section~\ref{sec:KTF}
 we give a proof in the case $mn<0$  (Blomer \cite{blomer} has  recorded this formula for twists of the  theta-multiplier by a Dirichlet character).

The Maass cusp forms of interest   are functions which transform like $\im(\tau)^{\frac 14}\eta(\tau)$, where
\begin{equation} \label{eq:eta-def}
	\eta(\tau) = e\Big(\mfrac{\tau}{24}\Big)\prod_{n=1}^\infty \big(1-e(n\tau)\big), \qquad \im(\tau)>0,
\end{equation}
is the Dedekind eta function, and which are 
eigenfunctions of the weight $1/2$ Laplacian $\Delta_{1/2}$ (see Section \ref{sec:background} for details).
To each Maass cusp form $F$ we attach an
eigenvalue $\lambda$ and a spectral parameter $r$
which are defined via
\[\Delta_\frac12 F+\lambda F=0\]
and 
\[\lambda=\mfrac14+r^2.\]

We denote the space spanned by these Maass cusp forms  by $\mathcal S_\frac12(1,\chi)$.
Denote by $\{u_j(\tau)\}$ an orthonormal  basis of Maass cusp forms for this space, and let 
 $r_j$ denote the spectral parameter attached to each $u_j$.
Then (recalling the notation $\tilde n=n-\frac{23}{24}$), $u_j$ has a Fourier expansion of the form
\begin{equation*}
	u_j(\tau) = \sum_{n\neq 0} \rho_j(n) W_{\frac{\sgn(n)}4,ir_j}(4\pi |\tilde n| y) e(\tilde n x),
\end{equation*}
where $W_{\kappa,\mu}(y)$ is the $W$-Whittaker function (see \S \ref{sec:maass-forms}).
In  Section~\ref{sec:KTF} we will prove that
\begin{equation} \label{eq:ktf-intro}
	\sum_{c>0} \frac{S(m,n,c,\chi)}{c} \phi\pfrac{4\pi\sqrt{\tilde m|\tilde n|}}{c} = 8\sqrt i \sqrt{\tilde m|\tilde n|} \, \sum_{r_j} \frac{\bar{\rho_j(m)}\rho_j(n)}{\ch \pi r_j} \check\phi(r_j),
\end{equation}
where $\phi$ is a suitable test function and $\check\phi$ is the $K$-Bessel transform
\begin{equation} \label{eq:phi-check-intro}
	\check\phi(r):=\ch\pi r\int_0^\infty K_{2ir}(u)\phi(u)\frac{du}u.
\end{equation}

In Section~\ref{sect:shimura} 
we introduce theta lifts (as in Niwa \cite{niwa}, Shintani \cite{shintani}) which give a Shimura-type correspondence 
\begin{equation} \label{eq:shimura-intro}
	\mathcal S_\frac12(N,\psi\chi,r) \to \mathcal S_0(6N,\psi^2,2r).
\end{equation}
Here $\mathcal S_k(N,\nu,r)$ denotes the space of Maass cusp forms of weight $k$ on $\Gamma_0(N)$ with multiplier $\nu$ and spectral parameter $r$.
These lifts commute with the action of the Hecke operators 
on the respective spaces, and in particular   allow us to control the size of the weight $1/2$ Hecke eigenvalues,
which becomes important in Section~\ref{sec:avgduke}.
The existence of these lifts shows that there are no small eigenvalues; this fact is used in Sections~\ref{sec:kbessel}, \ref{sec:easyproof}, and \ref{sec:hardproof}.
In particular, from  \eqref{eq:shimura-intro} it will follow that if $\mathcal S_{1/2}(1,\chi,r)\neq 0$ then either $r=i/4$ or $r>1.9$.

To  estimate the right-hand side of \eqref{eq:ktf-intro}, we first
obtain a mean value estimate for the sums
\[
	|\tilde n| \sum_{0<r_j\leq x} \frac{|\rho_j(n)|^2}{\ch \pi r_j}.
\]
In analogy with Kuznetsov's result \cite[Theorem 6]{kuznetsov} in weight $0$,  we prove  in Section~\ref{sec:MVE}
a general mean value result for weight $\pm 1/2$ Maass cusp forms which has the following as a corollary.
\begin{theorem} \label{thm:mve}
With notation as above, we have
\begin{equation*} 
|\tilde n| \sum_{0< r_j\leq x} \frac{|\rho_j(n)|^2}{\ch \pi r_j} =
	\begin{dcases}
		\frac{x^{\frac 52}}{5\pi^2}  + O_\epsilon\(x^{\frac 32}\log x + |n|^{\frac 12+\epsilon}x^{\frac 12}\) & \text{ if }n<0, \\
		\frac{x^{\frac 32}}{3\pi^2} + O_\epsilon\(x^{\frac 12}\log x + n^{\frac 12+\epsilon}\) & \text{ if }n>0.
	\end{dcases}
\end{equation*}
\end{theorem}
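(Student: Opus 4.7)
The argument is based on the Kuznetsov-type trace formula for weight $1/2$ Maass cusp forms with multiplier $\chi$, specialized to $m=n$. For $n$ with $\tilde n<0$ I would apply the ``both-negative'' version of Proskurin's formula (a close cousin of \eqref{eq:ktf-intro}); for $n$ with $\tilde n>0$ I would apply the ``both-positive'' version originally due to Proskurin \cite{proskurin-new}. Both are to be established in Section~\ref{sec:KTF}. In either case I choose a non-negative test function $\phi=\phi_x$ whose Bessel transform $\check\phi_x(r)$ is a smoothed approximation to a positive function majorizing the indicator of $(0,x]$. Substituted into the trace formula, this converts
\[
|\tilde n|\sum_{0<r_j\leq x}\frac{|\rho_j(n)|^2}{\ch\pi r_j},
\]
up to a smoothing error near the endpoint $r=x$, into a constant multiple of the geometric sum
\[
\sum_{c>0}\frac{S(n,n,c,\chi)}{c}\,\phi_x\!\left(\frac{4\pi|\tilde n|}{c}\right).
\]

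The main term would emerge by isolating the ``diagonal'' contribution on the geometric side. Expanding $\phi_x$ via Bessel inversion of $\check\phi_x$ and interchanging the order of summation, the leading piece reflects Weyl's law for the weight $1/2$ spectrum. For $\tilde n<0$, the relevant spectral density grows like $r^{3/2}$, so $\int_0^x r^{3/2}\,dr$ with the correct normalization produces the announced $x^{5/2}/(5\pi^2)$; for $\tilde n>0$, the $J$-Bessel transform appearing in the positive-sign formula carries the lighter density $r^{1/2}$, giving the main term $x^{3/2}/(3\pi^2)$. The precise constants would be pinned down by explicitly evaluating $\int_0^\infty K_{2ir}(u)\,\phi_x(u)\,du/u$ and its $J$-Bessel analog.

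For the off-diagonal Kloosterman contribution I would apply the Weil-type bound $|S(n,n,c,\chi)|\ll_\epsilon c^{1/2+\epsilon}(n,c)^{1/2}$ developed in \S\ref{sec:gen-kloo-sums}, coupled with size/decay estimates on $\phi_x$ that effectively localize the relevant $c$. Summing the Weil bound in this range and optimizing produces errors of the expected shape: $O(x^{3/2}\log x + |n|^{1/2+\epsilon}x^{1/2})$ when $\tilde n<0$ and $O(x^{1/2}\log x + n^{1/2+\epsilon})$ when $\tilde n>0$, matching the stated remainders.

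The principal obstacle is uniformity in $n$: keeping the dependence on $n$ polynomial rather than exponential requires sharp, uniform control of the Bessel transform $\check\phi_x(r)$ across all $r>0$ as $x$ varies. This is precisely the uniform $K$-Bessel estimate the authors foreshadow in the abstract and develop in a dedicated section, and it is the technical heart of the argument. A secondary subtlety is the identification of the exact leading constants $1/(5\pi^2)$ and $1/(3\pi^2)$, which demands careful bookkeeping of the Whittaker normalization appearing in the Fourier expansion of $u_j$ and an honest evaluation of the Bessel integrals rather than a softer asymptotic.
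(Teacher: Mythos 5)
Your high-level strategy --- apply a trace formula with $m=n$, extract the main term from the diagonal, and control the off-diagonal Kloosterman contribution with the Weil-type bound \eqref{eq:weil-bound} --- is the same as the paper's, which follows Kuznetsov's original argument for his weight-$0$ mean value theorem. But there are two concrete gaps in your execution plan. First, you propose to \emph{prescribe} the spectral weight by choosing $\phi_x$ so that its $K$-Bessel transform $\check\phi_x$ majorizes the indicator of $(0,x]$. This is the hard direction of the transform: constructing a $\phi_x$ with $\phi_x\geq 0$, $\check\phi_x\geq 0$, and $\check\phi_x\gtrsim \mathbf{1}_{(0,x]}$ uniformly is not something the paper does or that you indicate how to do. The paper sidesteps the inversion problem entirely by starting from the auxiliary (pre-Kuznetsov) identity of Lemma~\ref{lem:pre_kusnetsov}, setting $\sigma=1$, multiplying by the explicit weight $g(t)=|\Gamma(1-\tfrac k2+it)|^2\sh\pi t$ and integrating $t$ over $[0,x]$; this \emph{automatically} produces the spectral weight $h_x(r)$ of \eqref{eq:hxrdef}, whose positivity and indicator-approximation properties \eqref{eq:h-x-r-R}--\eqref{eq:h-x-r-geq-x2} are then verified directly. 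Relatedly, the main term is not ``Weyl's law'': it is the identity term $\Gamma(2\sigma-1)/(4(2\pi n_\nu)^{2\sigma-1})$ of \eqref{eq:proskurin-lemma-3} integrated against $g(t)\sim\pi t^{1-k}$, and the discrepancy between $x^{3/2}$ and $x^{5/2}$ comes from the weight $k=\pm1/2$ in the Gamma factors, not from a change in spectral density.

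Second, you omit the reduction that makes the $n<0$ case accessible at all: the paper proves a single general statement (Theorem~\ref{thm:mve-general}) for \emph{positive} index $n$ in weights $k=\pm1/2$, and obtains the $n<0$ case of Theorem~\ref{thm:mve} by passing to weight $-1/2$ with multiplier $\bar\chi$ via complex conjugation and the coefficient relation \eqref{eq:conj-coeff}. Without this step there is no ``both-negative'' formula to apply. Finally, the technical heart of the error analysis is not the $K$-Bessel transform estimates of Section~\ref{sec:kbessel} (those serve the test functions $\phi_{a,x,T}$ in Sections~\ref{sec:easyproof} and \ref{sec:hardproof}); it is the uniform estimation of the oscillatory triple integrals $I(x,a)$ in Propositions~\ref{prop:mve-int-est-pos} and \ref{prop:mve-int-est-neg}, carried out for all $a=4\pi n_\nu/c$ via repeated integration by parts, the variation bounds \eqref{eq:G-1-est}--\eqref{eq:G-2-est}, and the first/second derivative tests. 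Your plan does not identify this step, and ``summing the Weil bound and optimizing'' will not by itself yield the stated remainders without those uniform integral estimates.
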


We then require uniform estimates for the Bessel transform \eqref{eq:phi-check-intro},
which are made subtle by the oscillatory nature of $K_{ir}(x)$ for small $x$ and by the
transitional range of the $K$-Bessel function.  
In Section \ref{sec:kbessel} we obtain estimates for $\check\phi(r)$ which,
together with Theorem~\ref{thm:mve}, suffice to prove Theorem~\ref{thm:x-1/6}.

To prove Theorem~\ref{sumofks-intro} we require a second estimate for the Fourier coefficients $\rho_j(n)$. 
In \cite{sarnak-tsimerman}
such an estimate is obtained via the simple relationship
\[
	a(n) = \lambda(n) a(1)
\]
satisfied by the coefficients $a(n)$ of a Hecke eigenform with eigenvalue $\lambda(n)$.
This relationship is not available in half-integral weight (the best which one can do is to relate 
coefficients of index $m^2n$ to those of index $n$). 
As a substitute, 
we employ an average version of a theorem of Duke  \cite{duke-half-integral}.

Duke proved that if $a(n)$ is the $n$-th coefficient of a normalized Maass cusp form in $\mathcal S_{1/2}(N,\pfrac{D}\bullet \nu_\theta,r)$
($\nu_\theta$ is the theta-multiplier defined in \S \ref{sec:mult-sys})
then for squarefree  $n$ we have
\[
	|a(n)|\ll_\ep |n|^{-\frac27+\ep}|r|^{\frac52-\frac{\sgn(n)}4}\ch\ptfrac{\pi r}{2}.
\]
This  is not strong enough in the $r$-aspect for our purposes (Baruch and Mao  \cite{baruch-mao} have obtained a bound which is stronger in the $n$-aspect, but even weaker in the $r$-aspect).

Here we modify Duke's argument to obtain an  average version of his result.
We prove in Theorem~\ref{thm:avgduke} that if the $u_j$ as above are
eigenforms of the Hecke operators $T(p^2)$, $p\nmid 6$, then we have
\begin{equation}\label{eq:avgduke}
 	\sum_{0< r_j\leq x} \frac{|\rho_j(n)|^2}{\ch \pi r_j} \ll_\ep |n|^{-\frac47+\ep}x^{5-\frac{\sgn n}2},
\end{equation}
for any collection of  values of $n$ such that $24n-23$ is not divisible by arbitrarily large powers of $5$ or $7$.
In Section \ref{sec:hardproof} we use Theorem \ref{thm:avgduke} and a modification of the argument of \cite{sarnak-tsimerman} to prove Theorem~\ref{sumofks-intro}.
Section \ref{sec:partitionproof} contains the 
 proofs of Theorems \ref{raderror} and \ref{raderror1}.

\begin{notation}
Throughout, $\epsilon$ denotes an arbitrarily small positive number whose value is allowed to change with each occurence.
Implied constants in any equation which contains $\epsilon$  are allowed to depend on $\epsilon$.
For all other parameters, we will use a subscript (e.g. $\ll_{a,b,c}$) to signify  dependencies in the implied constant.
\end{notation}

\addtocontents{toc}{\SkipTocEntry}
\section*{Acknowledgments}
We thank Andrew Booker for providing the computations which appear in Section \ref{sect:shimura}.

%%%%%%%%%%%%%%%%%%%%%%%%%%%%%%%%%%%%%%%%%%%%%%%%%%%%%%%%%%%%%%%%
%%%%%%%%%%%%%%%%%%%%%%%%%%%%%%%%%%%%%%%%%%%%%%%%%%%%%%%%%%%%%%%%
%%%%%%%% BACKGROUND %%%%%%%%%%%%%%%%%%%%%%%%%%%%%%%%%%%%%%%%%%%%
%%%%%%%%%%%%%%%%%%%%%%%%%%%%%%%%%%%%%%%%%%%%%%%%%%%%%%%%%%%%%%%%
%%%%%%%%%%%%%%%%%%%%%%%%%%%%%%%%%%%%%%%%%%%%%%%%%%%%%%%%%%%%%%%%

\section{Background} \label{sec:background}
We give some background on Maass forms and Kloosterman sums of general weight $k$ and multiplier $\nu$
(references for this material are \cite{duke-friedlander-iwaniec,sarnak-additive,proskurin-new} along with the original sources \cite{maass1,maass2,roelcke2,selberg-harmonic,selberg-estimation}).
 In the body of the paper we will work primarily  in integral or half-integral weight
and with the multiplier attached to a Dirichlet character or to  the Dedekind eta-function,
but for much of the background it is no more complicated to describe the general case.
For the benefit of the reader we have attempted to provide a relatively self-contained account by providing details at various points.

\subsection{Eigenfunctions of the Laplacian}
Let $k$ be a real number and let $\H$ denote the upper half-plane.
The group $\SL_2(\R)$ acts on $\tau=x+iy\in\H$ via linear fractional transformations
\[
	\gamma \tau := \frac{a\tau+b}{c\tau+d}, \qquad \gamma=\pMatrix abcd.
\]
For $\gamma\in \SL_2(\R)$ we define the weight $k$ slash operator   by
\[
	f\sl_k \gamma := j(\gamma,\tau)^{-k} f(\gamma \tau), \qquad j(\gamma,\tau) := \frac{c\tau+d}{|c\tau+d|} = e^{i\arg(c\tau+d)},
\]
where we always choose the argument in $(-\pi,\pi]$.

The weight $k$ Laplacian
\begin{equation*}
	\Delta_k := y^2 \bigg( \frac{\partial^2}{\partial x^2} + \frac{\partial^2}{\partial y^2} \bigg) - iky \frac{\partial}{\partial x}
\end{equation*}
 commutes with the weight $k$ slash operator for every $\gamma\in \SL_2(\R)$.
The operator $\Delta_k$ can be written as
\begin{align}
	\Delta_k &= -R_{k-2}L_k - \mfrac k2\Big(\!1-\mfrac k2\Big), \label{eq:delta-rk-lk} \\
	\Delta_k &= -L_{k+2}R_k + \mfrac k2\Big(\!1+\mfrac k2\Big), \label{eq:delta-lk-rk}
\end{align}
where $R_k$ is the Maass raising operator
\begin{equation*}
	R_k := \frac k2 + 2iy \frac{\partial}{\partial \tau} = \frac k2 + iy \left(\frac{\partial}{\partial x} - i\frac{\partial}{\partial y}\right),
\end{equation*}
and $L_k$ is the Maass lowering operator
\begin{equation*}
	L_k := \frac k2 + 2iy \frac{\partial}{\partial \bar \tau} = \frac k2 + iy \left(\frac{\partial}{\partial x} + i\frac{\partial}{\partial y}\right).
\end{equation*}
These raise and lower the weight by $2$: we have

\[
	R_k \big( f\sl_k \gamma \big) = (R_k f) \sl_{k+2} \gamma 
	\quad \text{and} \quad
	L_k \big( f\sl_k \gamma \big) = (L_k f) \sl_{k-2} \gamma.
\]
From \eqref{eq:delta-rk-lk} and \eqref{eq:delta-lk-rk} we obtain the relations
\begin{equation} \label{eq:rk-lk-delta-commute}
	R_k \Delta_k = \Delta_{k+2}R_k \quad \text{and} \quad L_k \Delta_k = \Delta_{k-2} L_k.
\end{equation}

A real analytic function $f:\H\to\C$ is an eigenfunction of $\Delta_k$ with eigenvalue $\lambda$ if
\begin{equation} \label{eq:delta-k-f-lambda-f}
	\Delta_k f + \lambda f = 0.
\end{equation}
If $f$ satisfies \eqref{eq:delta-k-f-lambda-f} then for 
 notational convenience we write
\[
	\lambda = \mfrac 14 + r^2,
\]
and we refer to $r$ as the spectral parameter of $f$.
From \eqref{eq:rk-lk-delta-commute} it follows that if $f$ is an eigenfunction of $\Delta_k$ with eigenvalue $\lambda$ then $R_k f$ (resp. $L_k f$) is an eigenfunction of $\Delta_{k+2}$ (resp. $\Delta_{k-2}$) with eigenvalue $\lambda$.

\subsection{Multiplier systems}\label{sec:mult-sys}

Let $\Gamma = \Gamma_0(N)$ for some $N\geq 1$.
We say that $\nu:\Gamma\to \C^\times$ is a multiplier system of weight $k$ if
\begin{enumerate}[\hspace{1.5em}(i)]\setlength\itemsep{.4em} 
	\item $|\nu|=1$,
	\item $\nu(-I)=e^{-\pi i k}$, and
	\item $\nu(\gamma_1 \gamma_2) \, j(\gamma_1\gamma_2,\tau)^k=\nu(\gamma_1)\nu(\gamma_2) \, j(\gamma_2,\tau)^k j(\gamma_1,\gamma_2\tau)^k$ for all $\gamma_1,\gamma_2\in \Gamma$.
\end{enumerate}
If $\nu$ is a multiplier system of weight $k$, then it is also a multiplier system of weight $k'$ for any $k'\equiv k\pmod 2$, and the conjugate $\bar\nu$ is a multiplier system of weight $-k$.
If $\nu_1$ and $\nu_2$ are multiplier systems of weights $k_1$ and $k_2$ for the same group $\Gamma$, then their product $\nu_1\nu_2$ is a multiplier system of weight $k_1+k_2$ for $\Gamma$.

When $k$ is an integer, a multiplier system of weight $k$ for $\Gamma$ is simply a character of $\Gamma$ which satisfies $\nu(-I)=e^{-\pi i k}$.
If $\psi$ is an even (resp. odd) Dirichlet character modulo $N$, we can extend $\psi$ to a multiplier system of even (resp. odd) integral weight for $\Gamma$ by setting $\psi(\gamma):=\psi(d)$ for $\gamma=\pmatrix abcd$.

Given a cusp $\a$ let $\Gamma_\a:=\{\gamma\in\Gamma:\gamma\a=\a\}$ denote its stabilizer in $\Gamma$, and let $\sigma_\a$ denote the unique
(up to translation by $\pm\pmatrix 1*01$ on the right) matrix in $\SL_2(\R)$ satisfying $\sigma_\a \infty = \a$ and $\sigma_\a^{-1}\Gamma_\a\sigma_\a=\Gamma_\infty$.
Define $\alpha_{\nu,\a} \in [0,1)$ by the condition
\[
	\nu\left(\!\sigma_\a\pMatrix 1101 \sigma_\a^{-1}\!\right) = e(-\alpha_{\nu,\a}).
\]
We say that $\a$ is singular with respect to $\nu$ if $\nu$ is trivial on $\Gamma_\a$, that is, if $\alpha_{\nu,\a}=0$.
For convenience we write $\alpha_{\nu}$ for $\alpha_{\nu,\a}$ when $\a=\infty$.
Note that if $\alpha_\nu>0$ then
\begin{equation} \label{eq:alpha-nu-bar}
	\alpha_{\bar\nu} = 1-\alpha_\nu.
\end{equation}

We are primarily interested in the multiplier system $\chi$ of weight $1/2$ on $\SL_2(\Z)$ (and its conjugate $\bar\chi$ of weight $-1/2$) where
\begin{equation} \label{eq:eta-mult-def}
	\eta(\gamma \tau) = \chi(\gamma)\sqrt{c\tau+d}\,\eta(\tau), \qquad \gamma=\pMatrix abcd \in \SL_2(\Z),
\end{equation}
and $\eta$ is the Dedekind eta function defined in \eqref{eq:eta-def}.
From condition (ii) above we have $\chi(-I)=-i$.
From  the definition of $\eta(\tau)$ we have $\chi(\pmatrix 1101)=e(1/24)$;
it follows that
\[
	\alpha_{\chi} = \mfrac{23}{24} \quad \text{and} \quad \alpha_{\bar\chi} = \mfrac{1}{24},
\]
so  the unique cusp $\infty$ of $\SL_2(\Z)$ is singular neither  with respect to $\chi$ nor with respect to  $\bar\chi$.

For $\gamma=\pmatrix abcd$ with $c>0$ there are two useful formulas for $\chi$.
Rademacher (see, for instance, (74.11), (74.12), and (71.21) of \cite{rademacher-book}) showed that 
\begin{equation} \label{eq:chi-dedekind-sum}
	\chi(\gamma) = \sqrt{-i} \, e^{-\pi i s(d,c)} \, e\pfrac{a+d}{24c},
\end{equation}
where $s(d,c)$ is the Dedekind sum
\begin{equation}  \label{eq:ded-sum-def}
	s(d,c) = \sum_{r=1}^{c-1} \mfrac rc \, \left(\mfrac{dr}{c} - \left\lfloor\! \mfrac{dr}{c}\!\right\rfloor - \mfrac 12\right).	
\end{equation}
On the other hand, Petersson (see, for instance, \S 4.1 of \cite{knopp}) showed that 
\begin{equation} \label{eq:chi-kronecker-symbol}
	\chi(\gamma) = 
	\begin{dcases}
		\Big( \mfrac dc \Big) \, e\Big(\mfrac 1{24} \left[(a+d)c-bd(c^2-1)-3c\right]\Big) & \text{ if $c$ is odd}, \\
		\Big(\mfrac cd \Big) \, e\Big(\mfrac 1{24} \left[(a+d)c-bd(c^2-1)+3d-3-3cd\right]\Big) & \text{ if $c$ is even.}
	\end{dcases}
\end{equation}

We will also use the multiplier system $\nu_\theta$ of weight $1/2$ on $\Gamma_0(4N)$ defined by
\[
	\theta(\gamma\tau) = \nu_\theta(\gamma) \sqrt{c\tau+d} \, \theta(\tau), \qquad \gamma\in \Gamma_0(4N),
\]
where
\[
	\theta(\tau) := \sum_{n\in\Z} e(n^2 \tau).
\]
Explicitly, we have
\begin{equation} \label{eq:def-theta-mult}
	\nu_\theta \ppMatrix abcd = \pfrac cd \varepsilon_d^{-1},
\end{equation}
where $\ptfrac\cdot\cdot$ is the extension of the Kronecker symbol given e.g. in \cite{shimura} and
\[
	\varepsilon_d = \pmfrac{-1}d^{\frac 12} =
	\begin{cases}
		1 & \text{ if }d\equiv 1\pmod 4, \\
		i & \text{ if }d\equiv 3\pmod 4.
	\end{cases}
\]

Let $\Gamma_0(N,M)$ denote the subgroup of $\Gamma_0(N)$ consisting of matrices whose upper-right entry is divisible by $M$.
Equation \eqref{eq:chi-kronecker-symbol} shows that
\[
	\chi(\gamma) = \pmfrac cd e\pmfrac{d-1}8 \qquad \text{for } \gamma=\pMatrix abcd\in \Gamma_0(24,24).
\]
This implies that
\begin{equation} \label{eq:576-theta}
	\chi \ppMatrix a{24b}{c/24}d = \pmfrac{12}d \pmfrac cd \varepsilon_d^{-1} \qquad \text{for } \pMatrix abcd\in \Gamma_0(576),
\end{equation}
which allows one to relate automorphic functions with multiplier $\chi$ on $\SL_2(\Z)$ to those with multiplier $\pfrac{12}{\bullet}\nu_\theta$ on $\Gamma_0(576)$.

\subsection{Maass forms} \label{sec:maass-forms}

A function $f:\H\to\C$ is automorphic of weight $k$ and multiplier $\nu$ for $\Gamma=\Gamma_0(N)$ if
\begin{equation} \label{eq:transformation-law}
	f \sl_k \gamma = \nu(\gamma) f \qquad \text{ for all }\gamma\in\Gamma.
\end{equation}
Let $\mathcal{A}_k(N,\nu)$ denote the space of all such functions.
A smooth, automorphic function which is also an eigenfunction of $\Delta_k$ and which satisfies the growth condition
\begin{equation} \label{eq:growth-condition}
	f(\tau) \ll y^\sigma + y^{1-\sigma} \qquad \text{for some $\sigma$, for all $\tau\in \H$} 
\end{equation}
is called a Maass form.
We let $\mathcal{A}_k(N,\nu,r)$ denote the vector space of Maass forms with spectral parameter $r$.
From the preceeding discussion, the raising (resp. lowering) operator maps $\mathcal{A}_k(N,\nu,r)$ into $\mathcal{A}_{k+2}(N,\nu,r)$ (resp. $\mathcal{A}_{k-2}(N,\nu,r)$).
Also, complex conjugation $f\to \bar f$ gives a bijection $\mathcal{A}_k(N,\nu,r) \longleftrightarrow \mathcal{A}_{-k}(N,\bar\nu,r)$.

If $f \in \mathcal{A}_k(n,\nu,r)$, then it satisfies $f(\tau+1)=e(-\alpha_\nu)f(\tau)$.
For $n\in\Z$ define
\begin{equation}\label{eq:n_nu_def}
n_\nu := n-\alpha_\nu.
\end{equation}
Then $f$ has a Fourier expansion of the form
\begin{equation} \label{eq:fourier-exp-0}
	f(\tau) = \sum_{n=-\infty}^{\infty} a(n,y) e(n_\nu x).
	\end{equation}
By imposing  condition \eqref{eq:delta-k-f-lambda-f} on the Fourier expansion  we find that for $n_\nu\neq0$, the function $a(n,y)$ satisfies
\[
	\frac{a''(n,y)}{(4\pi |n_\nu|y)^2} + \left( \frac{1/4+r^2}{(4\pi|n_\nu|y)^2} + \frac{k\sgn(n_\nu)}{8\pi|n_\nu|y} - \frac14 \right) a(n,y) = 0.
\]
The Whittaker functions $M_{\kappa,\mu}(y)$ and $W_{\kappa,\mu}(y)$
are the two linearly independent solutions to 
Whittaker's equation \cite[\S 13.14]{nist}
\[
	W'' + \left(\frac{1/4-\mu^2}{y^2} + \frac{\kappa}{y} - \frac 14\right)W = 0.
\]
As $y\to\infty$, the former solution grows exponentially, while the latter decays exponentially.
Since $f$ satisfies the growth condition \eqref{eq:growth-condition}, we must have 
\begin{equation} \label{eq:a-n-y-rho-n}
	a(n,y) = a(n) W_{\frac k2\sgn n_\nu, ir}(4\pi |n_\nu|y)
\end{equation}
for some constant $a(n)$.
For $n_\nu=0$ we have
\[
	a(n,y) = a(0)y^{\frac 12+ir} + a'(0) y^{\frac 12-ir}.
\]
We call the numbers $a(n)$ (and $a'(0)$) the Fourier coefficients of $f$.
For $\re(\mu-\kappa+1/2)>0$ we have the integral representation \cite[(13.14.3), (13.4.4)]{nist}
\begin{equation} \label{eq:W-int-rep}
	W_{\kappa,\mu}(y) = \frac{e^{-y/2} \, y^{\mu+1/2}}{\Gamma\big(\mu-\kappa+\frac12\big)} \int_0^\infty e^{-yt} \, t^{\mu-\kappa-\frac12} (1+t)^{\mu+\kappa-\frac12} \, dt.
\end{equation}
When $\kappa=0$ we have \cite[(13.18.9)]{nist}
\[
	W_{0,\mu}(y) = \frac {\sqrt y}{\sqrt \pi} \, K_{\mu}(y/2),
\]
where $K_{\mu}$ is the $K$-Bessel function. 
For Maass forms of weight $0$, many authors normalize the Fourier coefficients in \eqref{eq:a-n-y-rho-n} so that $a(n)$ is the coefficient of 
$\sqrt y \, K_{ir}(2\pi|n_\nu|y)$, which
has the effect of multiplying $a(n)$ by $2|n_\nu|^{1/2}$.

By \cite[(13.15.26)]{nist}, we have
\[\begin{aligned}
	y \frac{d}{dy} W_{\kappa,\mu}(y) &= \Big(\mfrac y2-\kappa\Big) W_{\kappa,\mu}(y) - W_{\kappa+1,\mu}(y)\\
	&=\(\kappa-\mfrac y2\)W_{\kappa,\mu}(y)+\(\kappa+\mu-\mfrac12\)\(\kappa-\mu-\mfrac12\)W_{\kappa-1,\mu}(y).
\end{aligned}\]
This, together with \cite[(13.15.11)]{nist} shows that
\begin{multline*}
	L_k \Big(W_{\frac k2\sgn(n_\nu),ir}(4\pi |n_\nu|y)e(n_\nu x)\Big) \\
	= W_{\frac {k-2}2\sgn(n_\nu),ir}(4\pi |n_\nu|y)e(n_\nu x) \times
	\begin{cases}
		-\Big(r^2+\mfrac {(k-1)^2}4\Big) & \text{ if }n_\nu>0, \\
		1 & \text{ if }n_\nu<0,
	\end{cases}
\end{multline*}
from which it follows that if $f_L:=L_k f$ has coefficients $a_L(n)$, then
\begin{equation}\label{eq:lower_coeff}
	a_L(n) = 
	\begin{cases}
		-\(r^2+\mfrac{(k-1)^2}{4}\) a(n) & \text{ if }n_\nu>0, \\
		a(n) & \text{ if }n_\nu<0.
	\end{cases}
\end{equation}
Similarly, for the coefficients $a_R(n)$ of $f_R:=R_k f$, 
 we have
\begin{equation}\label{eq:raise-coeff}
	a_R(n) = 
	\begin{cases}
		-a(n) & \text{ if }n_\nu>0, \\
		\Big(r^2+\mfrac {(k+1)^2}4\Big) a(n) & \text{ if }n_\nu<0.
	\end{cases}
\end{equation}

From the integral representation \eqref{eq:W-int-rep} we find that $\bar{W_{\kappa,\mu}(y)}=W_{\kappa,\bar\mu}(y)$ when $y,\kappa\in\R$,
so $W_{\kappa,\mu}(y)$ is real when $\mu \in \R$ or (by \cite[(13.14.31)]{nist}) when $\mu$ is purely imaginary.
Suppose that $f$ has multiplier $\nu$ and that  $\alpha_\nu\neq 0$.
Then by \eqref{eq:alpha-nu-bar} we have $-n_\nu=(1-n)_{\overline\nu}$,
so 
the coefficients $a_c(n)$ of $f_c:= \bar f$ satisfy
\begin{equation} \label{eq:conj-coeff}
	a_c(n) = \bar {a(1-n)}.
\end{equation}

\subsection{The spectrum of $\Delta_k$}

Let $\mathcal{L}_k(N,\nu)$ 
denote the $L^2$-space of automorphic functions with respect to the Petersson inner product
\begin{equation}\label{eq:inner_prod_def}
	\langle f,g \rangle := \int_{\Gamma\backslash\H} f(\tau) \bar{g(\tau)} \, d\mu, \qquad d\mu := \mfrac{dx\,dy}{y^2}.
\end{equation}
Let $\mathcal{B}_k(N,\nu)$ denote the subspace of $\mathcal{L}_k(N,\nu)$ consisting of smooth functions $f$ such that $f$ and $\Delta_k f$ are bounded on $\H$.
By \eqref{eq:delta-rk-lk}, \eqref{eq:delta-lk-rk}, and Green's formula, we have the relations (see also \S 3 of \cite{roelcke1})
\begin{align}
	\langle f, -\Delta_k g \rangle 
	&= \langle R_k f, R_k g \rangle - \mfrac k2\Big(1+\mfrac k2\Big) \langle f,g \rangle \label{eq:f-delta-k-g-R-k} \\
	&= \langle L_k f, L_k g \rangle + \mfrac k2\Big(1-\mfrac k2\Big) \langle f,g \rangle \label{eq:f-delta-k-g-L-k}
\end{align}
for any $f,g\in \mathcal{B}_k(N,\nu)$.
It follows that $\Delta_k$ is symmetric and that 
\begin{equation} \label{eq:delta-k-bounded-below}
	\langle f, -\Delta_k f \rangle \geq \lambda_0(k) \langle f,f \rangle, \qquad \lambda_0(k) := \mfrac {|k|}2 \Big(1-\mfrac{|k|}2\Big).
\end{equation}
By Friedrichs' extension theorem (see e.g. \cite[Appendix A.1]{iwaniec-spectral}) the operator $\Delta_k$ has a unique self-adjoint extension to  $\mathcal{L}_k(N,\nu)$
(denoted also  by by $\Delta_k$).

From  \eqref{eq:delta-k-bounded-below}  we see that the spectrum of $\Delta_k$ is real and contained in $[\lambda_0(k),\infty)$.
The holomorphic forms correspond to
 the bottom of the spectrum.
To be precise, if $f_0\in \mathcal{L}_k(N,\nu)$ has eigenvalue $\lambda_0(k)$ then the equations above show that
\[
	f_0(\tau) =
	\begin{cases}
		y^{\frac k2} \, F(\tau) & \text{ if }k\geq 0, \\
		y^{-\frac k2} \, \bar F(\tau) & \text{ if }k<0,
	\end{cases}
\]
where $F:\H\to\C$ is a   holomorphic cusp form of weight $k$;  i.e. $F$ vanishes at the cusps and  satisfies
\[
	F(\gamma\tau) = \nu(\gamma)(c\tau+d)^k F(\tau) \qquad \text{for all }\gamma=\pmatrix abcd\in \Gamma.
\]
In particular, $\lambda_0(k)$ is not an eigenvalue when the space of cusp forms is trivial.
Note also that if $a_0(n)$ are the coefficients of $f_0$, then
\begin{align}
	a_0(n) = 0 \quad \text{ when } \quad \sgn(n_\nu) = -\sgn(k).
\end{align}

The spectrum of $\Delta_k$ on $\mathcal{L}_k(N,\nu)$ consists of an absolutely continuous spectrum of multiplicity equal to the number of singular cusps, and a discrete spectrum of finite multiplicity.
The Eisenstein series, of which there is one for each singular cusp $\a$, give rise to the continuous spectrum, which  is bounded below by $1/4$.
When $(k,\nu)=(1/2,\chi)$ or $(-1/2,\bar\chi)$ and $\Gamma=\SL_2(\Z)$ there are no Eisenstein series since the only cusp is not  singular.

Let $\mathcal{S}_k(N,\nu)$ denote the orthogonal complement in $\mathcal{L}_k(N,\nu)$ of the space generated by Eisenstein series.
The spectrum of $\Delta_k$ on $\mathcal{S}_k(N,\nu)$ is countable and of finite multiplicity with no limit points except $\infty$.
The exceptional eigenvalues are those which lie in  $(\lambda_0(k),1/4)$.
Let $\lambda_1(N,\nu,k)$ denote the smallest eigenvalue larger than $\lambda_0(k)$ in the spectrum of $\Delta_k$ on $\mathcal{S}_k(N,\nu)$.
Selberg's eigenvalue conjecture states that
\[
	\lambda_1(N,\bm 1,0)\geq \mfrac14,
\]
i.e., there are no exceptional eigenvalues.
Selberg \cite{selberg-estimation} showed that 
\[
	\lambda_1(N,\bm 1,0) \geq \mfrac{3}{16}.
\]
The best progress toward this conjecture was made by Kim and Sarnak \cite[Appendix~2]{kim-sarnak}, who proved that
\[
	\lambda_1(N,\bm 1,0) \geq \mfrac 14 - \pmfrac 7{64}^2 = \mfrac{975}{4096},
\]
as a  consequence of Langlands functoriality for the symmetric fourth power of an automorphic representation on $\GL_2$.

\subsection{Maass cusp forms}
The subspace $\mathcal{S}_k(N,\nu)$ consists of functions $f$ whose zeroth Fourier coefficient at each singular cusp vanishes.
Eigenfunctions of $\Delta_k$ in $\mathcal{S}_k(N,\nu)$ are called Maass cusp forms.
Let $\{f_j\}$ be an orthonormal 
basis of $\mathcal{S}_k(N,\nu)$, and for each $j$ let
\[
	\lambda_j = \mfrac 14+r_j^2
\]
denote the  Laplace eigenvalue and $\{a_j(n)\}$ the Fourier coefficients.
Then we have the Fourier expansion
\begin{equation} \label{eq:u-j-Fourier-exp}
	f_j(\tau) = \sum_{n_\nu\neq 0} a_j(n) W_{\frac k2\sgn n_\nu,ir_j}(4\pi|n_\nu|y) e(n_\nu x).
\end{equation}
Suppose that $f$ has spectral parameter $r$.
Then by \eqref{eq:f-delta-k-g-L-k} we have
\begin{align}
	\lVert L_k f \rVert^2 &= \Big(r^2 + \mfrac{(k-1)^2}{4}\Big) \lVert f \rVert^2. \label{eq:L-k-norm}
\end{align}

Weyl's law describes the distribution of the spectral parameters $r_j$.
Theorem~2.28 of \cite{hejhal-stf2} shows that
\begin{equation}
	\sum_{0\leq r_j\leq T} 1 - \mfrac{1}{4\pi} \int_{-T}^T \mfrac{\varphi'}{\varphi}\Big(\mfrac 12+it\Big) \, dt 
	= \mfrac{\operatorname{vol}(\Gamma\backslash\H)}{4\pi} \, T^2 - \mfrac{K_0}{\pi} \, T\log T + O(T),
\end{equation}
where $\varphi(s)$ and $K_0$ are the determinant (see \cite[p. 298]{hejhal-stf2}) and dimension (see \cite[p. 281]{hejhal-stf2}), respectively, of the scattering matrix $\Phi(s)$ whose entries are given in terms of constant terms of Eisenstein series.
In particular we have
\begin{equation}\label{eq:weyl_law}
	\sum_{0\leq r_j\leq T} 1 = \mfrac{1}{12} \, T^2 + O(T) \qquad \text{for }(k,\nu)=\Big(\mfrac 12, \chi\Big).
\end{equation}

We will be working mostly in the space $\mathcal S_\frac12(1,\chi)$.  
Throughout the paper we will
denote an orthonormal basis for this space by $\{u_j\}$,
and we will write the Fourier expansions as
\begin{gather} \label{eq:uj-fix}
	u_j(\tau) = \sum_{n\neq 0} \rho_j(n) W_{\frac{\sgn n}4,ir_j}(4\pi |\tilde n| y) e(\tilde n x), \qquad \tilde n:=n-\mfrac{23}{24}.
\end{gather}

\subsection{Hecke operators}
We introduce Hecke operators on the spaces $\mathcal S_0(N,\bm 1)$ and $\mathcal S_\frac12(1,\chi)$.
For each prime $p\nmid N$ the Hecke operator $T_p$ acts on a Maass form $f\in \mathcal{A}_0(N,\bm 1)$ as
\[
	T_p f(\tau) = p^{-\frac 12} \Big( \sum_{j\bmod p} f\pmfrac{\tau+j}{p} + f(p\tau) \Big).
\]
The $T_p$ commute with each other and with the Laplacian $\Delta_0$,
so $T_p$ acts on each $\mathcal{S}_0(N,\bm 1, r)$.
The action of $T_p$ on the Fourier expansion of  $f\in\mathcal{S}_0(N,\bm 1, r)$ is given by
\begin{equation} \label{eq:hecke-wt-0}
	T_p f(\tau) = \sum_{n\neq 0} \Big( p^{\frac 12} a(pn) + p^{-\frac 12} a(n/p) \Big) W_{0,ir}(4\pi|n|y) e(nx).
\end{equation}

Hecke operators on half-integral weight spaces have been defined for the theta multiplier (see e.g. \cite{katok-sarnak}).
We define Hecke operators $T_{p^2}$ for $p\geq 5$ on $\mathcal A_{1/2}(1,\chi)$ directly by
\[
	T_{p^2} f = \frac 1p \left[ \sum_{b\bmod p^2} e\pmfrac {-b}{24} f \sl_{\frac 12} \pMatrix{\frac1p}{\frac bp}{0}{p} + e\pmfrac{p-1}8 \sum_{h=1}^{p-1} e\pmfrac{-hp}{24} \pmfrac hp f \sl_{\frac 12} \pMatrix{1}{\frac hp}{0}{1} + f \sl_{\frac 12} \pMatrix{p}{0}{0}{\frac 1p} \right].
\]
One can show that
\begin{equation}
	(T_{p^2}f) \sl_{\frac 12} \pMatrix 1101 = e\pmfrac 1{24} f \qquad \text{and} \qquad (T_{p^2} f) \sl_{\frac 12} \pMatrix 0{-1}10 = \sqrt{-i} \, f
\end{equation}
and that $T_{p^2}$ commutes with $\Delta_{\frac 12}$,
so $T_{p^2}$ is an endomorphism of $\mathcal{S}_{\frac 12}(1,\chi,r)$.
To describe the action of $T_{p^2}$ on Fourier expansions, it is convenient to write $f\in \mathcal{S}_{\frac 12}(1,\chi,r)$ in the form
\[
	f(\tau) = \sum_{n\neq 0} a(n) W_{\frac{\sgn(n)}4,ir}\pmfrac{\pi |n|y}{6} e\pmfrac{nx}{24}.
\]
Then a standard computation gives
\begin{equation}\label{eq:hecke_action}
	T_{p^2} f(\tau) = \sum_{n\neq 0} \left( p \, a(p^2 n) + p^{-\frac 12} \ptfrac{12n}p a(n) + p^{-1} a(n/p^2) \right) W_{\frac{\sgn n }4,ir}\pmfrac{\pi |n|y}{6} e\pmfrac{nx}{24}.
\end{equation}

\subsection{Further operators}

The reflection operator
\[
	f(x+iy) \mapsto f(-x+iy)
\]
defines an involution on $\mathcal S_0(N,\bm 1)$ which commutes with $\Delta_0$ and the Hecke operators.
We say that a Maass cusp form is even if it is fixed by reflection; such a form has Fourier expansion
\[
	f(\tau) =  \sum_{n\neq 0} a(n) W_{0,ir} (4\pi|n|y) \cos(2\pi n x).
\]
We also have the operator
\[
	f(\tau) \mapsto f(d\tau)
\]
which in general raises the level   and   changes the multiplier.
We will only need this operator in the case $d=24$ on $\mathcal S_{1/2}(1,\chi)$, where \eqref{eq:576-theta} shows that
\begin{equation}
	f(\tau) \mapsto f(24\tau) : \mathcal S_{\frac 12}(1,\chi,r) \to \mathcal S_{\frac 12}\Big(576,\pmfrac{12}\bullet \nu_\theta,r\Big).
\end{equation}

\subsection{Generalized Kloosterman sums} \label{sec:gen-kloo-sums}
Our main objects of study are the generalized Kloosterman sums given by 
\begin{equation}\label{eq:kloos_def}
	S(m,n,c,\nu) := \sum_{\substack{0\leq a,d<c \\ \gamma=\pmatrix abcd\in \Gamma}} \bar\nu(\gamma) e\pfrac{m_\nu a+n_\nu d}{c}
	=\sum_{\substack{\gamma\in \Gamma_\infty \backslash \Gamma/ \Gamma_\infty\\ \gamma=\pmatrix abcd }}\bar\nu(\gamma) e\pfrac{m_\nu a+n_\nu d}{c}
\end{equation}
(to see that these are equal, one checks that each summand is invariant under the substitutions $\gamma\to \pmatrix 1101\gamma$
and $\gamma\to \gamma\pmatrix 1101$ using   axiom (iii)  for multiplier systems and \eqref{eq:n_nu_def}).
When $\Gamma=\SL_2(\Z)$ and $\nu=\bm 1$ we recover the ordinary Kloosterman sums
\[
	S(m,n,c) = S(m,n,c,\bm 1) = \sum_{\substack{d\bmod c \\ (d,c)=1}} e\pfrac{m\bar d+nd}{c},
\]
where $d\bar d\equiv 1\pmod c$.
For the eta-multiplier, \eqref{eq:chi-dedekind-sum} gives
\begin{equation*}
	S(m,n,c,\chi) = \sqrt i \, \sum_{\substack{d\bmod c \\ (d,c)=1}} e^{\pi i s(d,c)} e\pfrac{(m-1)\bar d+(n-1)d}{c},
\end{equation*}
so the sums $A_c(n)$ appearing in Rademacher's formula are given by
\begin{equation} \label{eq:A-c-n-S}
	A_c(n) = \sqrt{-i} \, S(1,1-n,c,\chi).
\end{equation}
We recall the bounds \eqref{eq:super_weil} and \eqref{eq:weil-ordinary}  from the introduction.
A general Weil-type bound for $S(m,n,c,\chi)$ which holds for all $m,n$ follows from  work of the second author, and is given in the following proposition.

\begin{proposition}\label{prop:weil-bound}
Let $m,n\in \Z$ and write $24n-23=N$ and $24m-23=v^2 M$, with $M$ squarefree.
For $c\geq 1$ we have
\begin{equation} \label{eq:weil-bound}
	|S(m,n,c,\chi)| \ll \tau\big((v,c)\big)\tau(c) \(MN,c\)^{\frac 12} \sqrt c.
\end{equation}
\end{proposition}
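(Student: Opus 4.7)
The plan is to reduce $S(m,n,c,\chi)$ to ordinary Kloosterman sums---to which Weil's bound \eqref{eq:weil-ordinary} applies---by exploiting the factorization $24m-23 = v^2 M$ of the first index.

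First, I would use the explicit formula \eqref{eq:chi-kronecker-symbol} (which treats odd and even $c$ separately) for $\bar\chi(\gamma)$ to rewrite \eqref{eq:kloos_def} as a Gauss--Sali\'e type sum of the form
\[
	S(m,n,c,\chi) = C(c)\sum_{\substack{d \bmod c \\ (d,c)=1}} \varepsilon(d,c)\, e\left(\frac{\alpha \bar d + \beta d}{c}\right),
\]
where $C(c)$ is an explicit scalar and $\varepsilon(d,c)$ is the Kronecker-symbol weight coming from the Dedekind-sum factor in $\chi$, while the arguments $\alpha,\beta$ are determined by $24m-23 = v^2 M$ and $24n-23 = N$. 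This essentially identifies $S(m,n,c,\chi)$ with a Kloosterman sum for the theta multiplier $\nu_\theta$ (or a close relative, via \eqref{eq:576-theta}) at integral indices $v^2 M$ and $N$.

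Second, I would apply the classical Sali\'e/Iwaniec/Kohnen factorization: when the first index of a half-integral weight Kloosterman sum is $v^2 M$ with $M$ squarefree, the inner Gauss sum can be evaluated, yielding a decomposition
\[
	\sum_{u \mid (v,c)} \beta_u\, S\bigl(\pm M_u, N,\, c/u\bigr), \qquad |\beta_u| \ll \sqrt u,
\]
into ordinary integral-weight Kloosterman sums of moduli $c/u$. Applying the Weil bound \eqref{eq:weil-ordinary} to each summand and noting that $u\cdot (M_u N, c/u) \leq (MN, c)$ after chasing divisibilities gives a per-term contribution
\[
	\ll \sqrt u \cdot \tau(c)\,(M_u N, c/u)^{1/2}\,\sqrt{c/u}\;\ll\;\tau(c)\,(MN,c)^{1/2}\,\sqrt c,
\]
and summing over the $\tau((v,c))$ divisors $u\mid(v,c)$ produces \eqref{eq:weil-bound}.

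The main obstacle is establishing the factorization cleanly for the eta multiplier. The difficulty concentrates at the primes $2$ and $3$: the standard theta-multiplier factorization is formulated on $\Gamma_0(4N)$, whereas the eta multiplier only matches (a Dirichlet twist of) the theta multiplier after passing to level $576$ via \eqref{eq:576-theta}. A careful local analysis at the primes dividing $24$, using both cases of \eqref{eq:chi-kronecker-symbol}, is therefore required to isolate the square part $v^2$ inside the Gauss sum and to track the resulting quadratic-character factors; one must also verify that the divisibility arithmetic delivers the clean gcd $(MN,c)^{1/2}$ rather than the weaker $(v^2 MN, c)^{1/2}$ produced by a naive bound.
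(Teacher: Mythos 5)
Your first step---opening $\chi$ via \eqref{eq:chi-kronecker-symbol} and recognizing a Gauss--Sali\'e type sum, with the real difficulty concentrated at the primes $2$ and $3$---is in the right spirit. But the pivotal second step is not a real theorem: a half-integral-weight Kloosterman sum with a quadratic-character weight does \emph{not} decompose as a short linear combination $\sum_{u\mid(v,c)}\beta_u\,S(\pm M_u,N,c/u)$ of \emph{ordinary} Kloosterman sums. The classical phenomenon is the opposite one: because the weight is quadratic, the sum \emph{evaluates in closed form}. Already for $v=1$ and $c=p$ a prime not dividing $6MN$ your claim would read $S(m,n,p,\chi)=\beta_1\,S(\pm M,N,p)$ with $|\beta_1|\ll 1$; but the left-hand side is essentially a Sali\'e sum, equal to $\varepsilon\sqrt p\sum_{x^2\equiv MN\,(p)}(\pm)e(\alpha x/p)$ --- in particular it vanishes whenever $MN$ is a quadratic non-residue mod $p$ --- while the right-hand side is a genuine Kloosterman sum whose normalized angles obey Sato--Tate and essentially never vanish. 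No identity of the proposed shape can hold, so the appeal to Weil's bound \eqref{eq:weil-ordinary} has nothing to attach to.

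What the paper actually does is cite Theorem 3 and equation (1.20) of the second author's paper \cite{andersen-singular}, which carries out exactly the Gauss-sum analysis you describe (including the local work at $2$ and $3$) but lands on the explicit evaluation: $S(m,n,c,\chi)$ is a finite sum, indexed by $u\mid(v,c)$, of normalized quadratic Gauss sums supported on the solutions of $x^2\equiv MN\pmod{24c}$. Bounding every term trivially by its modulus gives $|S(m,n,c,\chi)|\le\frac{4}{\sqrt{3}}\,\tau((v,c))\,c^{1/2}\,R(MN,24c)$ with $R(y,\ell)=\#\{x\bmod\ell:x^2\equiv y\}$, and the elementary multiplicative estimate $R(y,\ell)\le 2\cdot 2^{\omega(\ell)}(y,\ell)^{1/2}\le 2\tau(\ell)(y,\ell)^{1/2}$ then yields \eqref{eq:weil-bound}. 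So the factor $(MN,c)^{1/2}$ comes from counting square roots, not from the Riemann Hypothesis for curves: given the evaluation, the bound is elementary, and your argument would need to be rebuilt around that evaluation rather than around a reduction to integral-weight Kloosterman sums.
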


\begin{proof}
We apply Theorem 3 of \cite{andersen-singular}.
In the notation of that paper, we have $S(m,n,c,\chi)=\sqrt{i} \, K(m-1,n-1;c)$.
Estimating the right-hand side of \cite[(1.20)]{andersen-singular} trivially, we find that
\begin{equation} \label{S-leq-sum-R}
	|S(m,n,c,\chi)| \leq \mfrac{4}{\sqrt 3} \, \tau\big((v,c)\big) c^{\frac 12} R\big(MN,24c\big),
\end{equation}
where
\[
	R(y,\ell) = \# \{x\bmod \ell : x^2\equiv y\bmod \ell\}.
\]
The function $R(y,\ell)$ is multiplicative in $\ell$.
If $p$ is prime and  $p\nmid y$, then
\begin{equation} \label{eq:R-y-N-est-rel-prime}
	R(y,p^e) \leq 
	\begin{cases}
		2 & \text{ if $p$ is odd}, \\
		4 & \text{ if $p=2$}.
	\end{cases}
\end{equation}
If $y=p^dy'$ with $p\nmid y'$ then $R(y,p^e)\leq p^{d/2} R(y',p^{e-d})$.
It follows that
\begin{equation} \label{eq:R-y-N-est}
	R(y,\ell) \leq 2\cdot 2^{\omega(\ell)} (y,\ell)^{\frac 12},
\end{equation}
where $\omega(\ell)$ is the number of primes dividing $\ell$.
By \eqref{S-leq-sum-R}, \eqref{eq:R-y-N-est-rel-prime}, \eqref{eq:R-y-N-est}, and the fact that $2^{\omega(\ell)}\leq \tau(\ell)$, we obtain \eqref{eq:weil-bound}.
\end{proof}

One useful consequence of Proposition~\ref{prop:weil-bound}  is the ``trivial'' estimate
\begin{equation}\label{eq:trivial_est}
\sum_{c\leq X}\frac{|S(m,n,c,\chi)|}c\ll X^{\frac12} \log X \, |mn|^\ep,
\end{equation}
which follows from a standard argument involving the mean value bound for $\tau(c)$.

%%%%%%%%%%%%%%%%%%%%%%%%%%%%%%%%%%%%%%%%%%%%%%%%%%%%%%%%%%%%%%%%
%%%%%%%%%%%%%%%%%%%%%%%%%%%%%%%%%%%%%%%%%%%%%%%%%%%%%%%%%%%%%%%%
%%%%%%%% MEAN VALUE ESTIMATES %%%%%%%%%%%%%%%%%%%%%%%%%%%%%%%%%%
%%%%%%%%%%%%%%%%%%%%%%%%%%%%%%%%%%%%%%%%%%%%%%%%%%%%%%%%%%%%%%%%
%%%%%%%%%%%%%%%%%%%%%%%%%%%%%%%%%%%%%%%%%%%%%%%%%%%%%%%%%%%%%%%%

\section{A mean value estimate for coefficients of Maass cusp forms}\label{sec:MVE}

In this section we prove a general version of Theorem \ref{thm:mve} which applies to the Fourier coefficients of weight $\pm 1/2$ Maass cusp forms with multiplier $\nu$ for $\Gamma_0(N)$, where $\nu$ satisfies the following assumptions:
\begin{enumerate}
	\item There exists $\beta=\beta_\nu \in (1/2,1)$ such that
	\begin{align} \label{eq:mve-weil-assumption}
		\sum_{c>0} \frac{|S(n,n,c,\nu)|}{c^{1+\beta}} \ll_{\nu} n^{\epsilon}.
	\end{align}
	\item None of the cusps of $\Gamma_0(N)$ is singular for $\nu$.
\end{enumerate}
By \eqref{eq:trivial_est} the multipliers $\chi$ and $\bar\chi$ on $\SL_2(\Z)$ satisfy these assumptions with $\beta=1/2+\epsilon$.

Fix an orthonormal basis of cusp forms $\{f_j\}$ for $\mathcal{S}_k(\Gamma_0(N),\nu)$.
For each $j$, let $a_j(n)$ denote the $n$-th Fourier coefficient of $f_j$ and let $r_j$ denote the spectral parameter.

\begin{theorem} \label{thm:mve-general}
	Suppose that $k=\pm 1/2$ and that $\nu$ satisfies conditions (1) and (2) above. 
	Then for all $x\geq 2$ and $n\geq 1$ we have
	\begin{equation} \label{eq:mve}
	n_\nu \sum_{0<r_j\leq x} \frac{|a_j(n)|^2}{\ch \pi r_j} =
	\begin{dcases}
		\frac{x^{\frac 32}}{3\pi^2} + O_{\nu}\(x^{\frac 12}\log x + n^{\beta+\epsilon}\) & \text{ if }k=1/2, \\
		\frac{x^{\frac 52}}{5\pi^2} + O_{\nu}\(x^{\frac 32}\log x + n^{\beta+\epsilon}x^{\frac 12}\) & \text{ if }k=-1/2.
	\end{dcases}
	\end{equation}
\end{theorem}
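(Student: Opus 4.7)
The plan is to apply Proskurin's extension of Kuznetsov's trace formula in the same-sign case $m=n$ on $\mathcal{S}_k(\Gamma_0(N),\nu)$. Since assumption (2) eliminates all singular cusps, the continuous spectrum is absent, and the formula takes the schematic form
\[
\sum_{j} \frac{|a_j(n)|^2}{\ch \pi r_j}\, h(r_j) \;=\; D_k(h) \;+\; \frac{1}{n_\nu}\sum_{c>0}\frac{S(n,n,c,\nu)}{c}\, \mathcal{B}_k h\!\left(\frac{4\pi n_\nu}{c}\right),
\]
where $D_k(h)$ is the identity (diagonal) term and $\mathcal{B}_k$ is the Bessel integral transform appropriate to weight $k$ in the same-sign case. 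I would take $h$ to be a smooth nonnegative approximation to $\mathbf{1}_{[0,x]}(r)$ with smoothing scale $O(1)$; positivity of $|a_j(n)|^2/\ch\pi r_j$ then lets me truncate the spectral sum at $x$ up to a controlled loss.

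The identity term $D_k(h)$ unfolds to an integral of $h$ against a Plancherel-type weight $w_k(r)$ whose large-$r$ behavior is governed by $|\Gamma(\tfrac12+\tfrac k2+ir)|^{-2}/\ch\pi r$. By Stirling, $w_k(r) \asymp r^{-1/2}$ for $k=1/2$ and $w_k(r) \asymp r^{1/2}$ for $k=-1/2$ (the extra factor of $r$ reflects the raising/lowering identities \eqref{eq:lower_coeff}--\eqref{eq:raise-coeff} which relate the two weights). Integrating against the density $r\,dr$ yields
\[
D_{1/2}(\mathbf{1}_{[0,x]}) \;\sim\; \frac{1}{n_\nu}\cdot\frac{x^{3/2}}{3\pi^2}, \qquad D_{-1/2}(\mathbf{1}_{[0,x]}) \;\sim\; \frac{1}{n_\nu}\cdot\frac{x^{5/2}}{5\pi^2},
\]
with smoothing errors of order $n_\nu^{-1}x^{1/2}\log x$ and $n_\nu^{-1}x^{3/2}\log x$ respectively (the logarithm arises from the usual Kuznetsov-type balancing between the smoothing length and the sharpness of the cut-off at $r=x$).

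For the Kloosterman contribution I would establish uniform bounds on $\mathcal{B}_k h(u)$ via $J$-Bessel asymptotics: in the small-$u$ regime the kernel is of size $u^{\beta}$ times a $k$-dependent prefactor which is $O(1)$ for $k=1/2$ and $O(x^{1/2})$ for $k=-1/2$; in the large-$u$ regime one exploits oscillation together with the rapid decay of $h$. Inserting these into assumption (1) and summing on $c$ produces a total Kloosterman contribution of order $n^{\beta+\epsilon}/n_\nu$ for $k=1/2$ and $x^{1/2}\, n^{\beta+\epsilon}/n_\nu$ for $k=-1/2$, which after multiplication by $n_\nu$ matches the stated error terms. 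The main obstacle is the uniform control of $\mathcal{B}_k h$ through the transitional range $u\asymp r$, together with careful bookkeeping of the $k$-dependent Plancherel constants needed to extract the exact leading coefficients $1/(3\pi^2)$ and $1/(5\pi^2)$; these are precisely the technical difficulties that Kuznetsov confronted in the original weight-zero mean value theorem.
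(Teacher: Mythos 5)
Your plan is essentially the paper's proof: the paper starts from Proskurin's auxiliary (pre-trace) formula with $m=n$ (its Lemma on $\int_L K_{it}$), integrates it against the weight $g(t)=|\Gamma(1-\tfrac k2+it)|^2\sh\pi t$ over $[0,x]$ to manufacture the smoothed cutoff $h_x(r)$, extracts the main terms $x^{3/2}/3\pi^2$ and $x^{5/2}/5\pi^2$ from $\tfrac1{2\pi^3}\int_0^x g$, bounds the Kloosterman side by proving the uniform transform estimates $I(x,a)\ll a^{\delta}$ (resp.\ $\sqrt{x}\,a^{\delta-1}$) and invoking hypothesis (1) with $\delta=\beta-\tfrac12$, and then unsmooths using the standard properties of $h_x$ to produce the $\log x$ losses --- exactly the route you describe, with matching Plancherel weights and constants. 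The one caveat is that the step you correctly flag as the main obstacle, uniform control of the Bessel transform through the transitional range, is where essentially all of the paper's effort is spent: its two propositions on $I(x,a)$ require total-variation bounds on $(g(t)/t)'$, repeated first- and second-derivative tests, and Fresnel-integral analysis, precisely because the weight $g(t)\sim\pi t^{1-k}$ is unbounded (notably in weight $-1/2$), so your sketch is the correct skeleton but defers the genuinely hard analysis.
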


The $n>0$ case of Theorem \ref{thm:mve} follows from a direct application of Theorem \ref{thm:mve-general} when $(k,\nu)=(1/2,\chi)$.
For $n<0$, we apply Theorem \ref{thm:mve-general} to the case $(k,\nu)=(-1/2,\bar\chi)$ and use the relation \eqref{eq:conj-coeff}.

\subsection{An auxiliary version of the Kuznetsov trace formula}
We begin with an auxiliary version of Kuznetsov's formula  (\cite[\S 5]{kuznetsov}) which is Lemma 3 of \cite{proskurin-new} with $m=n$.
By assumption (2) there are no Eisenstein series for the multiplier system $\nu$.
While Proskurin assumes that $k>0$ throughout his paper, this lemma is still valid for $k<0$ by the same proof.
We  include the term $\Gamma(2\sigma-1)$ which is omitted on the right-hand side of \cite[Lemma~3]{proskurin-new}.

\begin{lemma}\label{lem:pre_kusnetsov}
Suppose that $k=\pm 1/2$ and that $\nu$ satisfies conditions (1) and (2) above. 
For $n>0$, $t\in\R$, and $\sigma>1$ we have
\begin{multline} \label{eq:proskurin-lemma-3}
	-\mfrac1{i^{k+1}}\sum_{c>0} \frac{S(n,n,c,\nu)}{c^{2\sigma}} \int_L K_{it} \left(\mfrac{4\pi n_\nu}{c} \, q\right) \left(q+\mfrac 1q\right)^{2\sigma-2} q^{k-1} \, dq \\
	= \frac{\Gamma(2\sigma-1)}{4(2\pi n_\nu)^{2\sigma-1}} - \frac{2^{1-2\sigma}\pi^{2-2\sigma}n_\nu^{2-2\sigma}}{\Gamma(\sigma-\frac k2+\frac {it}2)\Gamma(\sigma-\frac k2-\frac{it}2)} \sum_{r_j} |a_j(n)|^2 \Lambda\(\sigma+\mfrac{it}2,\sigma-\mfrac{it}2,r_j\),
\end{multline}
where $L$ is the contour $|q|=1$, $\re(q)>0$, from $-i$ to $i$ and
\[
	\Lambda(s_1,s_2,r) = \Gamma\(s_1-\mfrac 12-ir\) \Gamma\(s_1-\mfrac 12+ir\) \Gamma\(s_2-\mfrac 12-ir\) \Gamma\(s_2-\mfrac 12+ir\).
\]
\end{lemma}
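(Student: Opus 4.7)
The plan is to reproduce Kuznetsov's derivation \cite{kuznetsov} in the half-integral weight setting with multiplier $\nu$ (this is essentially Proskurin's Lemma 3 in \cite{proskurin-new}), while checking that the argument is insensitive to the sign of $k$. The central object is the Selberg--Poincar\'e series
\[
P_n(\tau, s, t) := \sum_{\gamma \in \Gamma_\infty \backslash \Gamma} \bar\nu(\gamma) \, \bigl( y^{s - 1/2} W_{k/2, it}(4\pi n_\nu y) e(n_\nu x) \bigr) \sl_k \gamma,
\]
which converges absolutely for $\sigma := \re(s) > 1$. The identity \eqref{eq:proskurin-lemma-3} arises by computing $\langle P_n, P_n \rangle$ in two different ways and equating the results.

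On the spectral side, assumption (2) rules out Eisenstein contributions, so Parseval against the orthonormal basis $\{f_j\}$ of $\mathcal{S}_k(N, \nu)$ gives $\langle P_n, P_n \rangle = \sum_j |\langle P_n, f_j \rangle|^2$. Unfolding $\langle P_n, f_j \rangle$ against the strip $\{0 \leq x < 1\}$ and inserting the Fourier expansion \eqref{eq:u-j-Fourier-exp} reduces each such inner product to a Mellin transform of two Whittaker functions, which evaluates by the standard Barnes integral to $\overline{a_j(n)}$ times $\Lambda(s + it/2, s - it/2, r_j) / \bigl( \Gamma(\sigma - k/2 + it/2) \Gamma(\sigma - k/2 - it/2) \bigr)$. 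Squaring and summing over $j$ produces the spectral side of \eqref{eq:proskurin-lemma-3}.

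On the geometric side, one unfolds $\langle P_n, P_n \rangle$ directly using the Bruhat-type decomposition of $\Gamma$ into $\Gamma_\infty$ and the double cosets indexed by lower-left entry $c > 0$. The identity coset yields a pure Mellin integral in $y$, which evaluates (modulo gamma factors) to $\Gamma(2\sigma - 1) / \bigl( 4(2\pi n_\nu)^{2\sigma - 1} \bigr)$ and accounts for the first term on the right of \eqref{eq:proskurin-lemma-3}. Each coset with $c > 0$ contributes $S(n, n, c, \nu)/c^{2\sigma}$ multiplied by a double integral in $x$ and $y$; performing the $x$-integral by Fourier inversion produces a $K$-Bessel factor, and a change of variables parametrising the semicircle $L$ reshapes the remaining $y$-integral into the contour integral on the left of \eqref{eq:proskurin-lemma-3}.

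The main obstacle is verifying that every step is symmetric in the sign of $k$, since Proskurin writes the argument only for $k > 0$. The relevant ingredients---$\overline{W_{\kappa, \mu}(y)} = W_{\kappa, \bar\mu}(y)$ for real $\kappa$, the involution $\mathcal{A}_k(N, \nu, r) \leftrightarrow \mathcal{A}_{-k}(N, \bar\nu, r)$ given by complex conjugation, and the identity \eqref{eq:alpha-nu-bar}---show that the computation for $k = -1/2$ is identical to that for $k = 1/2$ after replacing $\nu$ by $\bar\nu$ and conjugating the coefficients (the overall prefactor $i^{-k-1}$ on the left of \eqref{eq:proskurin-lemma-3} records the corresponding normalisation of the $j(\gamma,\tau)^{-k}$ factor). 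Convergence of the geometric side is controlled by assumption (1) for sufficiently large $\sigma$, while convergence of the spectral side follows from Weyl's law \eqref{eq:weyl_law} together with the usual Rankin--Selberg bound for $\sum_{|n|\leq N} |a_j(n)|^2$.
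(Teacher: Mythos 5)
You should first note that the paper does not actually reprove this statement: it quotes Lemma 3 of Proskurin \cite{proskurin-new} (with $m=n$ and the omitted factor $\Gamma(2\sigma-1)$ restored) and remarks that Proskurin's proof carries over verbatim to $k<0$. Your overall strategy --- compute an inner product of Poincar\'e series two ways, Parseval on one side and unfolding over the Bruhat decomposition on the other --- is indeed Proskurin's strategy, but two of your concrete steps would fail as written.

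The first problem is a structural mismatch on the spectral side. If the identity is to come from $\langle P_n,P_n\rangle=\sum_j|\langle P_n,f_j\rangle|^2$, then each projection $\langle P_n,f_j\rangle$ must contribute only \emph{two} Gamma factors, namely $\overline{a_j(n)}\,\Gamma(s_1-\tfrac12-ir_j)\Gamma(s_1-\tfrac12+ir_j)/\Gamma(s_1-\tfrac k2)$ with $s_1=\sigma+\tfrac{it}2$, so that its squared modulus yields the four factors of $\Lambda(\sigma+\tfrac{it}2,\sigma-\tfrac{it}2,r_j)$ divided by $\Gamma(\sigma-\tfrac k2+\tfrac{it}2)\Gamma(\sigma-\tfrac k2-\tfrac{it}2)$. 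You assert that $\langle P_n,f_j\rangle$ already equals $\overline{a_j(n)}$ times that full quotient and then square it; since the quotient is real and positive, squaring produces $\Lambda^2$ over the square of the Gamma product, which is not the right-hand side of \eqref{eq:proskurin-lemma-3}. The second problem is the choice of seed. With the Whittaker seed $y^{s-1/2}W_{k/2,it}(4\pi n_\nu y)e(n_\nu x)$, the unfolded projection is the Mellin transform of a product of two Whittaker functions with equal first index $k/2=\pm\tfrac14$; for nonzero first index this is a genuine ${}_3F_2$, not a product of Gamma functions (the clean factorization you invoke is the weight-zero Weber--Schafheitlin evaluation for $K$-Bessel functions, i.e.\ $\kappa=0$). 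Likewise the identity coset would give $\int_0^\infty y^{2\sigma-2}W_{k/2,it}(4\pi n_\nu y)^2\,dy/y$ rather than $\Gamma(2\sigma-1)/(4(2\pi n_\nu)^{2\sigma-1})$, and the off-diagonal terms would not reshape into the semicircle integral $\int_LK_{it}(aq)(q+1/q)^{2\sigma-2}q^{k-1}\,dq$. The stated identity comes from the \emph{ordinary} Poincar\'e series $\mathcal U_n(\tau,s,k,\nu)$ of \eqref{eq:U-m-def}, with seed $y^se(n_\nu\tau)$, evaluated at the complex parameter $s=\sigma+\tfrac{it}2$: one computes $\lVert\mathcal U_n(\cdot,\sigma+\tfrac{it}2,k,\nu)\rVert^2$ both ways, exactly as the paper does in Lemmas \ref{innerprodlemma1} and \ref{innerprodlemma2} for the mixed-sign analogue. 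Finally, the extension to $k=-\tfrac12$ is not a formal conjugation symmetry: by \eqref{eq:conj-coeff} conjugation sends the index $n$ to $1-n$ and hence flips the sign of $n_\nu$, so it does not carry the $(k,\nu)=(\tfrac12,\chi)$, $n_\nu>0$ case to the $(-\tfrac12,\bar\chi)$, $n_\nu>0$ case; one simply reruns Proskurin's computation with $k=-\tfrac12$, which is what the paper's remark means by ``the same proof.''
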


We justify substituting $\sigma=1$ in this lemma.  
Suppose that $t\in \R$ and write $a = 4\pi n_\nu/c$ and $q=e^{i\theta}$ with $-\pi/2<\theta<\pi/2$.
By \cite[(10.27.4), (10.25.2), and (5.6.7)]{nist} we have, for $c$ sufficiently large,
\begin{equation*}
	K_{it} \big(a e^{i\theta}\big) \ll e^{-\frac{\pi |t|}{2}} \big(e^{\theta t} + e^{-\theta t}\big).
\end{equation*}
So $K_{it}(a q) \ll 1$ on $L$ and
the integral over $L$ is bounded uniformly in $a$.
By this and the assumption \eqref{eq:mve-weil-assumption}, 
the left-hand side of \eqref{eq:proskurin-lemma-3} is absolutely uniformly convergent for $\sigma \in [1,2]$.
In this range we have 
\[
	\left|\Gamma\(\sigma - \mfrac 12 + iy\)\right| = \frac{|\Gamma(\sigma+\tfrac12+iy)|}{|\sigma-\tfrac 12 + iy|} \leq 2 \, \left|\Gamma\(\sigma+\mfrac 12+iy\)\right|.
\]
Using this together with the fact that $\Lambda\(\sigma+\tfrac{it}2,\sigma-\tfrac{it}2,r_j\)$ is positive, we see that
the convergence of the right hand side of  \eqref{eq:proskurin-lemma-3} for $\sigma>1$ implies the   uniform convergence of the right hand side for $\sigma \in [1,2]$. 
We may therefore take $\sigma=1$.

To evaluate the integral on the left-hand side of \eqref{eq:proskurin-lemma-3} we require the following lemma. 

\begin{lemma} \label{lem:int_L}
Suppose that $a>0$. Then
\begin{multline}
	\int_L K_{it}(aq) q^{k-1} \, dq \\= i\sqrt{2} \times
	\begin{dcases}
		\frac 1{\sqrt a} \int_0^a u^{-\frac 12} \int_0^\infty \cos(t\xi)\big(\! \cos(u\ch \xi)-\sin(u \ch \xi)\big) \, d\xi \, du &\text{ if }k=\mfrac12, \\
		\sqrt a \int_a^\infty u^{-\frac 32} \int_0^\infty \cos(t\xi)\big(\! \cos(u\ch \xi)+\sin(u \ch \xi)\big) \, d\xi \, du &\text{ if }k=-\mfrac12.
	\end{dcases}
\end{multline}
\end{lemma}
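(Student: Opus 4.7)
The plan is to replace $K_{it}(aq)$ by its standard integral representation
\[
	K_{it}(aq) = \int_0^\infty \cos(t\xi)\,e^{-aq\ch\xi}\,d\xi \qquad (\re q>0),
\]
swap the order of integration, evaluate the resulting inner $q$-integral by a contour deformation, and then swap once more to put $u$ on the outside. For the first swap, on any subarc of $L$ bounded away from the endpoints $\pm i$ the double integrand is absolutely integrable in $(\xi,q)$, so Fubini applies; continuity of $K_{it}(aq)\, q^{k-1}$ on the compact $L$ lets us remove the cutoff.

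The inner integral $J(s):=\int_L e^{-sq} q^{k-1}\,dq$ with $s:=a\ch\xi>0$ is then computed by Cauchy's theorem, replacing $L$ by a homotopic contour in the open right half-plane (with the principal branch of $q^{k-1}$). For $k=\tfrac12$, I deform through the origin via the two segments $q=\pm iy$, $y\in[0,1]$ (the $q^{-1/2}$ singularity is integrable at $0$), to get
\[
	J(s) = 2i\int_0^1 y^{-1/2}\cos\bigl(sy+\mfrac{\pi}{4}\bigr)\,dy = \sqrt2\,i\int_0^1 y^{-1/2}[\cos(sy)-\sin(sy)]\,dy.
\]
For $k=-\tfrac12$, I instead deform through infinity via the rays $q=\pm iy$, $y\in[1,\infty)$, closed by a right-semicircular arc of radius $R$; that arc contributes $O(R^{-1/2})$ since $|e^{-sq}|\leq 1$ in the right half-plane and $|q|^{-3/2}=R^{-3/2}$, so
\[
	J(s) = \sqrt2\,i\int_1^\infty y^{-3/2}[\cos(sy)+\sin(sy)]\,dy.
\]

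Substituting $s=a\ch\xi$, making the change of variables $u=ay$ (which produces the factor $a^{\mp 1/2}$), and swapping the $u$- and $\xi$-integrals yields the stated identity. The main obstacle is this last swap, because the inner $\xi$-integral is only conditionally convergent and the integrand is not absolutely integrable over $(u,\xi)$. I would truncate at $\xi\leq T$, apply Fubini on the compact region, and then send $T\to\infty$ by dominated convergence; the required uniform-in-$T$ bound on the truncated inner integral is obtained by splitting at $\xi_0=\log(1/u)$ and combining a Taylor expansion on $[0,\xi_0]$ (where $u\ch\xi\lesssim 1$) with van der Corput's second-derivative test on $[\xi_0,T]$ (where $\phi''(\xi)=u\ch\xi\geq 1$), which suffices against the integrable weight $u^{-1/2}$ on $(0,a)$ in the $k=1/2$ case and $u^{-3/2}$ on $(a,\infty)$ in the $k=-1/2$ case.
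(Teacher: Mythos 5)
Your argument is correct and reaches the stated identity, but it is organized differently from the paper's proof. The paper handles $k=\tfrac12$ by citing Proskurin's equation (44) together with the Mehler--Sonine representation \cite[(10.9.8)]{nist}, and handles $k=-\tfrac12$ by deforming $L$ onto the imaginary axis plus a large semicircle \emph{for the original integrand} $K_{it}(aq)q^{-3/2}$, changing variables to get $\int_a^\infty\big(\sqrt{-i}\,K_{it}(iu)-\sqrt{i}\,K_{it}(-iu)\big)u^{-3/2}\,du$, and then converting $K_{it}(\pm iu)$ to $J_{\pm it}(u)$ via Hankel-function identities before applying \cite[(10.9.8)]{nist}. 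You instead insert the representation $K_{it}(z)=\int_0^\infty\cos(t\xi)e^{-z\ch\xi}\,d\xi$ first, so that the contour deformation is performed on the elementary integral $\int_L e^{-sq}q^{k-1}\,dq$; your evaluations $J(s)=\sqrt2\,i\int_0^1 y^{-1/2}[\cos(sy)-\sin(sy)]\,dy$ and $J(s)=\sqrt2\,i\int_1^\infty y^{-3/2}[\cos(sy)+\sin(sy)]\,dy$ are correct, and after $u=ay$ they give exactly the claimed right-hand sides. What your route buys is a self-contained, special-function-free proof that also covers $k=\tfrac12$ without an external citation; what it costs is two interchanges of conditionally convergent integrals, which you correctly identify and (for the final swap) handle adequately via truncation, the split at $\xi_0=\log(1/u)$, and the second-derivative test.

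One step is under-justified as written: after applying Fubini on a subarc $L_\epsilon$ bounded away from $\pm i$, continuity of $K_{it}(aq)q^{k-1}$ only controls the left-hand side $\int_{L_\epsilon}K_{it}(aq)q^{k-1}\,dq$ as $\epsilon\to0$. On the right-hand side you must pass from $\int_0^\infty\cos(t\xi)J_\epsilon(a\ch\xi)\,d\xi$ to $\int_0^\infty\cos(t\xi)J(a\ch\xi)\,d\xi$, and the naive bound on the discarded arcs (arc length times $\sup|e^{-sq}|\le1$) is not summable over $\xi$. This is repairable: a stationary-phase estimate at the endpoints of $L$ (the phase $-s\sin\theta$ has $|\partial_\theta^2|\asymp s$ there) gives $\int_{L\setminus L_\epsilon}e^{-sq}q^{k-1}\,dq\ll\min(\epsilon,s^{-1/2})$ uniformly in $\epsilon$, after which dominated convergence in $\xi$ closes the gap. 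You should add this bound (or an equivalent one) to make the cutoff removal rigorous.
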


\begin{proof}
The case $k=1/2$ follows from equation (44) in \cite{proskurin-new} (correcting a typo in the upper limit of integration) and the integral representation \cite[(10.9.8)]{nist}.

For $k=-1/2$, we write
\[
	\int_L K_{it}(aq) q^{k-1} \, dq = - \left( \int_{-i R}^{-i} + \int_i^{iR} + \int_{L_R}\right) K_{it}(aq) q^{k-1} \, dq,
\]
where the first two integrals are along the imaginary axis, and $L_R$ is the semicircle $|q|=R$, $\re(q)>0$, traversed clockwise.
By \cite[(10.40.10)]{nist}, the integral over $L_R$ approaches zero as $R\to\infty$.
For the first and second integrals we change variables to $q=-iu/a$ and $q=iu/a$, respectively, to obtain
\[
	\int_L K_{it}(aq) q^{k-1} \, dq = -\sqrt a \int_a^\infty \left(\sqrt{-i}K_{it}(iu) - \sqrt{i}K_{it}(-iu)\right) \frac{du}{u^{3/2}}.
\]
The lemma follows from writing $K_{it}(\pm iu)$ as a linear combination of $J_{it}(u)$ and $J_{-it}(u)$ using \cite[(10.27.9), (10.27.3)]{nist} and applying the integral representation \cite[(10.9.8)]{nist}.
\end{proof}

Starting with \eqref{eq:proskurin-lemma-3} with $\sigma=1$ and using the fact that
\begin{equation}\label{eq:lambda_def}
	\Lambda\(1+it,1-it,r\) = \frac{\pi^2}{\ch(\pi r+\pi t)\ch(\pi r-\pi t)},
\end{equation}
 we replace $t$ by $2t$, multiply by
\[
	\frac{4 n_\nu}{\pi^2}\,g(t), \quad \text{ where } g(t) := \left|\Gamma\(1-\mfrac k2+it\)\right|^2 \sh \pi t,
\]
and integrate on $t$ from $0$ to $x$.
Applying Lemma \ref{lem:int_L} to the result, we obtain
\begin{equation} \label{eq:mve-1}
	n_\nu \sum_{r_j} \frac{|a_j(n)|^2}{\ch\pi r_j} h_x(r_j) = \frac{1}{2\pi^3} \int_0^x g(t) \, dt
	+ \frac{(4 n_\nu)^{1-k}\sqrt 2}{i^k \pi^{2+k}} \sum_{c>0} \frac{S(n,n,c,\nu)}{c^{2-k}} I\left(x,\mfrac{4\pi n_\nu}{c}\right),
\end{equation}
where
\begin{equation}\label{eq:hxrdef}
	h_x(r) := \int_0^x \frac{2\sh\pi t\ch \pi r}{\ch(\pi r+\pi t)\ch(\pi r-\pi t)} \, dt
\end{equation}
and
\begin{equation}\label{eq:ixadef}
	I(x,a) := 
	\begin{dcases}
		\int_0^x g(t) \int_0^a u^{-\frac12} \int_0^\infty \cos(2t\xi)\big(\!\cos(u\ch \xi) - \sin(u\ch \xi)\big) \, d\xi \, du \, dt &\text{ if }k=\mfrac 12, \\
		\int_0^x g(t) \int_a^\infty u^{-\frac32} \int_0^\infty \cos(2t\xi)\big(\!\cos(u\ch \xi) + \sin(u\ch \xi)\big) \, d\xi \, du \, dt &\text{ if }k=-\mfrac 12.
	\end{dcases}
\end{equation}

\subsection{The function $g(t)$ and the main term}
A computation involving \cite[(5.11.3), (5.11.11)]{nist} shows that for fixed $x$ and $y\geq 1$ we have
\begin{align*}
	\Gamma(x+iy)\Gamma(x-iy) 
	= 2\pi y^{2x-1} e^{-\pi y} \left( 1 + O\pmfrac 1y \right),
\end{align*}
from which it follows that
\begin{equation} \label{eq:g-t-approx}
	g(t) = \pi t^{1-k} + O(t^{-k})\qquad \text{as $t\to\infty$}.
\end{equation}
We have
\begin{equation} \label{eq:g-t-approx0}
g(t)=\pi   \Gamma \(1-\tfrac{k}{2}\)^2t+O(t^2)\qquad \text{as $t\to0$}.
\end{equation}
This gives the main term
\begin{equation} \label{eq:mve-main-term}
	\mfrac{1}{2\pi^3} \int_0^x g(t) \, dt = 
	\begin{dcases}
		\frac {x^{\frac 32}}{3\pi^2}  + O\big(x^{\frac 12}\big) & \text{ if }k=\mfrac 12, \\
		\frac {x^{\frac 52}}{5\pi^2}  + O\big(x^{\frac 32}\big) & \text{ if }k=-\mfrac 12.
	\end{dcases}
\end{equation}

\subsection{Preparing to estimate the error term}
The error term in \eqref{eq:mve} is obtained by uniformly estimating the integral $I(x,a)$ in the ranges $a\leq 1$ and $a\geq 1$.
The analysis is similar in spirit to \cite[Section 5]{kuznetsov}, but the nature of the function $g(t)$ introduces substantial 
difficulties.
In this section we collect a number of facts which we will require for these estimates.

Given an interval $[a,b]$, let $\var f$ denote the total variation of $f$ on $[a,b]$ which, for $f$ differentiable, is given by
\begin{equation} \label{eq:var-diff}
	\var f = \int_a^b |f'(x)| \, dx.
\end{equation}
We begin by collecting some facts about the functions $g'(t)$, 
$(g(t)/t)'$, and $t(g(t)/t)'$.
\begin{lemma}
Let $k=\pm 1/2$.
If $x\geq 1$ then on the interval $[0,x]$ we have
\begin{align}
	\var\, (g(t)/t)' &\ll 1 + x^{-1-k}, \label{eq:G-1-est} \\
	\var\, t(g(t)/t)'&\ll 1 + x^{-k} \label{eq:G-2-est}.
\end{align}
For $t>0$ we have
\begin{equation}\label{eq:gtsign}
g'(t)>0,\qquad \sgn\, (g(t)/t)'=-\sgn k.
\end{equation}
\end{lemma}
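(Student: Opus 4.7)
The plan is to settle the sign assertions in \eqref{eq:gtsign} first and then bound the two variations.

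To prove $g'(t) > 0$, I would decompose $g(t) = \tfrac{1}{2}\bigl(h_+(t) - h_-(t)\bigr)$ with $h_\pm(t) := e^{\pm\pi t}|\Gamma(a+it)|^2$ and $a = 1 - k/2 \in \{3/4,\,5/4\}$. Differentiating $\log\Gamma(a+it)\Gamma(a-it)$ gives $\tfrac{d}{dt}\log|\Gamma(a+it)|^2 = -2\im\psi(a+it)$, so
\[
	\frac{h_\pm'(t)}{h_\pm(t)} = \pm\pi - 2\im\psi(a+it).
\]
A direct computation yields $\tfrac{d}{da}\im\psi(a+it) = -2t\sum_{n\geq 0}(n+a)/|n+a+it|^4 < 0$, so $a\mapsto \im\psi(a+it)$ is strictly decreasing on $(0,\infty)$; moreover the reflection formula $\psi(1-z)-\psi(z)=\pi\cot(\pi z)$ at $z=1/2+it$ produces the closed form $\im\psi(\tfrac12+it) = \tfrac{\pi}{2}\th(\pi t)$. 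Combining these gives $\im\psi(a+it) \leq \tfrac{\pi}{2}\th(\pi t) < \tfrac{\pi}{2}$ whenever $a\geq 1/2$ (which is our case), hence $h_+'(t)>0>h_-'(t)$ and consequently $g'(t) = \tfrac{1}{2}(h_+'(t)-h_-'(t)) > 0$.

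For the sign of $(g/t)'$ I would subtract $1/t$ from $g'/g = \pi\coth(\pi t) - 2\im\psi(a+it)$ and expand both $\pi\coth(\pi t) - 1/t$ and $2\im\psi(a+it)$ via their Mittag--Leffler series; after combining term by term, $(n+a)^2-(n+1)^2 = (a-1)(2n+a+1)$ leads to
\[
	\frac{(g(t)/t)'}{g(t)/t} = 2(a-1)\sum_{n=0}^\infty \frac{t(2n+a+1)}{\bigl((n+1)^2+t^2\bigr)\bigl((n+a)^2+t^2\bigr)}.
\]
The series is a sum of strictly positive terms for $t>0$ and $a-1 = -k/2$, so $\sgn (g/t)' = \sgn(a-1) = -\sgn k$.

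For the variations \eqref{eq:G-1-est} and \eqref{eq:G-2-est}, the plan is to split $[0,x] = [0,1]\cup[1,x]$. Since $g$ is smooth and odd on $\R$, the quotient $g(t)/t$ extends to an even smooth function at the origin, so $(g/t)'$ and $t(g/t)'$ have bounded derivatives on $[0,1]$, contributing only $O(1)$ to each variation. On $[1,x]$ I would apply Stirling's formula with explicit remainder to get $|\Gamma(a+it)|^2 = 2\pi t^{2a-1}e^{-\pi t}(1+O(t^{-2}))$, hence $g(t) = \pi t^{1-k}(1+O(t^{-2}))$ together with corresponding bounds for each derivative. Differentiating and using \eqref{eq:var-diff} gives
\[
	\bigl|(g/t)''(t)\bigr| \ll t^{-k-2}, \qquad \bigl|(t(g/t)')'(t)\bigr| = \bigl|g''(t) - (g(t)/t)'(t)\bigr| \ll t^{-k-1}
\]
(the leading constants coming out to $\pi k(k+1)$ and $\pi k^2$ respectively). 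Integrating yields $\var_{[1,x]}(g/t)' \ll \int_1^x t^{-k-2}\,dt \ll 1$ and $\var_{[1,x]}t(g/t)' \ll \int_1^x t^{-k-1}\,dt \ll 1+x^{-k}$; combining with the bounded contributions from $[0,1]$ produces \eqref{eq:G-1-est} and \eqref{eq:G-2-est}.

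The main obstacle I anticipate is making the differentiated Stirling expansion rigorous uniformly on $[1,x]$, so that the estimates on $(g/t)''$ and $g''-(g/t)'$ hold with explicit absolute constants. This is routine via Binet's integral remainder for $\log\Gamma$, but requires bookkeeping to track the derivatives of the error term and to handle the multiplicative interaction between the polynomial-times-exponential growth of $\sh(\pi t)$ and the decay of $|\Gamma(a+it)|^2$.
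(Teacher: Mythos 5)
Your proposal is correct and follows the same basic strategy as the paper's proof: the sign of $(g(t)/t)'$ comes from the same Mittag--Leffler series (the paper writes $(g/t)'=g(t)h(t)$ with $h(t)=2\sum_{n\ge1}\bigl(\tfrac1{t^2+n^2}-\tfrac1{t^2+(n-k/2)^2}\bigr)$, which after reindexing $n\mapsto n+1$ and multiplying by $t$ is exactly your series), and the variation bounds come from integrating pointwise bounds on the relevant second derivatives, with an $O(1)$ contribution near $t=0$ and power decay for large $t$. The two genuine differences are minor. First, for $g'(t)>0$ the paper merely remarks that the claim ``is simpler'' and omits the argument; your decomposition $g=\tfrac12(h_+-h_-)$ together with the bound $\im\psi(a+it)\le\tfrac\pi2\th(\pi t)<\tfrac\pi2$ for $a\ge\tfrac12$ is a clean way to supply it. Second, for the large-$t$ derivative bounds the paper stays with the logarithmic-derivative decomposition and bounds $g,g',h,h'$ directly from the asymptotic expansions of $\psi(\sigma\pm it)$ and $\psi'(\sigma\pm it)$, obtaining $g\ll t^{1-k}$, $g'\ll t^{-k}$, $h\ll t^{-2}$, $h'\ll t^{-3}$ and hence the same integrands $t^{-k-2}$ and $t^{-1-k}$ that you reach; this sidesteps the issue you flag at the end, since one never has to differentiate Stirling's asymptotic for $|\Gamma(a+it)|^2$ or track the derivative of its remainder. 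Your Binet-integral route would work, but the digamma route involves less bookkeeping. Near $t=0$ the paper uses explicit Taylor expansions of $g$ and $h$ where you invoke parity and real-analyticity; both are fine.
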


\begin{proof}
Let $\sigma=1-k/2$.
Taking the logarithmic derivative of $g(t)/t$, we find that
\begin{equation}\label{eq:gtprime}
	\pfrac{g(t)}t' = g(t) h(t),
\end{equation}
where
\[
	h(t) := \mfrac 1t \left[ i\big(\psi(\sigma+it)-\psi(\sigma-it)\big)-\mfrac 1t+\pi\coth\pi t \right], \qquad \psi := \mfrac{\Gamma'}{\Gamma}.
\]
We have
\[
	g'(t) = g(t)\left[ i\big(\psi(\sigma+it)-\psi(\sigma-it)\big) + \pi\coth\pi t \right],
\]
and
\[
	h'(t) = -\frac{h(t)}{t} - \frac 1t \left[\psi'(\sigma+it)+\psi'(\sigma-it) - \mfrac 1{t^2} + \pi^2 (\operatorname{csch}\pi t)^2 \right].
\]
At $t=0$ we have the Taylor series
\begin{align}
	g(t) &= \pi \Gamma(\sigma)^2 t + \pi\Gamma(\sigma)^2 \Big( \mfrac{\pi^2}{6} - \psi'(\sigma) \Big)t^3 + \ldots, \\
	h(t) &= \mfrac{\pi^2}{3} - 2\psi'(\sigma) + \Big(\mfrac 13 \psi'''(\sigma) - \mfrac{\pi^4}{45}\Big)t^2 + \ldots.
\end{align}
Thus we have the estimates
\begin{equation} \label{eq:g-h-est-0}
	g(t) \ll t, \quad g'(t) \ll 1, \quad h(t) \ll 1, \quad \text{and} \quad h'(t) \ll t \quad \text{ as }t\to 0.
\end{equation}
For large $t$ we use \cite[(5.11.2) and (5.15.8)]{nist} to obtain the asymptotic expansions
\begin{align}
	i\big(\psi(\sigma+it)-\psi(\sigma-it)\big) &= -2\arctan\pfrac t\sigma - \frac{t}{\sigma^2+t^2} + O\Big(\frac{1}{t^2}\Big), \label{i-psi-asymp} \\
	\psi'(\sigma+it)+\psi'(\sigma-it) &= \frac{2\sigma}{\sigma^2+t^2} + \frac{\sigma^2-t^2}{(\sigma^2+t^2)^2} + O\Big(\frac{1}{t^3}\Big). \label{psi-prime-asymp}
\end{align}
By \eqref{eq:g-t-approx}, \eqref{i-psi-asymp}, and \eqref{psi-prime-asymp} we have the estimates
\begin{equation} \label{g-h-infty}
	g(t) \ll t^{1-k}, \quad g'(t) \ll t^{-k}, \quad h(t) \ll \frac{1}{t^2}, \quad \text{and} \quad h'(t) \ll \frac{1}{t^3} \quad \text{as } t\to\infty.
\end{equation}
From \eqref{g-h-infty} and \eqref{eq:var-diff} it follows that
\[
	\var\, (g(t)/t)' = \int_0^x \left|g(t)h'(t)+g'(t)h(t)\right| \, dt \ll 1+x^{-1-k}
\]
and
\[
	\var\, t(g(t)/t)'= \int_0^x \left|t\,g(t)h'(t)+t\,g'(t)h(t)+g(t)h(t)\right| \, dt \ll 1+x^{-k}. \]

Using \cite[(4.36.3), (5.7.6)]{nist} we have
\[h(t)=2\sum_{n=1}^\infty\(\frac1{t^2+n^2}-\frac1{t^2+(n-\frac k2)^2}\).\]
The second claim in  \eqref{eq:gtsign} follows from \eqref{eq:gtprime}.
The first is simpler.
\end{proof}

Suppose that $f\geq 0$ and $\phi$ are of bounded variation on $[a,b]$ with no common points of discontinuity.
By Corollary 2 of \cite{knowles}, there exists $[\alpha,\beta]\subseteq[a,b]$ such that
\begin{equation} \label{eq:knowles}
	\int_a^b f \, d\phi = (\inf f + \var f) \int_\alpha^\beta d\phi,
\end{equation}
where the integrals are of Riemann-Stieltjes type.
We obtain the following  by taking $\phi(x)=\int_0^x G(t)\, dt$.

\begin{lemma} \label{lem:ganelius}
Suppose that $F\geq 0$ and $G$ are of bounded variation on $[a,b]$ and that $G$ is continuous.
Then
\begin{equation} \label{eq:ganelius}
	\left|\int_a^b F(x) G(x) \, dx \right| \leq (\inf F + \var F) \sup_{[\alpha,\beta]\subseteq [a,b]} \left| \int_\alpha^\beta G(x) \, dx \right|.
\end{equation}
\end{lemma}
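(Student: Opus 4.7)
The plan is to derive the lemma as an immediate specialization of the Knowles-type identity \eqref{eq:knowles}, applied with the particular choice $\phi(x) = \int_a^x G(t)\,dt$. Since $G$ is of bounded variation on $[a,b]$, it is bounded and Riemann-integrable, so $\phi$ is absolutely continuous; in particular $\phi$ is continuous and of bounded variation on $[a,b]$ (with total variation at most $\int_a^b |G|\,dt$). Because $\phi$ is continuous, the functions $F$ and $\phi$ share no common points of discontinuity, which is precisely the compatibility condition needed to invoke \eqref{eq:knowles}.

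Next I would pass from the Riemann--Stieltjes integrals in \eqref{eq:knowles} to ordinary Riemann integrals. Since $\phi$ is absolutely continuous with derivative equal to $G$ almost everywhere, and $F$ is Riemann-integrable (being of bounded variation), standard identification of the Riemann--Stieltjes integral against an absolutely continuous measure gives
\[
\int_a^b F(x)\,d\phi(x) = \int_a^b F(x) G(x)\,dx, \qquad \int_\alpha^\beta d\phi(x) = \int_\alpha^\beta G(x)\,dx
\]
for every subinterval $[\alpha,\beta]\subseteq [a,b]$. Plugging $\phi$ into \eqref{eq:knowles} therefore yields the existence of some $[\alpha_0,\beta_0]\subseteq[a,b]$ with
\[
\int_a^b F(x) G(x)\,dx = (\inf F + \var F)\int_{\alpha_0}^{\beta_0} G(x)\,dx.
\]

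Finally, taking absolute values of both sides and noting that $\inf F + \var F \ge 0$, I would bound the right-hand side by replacing the specific interval $[\alpha_0,\beta_0]$ by the supremum over all subintervals $[\alpha,\beta]\subseteq[a,b]$, which produces exactly \eqref{eq:ganelius}. The only point requiring a moment of care is the identification of the Riemann--Stieltjes integral $\int_a^b F\,d\phi$ with $\int_a^b F G\,dx$; this is routine given the absolute continuity of $\phi$ and the bounded variation of $F$, and is the only conceivable obstacle in an otherwise direct specialization argument.
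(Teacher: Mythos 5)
Your proposal is correct and is essentially the paper's own argument: the paper likewise deduces the lemma from \eqref{eq:knowles} by taking $\phi$ to be an antiderivative of $G$ and then passing to ordinary integrals and taking absolute values. The extra details you supply (absolute continuity of $\phi$, the identification of the Riemann--Stieltjes integrals) are correct and fill in what the paper leaves implicit.
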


We will frequently make use of the following well-known estimates for oscillatory integrals (see, for instance, \cite[Chapter IV]{titchmarsh}).

\begin{lemma} [First derivative estimate]
Suppose that $F$ and $G$ are real-valued functions on $[a,b]$ with $F$ differentiable, such that $G(x)/F'(x)$ is monotonic. 
If $|F'(x)/G(x)|\geq m>0$ then
\[
	\int_a^b G(x) e(F(x)) \, dx \ll \frac{1}{m}.
\]
\end{lemma}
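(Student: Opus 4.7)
The plan is to prove this classical first derivative estimate by a single integration by parts, exploiting the fact that $e(F(x))$ is (up to a constant) the derivative of $\frac{1}{2\pi i F'(x)} e(F(x))$. I would first rewrite
\[
    \int_a^b G(x) e(F(x))\,dx = \frac{1}{2\pi i} \int_a^b \frac{G(x)}{F'(x)} \, d\bigl(e(F(x))\bigr),
\]
which is legitimate since $F' \neq 0$ on $[a,b]$ (by the hypothesis $|F'/G| \geq m > 0$, so in particular $G$ never vanishes and $F'$ never vanishes). Integrating by parts converts this into a boundary term plus
\[
    -\frac{1}{2\pi i}\int_a^b e(F(x)) \, d\!\left(\frac{G(x)}{F'(x)}\right).
\]

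Next I would bound the two pieces separately. The boundary term is $\frac{1}{2\pi i}\bigl[\frac{G}{F'} e(F)\bigr]_a^b$, whose modulus is at most $\frac{1}{2\pi}\bigl(|G(b)/F'(b)| + |G(a)/F'(a)|\bigr) \leq \frac{1}{\pi m}$ directly from the hypothesis $|G/F'| \leq 1/m$. For the remaining Riemann--Stieltjes integral, the monotonicity of $G/F'$ means the signed measure $d(G/F')$ has constant sign, so
\[
    \left|\int_a^b e(F(x))\, d\!\left(\frac{G(x)}{F'(x)}\right)\right| \leq \int_a^b \left|d\!\left(\frac{G}{F'}\right)\right| = \left|\frac{G(b)}{F'(b)} - \frac{G(a)}{F'(a)}\right| \leq \frac{2}{m},
\]
using $|e(F(x))|=1$ and the triangle inequality on the endpoint values.

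Combining the two bounds yields $\left|\int_a^b G(x) e(F(x))\,dx\right| \leq \frac{2}{\pi m}$, which is the claimed estimate. There is no real obstacle here: the argument is completely standard, and the only mild subtlety is noting that the monotonicity hypothesis on $G/F'$ is exactly what is needed to convert the oscillatory Stieltjes integral into the total variation of $G/F'$, which then telescopes to an endpoint bound controlled by $1/m$. If one wished to relax the monotonicity assumption to bounded variation, the same proof would give a bound of the form $(1/m)\bigl(1 + \operatorname{var}(G/F')\cdot m\bigr)$, but under the stated hypotheses the endpoint estimate suffices.
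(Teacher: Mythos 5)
The paper does not prove this lemma at all: it is quoted as a standard fact with a pointer to Titchmarsh, Chapter IV, so there is no in-paper argument to compare against. Your proof is correct and is one of the two standard ones. You integrate by parts in the Riemann--Stieltjes sense, bound the boundary term by the hypothesis $|G/F'|\le 1/m$, and use monotonicity of $G/F'$ to say that the total variation of the integrator telescopes to an endpoint difference of size at most $2/m$; this yields the explicit constant $2/(\pi m)$. The classical proof in Titchmarsh instead splits into real and imaginary parts and applies the second mean value theorem to $\int (G/F')\cdot\bigl(F'\cos F\bigr)\,dx$; the two routes are essentially equivalent, and yours has the mild advantage of handling the complex exponential in one stroke. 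One small inaccuracy in your write-up: the hypothesis $|F'/G|\ge m$ does not force $G$ to be nonvanishing (it should be read as $|G/F'|\le 1/m$, with $F'\ne 0$ guaranteed by the monotonicity hypothesis on $G/F'$); $G$ may vanish without harming any step of your argument, so this is a cosmetic point only.
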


\begin{lemma} [Second derivative estimate]
Suppose that $F$ and $G$ are real-valued functions on $[a,b]$ with $F$ twice differentiable, such that $G(x)/F'(x)$ is monotonic. 
If $|G(x)|\leq M$ and $|F''(x)|\geq m>0$ then
\[
	\int_a^b G(x) e(F(x)) \, dx \ll \frac{M}{\sqrt m}.
\]
\end{lemma}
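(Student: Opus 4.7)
The plan is to derive the bound from the first derivative estimate by isolating a short interval on which $|F'|$ is small. Since $|F''(x)| \geq m > 0$ throughout $[a,b]$ and derivatives enjoy the Darboux intermediate value property, $F''$ cannot change sign on $[a,b]$, so $F'$ is strictly monotone there. Let $c \in [a,b]$ be the point at which $|F'|$ attains its minimum: either $F'(c) = 0$ (if $F'$ changes sign on $[a,b]$) or $c$ is an endpoint. Setting $\delta := m^{-1/2}$ and applying the mean value theorem to $F'$, I get $|F'(x)| \geq m|x - c|$ on $[a,b]$, and in particular $|F'(x)| \geq \sqrt{m}$ whenever $|x - c| \geq \delta$.

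Next I would split $[a,b]$ into $I_0 \cup I_- \cup I_+$, where $I_0 := [a,b] \cap [c - \delta, c + \delta]$ and $I_{\pm}$ are the (possibly empty) complementary subintervals. On $I_0$ the trivial bound gives
\[
\biggl| \int_{I_0} G(x)\, e(F(x))\, dx \biggr| \leq M \cdot |I_0| \leq \frac{2M}{\sqrt{m}}.
\]
On each $I_{\pm}$ we have $|F'(x)| \geq \sqrt{m}$, hence $|F'(x)/G(x)| \geq \sqrt{m}/M$, and $G/F'$ remains monotone on $I_{\pm}$ by restriction from $[a,b]$. Applying the first derivative estimate on each $I_{\pm}$ with the constant $\sqrt{m}/M$ in place of $m$ yields
\[
\biggl| \int_{I_{\pm}} G(x)\, e(F(x))\, dx \biggr| \ll \frac{M}{\sqrt{m}}.
\]
Summing the at most three contributions gives the claimed bound $M/\sqrt{m}$.

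The only subtlety is justifying the strict monotonicity of $F'$ without assuming continuity of $F''$, which is exactly what Darboux's theorem provides; everything else is a clean reduction, since the short interval near the stationary phase of $e(F)$ is absorbed by the trivial bound, while on its complement the hypotheses of the first derivative estimate are automatic.
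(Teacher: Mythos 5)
The paper does not prove this lemma at all---it simply cites \cite[Chapter IV]{titchmarsh}---and your argument is exactly the standard proof found there: use $|F''|\geq m$ to show $|F'(x)|\geq m|x-c|$ away from the minimizer $c$ of $|F'|$, bound the integral trivially on the window of length $2m^{-1/2}$ around $c$, and apply the first derivative estimate on the at most two complementary pieces. Your proof is correct, including the careful points (Darboux for the sign of $F''$, and monotonicity of $G/F'$ surviving restriction to subintervals).
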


\subsection{The first error estimate}
We estimate the error arising from 
$I(x,a)$ (recall \eqref{eq:ixadef}) when $k=1/2$.
\begin{proposition} \label{prop:mve-int-est-pos}
Suppose that $k=1/2$. 
For $a>0$ we have
\begin{equation}
	I(x,a) \ll
	\begin{cases}
		\sqrt{a} \, \big(\!\log(1/a)+1\big) & \text{ if }a\leq 1, \\
		1 & \text{ if }a\geq 1.
	\end{cases}
\end{equation}
\end{proposition}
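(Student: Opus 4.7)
\medskip

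\textbf{Proposed plan for Proposition \ref{prop:mve-int-est-pos}.}
The plan is to evaluate the inner $u$-integral in closed form as a Fresnel-type function, and then bound the resulting double oscillatory integral by combining joint stationary-phase cancellation in $(t,\xi)$ with Lemma~\ref{lem:ganelius}.

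First I would interchange the $u$- and $\xi$-integrations (justified by the amplitude estimates derived below) and substitute $v = u\ch\xi$ in the $u$-integral, giving
\[
\int_0^a u^{-1/2}\bigl(\cos(u\ch\xi)-\sin(u\ch\xi)\bigr)\,du \;=\; \frac{F(a\ch\xi)}{\sqrt{\ch\xi}},
\qquad F(w) \;:=\; \int_0^w v^{-1/2}(\cos v-\sin v)\,dv.
\]
Since $\int_0^\infty v^{-1/2}\cos v\,dv = \int_0^\infty v^{-1/2}\sin v\,dv = \sqrt{\pi/2}$, we have $F(\infty)=0$, and a single integration by parts on the tail combined with the trivial estimate near zero gives $|F(w)|\ll\min(\sqrt w,\,w^{-1/2})$, hence
\[
\left|\frac{F(a\ch\xi)}{\sqrt{\ch\xi}}\right|\;\ll\;\min\!\left(\sqrt{a},\ \frac{1}{\sqrt{a}\,\ch\xi}\right).
\]

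After a second application of Fubini, I would write $I(x,a) = \int_0^x g(t)\,M(t,a)\,dt$ where $M(t,a) := \int_0^\infty \cos(2t\xi)\,F(a\ch\xi)/\sqrt{\ch\xi}\,d\xi$.  The amplitude bound alone gives $|M(t,a)|\ll\sqrt a(\log(1/a)+1)$ for $a\le 1$ by splitting the $\xi$-range at $\xi_0 := \arccosh(1/a)$ (where $a\ch\xi=1$): on $[0,\xi_0]$ the integrand is $\ll\sqrt a$ in magnitude and the range has length $\log(2/a)$, while on $[\xi_0,\infty)$ it decays exponentially.  For $a\ge 1$ only the second range is present and $|M(t,a)|\ll 1$.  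To extract the additional decay in $t$ required for $x$-uniformity I would use the asymptotic $F(w)\sim w^{-1/2}(\cos w+\sin w)$ on $[\xi_0,\infty)$: product-to-sum reduces the integral to oscillatory integrals with phases $2t\xi\pm a\ch\xi$; the plus-sign phase has no stationary point and is controlled by the first-derivative estimate, while the minus-sign phase has a unique stationary point $\xi^*=\arcsinh(2t/a)$ with $|\varphi''(\xi^*)|=\sqrt{a^2+4t^2}$, giving the second-derivative-estimate contribution of size $(a^2+4t^2)^{-1/4}$.

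Finally I would apply Lemma~\ref{lem:ganelius} to $\int_0^x g(t)M(t,a)\,dt$ with the factorization $g(t)=(g(t)/t)\cdot t$: since $(g/t)'<0$ by \eqref{eq:gtsign} and $g/t$ is bounded on $[0,\infty)$, the prefactor $\inf(g/t)+\var(g/t)$ is $O(1)$, and the problem reduces to a uniform bound on $\sup_{[\alpha,\beta]\subseteq[0,x]}|\int_\alpha^\beta tM(t,a)\,dt|$.  The decay of $M(t,a)$ in $t$ from the stationary-phase step makes $tM(t,a)$ integrable with bounded tails, and splitting the $t$-range at $t\asymp a$ reproduces the claimed bounds.

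The main obstacle is $x$-uniformity.  Any attempt to handle the $t$-integral $\int_0^x g(t)\cos(2t\xi)\,dt$ in isolation (e.g.\ by integration by parts or by Lemma~\ref{lem:ganelius} applied directly) produces a boundary term of size $g(x)/\xi\sim x^{1/2}/\xi$ that depends on $x$, so one cannot decouple the $t$- and $\xi$-integrations.  Only the joint cancellation between the $t$-phase $\cos(2t\xi)$ and the $u$-phase $\cos(u\ch\xi)$—captured through the stationary point $\xi^*=\arcsinh(2t/a)$—yields an $x$-independent bound.  The transitional regime $t\asymp a$ is the most delicate, and it is the Fresnel behaviour there that produces the logarithm $\log(1/a)$ in the $a\le 1$ case.
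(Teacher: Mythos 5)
There is a genuine gap at the central step of your plan. After reducing, via Lemma~\ref{lem:ganelius}, to bounding $\sup_{[\alpha,\beta]\subseteq[0,x]}\bigl|\int_\alpha^\beta t\,M(t,a)\,dt\bigr|$ uniformly in $x$, you propose to control $M(t,a)$ by the second-derivative estimate at the stationary point $\xi^*=\operatorname{arcsinh}(2t/a)$, ``giving the contribution of size $(a^2+4t^2)^{-1/4}$.'' But the second-derivative estimate is only an upper bound on the modulus: it yields $|M(t,a)|\ll \sqrt a\,(a^2+4t^2)^{-3/4}$ (the amplitude at $\xi^*$ times $|\varphi''(\xi^*)|^{-1/2}$) and carries no phase information about $M$. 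With only a modulus bound the subsequent $t$-integral gives
\[
\int_\alpha^\beta t\,|M(t,a)|\,dt \;\ll\; \sqrt a\int_0^x \frac{t\,dt}{(a^2+4t^2)^{3/4}} \;\asymp\; \sqrt{ax},
\]
which is unbounded in $x$, so the claimed $x$-uniformity is not achieved. To exploit the oscillation of $e^{i\varphi(\xi^*(t))}$ in $t$ --- which is what your plan genuinely requires --- you would need a true stationary-phase \emph{asymptotic} $M(t,a)=c\,\sqrt a\,(a^2+4t^2)^{-3/4}e^{i\varphi(\xi^*(t))}+E(t,a)$ with an error satisfying $\int_0^x t\,|E(t,a)|\,dt\ll 1$ uniformly in $x$; the leading correction is already of size $\asymp\sqrt a\,(a^2+4t^2)^{-5/4}$, which is only barely integrable, and the regions where the expansion degenerates ($a\ch\xi\lesssim 1$, where your $F$ is in its Fresnel transition, and $\xi$ near $0$) need separate treatment. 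None of this is supplied, and it is a substantially heavier toolkit than the first/second derivative tests you invoke.

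Separately, the ``main obstacle'' you identify --- that the $t$- and $\xi$-integrations cannot be decoupled because $\int_0^x g(t)\cos(2t\xi)\,dt$ has a boundary term of size $g(x)/\xi\sim x^{1/2}/\xi$ --- is not in fact an obstacle, and the paper's proof proceeds by precisely this decoupling. The point is the order of operations: the paper first integrates by parts in $\xi$ in the truncated inner integral $\int_0^T\cos(2t\xi)\cos(u\ch\xi)\,d\xi$, which trades $\cos(2t\xi)$ for $\sin(2t\xi)/(2t)$ and $u^{-1/2}$ for $u^{1/2}$. The $t$-integral that then appears is $G_x(\xi)=\int_0^x \frac{g(t)}{t}\sin(2t\xi)\,dt$, and its own integration by parts \eqref{eq:G-x-xi-parts} produces the boundary term $-\frac{g(x)}{x}\frac{\cos(2x\xi)}{2\xi}$ with $g(x)/x\ll 1$ (for $k=1/2$ one has $g(t)\sim\pi t^{1/2}$), so the residual $x$-dependence sits only in the harmless oscillating factor $\cos(2x\xi)$, which is then removed by first/second derivative tests in $\xi$. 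The paper thereby reduces everything to a single $\xi$-integral of $\sh\xi\,G_x(\xi)H(u,\xi)$ (with $H$ the Fresnel-type partial $u$-integral, the analogue of your $F$) and estimates it term by term, with no joint stationary-phase analysis needed.
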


\begin{proof}
We write $I(x,a)$ as a difference of integrals, one involving $\cos(u\ch\xi)$, the other involving $\sin(u\ch \xi)$.
We will estimate the first integral, the second being similar.
We have
\[
	\int_0^\infty \cos(2t\xi)\cos(u\ch \xi) \, d\xi = \int_0^T \cos(2t\xi)\cos(u\ch \xi) \, d\xi + R_T(u,t),
\]
where, by the second derivatve estimate,
\[
	R_T(u,t) \ll u^{-\frac 12} e^{-\frac T2}.
\]
For $0<\varepsilon < a$ we have, using \eqref{eq:g-t-approx},
\[
	\int_0^x g(t) \int_{\varepsilon}^a u^{-\frac 12} R_T(u,t) \, du\, dt \ll e^{-\frac T2} x^{\frac 32} \big( |\log a|+|\log \varepsilon|\big) \to 0 \quad \text{ as }T\to\infty.
\]
So our goal is to estimate
\[
	\lim_{\varepsilon\to0^+} \lim_{T\to\infty} I(x,\varepsilon, a, T),
\]
where
\[
	I(x,\varepsilon, a, T) = \int_0^x g(t) \int_\varepsilon^a u^{-\frac 12} \int_0^T \cos(2t\xi) \cos(u\ch \xi) \, d\xi \, du \, dt.
\]
Integrating the innermost integral by parts, we find that
\begin{multline} \label{eq:I-x-ep-a-T-parts}
	I(x,\varepsilon, a, T) = \mfrac 12 \int_0^x \frac{g(t)}{t} \sin(2tT) \, dt \int_\varepsilon^a u^{-\frac 12} \cos(u\ch T) \, du \\ + \mfrac 12 \int_0^T \sh\xi \int_0^x \frac{g(t)}{t} \sin(2t\xi) \, dt \int_\varepsilon^a \sqrt u \, \sin(u\ch \xi) \, du \, d\xi.
\end{multline}
The first derivative estimate (recalling \eqref{eq:g-t-approx0} and \eqref{eq:gtsign}) gives
\begin{equation} \label{eq:I-x-e-a-t-est}
	\int_0^x \frac{g(t)}{t} \sin(2tT) \, dt \ll \frac{1}T \quad \text{ and } \quad 
	\int_\varepsilon^a u^{-\frac 12} \cos(u\ch T) \, du \ll \frac{1}{\sqrt \varepsilon \, \ch T},
\end{equation}
so the first term on the right-hand side of \eqref{eq:I-x-ep-a-T-parts} approaches zero as $T\to\infty$.

Turning to the second term, set
\[
	G_x(\xi) := \int_0^x \frac{g(t)}{t} \sin(2t\xi) \, dt.\]
Integrating by parts gives
\begin{equation} \label{eq:G-x-xi-parts}
	G_x(\xi) = \frac{\pi \Gamma(\tfrac34)^2}{2\xi} - \frac{g(x)}{x}\frac{\cos(2x\xi)}{2\xi} + \frac{1}{2\xi} \int_0^x \pfrac{g(t)}{t}' \cos(2t\xi) \, dt.
\end{equation}
Applying Lemma \ref{lem:ganelius} and \eqref{eq:G-1-est} to the integral in \eqref{eq:G-x-xi-parts} we find that
\[
	\frac{1}{\xi} \int_0^x \pfrac{g(t)}{t}' \cos(2t\xi) \, dt \ll \frac{1}{\xi} \sup_{0\leq\alpha<\beta\leq x} \left| \int_\alpha^\beta \cos(2t\xi) \, dt \right| \ll \frac{1}{\xi^2}.
\]
On the other hand, \eqref{eq:gtsign} gives
\begin{equation} \label{eq:G-x-xi-leq-1}
	\left|\frac{1}{\xi}\int_0^x \pfrac{g(t)}{t}' \cos(2t\xi) \, dt\right| \leq \frac{1}{\xi}\int_0^x -\left(\frac{g(t)}{t}\right)' \, dt \ll \frac{1}{\xi}.
\end{equation}
It follows that
\begin{equation} \label{eq:G-x-xi-O}
	G_x(\xi) = \frac{\pi \Gamma(\tfrac34)^2}{2\xi} - \frac{g(x)}{x}\frac{\cos(2x\xi)}{2\xi} + O\(\min(\xi^{-1},\xi^{-2})\).
\end{equation}

The integral
\[
	\int_\varepsilon^a \sqrt{u} \, \sin(u\ch \xi) \, du
\]
evaluates to $H(a,\xi) - H(\varepsilon, \xi)$, where
\begin{equation} \label{eq:H-a-xi-def}
	H(u,\xi) := - \frac{\sqrt u\, \cos(u\ch \xi)}{\ch \xi} + \frac{\sqrt{\pi/2}}{(\ch \xi)^{\frac32}} C\(\!\sqrt{\frac{2u\ch \xi}{\pi}}\,\),
\end{equation}
and $C(x)$ denotes the Fresnel integral
\[
	C(x) := \int_0^x \cos\(\mfrac \pi2 t^2\) \, dt.
\]
For $u>0$, write
\begin{equation} \label{eq:J-x-u-def}
	J(x,u) := \int_0^\infty \sh \xi \, G_x(\xi) H(u,\xi)\, d\xi.
\end{equation}
To show that $J(x, u)$ converges, suppose that $B\geq A\geq T$. 
By \eqref{eq:G-x-xi-O}, \eqref{eq:H-a-xi-def}, and the bound $C(x)\leq 1$ we have 
\begin{align*}
	&\int_A^B \sh \xi \, G_x(\xi) H(u,\xi)\, d\xi \\
	&\phantom{\int} = -\sqrt{u} \int_A^B \th \xi \cos(u\ch \xi) \left( \frac{\pi\Gamma(\frac34)^2}{2\xi} - \frac{g(x)}{x}\frac{\cos(2x\xi)}{2\xi} + O\Big(\mfrac 1{\xi^2}\Big) \! \right) \, d\xi + O\left(\int_A^B \frac{\sh\xi}{\xi(\ch\xi)^{\frac 32}} \, d\xi \right)\\
	&\phantom{\int} \ll u^{-\frac 12}e^{-A/2} + u^{\frac 12} A^{-1} + e^{-A/2},
\end{align*}
where we have used the first derivative estimate in the last inequality.
We have 
\begin{equation}\label{eq:ixjx}
	\lim_{\varepsilon\to0^+} \lim_{T\to\infty} I(x,\varepsilon, a, T)
	=\mfrac12J(x,a)-\mfrac12\lim_{\varepsilon\to 0^+}J(x,\varepsilon),
\end{equation}
so it remains only to  estimate $J(x,u)$
in the ranges $u\leq 1$ and $u\geq 1$.

First, suppose that $u\leq 1$.
We estimate each of the six terms obtained by multiplying \eqref{eq:G-x-xi-O} and \eqref{eq:H-a-xi-def} in \eqref{eq:J-x-u-def}.
Starting with the terms involving $\cos(u\ch\xi)$,
we have
\begin{align*}
	\sqrt{u} \int_0^\infty \frac{\th \xi}{\xi} \cos(u\ch\xi) \, d\xi
	&= \sqrt{u} \(\int_0^{\log{\frac 1u} } + \int_{\log {\frac 1u}}^\infty\) \frac{\th \xi}{\xi} \cos(u\ch\xi) \, d\xi \\
	&\ll \sqrt{u} \, \big(\log(1/u) + 1\big)
\end{align*}
by applying the second derivative estimate to the second integral.
By the same method,
\[
	\sqrt{u} \, \frac{g(x)}{x} \int_0^\infty \frac{\th \xi}{\xi} \cos(2x\xi) \cos(u\ch\xi) \, d\xi \ll  \sqrt{u} \, \big(\log(1/u) + 1\big),
\]
since $g(x)/x\ll 1$.
For the third term we have
\[
	\sqrt{u} \int_0^\infty \th \xi \cos(u\ch \xi) \min(\xi^{-1},\xi^{-2}) \, d\xi
	\ll \sqrt{u} \int_0^1 \frac{\th \xi}{\xi} \, d\xi + \sqrt u\int_1^\infty \frac{\th \xi}{\xi^2} \, d\xi 
	\ll \sqrt{u}.
\]
For the terms involving the Fresnel integral, we use the trivial estimate $C(x)\leq \min(x,1)$.
For the first of these terms we have
\[
	\int_0^\infty \frac{\sh \xi}{\xi (\ch \xi)^{\frac 32}} C\(\!\sqrt{\mfrac{2u\ch \xi}{\pi}}\,\) \, d\xi \ll \sqrt u \int_0^{\log {\frac 1u}} \frac{\th \xi}{\xi} \, d\xi + \int_{\log \frac 1u}^\infty \frac{d\xi}{\sqrt{\ch \xi}} \ll \sqrt u\,\big(\log(1/u)+1\big).
\]
By the same method, the remaining two terms are $\ll \sqrt{u} \,(\log(1/u)+1)$.
It follows that
\begin{equation} \label{eq:J-x-u-leq-1}
	J(x,u) \ll \sqrt{u} \, \big(\log(1/u)+1\big) \qquad \text{ for }u\leq 1.
\end{equation}

For $u\geq 1$ we show that $J(x, u)\ll 1$.
Write
\[
	J(x,u) = J_1(x,u) + J_2(x,u) := \left(\int_0^{1/\sqrt{u}} + \int_{1/\sqrt{u}}^\infty \right) \sh \xi \, G_x(\xi) H(u,\xi) \, d\xi.
\]
Since $G_x(\xi)\ll \xi^{-1}$ and $H(u,\xi)\ll \sqrt{u}/\ch\xi$, we have
\[
	J_1(x,u) \ll \sqrt{u} \int_0^{1/\sqrt{u}} \frac{\th \xi}{\xi} \, d\xi \ll 1.
\]
We break $J_2(x,u)$ into six terms, using 
\eqref{eq:G-x-xi-parts} in place of \eqref{eq:G-x-xi-O}.
We start with the terms involving $\pi\Gamma(3/4)^2/2\xi$.
By the first derivative estimate we have
\[
	\sqrt{u} \int_{1/\sqrt u}^\infty \frac{\th \xi}{\xi} \cos(u\ch \xi) \, d\xi \ll 1.
\]
Estimating the Fresnel integral trivially, we have
\[
	\int_{1/\sqrt u}^\infty \frac{\sh \xi}{\xi(\ch \xi)^{\frac 32}} C\(\!\sqrt{\mfrac{2u\ch \xi}{\pi}}\,\) d\xi \ll \int_0^\infty \frac{d\xi}{\sqrt{\ch\xi}} \ll 1.
\]
The two terms involving $\cos(2x\xi)/\xi$ are treated similarly, and are $\ll 1$.
The term involving the integral in \eqref{eq:G-x-xi-parts} and the Fresnel integral can be estimated trivially using \eqref{eq:G-x-xi-leq-1}; it is also $\ll 1$.

The final term,
\[
	\sqrt{u}\int_{1/\sqrt{u}}^\infty \frac{\sh \xi \cos(u\ch \xi)}{\xi \ch\xi} \int_0^x \pfrac{g(t)}{t}' \cos(2t\xi) \, dt\, d\xi,
\]
is more delicate.
We write the integral as
\[
	\sqrt{u} \int_{1/\sqrt{u}}^\infty \left\{\frac{1}{\xi\ch\xi} \int_0^x \pfrac{g(t)}{t}' \cos(2t\xi) \, dt \right\} \cdot \bigg\{\!\sh\xi\cos(u\ch \xi) d\xi\bigg\} = \sqrt{u} \int_{1/\sqrt{u}}^\infty U \, dV
\]
and integrate by parts.
By \eqref{eq:G-x-xi-leq-1}
we have
\[
	\sqrt{u} \, UV \Big|_{1/\sqrt u}^\infty \ll \sqrt u \cdot \frac{\sqrt u}{\ch(1/\sqrt u)} \cdot \frac{1}{u} \ll 1.
\]
Write $U'=R+S$, where
\[
	R = \pfrac{1}{\xi \ch\xi}' \int_0^x \pfrac{g(t)}{t}' \cos(2t\xi) \, dt, \qquad  S=\frac{2}{\xi\ch\xi} \int_0^x t\pfrac{g(t)}{t}' \sin(2t\xi) \, dt.
\]
By \eqref{eq:G-x-xi-leq-1} we have
\[
	R \ll \frac{1}{\xi \ch\xi} \left(\frac 1\xi + \th\xi\right) \ll \begin{cases}
		(\xi^2\ch\xi)^{-1} & \text{ if }\xi\leq 1, \\
		(\xi \ch\xi)^{-1} & \text{ if }\xi \geq 1.
	\end{cases}
\]
Applying Lemma \ref{lem:ganelius} and the estimate \eqref{eq:G-2-est} we find that
\[
	S \ll \frac{1}{\xi^2 \ch \xi}.
\]
Thus we have
\[
	\sqrt{u} \int_{1/\sqrt{u}}^\infty V\, dU 
	\ll \frac{1}{\sqrt{u}} \int_{1/\sqrt{u}}^1 \frac{d\xi}{\xi^2} + \frac{1}{\sqrt{u}} \int_1^\infty \frac{d\xi}{\xi\ch\xi} 
	\ll 1.
\]
We conclude that
\begin{equation} \label{eq:J-x-u-geq-1}
	J(x,u) \ll 1  \qquad \text{ for }u\geq 1.
\end{equation}
The proposition follows from 
\eqref{eq:ixjx}, \eqref{eq:J-x-u-leq-1}, and  \eqref{eq:J-x-u-geq-1}.
\end{proof}

\subsection{The second error estimate}
In the case when  $k=-1/2$ we must keep track of the dependence on $x$.
\begin{proposition}\label{prop:mve-int-est-neg}
Suppose that $k=-1/2$.
For $a>0$ we have
\begin{equation}
	I(x,a) \ll \sqrt{x} \times
	\begin{dcases}
		a^{-\frac 12}\big(\!\log(1/a)+1\big) & \text{ if }a\leq 1, \\
		a^{-1} & \text{ if }a\geq 1.
	\end{dcases}
\end{equation}
\end{proposition}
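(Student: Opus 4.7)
The plan is to parallel the proof of Proposition \ref{prop:mve-int-est-pos} with close attention to the $x$-dependence, which now enters nontrivially because $g(t)/t\sim \pi t^{1/2}$ grows for $k=-1/2$ (as opposed to being bounded when $k=1/2$). First I split $I(x,a)$ into the cosine and sine pieces, truncate the innermost $\xi$-integral at $T$ (the tail is $\ll e^{-T/2}x^{5/2}a^{-1}$ by the second derivative estimate combined with $\int_a^\infty u^{-2}du=a^{-1}$), and integrate by parts in $\xi$ using the antiderivative $\sin(2t\xi)/(2t)$. The boundary contribution vanishes as $T\to\infty$ via the first derivative estimate applied to $\int_0^x (g(t)/t)\sin(2tT)\,dt$, which is $\ll \sqrt x/T$ by the monotonicity of $g(t)/t$ from \eqref{eq:gtsign}, together with the corresponding estimate in $u$. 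Interchanging orders of integration and passing to the limit yields
\[
    I(x,a)=\tfrac12\int_0^\infty \sh\xi\,G_x(\xi)\,\tilde H(a,\xi)\,d\xi,
\]
with $G_x(\xi):=\int_0^x (g(t)/t)\sin(2t\xi)\,dt$ (as in the previous proof) and
$\tilde H(a,\xi):=\int_a^\infty u^{-1/2}\bigl(\sin(u\ch\xi)-\cos(u\ch\xi)\bigr)du$, the analogue of the auxiliary $H$.

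The two pointwise estimates I need are, first, an expansion of $G_x$ obtained by a single integration by parts together with Lemma \ref{lem:ganelius} and \eqref{eq:G-1-est}:
\[
    G_x(\xi)=\frac{\pi\Gamma(\tfrac54)^2}{2\xi}-\frac{(g(x)/x)\cos(2x\xi)}{2\xi}+O\!\left(\min\!\left(\frac{\sqrt x}{\xi},\frac1{\xi^2}\right)\right),
\]
where the $\sqrt x/\xi$ branch of the $\min$ comes from the positivity $(g(t)/t)'\geq 0$ (so that $\int_0^x(g(t)/t)'dt=g(x)/x-\pi\Gamma(\tfrac54)^2\ll\sqrt x$). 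Second, for $\tilde H$ the substitution $v=u\ch\xi$ combined with the Fresnel identities $\int_0^\infty v^{-1/2}\sin v\,dv=\int_0^\infty v^{-1/2}\cos v\,dv=\sqrt{\pi/2}$ gives, via Taylor expansion at $v=0$ for small $v$ and a single integration by parts at large $v$,
\[
    |\tilde H(a,\xi)|\ll \min\!\left(\sqrt a,\,\frac1{\sqrt a\,\ch\xi}\right),
\]
with transition at $a\ch\xi\sim 1$.

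With these in hand, I estimate $J(x,a)$ separately in the two ranges. When $a\geq 1$ the second branch of $\tilde H$ holds throughout, and I split $J$ at $\xi=1/\sqrt a$ exactly as in the preceding proof. The short-range piece, bounded trivially using $|G_x|\ll \sqrt x/\xi$ and $|\tilde H|\ll 1/\sqrt a$, is $\ll \sqrt x/a$. The long-range piece breaks into six subintegrals coming from the three-term expansion of $G_x$ times the two-term asymptotic of $\tilde H$ (leading $(\cos+\sin)(a\ch\xi)$ plus $O((\sqrt a\ch\xi)^{-1}\cdot(a\ch\xi)^{-1})$ tail), each controlled by first derivative estimates applied to the oscillating factors $\cos(2x\xi)$, $\cos(a\ch\xi)$, $\sin(a\ch\xi)$. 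The most delicate term is the pairing of the $\int_0^x (g(t)/t)'\cos(2t\xi)\,dt$ remainder against the leading trigonometric part of $\tilde H$; this is handled by an additional integration by parts in $\xi$ exactly as at the end of Proposition \ref{prop:mve-int-est-pos}, invoking Lemma \ref{lem:ganelius} and the variation bound \eqref{eq:G-2-est}. When $a\leq 1$ the analogous split is made at $\xi_0=\log(1/a)$: on $[0,\xi_0]$ use $|\tilde H|\ll \sqrt a$ and $|G_x|\ll\sqrt x/\xi$, where the $\sh\xi$ grows to at most $e^{\xi_0}\ll 1/a$ so the piece contributes $\ll \sqrt x\,a^{-1/2}$; on $[\xi_0,\infty)$ repeat the six-term analysis of the $a\geq 1$ case with obvious modifications to obtain $\ll \sqrt x\,a^{-1/2}(\log(1/a)+1)$.

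The main obstacle throughout is the precise bookkeeping of the $\sqrt x$ factor, which enters through both the $g(x)/x$ coefficient in the expansion of $G_x$ and the $\sqrt x/\xi$ branch of the error term. One must verify at each stage that the oscillation in the various $\cos/\sin$ factors is exploited sharply enough for the final bound to sit at the claimed $\sqrt x$ level, rather than degrading toward the trivial $x^{5/2}$ bound that follows from $\int_0^x g(t)\,dt\ll x^{5/2}$.
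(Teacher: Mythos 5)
Your proposal is correct and follows essentially the same route as the paper: integrate by parts in $\xi$ to reduce $I(x,a)$ to $\int_0^\infty \sh\xi\, G_x(\xi)H(a,\xi)\,d\xi$, expand $G_x$ by one integration by parts with the $\min(\sqrt x/\xi,\xi^{-2})$ error from Lemma \ref{lem:ganelius} and \eqref{eq:G-1-est}, pair this against two estimates for the $u$-integral, split the $\xi$-range, and handle the delicate remainder term by a further integration by parts using \eqref{eq:G-2-est}. The only deviations — combining the sine and cosine kernels into $\tilde H$ to exploit the Fresnel cancellation $\tilde H\ll\sqrt a$ for $a\ch\xi\leq 1$, and splitting at $\log(1/a)$ rather than $1/a$ when $a\leq 1$ — are cosmetic and do not change the argument.
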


\begin{proof}
As before, we estimate the term involving $\cos(u\ch\xi)$ and we write
\begin{multline} \label{eq:int-est-neg-parts}
	\int_0^\infty \cos(2t\xi) \cos(u\ch\xi) \, d\xi  \\
	= \frac{1}{2t} \sin(2tT)\cos(u\ch T) + \frac{u}{2t}\int_0^T \sh \xi\sin(2t\xi) \sin(u\ch\xi) \, d\xi + R_T(u,t),
\end{multline}
with $R_T(u,t)\ll u^{-1/2}e^{-T/2}$.
Recalling \eqref{eq:g-t-approx} and \eqref{eq:g-t-approx0},
we have 
\[
	\int_0^x g(t) \int_a^\infty u^{-\frac 32} R_T(u,t) \, du \, dt \ll a^{-1} x^{\frac 52} e^{-T/2} \to 0 \quad \text{ as } T\to \infty
\]
and, by the first derivative estimate,
\[
	\int_0^x \frac{g(t)}{t} \sin(2tT) \, dt \int_a^\infty u^{-\frac 32}\cos(u\ch T) \, du \ll a^{-\frac 32}x^{\frac 32} e^{-T} \to 0 \quad \text{ as }T\to\infty.
\]

Our goal is to estimate
\begin{equation}
	J(x,a) := \int_0^{\infty} \sh\xi \, G_x(\xi) H(a,\xi) \, d\xi,
\end{equation}
where
\begin{equation} \label{eq:G-x-xi-def}
	G_x(\xi) := \int_0^x \frac{g(t)}{t} \sin(2t\xi) \, dt
\end{equation}
and
\begin{equation}\label{eq:haxi}
	H(a,\xi) := \int_a^\infty u^{-\frac 12} \sin(u\ch\xi) \, du.
\end{equation}
The integral defining $J(x,a)$ converges by an argument as in the proof of Proposition~\ref{prop:mve-int-est-pos}.

Recalling \eqref{eq:g-t-approx} and \eqref{eq:gtsign},   the first derivative estimate gives
\begin{equation} \label{eq:G-x-xi-est-1}
	G_x(\xi) \ll \frac{\sqrt{x}}{\xi}.
\end{equation}
For a better estimate, we integrate by parts in \eqref{eq:G-x-xi-def} to get
\begin{equation} \label{eq:G-x-xi-parts-2}
	G_x(\xi) = \frac{\pi \Gamma(\tfrac 54)^2}{2\xi} - \frac{g(x)}{x} \frac{\cos(2x\xi)}{2\xi} + \frac{1}{2\xi}\int_0^x \pfrac{g(t)}{t}' \cos(2t\xi) \, dt.
\end{equation}
We estimate the third term  in two ways.
Taking absolute values and using \eqref{eq:gtsign}, it is $\ll \sqrt x/\xi$; by \eqref{eq:G-1-est} and Lemma~\ref{lem:ganelius} it is $\ll 1/\xi^2$. 
Thus
\begin{equation} \label{eq:G-x-xi-int-est}
	\frac{1}{\xi}\int_0^x \pfrac{g(t)}{t}' \cos(2t\xi) \, dt \ll \min\(\frac{\sqrt x}\xi,\frac{1}{\xi^2}\).
\end{equation}
We also need two estimates for $H(a,\xi)$.
The first derivative estimate applied to \eqref{eq:haxi} gives
\begin{equation} \label{eq:H-a-xi-est-1}
	H(a,\xi) \ll \frac{1}{\sqrt a\, \ch \xi},
\end{equation}
while integrating \eqref{eq:haxi} by parts and applying the first derivative estimate gives
\begin{equation} \label{eq:H-a-xi-est-2}
	H(a,\xi) = \frac{\cos(a\ch\xi)}{\sqrt a \,\ch\xi} + O\(\frac{1}{a^{\frac 32}(\ch\xi)^2}\).
\end{equation}

First suppose that $a\leq 1$. Write
\[
	J(x,a) = J_1(x,a) + J_2(x,a) = \left(\int_0^{1/a} + \int_{1/a}^\infty\right) \sh\xi \, G_x(\xi) H(a,\xi) \, d\xi.
\]
By \eqref{eq:G-x-xi-est-1} and \eqref{eq:H-a-xi-est-1} we have
\begin{align*}
	J_1(x,a) \ll \sqrt{\frac xa} \, \left(\int_0^1 + \int_1^{1/a}\right) \frac{\th \xi}{\xi} \, d\xi \ll \sqrt \frac xa \, \big(1+\log(1/a) \big).
\end{align*}
By \eqref{eq:H-a-xi-est-2}, \eqref{eq:G-x-xi-est-1}, \eqref{eq:G-x-xi-parts-2}, and the second bound in \eqref{eq:G-x-xi-int-est} we have
\begin{align*}
	J_2(x,a) &=\frac{1}{\sqrt a}\int_{1/a}^\infty \th \xi \, G_x(\xi) \cos(a\ch\xi) \, d\xi 
	+ O\left(\frac{\sqrt x}{a^{\frac 32}} \int_{1/a}^\infty \frac{\sh\xi}{\xi(\ch\xi)^2} \, d\xi\right) \\
	&\ll \frac{1}{\sqrt a}\int_{1/a}^\infty \frac{\th \xi}{\xi} \cos(a\ch\xi) \, d\xi + \frac{g(x)}{x\sqrt a} \int_{1/a}^\infty \frac{\th \xi}{\xi} \cos(2x\xi)\cos(a\ch\xi) \, d\xi + \sqrt {x/a}.
\end{align*}
Applying the first derivative estimate to each integral above, we find that $J_2(x,a)\ll \sqrt{x/a}$, from which it follows that
\begin{equation} \label{eq:J-x-a-est-a-leq-1}
	J(x,a) \ll \sqrt{\frac xa} \, \big(\!\log(1/a)+1\big) \qquad \text{ for }a\leq 1.
\end{equation}

Now suppose that $a\geq 1$.
Using \eqref{eq:G-x-xi-est-1}, the contribution to $J(x,a)$ from the second term in \eqref{eq:H-a-xi-est-2} is
\[
	a^{-\frac 32}\int_0^\infty \frac{\sh \xi}{(\ch \xi)^2} G_x(\xi) \, d\xi \ll a^{-\frac 32} \sqrt x.
\]
We break the integral involving $\cos(a\ch \xi)$ at $1/\sqrt a$.
By \eqref{eq:G-x-xi-est-1} the contribution from the  initial segment is  $\ll a^{-1}\sqrt x$.
It remains to estimate
\[
	\frac{1}{\sqrt a}\int_{1/\sqrt a}^\infty \th\xi \, G_x(\xi) \cos(a\ch\xi) \, d\xi,
\]
which we break into three terms using \eqref{eq:G-x-xi-parts-2}.
For the first two terms  the first derivative estimate gives
\begin{gather*}
	\frac{1}{\sqrt a}\int_{1/\sqrt a}^\infty \frac{\th \xi}{\xi} \cos(a\ch\xi) \, d\xi \ll \frac 1a, \\
	\frac{g(x)}{x\sqrt a} \int_{1/\sqrt a}^\infty \frac{\th \xi}{\xi} \cos(2x\xi) \cos(a\ch \xi) \, d\xi \ll \frac{\sqrt x}{a}.
\end{gather*}
The remaining term,
\[
	\frac{1}{\sqrt a} \int_{1/\sqrt a}^\infty \frac{\th \xi}{\xi} \cos(a\ch\xi) \int_0^x \pfrac{g(t)}{t}' \sin(2t\xi) \, dt \, d\xi,
\]
requires more care, but we follow the estimate for the corresponding term in the proof of Proposition \ref{prop:mve-int-est-pos} using Lemma \ref{lem:ganelius} and \eqref{eq:G-2-est}. 
The details are  similar, and the contribution is 
$ \ll \frac{\sqrt x}{a}$,
which, together with \eqref{eq:J-x-a-est-a-leq-1}, completes the proof.
\end{proof}

\subsection{Proof of Theorem \ref{thm:mve-general}}
We give the proof for the case $k=1/2$, as the other case is analogous.
By Proposition \ref{prop:mve-int-est-pos} we have 
\[
I(x,a)\ll\min(1,a^{1/2}\log(1/a))\ll_\delta a^\delta\qquad \text{for any $0\leq \delta<1/2$}.
\]
So by \eqref{eq:mve-1} and \eqref{eq:mve-main-term} we have
\begin{equation*}
	n_\nu \sum_{r_j} \frac{|a_j(n)|^2}{\ch \pi r_j} h_x(r_j) = \frac{x^{\frac 32}}{3\pi^2} + O_\delta\left( x^\frac 12 + n^{\frac 12+\delta} \sum_{c>0} \frac{|S(n,n,c,\nu)|}{c^{3/2+\delta}} \right).
\end{equation*}
Setting $\delta=\beta-1/2$, with $\beta$ as in \eqref{eq:mve-weil-assumption}, this becomes
\begin{equation} \label{eq:mve-weighted}
	n_\nu \sum_{r_j} \frac{|a_j(n)|^2}{\ch \pi r_j} h_x(r_j) = \frac{x^{\frac 32}}{3\pi^2} + O\left( x^\frac 12 + n^{\beta+\epsilon}\right).
\end{equation}

The function $h_x(r)$ (recall \eqref{eq:hxrdef})  is a smooth approximation to the characteristic function of $[0,x]$.  We recall some properties from \cite[(5.4)--(5.7)]{kuznetsov} for $x\geq 1$:
\begin{align}
 \label{eq:h-x-r-R}
	&h_x(r) = \mfrac 2\pi \arctan(e^{\pi x-\pi r}) +O(e^{-\pi r}) &&\text{for $r\geq 1$}, \\
\label{eq:h-x-r-leq-x} 
	&h_x(r) = 1 + O(x^{-3} + e^{-\pi r})  &&\text{for }1\leq r\leq x-\log x, \\
\label{eq:h-x-r-geq-x}
	&h_x(r) \ll e^{-\pi (r-x)}  &&\text{for }r\geq x+\log x,  \\
\label{eq:h-x-r-geq-x2}
	&0<h_x(r) < 1  &&\text{for } r\geq 0.
\end{align}
For $x\geq 1$ and $0\leq r\leq x$ we bound $h_x(r)$ uniformly from below as follows.
If $r\geq 1$ we have
\[
	h_x(r) = 4\ch \pi r \int_0^x\frac{\sh \pi t}{\ch 2\pi r+\ch 2\pi t}\, dt
	\geq \frac{2\ch \pi r}{\ch 2\pi r}\int_0^r \sh \pi t\, dt > \mfrac 14,
\]
while if $r\leq 1$ we have
\[
	h_x(r) \geq 4\ch \pi r \int_0^1 \frac{\sh\pi t}{\ch 2\pi r+\ch2\pi t} \, dt \geq 4\int_0^1 \frac{\sh \pi t}{\ch2\pi +\ch2\pi t} \, dt > \mfrac{3}{100}.
\]
Similarly, we have $h_x(r)>2/5$ for $r\in i[0,1/4]$.

Set
\begin{equation} \label{eq:mve-A-x-def}
	A(x) := n_\nu \sum_{\substack{r_j\leq x \\ \text{or\ }r_j\in i\R}} \frac{ |a_j(n)|^2}{\ch\pi r_j}.
\end{equation}
Then
\[A(x)\ll  n_\nu \sum_{\substack{r_j\leq x \\ \text{or\ } r_j\in i\R}} \frac{|a_j(n)|^2}{\ch \pi r_j} h_x(r_j) \ll
	n_\nu \sum_{r_j} \frac{|a_j(n)|^2}{\ch \pi r_j} h_x(r_j) 
\]
which, together with \eqref{eq:mve-weighted}, gives
\begin{equation} \label{eq:A-x-trivial}
	A(x) \ll x^{\frac 32} + n^{\beta+\epsilon}.
\end{equation}
Let $A^*(x)$ denote the sum \eqref{eq:mve-A-x-def} restricted to   $1\leq r_j\leq x$.
By \eqref{eq:A-x-trivial}  with $x=1$ we have $A(x)=A^*(x)+O(n^{\beta+\epsilon})$, so in what follows we  work with $A^*(x)$.

Assume that $x\geq 2$ and set $X=x+2\log x$ so that $X-\log X\geq x$.
Then by \eqref{eq:h-x-r-leq-x} there exist $c_1,c_2>0$ such that 
\begin{equation*}
	n_\nu \sum_{1\leq r_j} \frac{|a_j(n)|^2}{\ch \pi r_j} h_{X}(r_j) \geq A^*(x)\left(1-\frac{c_1}{x^3}\right) - c_2 \, n_\nu \sum_{1\leq r_j\leq x} \frac{|a_j(n)|^2}{\ch\pi r_j} e^{-\pi r_j}.
\end{equation*}
On the other hand, \eqref{eq:mve-weighted} gives
\[
	n_\nu \sum_{1\leq r_j} \frac{|a_j(n)|^2}{\ch \pi r_j} h_{X}(r_j)
	= \frac{x^{\frac 32}}{3\pi^2} + O\(x^{\frac 12}\log x + n^{\beta+\epsilon}\).
\]
By \eqref{eq:A-x-trivial} we have
\begin{equation} \label{eq:a-star-error}
	n_\nu \sum_{1\leq r_j\leq x} \frac{|a_j(n)|^2}{\ch\pi r_j} e^{-\pi r_j}
	= n_\nu \sum_{\ell=1}^{\lfloor x\rfloor} \sum_{\ell\leq r_j\leq \ell+1} \frac{|a_j(n)|^2}{\ch \pi r_j} e^{-\pi r_j}
	\ll \sum_{\ell=1}^\infty e^{-\pi \ell} \big(\ell^{\frac 32} + n^{\beta+\epsilon}\big)
	 \ll n^{\beta+\epsilon},
\end{equation}
so that
\begin{equation} \label{eq:A-x-1}
	A^*(x) \leq \frac{x^{\frac 32}}{3\pi^2} + O\(x^{\frac 12}\log x + n^{\beta+\epsilon}\).
\end{equation}
Similarly, replacing $x$ by $x-\log x$ in \eqref{eq:mve-weighted} and using \eqref{eq:h-x-r-geq-x},  \eqref{eq:h-x-r-geq-x2} gives
\[
	A^*(x) \geq \frac{x^{\frac 32}}{3\pi^2} + O\(x^{\frac 12}\log x + n^{\beta+\epsilon}\) + O\Big( n_\nu \sum_{r_j>x} \frac{|a_j(n)|^2}{\ch\pi r_j}e^{-\pi(r_j-x+\log x)} \Big).
\]
Arguing as in \eqref{eq:a-star-error}, the error term is $\ll x^{-\frac 32} + n^{\beta+\epsilon} x^{-3}$, 
from which we obtain
\begin{equation} \label{eq:A-x-2}
	A^*(x) \geq \frac{x^{\frac 32}}{3\pi^2} + O\(x^{\frac 12}\log x + n^{\beta+\epsilon}\).
\end{equation}
Equations \eqref{eq:A-x-1} and \eqref{eq:A-x-2} together give Theorem \ref{thm:mve-general}.
\qed

%%%%%%%%%%%%%%%%%%%%%%%%%%%%%%%%%%%%%%%%%%%%%%%%%%%%%%%%%%%%%%%%
%%%%%%%%%%%%%%%%%%%%%%%%%%%%%%%%%%%%%%%%%%%%%%%%%%%%%%%%%%%%%%%%
%%%%%%%% KUZNETSOV TRACE FORMULA %%%%%%%%%%%%%%%%%%%%%%%%%%%%%%%
%%%%%%%%%%%%%%%%%%%%%%%%%%%%%%%%%%%%%%%%%%%%%%%%%%%%%%%%%%%%%%%%
%%%%%%%%%%%%%%%%%%%%%%%%%%%%%%%%%%%%%%%%%%%%%%%%%%%%%%%%%%%%%%%%

\section{The Kuznetsov trace formula} \label{sec:KTF}
We require a variant of Kuznetsov's formula \cite[Theorem 1]{kuznetsov}, which relates weighted sums of the Kloosterman sums $S(m,n,c,\chi)$ on one side to spectral data on the other side (in this case, weighted sums of coefficients of Maass cusp forms).
In \cite{proskurin-new}, Proskurin generalized Kuznetsov's theorem to any weight $k$ and multiplier system $\nu$, but only for the case $m,n>0$.
Here we treat the mixed sign case when $k=1/2$ and $\nu=\chi$; the proof follows the same lines.
Blomer \cite{blomer} has  recorded this formula for twists of the  theta-multiplier by a Dirichlet character;
we provide a sketch of the proof in the present case since there are some details which are unique to this situation.

Suppose that $\phi: [0, \infty)\rightarrow \C$ is four times continuously differentiable and satisfies
\begin{equation}\label{phiproperties}
\phi(0)=\phi'(0)=0,\quad \phi^{(j)}(x)\ll_\epsilon x^{-2-\ep} \quad (j=0,\dots, 4) \quad \text{as $x\rightarrow\infty$}
\end{equation}
for some $\ep>0$.
Define
\begin{equation}\label{phicheckdef}
\check\phi(r):=\ch \pi r\int_0^\infty K_{2ir}(u)\phi(u)\frac{du}{u}.
\end{equation}

We state the analogue of the main result of \cite{proskurin-new} in this case.
\begin{theorem}\label{KTFmixedsign}
Suppose that $\phi$ satisfies the conditions \eqref{phiproperties}. 
As in \eqref{eq:uj-fix}
let $\rho_j(n)$ denote the coefficients of an orthornomal basis $\{u_j\}$ for $\mathcal{S}_{1/2}(1,\chi)$ with spectral parameters $r_j$.
If $m>0$ and $n<0$ then
\[
 \sum_{c>0}\frac{S(m, n,c,\chi)}c\phi\bigg(\frac{4\pi\sqrt{\tilde m|\tilde n|}}c\bigg)
=8\sqrt i\sqrt{\tilde m|\tilde n|}\sum_{r_j}\frac{\overline{\rho_j(m)}\rho_j(n)}{\ch \pi r_j}\check\phi(r_j).
\]
\end{theorem}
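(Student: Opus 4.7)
The plan is to adapt Proskurin's derivation in \cite{proskurin-new} (which covers only the same-sign case) to the mixed-sign setting, where the qualitative difference is that the spectral transform involves $K_{2ir}$ rather than $J_{2ir}$ (this is why the transform $\check\phi$ features the $K$-Bessel function).

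\emph{Step 1 (Poincar\'e series).} For the positive index $m>0$ I would define a weight-$\frac12$, multiplier $\chi$ Poincar\'e series
\[
P_m(\tau,s) := \sum_{\gamma\in\Gamma_\infty\backslash\SL_2(\Z)} \bar\chi(\gamma)\, j(\gamma,\tau)^{-1/2}\,(\im\gamma\tau)^{s}\, e\big(\tilde m\,\re\gamma\tau\big)\exp\!\big(\!-2\pi\tilde m\,\im\gamma\tau\big),
\]
and, for the negative index $n<0$, an analogous series $P_n^-(\tau,s)$ whose seed uses $e(\tilde n\re\gamma\tau)\exp(+2\pi\tilde n\im\gamma\tau)$ so that it still decays as $\im\tau\to\infty$. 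Both series converge absolutely for $\re s$ sufficiently large and transform like sections of weight $1/2$ with multiplier $\chi$.

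\emph{Step 2 (spectral expansion).} Since neither cusp of $\SL_2(\Z)$ is singular for $\chi$, the Hilbert space $\mathcal L_{1/2}(1,\chi)$ has no continuous spectrum, and the Petersson inner product decomposes as
\[
\langle P_m(\cdot,s_1),P_n^-(\cdot,s_2)\rangle = \sum_j \langle P_m,u_j\rangle\,\overline{\langle P_n^-,u_j\rangle}.
\]
Unfolding each inner product against the Fourier expansion \eqref{eq:uj-fix} reduces it to a Mellin transform of a Whittaker function against the seed; using \eqref{eq:W-int-rep} (and the standard Gamma-beta identity) this evaluates explicitly to $\bar{\rho_j(m)}\,\rho_j(n)$ times a product of Gamma factors depending on $s_1,s_2,r_j$.

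\emph{Step 3 (geometric unfolding).} Unfolding $P_m$ against $P_n^-$ along the double cosets $\Gamma_\infty\backslash\SL_2(\Z)/\Gamma_\infty$ (the identity coset producing a vanishing contribution since $m\tilde n<0$ forbids $m=n$) yields a sum over $c>0$ of Kloosterman sums $S(m,n,c,\chi)$, each multiplied by a double integral of the two seed functions. The integration over $x$ uses the identity $\int_{-\infty}^\infty (x^2+y^2)^{-s}\,e(\alpha x)\,dx$ to produce a $K$-Bessel kernel, and it is precisely the mixed-sign condition that turns this kernel into $K_{2ir}$ rather than the $J$-Bessel kernel appearing in Proskurin's case; this is where the $K$-Bessel function in \eqref{phicheckdef} first enters.

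\emph{Step 4 (extension to arbitrary $\phi$).} Steps 2 and 3 produce a pre-formula for a specific kernel $\Phi_{s_1,s_2}$ on the Kloosterman side and its $K$-Bessel transform $\check\Phi_{s_1,s_2}$ on the spectral side. To obtain the formula for general $\phi$ satisfying \eqref{phiproperties}, I would invert the Kontorovich--Lebedev transform (or equivalently take an appropriate Mellin superposition in $s_1,s_2$) and then verify absolute convergence of both sides: on the spectral side using Theorem \ref{thm:mve} together with the decay of $\check\phi(r)$ (which follows from the smoothness and decay hypotheses on $\phi$), and on the Kloosterman side using the trivial bound \eqref{eq:trivial_est} together with the vanishing $\phi(0)=\phi'(0)=0$ ensuring summability near $c=\infty$.

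The main obstacle will be Step 3, in particular pinning down the overall constant $8\sqrt{i}\sqrt{\tilde m|\tilde n|}$. This requires careful bookkeeping of: the eighth-root-of-unity factors coming from $\bar\chi(\gamma)$ via \eqref{eq:chi-dedekind-sum}--\eqref{eq:chi-kronecker-symbol}, the branch of $j(\gamma,\tau)^{-1/2}$ (the slash operator convention with $\arg\in(-\pi,\pi]$), the contour deformation replacing the real-line $x$-integral by a $K$-Bessel integral analogous to that in Lemma \ref{lem:int_L}, and the Whittaker normalization in \eqref{eq:uj-fix}. Once those pieces are assembled correctly, the statement follows.
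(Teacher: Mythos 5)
Your proposal follows essentially the same route as the paper: your $P_n^-$ is exactly the paper's $\overline{\mathcal U_{1-n}(\cdot,s_2,-\tfrac12,\bar\chi)}$ written out directly, and the paper likewise computes the inner product of the two Poincar\'e series both by Parseval (Lemma \ref{innerprodlemma2}) and by unfolding to a Kloosterman-sum expansion with a $K_{s_1-s_2}$-Bessel kernel (Lemma \ref{innerprodlemma1}), then sets $s_1=\sigma+\tfrac{it}2$, $s_2=\sigma-\tfrac{it}2$, justifies $\sigma\to1$, and applies the Kontorovich--Lebedev transform. The steps you flag as delicate (the vanishing of the diagonal term because $\tilde m\tilde n<0$, the contour shift producing the $K$-Bessel function, and the convergence justifications at $\sigma=1$) are precisely the points the paper spends its effort on, so the plan is sound.
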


The proof involves evaluating the inner product of   Poincar\'e series in two ways.
Let $\tau=x+iy \in \H$ and $s=\sigma+it\in \C$.
Let $k\in \R$ and let $\nu$ be a multiplier system for $\Gamma$ in weight $k$.
For $m>0$ the Poincar\'e series $\mathcal U_m(\tau,s,k,\nu)$ is defined by
\begin{equation}\label{eq:U-m-def}
	\mathcal U_m(\tau,s,k,\nu) := \sum_{\gamma\in \Gamma_\infty \backslash\Gamma} \bar{\nu(\gamma)} \, j(\gamma,\tau)^{-k} \im(\gamma\tau)^s e(m_\nu \gamma \tau), \qquad \sigma>1.
\end{equation}
This function satisfies the transformation law
\[
	\mathcal U_m(\cdot,s,k,\nu) \sl_k \gamma = \nu(\gamma) \, \mathcal U_m(\cdot,s,k,\nu) \qquad \text{for all }\gamma\in \Gamma 
\]
and has Fourier expansion (see equation (15) of \cite{proskurin-new})
\begin{equation}\label{eq:U-m-fourier}
	\mathcal U_m(\tau, s, k, \nu)=y^s e( m_\nu \tau)+y^s\sum_{\ell\in \Z}\sum_{c>0}\frac{S(m, \ell, c,\nu )}{c^{2s}}B(c, m_\nu, \ell_\nu, y, s,k)e(\ell_\nu x),
\end{equation}
where 
\begin{equation}\label{bigBdef}
\begin{aligned}B(c, m_\nu, \ell_\nu, y, s,k)
=y\int_{-\infty}^\infty e\leg{-m_\nu}{c^2y(u+i)}\leg{u+i}{|u+i|}^{-k}e(-\ell_\nu yu)\frac{du}{y^{2s}(u^2+1)^s}.
\end{aligned}
\end{equation}

In the next two lemmas we will obtain expressions for the inner product
\[
	\left\langle \mathcal U_m\(\cdot, s_1,\tfrac12, \chi\),\  \overline{\mathcal U_{1-n}\(\cdot, s_2, -\tfrac12, \overline\chi\)}\right\rangle.
\]
The  first lemma 
is stated in such a way that
symmetry may be exploited in its proof.
\begin{lemma}\label{innerprodlemma1}  
Suppose that $(k, \nu)$ is one of the pairs $(1/2, \chi)$ or $(-1/2, \bar\chi)$.
Suppose     that $m>0$, $n<0$.
Then for  $\re s_1>1$, $\re s_2>1$ we have
\begin{multline} \label{eq:innerprodlemma1}
\left\langle \mathcal U_m\(\cdot, s_1, k, \nu\),\  \overline{\mathcal U_{1-n}\(\cdot, s_2, -k, \overline\nu\)}\right\rangle\\
=i^{-k}2^{3-s_1-s_2}\pi \pmfrac{m_\nu}{|n_\nu|}^\frac{s_2-s_1}2 \!\!\! \frac{\Gamma(s_1+s_2-1)}{\Gamma(s_1-k/2)\Gamma(s_2+k/2)}
\sum_{c>0}\frac{S(m, n,c,\nu)}{c^{s_1+s_2}}K_{s_1-s_2}\bigg(\frac{4\pi\sqrt{ m_\nu |n_\nu|}}c\bigg).
\end{multline}
\end{lemma}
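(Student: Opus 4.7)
The plan is to compute the inner product via a Rankin--Selberg unfolding, evaluate the resulting double integral in $y$ and $u$, and then identify the Kloosterman sum that emerges with $S(m,n,c,\nu)$ via an involution on $\Gamma_\infty\backslash\Gamma/\Gamma_\infty$. The symmetric hypothesis on $(k,\nu)$ allows both cases $(1/2,\chi)$ and $(-1/2,\bar\chi)$ to be handled in a single computation, since complex-conjugating the inner product exchanges the two Poincar\'e series and simultaneously swaps $(k,\nu)\leftrightarrow(-k,\bar\nu)$.

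Set $h(\tau):=\mathcal U_{1-n}(\tau,s_2,-k,\bar\nu)$, so the inner product equals $\int_{\Gamma\backslash\H}\mathcal U_m\cdot h\,d\mu$. The transformation laws make $\mathcal U_m\cdot h$ $\Gamma$-invariant, and moreover give $\bar\nu(\gamma)j(\gamma,\tau)^{-k}h(\tau)=h(\gamma\tau)$; this identity absorbs the $\gamma$-factor of each summand of $\mathcal U_m$ into $h$, so unfolding over $\Gamma_\infty\backslash\Gamma$ collapses the integral onto the strip $[0,1]\times(0,\infty)$ with integrand $y^{s_1-2}e^{-2\pi m_\nu y}e(m_\nu x)h(\tau)$. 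Substituting the Fourier expansion \eqref{eq:U-m-fourier} of $h$, noting $(1-n)_{\bar\nu}=-n_\nu$ and $(1-m)_{\bar\nu}=-m_\nu$, and using orthogonality of $e(m_\nu x)e(\ell_{\bar\nu}x)$ in $x\in[0,1]$, only the term $\ell=1-m$ survives; the constant term $y^{s_2}e(-n_\nu\tau)$ contributes nothing because $m\ne n$. What remains is
\[
\sum_{c>0}\frac{S(1-n,1-m,c,\bar\nu)}{c^{2s_2}}\int_0^\infty y^{s_1+s_2-2}e^{-2\pi m_\nu y}B(c,-n_\nu,-m_\nu,y,s_2,-k)\,dy.
\]

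Expanding $B$ via \eqref{bigBdef} and exchanging the order of integration, the inner $y$-integral has the form $\int_0^\infty y^{s_1-s_2-1}\exp(-Ay-B/y)\,dy$ with $A=2\pi m_\nu(1-iu)$ and $B=-2\pi in_\nu/(c^2(u+i))$. The simplification $1-iu=-i(u+i)$ yields $AB=4\pi^2 m_\nu|n_\nu|/c^2$ and $B/A=n_\nu/(c^2 m_\nu(u+i)^2)$, so the classical identity $\int_0^\infty y^{s-1}e^{-Ay-B/y}\,dy=2(B/A)^{s/2}K_s(2\sqrt{AB})$ produces the factor $K_{s_1-s_2}(4\pi\sqrt{m_\nu|n_\nu|}/c)$ together with explicit powers of $(u+i)$ and of $m_\nu/|n_\nu|$. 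After the substitution $u=\tan\phi$, under which $u+i=\sec\phi\cdot e^{i(\pi/2-\phi)}$, the remaining $u$-integral reduces to the standard Beta integral
\[
\int_{-\pi/2}^{\pi/2}\cos^{s_1+s_2-2}\phi\cdot e^{-i(s_1-s_2-k)\phi}\,d\phi=\frac{\pi\,\Gamma(s_1+s_2-1)}{2^{s_1+s_2-2}\,\Gamma(s_1-k/2)\Gamma(s_2+k/2)},
\]
producing the entire non-Kloosterman factor of the right-hand side.

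Finally, to convert $S(1-n,1-m,c,\bar\nu)$ into $S(m,n,c,\nu)$ I will apply the involution $\gamma\mapsto -\gamma^{-1}$ on $\Gamma_\infty\backslash\Gamma/\Gamma_\infty$, which sends $\pmatrix abcd$ to $\pmatrix{-d}{b}{c}{-a}$ and so preserves $c>0$. A short cocycle computation using $\nu(I)=1$ and $\nu(-I)=e^{-i\pi k}$ yields the identity $\nu(-\gamma^{-1})=e^{-i\pi k}\bar\nu(\gamma)$; combined with the transformation $(a,d)\mapsto(-d,-a)$ in the exponential it gives $S(1-n,1-m,c,\bar\nu)=e^{-i\pi k}S(m,n,c,\nu)$, and this phase combines with those emerging from the fractional powers of $B/A$ and of $u+i$ to collapse into the single $i^{-k}$ prefactor of the statement. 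I expect the main technical obstacle to be keeping track of branch choices for $(B/A)^{(s_1-s_2)/2}$ and $(u+i)^{k-s_1+s_2}$: the argument of $u+i$ sweeps a full half-turn as $u$ runs over $\R$, so the branch conventions must be chosen compatibly with the definition of $j(\gamma,\tau)^k$ and of the multiplier $\nu$ in order for the various phase factors to simplify cleanly.
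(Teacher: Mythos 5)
Your proposal is correct in substance, but it is the mirror image of the paper's argument rather than a reproduction of it. The paper unfolds the \emph{second} Poincar\'e series $\mathcal U_{1-n}(\cdot,s_2,-k,\bar\nu)$, reducing the inner product to $\int_{\Gamma_\infty\backslash\H}\mathcal U_m(\tau,s_1,k,\nu)\,y^{s_2}e(-n_\nu\tau)\,d\mu$, and then substitutes the Fourier expansion \eqref{eq:U-m-fourier} of $\mathcal U_m$; this makes $S(m,n,c,\nu)$ appear directly, with no need for your final involution step. You instead unfold $\mathcal U_m$ against the Fourier expansion of $\mathcal U_{1-n}$, pick up $S(1-n,1-m,c,\bar\nu)$, and must convert via $\gamma\mapsto-\gamma^{-1}$; that identity, $S(m,n,c,\nu)=e^{\pi ik}S(1-n,1-m,c,\bar\nu)$, is exactly the one the paper proves, though it uses it for a different purpose (see below). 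Your evaluation of the inner integral via $\int_0^\infty y^{s-1}e^{-Ay-B/y}\,dy=2(B/A)^{s/2}K_s(2\sqrt{AB})$ and of the $u$-integral via $u=\tan\phi$ and the Beta integral is equivalent to the paper's route through \cite[(10.32.10)]{nist} and \cite[(5.12.8)]{nist}, and I have checked that your branch bookkeeping does close up: with the principal branch one gets $(B/A)^{(s_1-s_2)/2}=\big(|n_\nu|\cos^2\phi/(c^2m_\nu)\big)^{(s_1-s_2)/2}e^{i\phi(s_1-s_2)}$, and the constant phases $e^{ik\pi/2}$ from $j(\cdot)^{-(-k)}$ and $e^{-i\pi k}$ from the involution combine to $i^{-k}$ as claimed.

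The one point your sketch genuinely elides is the justification of the interchange of $\sum_{c}$ (and the $\ell$-sum in the Fourier expansion) with the $(u,y)$-integration. A triangle-inequality majorant of your double integral grows like $c^{2\re s_2-1}$, so the termwise bound $\sum_c|S(1-n,1-m,c,\bar\nu)|c^{-2s_2}\cdot c^{2\re s_2-1}$ diverges; absolute convergence of the unfolded expression only holds a priori in a half-plane. The paper confronts the same obstacle: it first proves the identity for $\re s_2>\re s_1$, where the majorant $\int_0^\infty y^{\re s_2-\re s_1-1}e^{3\pi n_\nu y}\,dy$ converges, and then extends to all of $\re s_1,\re s_2>1$ by observing that both sides are invariant under $(s_1,k,\nu,m)\leftrightarrow(s_2,-k,\bar\nu,1-n)$ --- which is where the involution identity is actually used --- together with continuity on the line $\re s_1=\re s_2$. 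Your mirrored unfolding will need the analogous initial restriction $\re s_1>\re s_2$ and the same symmetry-plus-continuity argument; since you already invoke that symmetry in your opening paragraph, this is a fixable omission rather than a structural flaw, but it must be made explicit for the proof to be complete.
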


\begin{proof}[Proof of Lemma~\ref{innerprodlemma1}]
We will prove \eqref{eq:innerprodlemma1} under the assumption $\re s_2>\re s_1$. 
Recall the definitions \eqref{eq:kloos_def}, 
\eqref{eq:alpha-nu-bar}, and \eqref{eq:n_nu_def},
and recall that 
\begin{equation}\label{eq:one_minus_n}
(1-n)_{\bar\nu}=-n_\nu.
\end{equation}
Replacing $\gamma$ by $-\gamma^{-1}$ in \eqref{eq:kloos_def} and
using $\nu(-I)=e^{-\pi i k}$, we find that
\[
	S(m,n,c,\nu)= e^{\pi i k} \, S(1-n,1-m,c,\overline\nu).
\]
The case $\re s_1>\re s_2$  then follows 
since both  sides of the putative identity are invariant  under the changes $s_1\leftrightarrow s_2$, $k\leftrightarrow -k$, $\nu\leftrightarrow \overline\nu$, $m\leftrightarrow 1-n$ (see \cite[(10.27.3)]{nist}).
The case $\re s_1=\re s_2$  follows by continuity.

Let $I_{m,n}(s_1,s_2)$ denote the inner product on the left-hand side of \eqref{eq:innerprodlemma1}.
Unfolding using  \eqref{eq:U-m-fourier} and \eqref{eq:U-m-def}, we 
 find that
 \begin{equation}\label{unfold}
 \begin{aligned}
I_{m,n}(s_1,s_2) &= 
\int_{\Gamma_\infty\backslash \H} \mathcal U_m(\tau, s_1, k, \nu)(\im \tau)^{s_2}e(-n_\nu)\, d\mu\\
&=\int_0^\infty y^{s_1+s_2-2}
\sum_{c>0}\frac{S(m,n,c,\nu)}{c^{2s_1}}B\(c,m_\nu,n_\nu,y,s_1,k\)e^{2\pi n_\nu y}\, dy.
\end{aligned}
\end{equation}
By \cite[(17)]{proskurin-new} 
we have
\[B(c,m_\nu,n_\nu,y,s_1,k)\ll_{s_1}y^{1-2\re s_1}e^{-\pi|n_\nu|y}.\]
Therefore the integral in \eqref{unfold} is majorized by the convergent (since $\re s_2>\re s_1$) integral
\[\int_0^\infty y^{\re s_2- \re s_1-1}e^{3\pi n_\nu y}\, dy.\]
Interchanging the integral and sum in \eqref{unfold} and using \eqref{bigBdef},
we find that 
\begin{multline*}
I_{m,n}(s_1,s_2)=
\sum_{c>0}\frac{S(m,n,c,\nu)}{c^{2s_1}}\int_{-\infty}^\infty \pmfrac{u+i}{|u+i|}^{-k}(u^2+1)^{-s_1}\\
\times\(\int_0^\infty y^{s_2-s_1-1}e\(\mfrac{-  m_\nu}{c^2 y(u+i)}-  n_\nu y u\)e^{2\pi  n_\nu y}\, dy\)\, du.
\end{multline*}
Setting $w=2\pi n_\nu(iu-1)y$ and $a=4\pi\sqrt{ m_\nu| n_\nu|}/c$,  the inner integral  becomes
\[ 
(2\pi   n_\nu(iu-1))^{s_1-s_2}\int_0^{(1-iu)\infty} \exp\(-w-\frac{a^2}{4w}\)\frac{dw}{w^{s_1-s_2+1}}.
\]
We may shift the path of integration to the positive real axis since
\[\int_{T}^{(1-iu)T}\exp\(-w-\frac{a^2}{4w}\)\frac{dw}{w^{s_1+s_2+1}}\ll T^{-\re (s_1-s_2)}e^{-T} \qquad \text{as $T\to\infty$}.\]
Using the integral representation \cite[(10.32.10)]{nist} for the $K$-Bessel function,
we find that the inner integral is equal to 
\[2\pmfrac{ m_\nu}{  |n_\nu|}^{\frac{s_2-s_1}2}(1-iu)^{s_1-s_2}c^{s_1-s_2}K_{s_1-s_2}\pmfrac{4\pi\sqrt{ m_\nu|n_\nu|}}c.\]
It follows that 

\begin{multline*}
	I_{m,n}(s_1,s_2) 
	= 2\pmfrac{m_\nu}{|n_\nu|}^\frac{s_2-s_1}2 \sum_{c>0}\frac{S(m,n,c,\nu)}{c^{s_1+s_2}}
	K_{s_1-s_2}\pmfrac{4\pi\sqrt{ m_\nu| n_\nu|}}c\\
	\times \int_{-\infty}^\infty
\pmfrac{u+i}{|u+i|}^{-k}(u^2+1)^{-s_1}(1-iu)^{s_1-s_2}\, du.
\end{multline*}
Lemma~\ref{innerprodlemma1} follows after using \cite[(5.12.8)]{nist} to evaluate the integral.
\end{proof}

Recall the definition
\begin{equation}\label{Lambdadef}
\Lambda(s_1, s_2, r) = \Gamma(s_1-\tfrac12-ir)\Gamma(s_1-\tfrac12+ir)\Gamma(s_2-\tfrac12-ir)\Gamma(s_2-\tfrac12+ir).
\end{equation}

\begin{lemma}\label{innerprodlemma2}Suppose that $\re s_1>1, \re s_2>1$.  
As in \eqref{eq:uj-fix}
let $\rho_j(n)$ and $r_j$ denote the coefficients and spectral parameters of an orthornomal basis $\{u_j\}$ for $\mathcal{S}_{1/2}(1,\chi)$.
If $m>0$ and $n<0$ then 
\begin{multline}
\left\langle \mathcal U_m\(\cdot, s_1, \tfrac12, \chi\),\  \overline{\mathcal U_{1-n}\(\cdot, s_2, -\tfrac12, \overline\chi\)}\right\rangle
=\frac{(4\pi  m_\chi)^{1-s_1}(4\pi|  n_\chi |)^{1-s_2}}{\Gamma(s_1-1/4)\Gamma(s_2+1/4)}\sum_{r_j}\overline{\rho_j(m)}\rho_j(n)\Lambda(s_1, s_2, r_j).
\end{multline}
\end{lemma}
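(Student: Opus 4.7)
The plan is to apply Parseval's identity and reduce everything to two Rankin--Selberg unfoldings. Since the unique cusp of $\SL_2(\Z)$ is singular for neither $\chi$ nor $\overline\chi$, the spaces $\mathcal{L}_{\pm 1/2}(1,\chi^{\pm 1})$ contain no Eisenstein contribution, and $\{u_j\}$ (resp.~$\{\overline{u_j}\}$) is a complete orthonormal basis of $\mathcal{S}_{1/2}(1,\chi)$ (resp.~$\mathcal{S}_{-1/2}(1,\overline\chi)$). For $\re s_1, \re s_2 > 1$ the Poincar\'e series lie in $L^2$, so a routine manipulation of the sesquilinear form produces
\[
\left\langle \mathcal U_m(\cdot, s_1, \tfrac12, \chi),\ \overline{\mathcal U_{1-n}(\cdot, s_2, -\tfrac12, \overline\chi)}\right\rangle = \sum_j \langle \mathcal U_m, u_j\rangle \, \langle \mathcal U_{1-n}, \overline{u_j}\rangle.
\]

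I would evaluate each factor by the standard unfolding in $\Gamma_\infty\backslash\H$. For the first, the Whittaker function $W_{\kappa, ir}(y)$ is real for $y,\kappa\in\R$ and $r\in\R\cup i\R$ (by \cite[(13.14.31)]{nist} together with the exceptional eigenvalue bound $\lambda_0(1/2)=3/16$ forcing $|\im r_j|<1/2$), so expanding $\overline{u_j}$ via \eqref{eq:uj-fix} and integrating in $x$ selects the $n=m$ coefficient, leaving
\[
\langle \mathcal U_m, u_j \rangle = \overline{\rho_j(m)} \int_0^\infty y^{s_1-2} e^{-2\pi m_\chi y} W_{1/4,\,ir_j}(4\pi m_\chi y)\, dy.
\]
After the substitution $u=4\pi m_\chi y$, I would invoke the classical Mellin formula
\[
\int_0^\infty e^{-u/2} u^{s-1} W_{\kappa,\mu}(u)\, du = \frac{\Gamma(s+\mu+\tfrac12)\Gamma(s-\mu+\tfrac12)}{\Gamma(s-\kappa+1)}
\]
(valid for $\re s>|\re\mu|-\tfrac12$) with $\kappa=\tfrac14$, $\mu=ir_j$, $s=s_1-1$ to obtain
\[
\langle \mathcal U_m, u_j \rangle = \overline{\rho_j(m)}\,(4\pi m_\chi)^{1-s_1}\,\frac{\Gamma(s_1-\tfrac12+ir_j)\Gamma(s_1-\tfrac12-ir_j)}{\Gamma(s_1-\tfrac14)}.
\]

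The second factor is handled identically. The crucial identity $(1-n)_{\overline\chi} = -n_\chi = |n_\chi|$ from \eqref{eq:one_minus_n} means that after unfolding $\mathcal U_{1-n}(\cdot, s_2, -\tfrac12, \overline\chi)$ against $\overline{\overline{u_j}} = u_j$, the $x$-integral picks out the $k=n$ coefficient of $u_j$; since $n<0$ the Whittaker index becomes $-\tfrac14$, and the same Mellin identity (now with $\kappa=-\tfrac14$) yields
\[
\langle \mathcal U_{1-n}, \overline{u_j}\rangle = \rho_j(n)\,(4\pi |n_\chi|)^{1-s_2}\,\frac{\Gamma(s_2-\tfrac12+ir_j)\Gamma(s_2-\tfrac12-ir_j)}{\Gamma(s_2+\tfrac14)}.
\]
Substituting both expressions into the Parseval identity and recognizing the product of the four $\Gamma$-factors as $\Lambda(s_1, s_2, r_j)$ from \eqref{Lambdadef} finishes the proof.

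The substantive content is really just the two Mellin transforms of the Whittaker functions; the main obstacle is bookkeeping rather than analysis: keeping straight the interplay of $\chi$ versus $\overline\chi$, of $\alpha_\chi=\tfrac{23}{24}$ versus $\alpha_{\overline\chi}=\tfrac{1}{24}$, and of the $\sgn$ that determines the first index of the Whittaker function in each unfolding. One should also verify $L^2$-convergence of the spectral expansion, which in the regime $\re s_1,\re s_2>1$ reduces to the usual absolute convergence of the Poincar\'e series together with the absence of continuous spectrum noted above.
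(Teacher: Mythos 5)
Your proposal is correct and follows essentially the same route as the paper: Parseval's identity (valid because the cusp is not singular for $\chi$, so there is no continuous spectrum), followed by unfolding each inner product and evaluating the resulting Laplace--Mellin transform of the Whittaker function via \cite[(13.23.4), (16.2.5)]{nist}. The only cosmetic difference is that the paper quotes Proskurin's formula for $\langle \mathcal U_m, u_j\rangle$ rather than re-deriving it by unfolding as you do.
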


\begin{proof}[Proof of Lemma~\ref{innerprodlemma2}]
In this situation there are no Eisenstein series.
So for any $f_1,f_2 \in \mathcal{L}_{\frac 12}(1,\chi)$ we have the Parseval identity \cite[(27)]{proskurin-new}
\begin{equation} \label{eq:parseval}
	\langle f_1, f_2\rangle=\sum_{r_j}\langle f_1, u_j\rangle\overline{\langle f_2, u_j\rangle}.
\end{equation}
By \cite[(32)]{proskurin-new} we have
\begin{equation} \label{eq:U-m-u-j}
	\left\langle\mathcal U_m\(z, s_1, \tfrac12, \chi\), u_j\right\rangle=\overline{\rho_j(m)}(4\pi  m_\chi)^{1-s_1}\frac{\Gamma(s_1-\frac12-ir_j)\Gamma(s_1-\frac12+ir_j)}
{\Gamma(s_1-\frac 14)}.
\end{equation}
Unfolding as in the last lemma using 
 the definition \eqref{eq:U-m-def} for $\mathcal U_{1-n}$ and the Fourier expansion \eqref{eq:uj-fix} for $u_j$ gives
\begin{align*}
	\overline{\left\langle\overline{\mathcal U_{1-n}\(\cdot, s_2, -\tfrac12, \overline\chi\)}, u_j\right\rangle}
	&=\left\langle u_j, \bar{\mathcal U_{1-n}\(\cdot, s_2, -\tfrac12, \overline\chi\)}\right\rangle\\
	&=\rho_j(n) \int_0^\infty y^{s_2-2}W_{-\frac 14, ir_j}(4\pi| n_\chi|y)e^{2\pi n_\chi y}\, dy.
\end{align*}
Using \cite[(13.23.4) and (16.2.5)]{nist} the latter expression becomes
\begin{equation*} 
	\rho_j(n)(4\pi | n_\chi|)^{1-s_2}\frac{\Gamma(s_2-\frac12-i r_j)\Gamma(s_2-\frac12+ir_j)}{\Gamma(s_2+\frac 14)}.
\end{equation*}
The lemma follows from this together with \eqref{eq:parseval} and \eqref{eq:U-m-u-j}.
\end{proof}

We are now ready to prove Theorem \ref{KTFmixedsign}.
Recall the notation $\tilde n=n_\chi$.

\begin{proof} [Proof of Theorem \ref{KTFmixedsign}]
Equating the right-hand sides of Lemmas~\ref{innerprodlemma1} (for $k=1/2$) and \ref{innerprodlemma2} and setting 
\[s_1=\sigma+ \mfrac {it}2,\qquad s_2=\sigma- \mfrac {it}2 \]
we obtain (for $\sigma>1$)
\begin{multline}\label{innerprodcombine}
i^{-\frac12}2^{3-2\sigma}\pi\Gamma(2\sigma-1)\sum_{c>0}\frac{S(m, n,c, \chi)}{c^{2\sigma}}K_{it}\leg{4\pi\sqrt{\tilde m|\tilde n|}}c\\
=(4\pi\tilde m)^{1-\sigma}(4\pi|\tilde n|)^{1-\sigma}\sum_{r_j}\overline{\rho_j(m)}\rho_j(n)
\Lambda(\sigma+\tfrac{it}2, \sigma-\tfrac{it}2, r_j).
\end{multline}

We justify the  substitution of  $\sigma=1$ in \eqref{innerprodcombine}.
By \cite[(10.45.7)]{nist} we have
\[
	K_{it}(x)\ll(t\sh \pi t)^{-1/2}  \quad\text{as $x\to 0$.}
\]
Using \eqref{eq:trivial_est}
we see that the left side of \eqref{innerprodcombine} converges absolutely uniformly 
for $\sigma\in[1, 2]$.
For the right hand side we use the inequality $|\rho_j(m)\rho_j(n)|\leq |\rho_j(m)|^2+|\rho_j(n)|^2$.
The argument which follows Lemma~\ref{lem:pre_kusnetsov} shows that the right side 
converges  absolutely uniformly for $\sigma\in[1, 2]$.

Using \eqref{eq:lambda_def}, 
we find that 
\begin{equation}\label{pluginone}
i^{-\frac12}\sum_{c>0}\frac{S(m, n,c, \chi)}{c^{2}}K_{it}\leg{4\pi\sqrt{\tilde m|\tilde n|}}c
=\frac\pi2\sum_{r_j}\frac{\overline{\rho_j(m)}\rho_j(n)}{\ch \pi(\frac t2-r_j)\ch \pi(\frac t2+r_j)}.
\end{equation}
Letting $\phi$ be a function satisfying the conditions \eqref{phiproperties},
 multiply both sides of \eqref{pluginone} by
\[\mfrac2{\pi^2} \, t\sh \pi t\int_0^\infty K_{it}(u)\phi(u)\frac{du}{u^2}\]
and integrate from $0$ to $\infty$. 
We apply the Kontorovich-Lebedev transform (\cite[(35)]{proskurin-old} or \cite[(10.43.30-31)]{nist})
\begin{equation*} 
	\frac2{\pi^2}\int_0^\infty K_{it}(x)t\sh \pi t\,\(\int_0^\infty K_{it}(u)\phi(u)\frac{du}{u^2}\)\, dt=\frac{\phi(x)}{x}
\end{equation*}
to the left-hand side of \eqref{pluginone} and the transform \cite[(39)]{proskurin-old}
\begin{equation*} 
\int_0^\infty\frac{t\sh\pi t}{\ch \pi(\frac t2+r)\ch \pi(\frac t2-r)}\(\int_0^\infty K_{it}(u)\phi(u)\frac{du}{u^2}\)\, dt
=\frac2{\ch\pi r}\check\phi(r)
\end{equation*}
(recalling the definition \eqref{phicheckdef})
to the right-hand side.
Then   \eqref{pluginone} becomes
\[
	\frac{i^{-1/2}}{4\pi\sqrt{\tilde m|\tilde n|}} \sum_{c>0}\frac{S(m, n,c, \chi)}c\phi\leg{4\pi\sqrt{\tilde m|\tilde n|}}c
= \frac2\pi\sum_{r_j}\frac{\overline{\rho_j(m)}\rho_j(n)}{\ch \pi r_j}\check\phi(r_j),
\]
and Theorem~\ref{KTFmixedsign} follows. 
\end{proof}

%%%%%%%%%%%%%%%%%%%%%%%%%%%%%%%%%%%%%%%%%%%%%%%%%%%%%%%%%%%%%%%%
%%%%%%%%%%%%%%%%%%%%%%%%%%%%%%%%%%%%%%%%%%%%%%%%%%%%%%%%%%%%%%%%
%%%%%%%% SHIMURA LIFT %%%%%%%%%%%%%%%%%%%%%%%%%%%%%%%%%%%%%%%%%%
%%%%%%%%%%%%%%%%%%%%%%%%%%%%%%%%%%%%%%%%%%%%%%%%%%%%%%%%%%%%%%%%
%%%%%%%%%%%%%%%%%%%%%%%%%%%%%%%%%%%%%%%%%%%%%%%%%%%%%%%%%%%%%%%%

\section{A theta lift for Maass cusp forms}\label{sect:shimura}

In this section we construct a version of the Shimura correspondence as in \cite[\S 3]{sarnak-additive} and \cite[\S 4]{katok-sarnak} for Maass forms of weight $1/2$ on $\Gamma_0(N)$ with the eta multiplier twisted by a Dirichlet character.

Throughout this section, $N\equiv 1\bmod {24}$ is a positive integer and $\psi$ is an even Dirichlet character modulo $N$.
When working with the Shimura correspondence and the Hecke operators, it is convenient to write the Fourier expansion of $G\in \mathcal{S}_{\frac 12}(N,\psi\chi,r)$ as
\begin{equation} \label{eq:shim-G-exp}
	G(\tau) = \sum_{n\equiv 1(24)} a(n) W_{\frac{\sgn(n)}4,ir}\pmfrac{\pi|n|y}{6} e\pmfrac{nx}{24}.
\end{equation}
When $G=u_j$ as in \eqref{eq:uj-fix} we have the relation
\begin{equation}
	\rho_j\pmfrac{n+23}{24} = a(n).
\end{equation}

For each $t\equiv 1\bmod {24}$, the following theorem gives a map 
\[
	S_t:\mathcal{S}_{\frac 12}(N,\psi\chi,r)\to \mathcal{S}_{0}(6N,\psi^2,2r).
\]
We will only apply this theorem in the case $N=1$.
The proof in the case $N=t=1$ is simpler than the general proof given below; we must work in this generality since the  proof in the case $t>1$ requires the map $S_1$ on $\Gamma_0(t)$ forms.

\begin{theorem} \label{thm:shimura}
Let $N\equiv 1\bmod{24}$ be squarefree.
Suppose that $G\in \mathcal{S}_{\frac 12}(N,\psi\chi,r)$ with $r\neq i/4$ has Fourier expansion \eqref{eq:shim-G-exp}.
Let $t\equiv 1\bmod{24}$ be a squarefree positive integer and define coefficients $b_t(n)$ by the relation
\begin{equation} \label{eq:shim-coeff-relation}
	\sum_{n=1}^\infty \frac{b_t(n)}{n^s} = L\left(s+1,\psi\ptfrac t\bullet\right)  \sum_{n=1}^\infty \pmfrac {12}n \frac{a(tn^2)}{n^{s-\frac 12}},
\end{equation}
where $L(s,\psi)$ is the usual Dirichlet $L$-function.
Then the function $S_t(G)$ defined by
\[
	(S_tG)(\tau) := \sum_{n=1}^\infty b_t(n) W_{0,2ir}(4\pi ny) \cos(2\pi n x)
\]
is an even Maass cusp form in $\mathcal{S}_0(6N,\psi^2,2r)$.
\end{theorem}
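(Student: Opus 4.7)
The plan is to construct a Shintani--Niwa theta lift adapted to Maass forms in the spirit of Katok--Sarnak, but tailored to the multiplier $\psi\chi$. One defines a theta kernel $\Theta_t(\tau,z)$ of weight $\tfrac{1}{2}$ in $\tau$ with multiplier $\overline{\psi\chi}$ on $\Gamma_0(N)$ and weight $0$ in $z$ with multiplier $\psi^2$ on $\Gamma_0(6N)$, built from an indefinite ternary quadratic form of signature $(2,1)$ whose associated lattice encodes the parameter $t$. The lift is then the inner product
\[
	(S_t G)(z) := \langle G(\cdot), \Theta_t(\cdot,z) \rangle = \int_{\Gamma_0(N)\backslash \mathcal{H}} G(\tau) \, \overline{\Theta_t(\tau,z)} \, d\mu(\tau),
\]
which converges thanks to the exponential decay of $G$ at cusps.

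Four properties must then be verified. Automorphy of $S_t G$ on $\Gamma_0(6N)$ with multiplier $\psi^2$ is inherited from the $z$-transformation law of $\Theta_t$, a Weil-representation calculation. The Laplacian eigenvalue is extracted from a key identity of the form $\Delta_\tau^{(1/2)}\Theta_t + c\,\Theta_t = \tfrac{1}{4}\Delta_z^{(0)}\Theta_t$ for an explicit constant $c$; integrating by parts via Green's identity (which is permitted by cuspidality of $G$) converts the eigenvalue $\tfrac{1}{4}+r^2$ of $G$ into eigenvalue $\tfrac{1}{4}+(2r)^2$ of $S_t G$, making the spectral parameter exactly $2r$. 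Cuspidality of $S_t G$ follows from that of $G$ combined with the decay of $\Theta_t$ at cusps, and evenness under $z \mapsto -\bar z$ is a consequence of the $Q \leftrightarrow -Q$ symmetry in the sum defining the kernel.

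The heart of the proof is the Fourier expansion. Inserting the explicit $z$-expansion of $\Theta_t$ and unfolding the $\tau$-integral reduces the $n$-th cosine coefficient of $S_t G$ to a weighted sum over representations of $tn^2$ by the binary forms appearing in the kernel, weighted by the genus character $\psi\ptfrac{t}{\bullet}$. Organizing this sum by the gcd of the representation vector with an auxiliary lattice produces a Dirichlet convolution which, after collecting local factors, matches \eqref{eq:shim-coeff-relation} term by term, with the $L$-function $L(s+1,\psi\ptfrac{t}{\bullet})$ arising as the Euler product of the local representation counts at unramified primes. The exclusion $r = i/4$ is forced because at that spectral parameter the kernel identification degenerates: this is the exceptional eigenvalue $\lambda_0(\tfrac12)=\tfrac{3}{16}$ corresponding to the holomorphic weight $\tfrac{1}{2}$ forms such as $\eta$, which lift trivially. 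The main technical obstacle will be calibrating $\Theta_t$ so that the weight-$0$ multiplier is exactly $\psi^2$ on level $6N$ (the factor $6$ reflecting the conductor $24$ of $\chi$ together with the squarefree assumption on $N$), and then verifying the Dirichlet-series identity with the correct local factors at the ramified primes $2$, $3$, and at primes dividing $tN$.
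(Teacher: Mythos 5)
Your overall strategy --- a Shintani--Niwa theta kernel, automorphy in $z$ from the Weil representation, the eigenvalue $\tfrac14+(2r)^2$ from the Casimir identity, and the coefficient relation from unfolding --- is the same as the paper's, but one step fails as you state it: the claim that cuspidality of $S_tG$ ``follows from that of $G$ combined with the decay of $\Theta_t$ at cusps.'' The ternary form here is isotropic, and the theta kernel does \emph{not} decay at the cusps in the $z$-variable: the isotropic lattice vectors (those with $x_3=0$ in the paper's coordinates) contribute, at each cusp $W_d\infty$ of $\Gamma_0(6N)$, a term growing linearly in $y$ whose $\tau$-dependence is proportional to $\eta(Nz)$. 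Consequently $(\Psi_F \sl_0 \gamma_{\mathfrak a})(iy)=c_{\mathfrak a}\,\langle y^{1/4}\eta(N\cdot),F\rangle\, y+O(1)$ (this is Lemma~\ref{lem:cusps}), and the lift is cuspidal only because $F$ is orthogonal to $y^{1/4}\eta(N\cdot)$ --- which holds precisely because $r\neq i/4$ forces the two Laplace eigenvalues to differ. So the hypothesis $r\neq i/4$ is not about a ``degeneration of the kernel identification''; it is exactly what rescues cuspidality, and without the cusp computation your argument shows only that $S_tG$ is a Maass form, not a cusp form.

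Two smaller points. The Dirichlet-series identity \eqref{eq:shim-coeff-relation} is asserted rather than derived; the paper obtains it by computing the Mellin transform of $\Psi_F(-1/6Nw)$ along the imaginary axis in two ways, with $L\left(s+1,\psi\ptfrac t\bullet\right)$ emerging from a Rankin--Selberg unfolding against $\sum_\gamma\psi(\gamma)\im(\gamma z)^{(s+1)/2}$, whereas in your representation-number route all of the content sits in the unproved ``collecting local factors'' step at $2$, $3$, and $p\mid tN$. Also, if you encode $t$ directly in the lattice you will naturally land in level $6Nt$; descending to level $6N$ (the paper does this by proving $S_1(G_t)=\sqrt t\,[S_t(G)]_t$ for the level-$Nt$ form $G_t$ and invoking Niwa's descent argument) still needs to be addressed.
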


An important property of the Shimura correspondence is Hecke equivariance.
Since we only need this fact for $N=1$, we state it only in that case.

\begin{corollary}\label{cor:shim_hecke_commute}
Suppose that $G\in \mathcal{S}_{\frac 12}\left(1,\chi,r\right)$ with $r\neq i/4$ and that $t\equiv 1\bmod {24}$ is a squarefree positive integer.
Then for any prime $p\geq 5$ we have
\[
	T_p \, S_t(G)=\pmfrac{12}p S_t(T_{p^2}G).
\]
\end{corollary}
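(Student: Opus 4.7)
\medskip

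The plan is to verify the identity at the level of Fourier coefficients by comparing the Dirichlet series attached to both sides. Writing $G$ as in \eqref{eq:shim-G-exp} with coefficients $a(n)$ (which vanish unless $n\equiv 1\bmod{24}$), set
\[
A_t(s) := \sum_{n\geq 1} \pmfrac{12}{n}\,\frac{a(tn^2)}{n^{s-\frac12}}, \qquad B_t(s) := L\bigl(s+1,\pmfrac{t}{\bullet}\bigr)A_t(s),
\]
so that $B_t(s) = \sum_n b_t(n) n^{-s}$ is the Dirichlet series attached to $S_t(G)$. Since $T_p$ preserves the space $\mathcal{S}_0(6,\mathbf 1,2r)$ and sends even forms to even forms, and since $T_{p^2}$ preserves $\mathcal{S}_{1/2}(1,\chi,r)$, both $T_p S_t(G)$ and $S_t(T_{p^2}G)$ lie in $\mathcal{S}_0(6,\mathbf 1,2r)$; thus it suffices to match their $n$-th cosine-coefficients for every $n\geq 1$, which is exactly a Dirichlet series identity.

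First I would expand $\sum_n [T_p S_t(G)](n) n^{-s}$ using \eqref{eq:hecke-wt-0}. Splitting $B_t(s) = B_t^{(p)}(s) + B_{t,p}(s)$ into the parts supported on integers coprime to $p$ and divisible by $p$ respectively, a routine reindexing gives
\[
\sum_n [T_p S_t(G)](n)\,n^{-s} \;=\; p^{\frac12+s}B_{t,p}(s) + p^{-\frac12-s}B_t(s).
\]
The analogous splitting $A_t = A_t^{(p)} + A_{t,p}$ and the factorization $L(s+1,\pmfrac{t}{\bullet}) = L_p L^{(p)}$ of the Dirichlet $L$-function into its Euler factor at $p$ and the rest yield $B_t^{(p)} = L^{(p)}A_t^{(p)}$ and hence, after using $L_p-1 = \pmfrac{t}{p}p^{-s-1}L_p$,
\[
B_{t,p}(s) \;=\; L\bigl(s+1,\pmfrac{t}{\bullet}\bigr)\!\left[\pmfrac{t}{p}p^{-s-1}A_t^{(p)}(s) + A_{t,p}(s)\right].
\]
Substituting and rewriting $A_t^{(p)} = A_t - A_{t,p}$, I obtain a clean expression of the form $L(s+1,\pmfrac{t}{\bullet})\bigl(\alpha(s)A_t(s) + \beta(s)A_{t,p}(s)\bigr)$ for explicit $\alpha,\beta$.

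Second, I apply the same pattern to $S_t(T_{p^2}G)$. Writing $a'(m) = pa(p^2m) + p^{-\frac12}\pmfrac{12m}{p}a(m) + p^{-1}a(m/p^2)$ from \eqref{eq:hecke_action}, I compute
\[
A'_t(s) := \sum_n \pmfrac{12}{n}\frac{a'(tn^2)}{n^{s-\frac12}}
\]
by substituting the three pieces of $a'$ and reindexing in each: the first becomes $\pmfrac{12}{p}p^{s+\frac12}A_{t,p}(s)$ after writing $n$ for the new dummy variable in $a(tp^2n^2)$; the middle term collapses to $\pmfrac{12t}{p}p^{-\frac12}A_t^{(p)}(s)$, using that $\pmfrac{12tn^2}{p}$ vanishes when $p\mid n$; and the third gives $\pmfrac{12}{p}p^{-s-\frac12}A_t(s)$ after substituting $n\mapsto pn$. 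The Dirichlet series attached to $S_t(T_{p^2}G)$ is then $L(s+1,\pmfrac{t}{\bullet})A'_t(s)$. Multiplying $A'_t$ by $\pmfrac{12}{p}$ and using the two elementary identities $\pmfrac{12}{p}^2 = 1$ (valid precisely because $p\geq 5$) and $\pmfrac{12}{p}\pmfrac{12t}{p} = \pmfrac{t}{p}$ converts the coefficients of $A_t$ and $A_{t,p}$ to exactly those obtained in the first computation, proving the identity.

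The main obstacle is bookkeeping rather than deep input: it is keeping the splittings $A_t = A_t^{(p)}+A_{t,p}$ and $L = L_pL^{(p)}$ consistent across the two computations, and correctly handling the degenerate case $p\mid t$ (where $\pmfrac{t}{p}=0$ forces $L_p=1$ and the $A_{t,p}$ terms simplify automatically). The condition $p\geq 5$ is used in exactly one essential place, namely $\pmfrac{12}{p}^2=1$, which is what allows the Hecke eigenvalue character factor $\pmfrac{12n}{p}$ in \eqref{eq:hecke_action} to align with the character $\pmfrac{12}{n}$ built into the definition of $A_t(s)$.
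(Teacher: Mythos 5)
Your proof is correct. It proves the same coefficient identity as the paper, but organizes the verification differently: the paper unpacks \eqref{eq:shim-coeff-relation} into the Dirichlet convolution $b_t(n)=\sum_{jk=n}\pmfrac tj\pmfrac{12}k\frac{\sqrt k}{j}a(tk^2)$, writes $n=p^\alpha n'$ with $p\nmid n'$, and matches both sides as telescoping combinations of explicit quantities $S_\ell$ ($0\le\ell\le\alpha+1$), treating $p\mid t$ as a separate (easier) case; you instead stay at the level of generating functions, splitting $A_t=A_t^{(p)}+A_{t,p}$ and factoring $L(s+1,\ptfrac t\bullet)=L_pL^{(p)}$, so that all powers of $p$ dividing $n$ are handled at once and the case $p\mid t$ degenerates automatically via $\pmfrac tp=0$. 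I checked your intermediate identities — $\sum_n[T_pS_t(G)](n)n^{-s}=p^{s+\frac12}B_{t,p}+p^{-s-\frac12}B_t$, the formula $B_{t,p}=L\bigl(\pmfrac tp p^{-s-1}A_t^{(p)}+A_{t,p}\bigr)$, and the three reindexed pieces of $A_t'$ — and both sides reduce to $L\cdot\bigl[(\pmfrac tp p^{-\frac12}+p^{-s-\frac12})A_t+(p^{s+\frac12}-\pmfrac tp p^{-\frac12})A_{t,p}\bigr]$, as required; you also correctly locate the one place where $p\ge5$ enters the computation ($\pmfrac{12}p^2=1$, so that the character $\ptfrac{12n}p$ from \eqref{eq:hecke_action} aligns with the $\pmfrac{12}n$ in $A_t$), beyond its role in making the Hecke operators themselves well defined. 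The Euler-factor route is arguably cleaner and more transparent; the paper's direct coefficient matching is more elementary and avoids any (purely formal) manipulation of Dirichlet series, but at the cost of a fiddlier bookkeeping in $\ell$.
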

\begin{proof}
We prove this in the case $p\nmid t$; the other case is similar, and easier.
Let $G$ have coefficients $a(n)$
as in \eqref{eq:shim-G-exp}.
The coefficients $b_t(n)$ of $S_t(G)$ are given by
\[
	b_t(n) = \sum_{jk=n} \pmfrac tj \pmfrac{12}k \mfrac{\sqrt k}j a(tk^2),
\]
and the coefficients $A(n)$ of $T_{p^2} G$ are given by
\[
	A(n) = p \, a(p^2 n) + p^{-\frac 12} \pmfrac{12n}{p} a(n) + p^{-1} a(n/p^2).
\]
We must show that
\begin{equation} \label{eq:hecke-shim-1}
	p^{\frac 12} b_t(pn) + p^{-\frac 12} b_t(n/p) = \pmfrac{12}{p} \sum_{jk=n} \pmfrac tj \pmfrac{12}k \mfrac{\sqrt k}{j} A(tk^2).
\end{equation}
Writing $n=p^\alpha n'$ with $p\nmid n'$, the left-hand side of \eqref{eq:hecke-shim-1} equals
\[
	p^{-\frac 12} \sum_{\ell=0}^{\alpha+1} S_\ell + p^{\frac 12} \sum_{\ell=0}^{\alpha-1} S_\ell, 
	\quad \text{where} \quad 
	S_\ell = p^{-\alpha}\pmfrac tp^{\alpha+1-\ell} \pmfrac{12}p^{\ell} p^{\frac{3\ell}2} \sum_{jk=n'} \pmfrac tj \pmfrac{12}k \mfrac{\sqrt k}j a(tp^{2\ell}k^2).
\]
A computation shows that the right-hand side of \eqref{eq:hecke-shim-1} equals
\[
	p^{-\frac 12} \sum_{\ell=0}^\alpha S_{\ell+1} + p^{-\frac 12} S_0 + p^{\frac 12} \sum_{\ell=1}^\alpha S_{\ell-1},
\]
and the corollary follows.
\end{proof}

Using Theorem~\ref{thm:shimura} we can rule out the existence of exceptional eigenvalues in $\mathcal S_{1/2}(1,\chi)$
 and obtain a lower bound on the second smallest eigenvalue $\lambda_1=\frac14+r_1^2$
 (we note that Bruggeman \cite[Theorem 2.15]{BruggemanIII} obtained $\lambda_1>\frac14$ using different methods).
Theorem~\ref{thm:shimura} shows that $2r_1$ is bounded below by the smallest spectral parameter 
for $\mathcal S_0(6,\bm 1)$.
Huxley \cite{huxley_kloostermania,huxley} studied the problem of exceptional eigenvalues in weight $0$ for subgroups of $\SL_2(\Z)$ whose fundamental domains are ``hedgehog'' shaped.
On page 247 of \cite{huxley_kloostermania} we find a lower bound which, for $N=6$, gives
$2r_1>0.4$.
Computations of Str\"omberg
\cite{lmfdb}
suggest (since $2r_1$ corresponds to an even Maass cusp form) that
$2r_1 \approx 3.84467$.
Using work of Booker and Str\"ombergsson \cite{booker-strombergsson-stf}  on Selberg's eigenvalue conjecture we can prove the following lower bound.
\begin{corollary}\label{cor:shimura}
Let $\lambda_1=\frac 14+r_1^2$ be as above. Then $r_1>1.9$.
\end{corollary}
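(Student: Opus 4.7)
The plan is to transport the problem from weight $1/2$ on $\SL_2(\Z)$ to weight $0$ on $\Gamma_0(6)$ via the Shimura-type lift of Theorem \ref{thm:shimura}, and then invoke an explicit numerical lower bound on the first spectral parameter for $\mathcal{S}_0(6,\bm 1)$ from \cite{booker-strombergsson-stf}.

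First I would fix any nonzero $u\in\mathcal{S}_{1/2}(1,\chi,r_1)$ with Fourier coefficients $a(n)$, supported on $n\equiv 1\pmod{24}$. Note that Theorem \ref{thm:shimura} applies since $r_1\neq i/4$: the eigenvalue $\lambda_0(1/2)=3/16$ is already realized by the holomorphic form $y^{1/4}\eta(\tau)$, and $\lambda_1$ is by definition the smallest eigenvalue strictly larger than $\lambda_0(1/2)$. Picking $n_0$ with $a(n_0)\neq 0$ and writing $n_0=tk^2$ with $t$ squarefree, I get $t\equiv 1\pmod{24}$ automatically, because $\gcd(n_0,24)=1$ forces $\gcd(k,24)=1$ and hence $k^2\equiv 1\pmod{24}$. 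So $S_t(u)\in \mathcal{S}_0(6,\bm 1,2r_1)$ is defined, and it is nonzero: by \eqref{eq:shim-coeff-relation} its Dirichlet series equals $L(s+1,\ptfrac{t}{\cdot})=1+O(2^{-s})$ times a Dirichlet series whose $k$-th coefficient is a nonzero scalar multiple of $\pmfrac{12}{k}a(n_0)$, and a short argument with Dirichlet convolution shows the product has nonvanishing coefficients.

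The final step is to invoke the work of Booker and Strömbergsson \cite{booker-strombergsson-stf} on Selberg's eigenvalue conjecture, applied at level $N=6$: their explicit trace-formula algorithm provides a rigorous numerical lower bound strictly exceeding $3.8$ on the smallest spectral parameter of $\mathcal{S}_0(6,\bm 1)$ (restricting, if need be, to the even part, where $S_t(u)$ lives by Theorem \ref{thm:shimura}). This gives $2r_1>3.8$, and hence $r_1>1.9$. The main obstacle will really be only this last step: one must extract from \cite{booker-strombergsson-stf} a sufficiently precise numerical threshold at level $6$, since the Shimura-lift step above is a direct application of Theorem \ref{thm:shimura}.
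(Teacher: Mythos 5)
Your plan is essentially the paper's proof: the corollary is deduced by combining the lift of Theorem \ref{thm:shimura} (which places an even form in $\mathcal{S}_0(6,\bm 1,2r_1)$) with a numerical lower bound of $3.8$ on the even spectral parameters at level $6$. Two small caveats: the required threshold is not quoted off the shelf from \cite{booker-strombergsson-stf} --- the paper runs the explicit Selberg trace formula of that reference itself at level $6$ with a concrete test function $h=f^2$ and verifies $\sum_{\tilde r_j} h(\tilde r_j)\approx 0.976<1$; and your nonvanishing argument for $S_t(u)$ tacitly requires a nonzero coefficient $a(n_0)$ with $n_0>0$ (so that the squarefree part $t$ is positive), a point the paper also leaves implicit.
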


\begin{proof}
Let $\{\tilde r_j\}$ denote the set of spectral parameters corresponding to even forms in $\mathcal S_0(6,\bm 1)$.
We show that each $\tilde r_j>3.8$
using the method described in \cite[Section 4]{booker-strombergsson-stf} in the case of level $6$ and trivial character.
We are thankful to the authors of that paper for providing the numerical details of this computation.
Given a suitable test function $h$ such that $h(t)\geq 0$ on $\R$ and $h(t)\geq 1$ on $[-3.8,3.8]$, we compute via an explicit version of the Selberg trace formula \cite[Theorem 4]{booker-strombergsson-stf} that
\begin{equation} \label{eq:sum-h}
	\sum_{\tilde r_j} h(\tilde r_j) < 1,
\end{equation}
so we cannot have $\tilde r_j \leq 3.8$ for any $j$.
Let
\[
	f(t) = \pfrac{\sin t/6}{t/6}^2 \, \sum_{n=0}^{2} x_n \cos \pmfrac{n t}3,
\]
where 
\[
	x_0 = 1.167099688946872, \qquad x_1 = 1.735437017086616, \qquad x_2 = 0.660025094420283.
\]
With $h=f^2$,
we compute that the sum in \eqref{eq:sum-h} is approximately $0.976$. 
\end{proof}

The proof of Theorem \ref{thm:shimura} occupies the remainder of this section. 
We first modify the theta functions introduced by Shintani \cite{shintani} and Niwa \cite{niwa}.
Next, we construct the Shimura lift for $t=1$ and derive the relation \eqref{eq:shim-coeff-relation} in that case.
The function $S_t(G)$ is obtained by applying the lift $S_1$ to the form $G(t\tau)$.
We conclude the section by showing that $S_t(G)$ has the desired level.

\subsection{The theta functions of Shintani and Niwa}

In this subsection we adopt the notation of \cite{niwa} for easier comparison with that paper.
Let $Q=\mfrac{1}{12N} \left(\begin{smallmatrix} &&-2 \\ &1& \\ -2&& \end{smallmatrix}\right)$, and for $x,y\in \R^3$ define
\[
	\langle x,y \rangle = x^T Q y = \mfrac{1}{12N}(x_2y_2-2x_1y_3-2x_3y_1).
\]
The signature of $Q$ is $(2,1)$.
Let $L\subset L'\subset L^*$ denote the lattices
\begin{alignat*}{5}
	L &=& N\Z& \,\oplus\,& 12N\Z& \,\oplus\, &6N\Z&, \\
	L' &=& \Z& \,\oplus\,& N\Z& \,\oplus\, &6N\Z&, \\
	L^* &=& \Z& \,\oplus\,& \Z& \,\oplus\, &6\Z&.
\end{alignat*}
Then $L^*$ is the dual lattice of $L$, and for $x,y\in L$ we have $\langle x,y \rangle \in \Z$ and $\langle x,x \rangle \in 2\Z$.
Let $f:\R^3\to \C$ be a Schwarz function satisfying the conditions of \cite[Corollary 0]{niwa} for $\kappa=1$.
If $\psi$ is a character mod $N$ and $h=(h_1,Nh_2,0)\in L'/L$, define 
\[
	\psi_1(h):= \psi(h_1)\pmfrac{12}{h_2}.
\]
For $z=u+iv$ we define
\[\theta(z,f,h) :=v^{-\frac 14} \sum_{x\in L} \left[r_0(\sigma_z)f\right](x+h)\]
and
\begin{equation}\label{thetadef}
	\theta(z,f) := \sum_{h\in L'/L} \bar\psi_1(h) \theta(z,f,h),
\end{equation}
where $\sigma\mapsto r_0(\sigma)$ is the Weil representation (see \cite[p. 149]{niwa}) and 
\[
	\sigma_z := \pMatrix{v^{\frac 12}}{u v^{-\frac 12}}{0}{v^{-\frac 12}}, \quad z=u+iv \in \H.
\]
The following lemma gives the transformation law for $\theta(z,f)$.
\begin{lemma}\label{thetatrans}
Suppose that $N\equiv 1 \pmod {24}$ and that $\sigma=\pmatrix abcd\in \Gamma_0(N)$. With $\theta(z, f)$ defined as in \eqref{thetadef} we have
\begin{equation}
	\theta(\sigma z,f) = \bar\psi(d) \pmfrac Nd \chi(\sigma) (cz+d)^{\frac 12} \theta(z,f),
\end{equation}
where $\chi$ is the eta multiplier \eqref{eq:eta-mult-def}.
\end{lemma}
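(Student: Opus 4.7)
The plan is to follow Niwa's strategy \cite{niwa} as adapted by Shintani \cite{shintani}, exploiting the fact that the assignment $\sigma \mapsto r_0(\sigma)$ is a representation of the metaplectic cover of $\mathrm{SL}_2(\R)$ on Schwartz functions. The key general formula is $\sigma_{\gamma z} = \gamma \, \sigma_z \, k(\gamma, z)$ for $\gamma = \pmatrix abcd$, where $k(\gamma,z) \in \mathrm{SO}(2)$ is the rotation by $\arg(cz+d)$; applying $r_0$ and tracking the resulting factor of $j(\gamma,\tau)^{1/2}$ reduces the problem to understanding how $r_0(\gamma)$ acts on lattice sums $\sum_{x \in L}[r_0(\sigma_z)f](x+h)$. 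Thus it suffices to verify the transformation on generators of $\Gamma_0(N)$, namely the translation $T = \pmatrix 1101$, the standard inversion $S$ conjugated into $\Gamma_0(N)$, and matrices of the form $\pmatrix abcd$ with $c>0$.

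For $T$: since $r_0(T)$ acts by multiplication by $e(\tfrac12\langle x,x\rangle)$, one gets $\theta(z+1,f,h) = e(\tfrac12\langle h,h\rangle)\theta(z,f,h)$. Plugging in $h=(h_1, Nh_2, 0)$ gives $\tfrac12 \langle h,h\rangle = \frac{N h_2^2}{24}$, and since $N \equiv 1 \pmod{24}$ this equals $h_2^2/24 \equiv 1/24 \pmod{1}$ for $h_2$ coprime to $6$ (the character $\pfrac{12}{h_2}$ vanishes otherwise). This matches $\chi(T) = e(1/24)$ and confirms $\theta(z+1,f)=e(1/24)\theta(z,f)$.

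For a general $\gamma = \pmatrix abcd \in \Gamma_0(N)$ with $c > 0$: the action of $r_0(\gamma)$ is, up to an eighth root of unity and a Gauss-sum factor, a combination of a translation on $L^*/L$ and a dilation. One writes $r_0(\gamma) f$ explicitly via the Bruhat decomposition, then applies Poisson summation on the sublattice $L$, producing a sum over the dual $L^*$ weighted by $e(\langle h, \cdot \rangle)$. The permutation of $L'/L$ induced by $\gamma$ together with the character $\bar\psi_1$ produces a classical Gauss sum evaluation: the $h_1$-component yields $\bar\psi(d)$ (via $\psi(h_1) \mapsto \psi(a h_1) = \bar\psi(d)\psi(h_1)$, since $\psi$ is a character mod $N$ and $ad \equiv 1 \bmod N$), while the $h_2$-component contributes $\pfrac{N}{d}$ from the quadratic reciprocity-type evaluation of the Gauss sum of $\pfrac{12}{\cdot}$ twisted by the quadratic form $\frac{N h_2^2}{24}$ modulo the appropriate level, together with the Weil-index factor $\varepsilon_d^{-1}\pfrac{c}{d}$ from Niwa's lemma. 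Combining these and invoking the formula \eqref{eq:chi-kronecker-symbol} for $\chi(\gamma)$ gives the claimed multiplier.

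The main obstacle is precisely the last bookkeeping step: reconciling the metaplectic cocycle from $r_0(\gamma)$ (which naturally produces a theta-multiplier-like factor $\varepsilon_d^{-1}\pfrac{c}{d}$) with the exponential factor in \eqref{eq:chi-kronecker-symbol} involving $e\!\left(\frac{1}{24}[(a+d)c - bd(c^2-1) \pm 3\cdots]\right)$. This requires carefully splitting $\gamma$ into factors whose $r_0$-action is tractable (typically $\gamma = \pmatrix 1{a/c}01\pmatrix{0}{-1}{1}{0}\pmatrix 1{d/c}01$ scaled appropriately), tracking how the twist by $\pfrac{12}{h_2}$ interacts with the Poisson transform on the middle $\Z$-component of $L$, and using $N \equiv 1 \pmod{24}$ to reduce the quadratic form mod $24$ so that the $24$th-root-of-unity exponential in $\chi$ emerges exactly. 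The squarefreeness of $N$ ensures the Gauss sums factor as products over primes, where each local factor can be matched against the corresponding factor of $\pfrac{N}{d}\chi(\gamma)$ by direct computation.
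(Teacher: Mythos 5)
Your setup is the same as the paper's: both proofs run through Niwa's Weil-representation machinery, using the relation $\sigma_{\gamma z}=\gamma\,\sigma_z\,k(\gamma,z)$ (Proposition~0 and Corollary~0 of \cite{niwa}) to reduce the claim to evaluating the coefficients $c(h,k)_\sigma$, and your treatment of the translation $\pmatrix 1101$ (giving $e(Nh_2^2/24)=e(1/24)$ on the support of $\pfrac{12}{h_2}$) is exactly the paper's first step. However, the step you defer as ``the last bookkeeping step'' is not bookkeeping --- it is the entire technical content of the proof, and you have not supplied it. After summing over the lattice coset representatives, one is left with an expression of the shape $\sum_{h\bmod 12}\pfrac{12}{h}e\bigl(\cdots\bigr)G(-6d,dh+k,c)$, and identifying this with the eta multiplier requires a genuine identity. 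The paper proves this as Lemma~\ref{lem:dedekind-gauss}: $\sqrt{12c}\,\pfrac{12}{k}e\bigl(\tfrac{\bar d(k^2-1)}{24c}\bigr)e^{\pi i s(d,c)}$ equals that Gauss-sum combination, the proof of which leans on nontrivial input from \cite{andersen-singular} plus a case analysis showing the sum vanishes when $(k,6)>1$. Without an equivalent of this identity your argument does not close.

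Two further points. First, your plan is to match the output against the Petersson/Kronecker-symbol formula \eqref{eq:chi-kronecker-symbol}, extracting $\varepsilon_d^{-1}\pfrac cd$ from the Weil index and then reconciling the residual exponential $e\bigl(\tfrac 1{24}[(a+d)c-bd(c^2-1)\pm\cdots]\bigr)$ by hand; the paper instead matches against Rademacher's Dedekind-sum formula \eqref{eq:chi-dedekind-sum}, so that the Gauss-sum identity directly produces $e^{\pi i s(-a,c')}$ and the reciprocity $s(-a,c')=-s(d,c')$ finishes the identification, with \eqref{eq:chi-kronecker-symbol} invoked only at the last line to convert $\chi\bigl(\pmatrix a{Nb}{c'}d\bigr)$ into $\pfrac Nd\chi(\sigma)$. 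Your route is in principle viable but substantially harder to execute, and your claim that the $h_2$-component directly yields $\pfrac Nd$ is not how that factor actually arises (it comes from the level change $c=Nc'$ in the matrix, not from the $h_2$ Gauss sum). Second, you invoke squarefreeness of $N$ to factor the Gauss sums; Lemma~\ref{thetatrans} assumes only $N\equiv 1\pmod{24}$, and squarefreeness is neither available here nor needed --- in the paper it enters only later, in the analysis of the cusps in Lemma~\ref{lem:cusps}.
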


In the proof of Lemma \ref{thetatrans} we will encounter the quadratic Gauss sum
\[
	G(a,b,c) = \sum_{x\bmod c} e\pfrac{ax^2+bx}{c}.
\]
With $g=(a,c)$, we have
\begin{equation} \label{eq:gauss-gcd}
	G(a,b,c) = 
	\begin{cases}
		g \, G(a/g,b/g,c/g) & \text{ if } g\mid b, \\
		0 & \text{ otherwise}.
	\end{cases}
\end{equation}
We require the following identity relating the Dedekind sums $s(d,c)$ to these Gauss sums.

\begin{lemma}\label{lem:dedekind-gauss}
Suppose that $(c,d)=1$ and that $k\in\Z$.
Let $\bar d$ satisfy $d\bar d\equiv 1\pmod c$. Then
\begin{equation} \label{eq:dedekind-gauss}
	\sqrt{12c} \, \pmfrac{12}k e\bigg(\frac{\bar d(k^2-1)}{24c}\bigg) e^{\pi i s(d,c)}
	= \sum_{h\bmod 12} \pmfrac{12}{h} e\bigg(\frac{-d(\frac{h^2-1}{24})}{c} - \frac{hk}{12c}\bigg) G(-6d,dh+k,c).
\end{equation}
\end{lemma}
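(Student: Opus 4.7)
The plan is to expand the right-hand side into a twisted quadratic Gauss sum, evaluate it explicitly via completion of the square and CRT, and match the result with the left-hand side using the equivalence of the two formulas \eqref{eq:chi-dedekind-sum} and \eqref{eq:chi-kronecker-symbol} for $\chi(\gamma)$.

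First I would write $G(-6d,dh+k,c)=\sum_{x\bmod c}e\bigl(\tfrac{-6dx^2+(dh+k)x}{c}\bigr)$ and combine all exponentials on the right over the common denominator $24c$. The identities $-dh^2+24dhx-144dx^2=-d(h-12x)^2$ and $-2hk+24kx=-2k(h-12x)$ collapse the $(h,x)$-dependence of the exponent to $\tfrac{-d(h-12x)^2-2k(h-12x)+d}{24c}$, so the substitution $y=h-12x$ is natural. In the principal case $(c,12)=1$ this gives a bijection $\Z/12\Z\times\Z/c\Z\to\Z/12c\Z$ with $\pmfrac{12}{h}=\pmfrac{12}{y}$, and the right-hand side becomes the twisted quadratic Gauss sum
\[
S:=\sum_{y\bmod 12c}\pmfrac{12}{y}\,e\pfrac{-dy^2-2ky+d}{24c}.
\]
When $c$ shares factors with $12$, the reduction \eqref{eq:gauss-gcd} forces congruence conditions on $dh+k$ and an analogous, more delicate bijection handles the remaining cases.

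To evaluate $S$, I would expand $\pmfrac{12}{y}=\tfrac{1}{\sqrt{12}}\sum_{a\bmod 12}\pmfrac{12}{a}\,e\pfrac{ay}{12}$, valid because $\chi_{12}(\cdot)=\pmfrac{12}{\cdot}$ is the unique primitive real character modulo $12$ with Gauss sum $\tau(\chi_{12})=\sqrt{12}$. Completing the square in $y$ produces the phase $\tfrac{\bar d(ac-k)^2}{24c}$, where the term $a^2c^2$ simplifies because $a^2\equiv 1\pmod{24}$ for $(a,12)=1$. The inner Gauss sum $\sum_{y\bmod 24c}e\pfrac{-dy^2}{24c}$ then splits by CRT into mod-$24$ and mod-$c$ pieces (after doubling the range $y\bmod 12c\to y\bmod 24c$, which the summand accommodates), while the outer $a$-sum collapses via the standard identity $\sum_{a\bmod 12}\pmfrac{12}{a}e\pfrac{-d^{-1}ak}{12}=\pmfrac{12}{d}\pmfrac{12}{k}\sqrt{12}$. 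Combining everything yields an expression of the form $\pmfrac{12}{d}\pmfrac{12}{k}\sqrt{12c}\,e\pfrac{d+\bar dk^2}{24c}\cdot(\text{eighth root of unity depending on }a,b,c,d)$.

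The final matching with $e^{\pi i s(d,c)}$ is obtained by rearranging \eqref{eq:chi-dedekind-sum} to get $e^{\pi i s(d,c)}=\sqrt{-i}\,\overline{\chi(\gamma)}\,e\pfrac{a+d}{24c}$ and substituting the Kronecker-symbol expression for $\overline{\chi(\gamma)}$ from \eqref{eq:chi-kronecker-symbol}. Since $a\equiv\bar d\pmod c$, the factor $e\pfrac{a+d}{24c}$ contributes precisely the $e\pfrac{d+\bar d}{24c}$ appearing in the Gauss-sum evaluation, up to an eighth root of unity. The main obstacle is the careful bookkeeping of all Kronecker symbols, eighth roots of unity, and the various inverses (mod $c$, mod $24$, and mod $24c$) through the CRT and completion-of-the-square steps, together with the case analysis based on $\gcd(c,6)$: when $c$ is divisible by $2$ or $3$, $G(-6d,dh+k,c)$ requires separate treatment matching the separate cases $c$ odd vs.\ $c$ even in \eqref{eq:chi-kronecker-symbol}. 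Once these technicalities are sorted, the identity reduces to the known consistency of \eqref{eq:chi-dedekind-sum} and \eqref{eq:chi-kronecker-symbol}, which is the conceptual core of the proof.
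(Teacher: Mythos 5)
Your proposal is correct in outline but takes a genuinely different route from the paper. The paper's proof is essentially a two-step reduction: for $(k,6)=1$ it quotes Lemma~5 and (4.1) of \cite{andersen-singular}, which already express $\sqrt{12c}\,\pmfrac{12}{k}e\big(\tfrac{\bar d(k^2-1)}{24c}\big)e^{\pi i s(d,c)}$ as two Gauss-type sums over $j\bmod 2c$, and then simply splits those sums according to the parity $j=2x$ or $j=2x+1$ to produce the right-hand side of \eqref{eq:dedekind-gauss}; the degenerate case $(k,6)>1$ is treated separately by exhibiting substitutions $x\mapsto x-\bar 2$, $x\mapsto x+\bar 6h$, $x\mapsto x-\bar 3$ in $G(-6d,dh+k,c)$ that force $\sum_{h}\pmfrac{12}{h}A(h)=0$. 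You instead evaluate the right-hand side from scratch: your collapse of the $(h,x)$-double sum to $\sum_{y\bmod 12c}\pmfrac{12}{y}e\big(\tfrac{-dy^2-2ky+d}{24c}\big)$ is correct when $(c,12)=1$ (the summand is indeed well defined mod $12c$ and the parametrization $y=h-12x$ is a bijection by CRT), and the Fourier expansion of $\pmfrac{12}{\cdot}$, completion of the square, and the collapse of the $a$-sum to $\pmfrac{12}{d}\pmfrac{12}{k}\sqrt{12}$ all go through; this has the pleasant side effect that the vanishing for $(k,6)>1$ falls out of the factor $\pmfrac{12}{k}$ rather than needing a separate argument. What the paper's route buys is brevity, at the cost of outsourcing the content to \cite{andersen-singular}; what yours buys is self-containedness, at the cost of two substantial chores you only gesture at: (i) the cases $\gcd(c,6)>1$, where \eqref{eq:gauss-gcd} forces congruence constraints on $dh+k$, the clean bijection breaks, and the analysis must track the $c$ odd/even dichotomy of \eqref{eq:chi-kronecker-symbol}; and (ii) the final root-of-unity bookkeeping, where one should note that $a\equiv\bar d\pmod c$ only gives $e\pfrac{a+d}{24c}=e\pfrac{\bar d+d}{24c}$ up to a $24$th (not eighth) root of unity, which must cancel against the $b$- and $a$-dependent terms of \eqref{eq:chi-kronecker-symbol} via $ad-bc=1$. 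Neither issue is fatal, but they are where the real work of your version would live.
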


\begin{proof}
First suppose that $(k,6)=1$.
If $c$ is even and $d\bar d\equiv 1\pmod {2c}$, or if $c$ is odd, then
by \cite[Lemma 5 and (4.1)]{andersen-singular} we have 
\begin{multline} \label{eq:dg-1}
\sqrt{12c} \, \pmfrac{12}k e\bigg(\frac{\bar d(k^2-1)}{24c}\bigg) e^{\pi i s(d,c)} \\
	= e\pmfrac{k}{12c} \sum_{j\bmod 2c} e\Big(\mfrac{-3dj^2-dj+j(c+k)}{2c}\Big) + e\pmfrac{-k}{12c} \sum_{j\bmod 2c} e\Big(\mfrac{-3dj^2-dj+j(c-k)}{2c}\Big).
\end{multline}
If $c$ is even and $d\bar d\not\equiv 1\pmod{2c}$ then, applying \eqref{eq:dg-1} with $\bar d$ replaced by $\bar d+c$, we see that \eqref{eq:dg-1} is true in this case as well.
Splitting each of the sums in \eqref{eq:dg-1} into two sums by writing $j=2x$ or $j=2x+1$ shows that \eqref{eq:dg-1} equals the right-hand side of \eqref{eq:dedekind-gauss}.

It remains to show that the right-hand side of $\eqref{eq:dedekind-gauss}$ is zero when $\delta:=(k,6)>1$.
If $(\delta, c)>1$ then \eqref{eq:gauss-gcd} shows that 
$\pfrac{12}{h}G(-6d,dh+k,c)=0$.
If $(\delta,c)=1$,
write
\[
	A(h) = e\bigg(\frac{-d(\frac{h^2-1}{24})}{c} - \frac{hk}{12c}\bigg) G(-6d,dh+k,c).
\]
If $\delta=2$ or $6$ then a computation shows that, replacing $x$ by $x-\bar 2$ in $G(-6d,dh+k,c)$, we have $A(h)=A(h-6)$ for $(h,6)=1$.
Similarly, if $\delta=3$ and $c$ is odd then, replacing $x$ by $x+\bar 6 h$, we find that $A(h)=-A(-h)$.
Finally, if $\delta=3$ and $c$ is even then, replacing $x$ by $x - \bar 3$, we find that $A(1)=A(5)$ and $A(7)=A(11)$.
In each case we find that
\[
	\sum_{h\bmod 12} \pmfrac{12}h A(h) = 0. \qedhere
\]
\end{proof}

\begin{proof}[Proof of Lemma \ref{thetatrans}]
Since $\sigma_{z+1}=\pmatrix11 01\sigma_z$,
Proposition 0 of \cite{niwa} gives
\[
	\theta(z+1,f) = \sum_{h\in L'/L} e\Big(\mfrac{Nh_2^2}{24}\Big) \bar\psi_1(h) \theta(z,f,h) = e\pmfrac{1}{24} \theta(z,f).
\]
For $\sigma=\pmatrix abcd \in \Gamma_0(N)$ with $c>0$ we have, by Corollary 0 of \cite{niwa},
\[
	\theta(\sigma z, f) = (cz+d)^{\frac 12} \sum_{k\in L^*/L} \theta(z,f,k) \cdot \sqrt{-i} \sum_{h\in L'/L} \bar\psi_1(h) c(h,k)_\sigma,
\]
where
\begin{equation} \label{eq:c-h-k-1}
	c(h,k)_\sigma = \frac{1}{(Nc)^{\frac32}\sqrt{12}} \, e\left(\frac{d\langle k,k \rangle}{2c}\right) \sum_{r\in L/cL} e\left(\frac{a\langle h+r, h+r\rangle}{2c} - \frac{\langle k,h+r \rangle}{c}\right).
\end{equation}
For $h\in L'/L$, $k\in L^*/L$, and $r\in L/cL$, we write $h=(h_1,Nh_2,0)$, $k=(k_1,k_2,6k_3)$, and $r=(Nr_1,12Nr_2,6Nr_3)$.
Then the sum on $r$ in \eqref{eq:c-h-k-1} equals
\begin{equation} \label{eq:r-sum}
	e\pfrac{aNh_2^2-2h_2k_2}{24c} \, G(6aN,aNh_2-k_2,c) \, S(c) ,
\end{equation}
where (recalling that $N \mid c$), 
\begin{align}
	S(c) &= e\pfrac{h_1k_3}{Nc} \sum_{r_1,r_3(c)} e\pfrac{-aNr_1r_3-ah_1r_3+r_1k_3+r_3k_1}{c} \notag \\
	&=
	\begin{dcases}
		Nc \, e\pfrac{dk_1k_3'}{c} & \text{ if } k_3=Nk_3' \text{ and } k_1\equiv ah_1\pmod{N}, \\
		0 & \text{ otherwise.} \label{eq:S-c-eval}
	\end{dcases}
\end{align}
So by \eqref{eq:r-sum}, \eqref{eq:S-c-eval}, and \eqref{eq:gauss-gcd} we have $c(h,k)_\sigma=0$ unless $k=(k_1,Nk_2',6Nk_3') \in L'$, in which case $k_1\equiv ah_1\pmod N$.
For such $k$ we have
\begin{gather*}
	\sum_{h\in L'/L} \bar\psi_1(h) c(h,k)_\sigma 
	= \frac{\bar\psi(dk_1)}{\sqrt{12c'}} e\pfrac{d(k_2')^2}{24c'} \sum_{h_2\bmod 12} \pmfrac{12}{h_2} e\pfrac{ah_2^2-2h_2k_2'}{24c'} 
	G(6a,ah_2-k_2',c')
	.
\end{gather*}
Applying Lemma \ref{lem:dedekind-gauss} we have
\begin{align*}
	\sqrt{-i}\sum_{h\in L'/L} \bar\psi_1(h) c(h,k)_\sigma 
	&= \bar\psi(d) \bar\psi_1(k) \sqrt{-i} \, e\pfrac{a+d}{24c'} e^{\pi i s(-a,c')}.
\end{align*}
By (68.4) and (68.5) of \cite{rademacher-book} we have $s(-a,c')=-s(a,c')=-s(d,c')$.
Therefore
\[
\theta(\sigma z,f) = \bar\psi(d) \chi \ppMatrix a{Nb}{c'}d (cz+d)^{\frac 12} \theta(z,f).
\]
By \eqref{eq:chi-kronecker-symbol} and the assumption $N\equiv 1\pmod{24}$, we have $\chi \big(\pmatrix a{Nb}{c'}d\big) = \pfrac Nd \chi(\pmatrix abcd)$, from which the lemma follows.
\end{proof}

For $w=\xi+i\eta\in \H$ and $0\leq \theta<2\pi$, let
\[
	g=g(w,\theta) = \pMatrix 1\xi01 \pMatrix {\eta^{\frac 12}}00{\eta^{-\frac 12}} \pMatrix {\cos\theta}{\sin\theta}{-\sin\theta}{\cos\theta} \in \SL_2(\R).
\]
The action of $\SL_2(\R)$ on $\R^3$ is given by $g.x=x'$, where
\begin{equation} \label{eq:sl2r-action}
	\pMatrix{x_1'}{x_2'/2}{x_2'/2}{x_3'} = g\pMatrix{x_1}{x_2/2}{x_2/2}{x_3}g^T.
\end{equation}
Since $\langle x,x \rangle=-\frac 1{3N} \det\pmatrix{x_1}{x_2/2}{x_2/2}{x_3}$ we have $\langle gx,gx \rangle = \langle x,x \rangle$.
The action of $\SL_2(\R)$ on functions $f:\R^3\to \C$ is given by $gf(x):=f(g^{-1}x)$.

We specialize to $f=f_3$, with
\[
	f_3(x) = \exp\left(-\mfrac{\pi}{12N}\left(2x_1^2+x_2^2+2x_3^2\right)\right)
\]  
 as in \cite[Example 3]{niwa}, and we consider the function $\theta(z,gf_3)$.
Let $k(\theta)=\pmatrix{\cos \theta}{\sin\theta}{-\sin\theta}{\cos\theta}$.
Since $f_3(k(\theta)x)=f_3(x)$, the function $\theta(z,gf_3)$ is independent of the variable $\theta$.
Therefore it makes sense to define
\begin{equation} \label{eq:theta-def}
	\vartheta(z,w) := \theta(z,g(w,0)f_3) = v^{\frac 12} \sum_{x\in L'} \bar\psi_1(x) e\left(\mfrac{u}{2}\langle x,x \rangle\right) f_3\left(\sqrt{v} \, \sigma_w^{-1}x\right),
\end{equation}
where the second equality follows from
\begin{equation*}
	[r_0(\sigma_z)f](x)=v^{\frac34}e\(\tfrac u2\langle x,x\rangle\)f(\sqrt{v}x).
\end{equation*}

To determine the transformation of $\vartheta(z,w)$ in the variable $w$, we use the relation
\[
	\sigma_{\gamma w}=\gamma \, \sigma_w \, k\big(\!\arg(cw+d)\big), \qquad \gamma=\pmatrix abcd \in \SL_2(\R).
\]
Suppose that $\gamma=\pmatrix abcd\in \Gamma_0(6N)$. If $x'=\gamma x$ then by \eqref{eq:sl2r-action} we have
\begin{align}
	x_1' &= a^2 \, x_1 + ab \, x_2 + b^2 \, x_3, \label{eq:x1'} \\
	x_2' &= 2ac \, x_1 + (1+2bc) \, x_2 + 2bd \, x_3, \label{eq:x2'} \\
	x_3' &= c^2 \, x_1 + cd \, x_2 + d^2 \, x_3. \label{eq:x3'}
\end{align}
It follows that $\gamma L'=L'$ and that $\psi_1(x')=\psi^2(a)\psi_1(x)$. Thus
\begin{equation} \label{eq:theta-w-transform}
	\vartheta(z,\gamma w) = \psi^2(d) \vartheta(z,w) \qquad \text{for all }\gamma\in \Gamma_0(6N).
\end{equation}
A computation shows that replacing $w=\xi+i\eta$ by $w'=-\xi+i\eta$ in $f_3(\sqrt{v}\sigma_w^{-1}x)$ has the same effect as replacing $x_2$ by $-x_2$, so
\begin{equation} \label{eq:theta-even}
	\vartheta(z,-\xi+i\eta) = \vartheta(z,\xi+i\eta).
\end{equation}

\subsection{Proof of Theorem \ref{thm:shimura} in the case $t=1$}

Suppose that $F\in \mathcal{S}_{\frac 12}(N,\bar\psi\ptfrac N\bullet \chi, r)$ with $r\neq i/4$.
For $z=u+iv$ and $w=\xi+i\eta$ we define
\begin{equation} \label{eq:Psi-F-def}
	\Psi_F(w) := \int_{\mathcal D} v^{\frac 14} \bar{\vartheta(z,w)} F(z) d\mu,
\end{equation}
where $\mathcal D=\Gamma_0(N)\backslash\H$ and $d\mu=\frac{du\,dv}{v^2}$.
Note that the integral is well-defined by Lemma~\ref{thetatrans}.
There is no issue with convergence since $\vartheta$ and $F$ are both rapidly decreasing at the cusps.

Let $D^{(w)}$ denote the Casimir operator for $\SL_2(\R)$ defined by 
\[
	D^{(w)} = \eta^2 \left( \frac{\partial^2}{\partial \xi^2} + \frac{\partial^2}{\partial \eta^2} \right) - \eta \frac{\partial^2}{\partial\xi\partial\theta}.
\]
To show that $\Psi_F$ is an eigenform of $\Delta_0^{(w)}$, we use 
 Lemmas 1.4 and 1.5 of \cite{shintani}, which give
\[
	v^{-\frac 12} r_0(\sigma_z) D^{(w)}f = \Delta_{\frac 12}^{(z)}\left[v^{-\frac 12} r_0(\sigma_z) f\right].
\]
Since $\Psi_F$ is constant with respect to $\theta$, we have $\Delta_0^{(w)}\Psi_F = D^{(w)}\Psi_F$.
By the lemma on p. 304 of \cite{sarnak-additive}, it follows that
\[
	\Delta_0^{(w)}\Psi_F + \left(\mfrac 14+(2r)^2\right)\Psi_F = 0.
\]
This, together with the transformation law \eqref{eq:theta-w-transform} shows that $\Psi_F$ is a Maass form.
The following lemma shows that $\Psi_F$ is a cusp form.
This is the only point  where we use the assumption that  $N$ is squarefree.  It would be possible to 
remove this assumption with added complications by arguing as in \cite{cipra}.
For the remainder of this section, to avoid confusion with the eta function, we write $w=\xi+iy$.

\begin{lemma} \label{lem:cusps}
Suppose that $N\equiv 1\pmod{24}$ is squarefree and that $F\in \mathcal{S}_{\frac 12}(N,\bar\psi\ptfrac N\bullet \chi, r)$.
Let $\eta_N(w):=y^{\frac 14}\eta(Nw)$. 
For each cusp $\mathfrak a=\gamma_{\mathfrak a}\infty$ there exists $c_{\mathfrak a}\in \C$ such that as $y\to\infty$ we have
\begin{equation} \label{eq:cusps}
	(\Psi_F \sl_0 \gamma_{\mathfrak a}) (iy) =
	\begin{cases}
		c_{\mathfrak a} \, \left\langle \eta_N, F \right\rangle \, y + O(1) & \text{ if $\psi$ is principal}, \\
		O(1) & \text{ otherwise}.
	\end{cases}
\end{equation}
\end{lemma}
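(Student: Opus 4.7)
The plan is to study the asymptotic behavior of $\vartheta(z,w)$ as $y = \im w \to \infty$ by splitting the lattice sum according to the third coordinate of $x\in L'$ and then applying Poisson summation in the first coordinate. This will produce linear growth in $y$ whose coefficient is a multiple of $\langle \eta_N, F\rangle$, via the theta identity $\eta(\tau) = \sum_{n\geq 1}\pfrac{12}{n}q^{n^2/24}$.

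At the cusp $\mathfrak a = \infty$: by \eqref{eq:x1'}--\eqref{eq:x3'}, the matrix $\sigma_{iy}^{-1}$ sends a vector $x$ to one whose third coordinate is $yx_3$. Hence in the sum defining $\vartheta(z, iy)$ the terms with $x_3\neq 0$ contain a factor $\exp(-\pi v y^2 x_3^2/(6N))$, and integrating against the cuspidal $F$ gives a contribution that is $O(e^{-cy})$. For the $x_3=0$ part, parametrize $x=(x_1,Nh_2,0)$ with $x_1,h_2\in\Z$ and use $\langle x,x\rangle = Nh_2^2/12$ to obtain
\[
\vartheta_0(z, iy) = v^{1/2}\sum_{h_2\in\Z}\pfrac{12}{h_2} e\pfrac{Nh_2^2 z}{24} \sum_{x_1\in\Z}\bar\psi(x_1)\exp\left(-\frac{\pi v x_1^2}{6Ny^2}\right).
\]
Then evaluate the inner sum by M\"obius inversion (to handle the implicit constraint $(x_1,N)=1$ in the principal case) and Poisson summation in $x_1$. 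The zero-th Poisson frequency contributes $c_\psi\cdot y\sqrt{6N/v}$, where $c_\psi=\varphi(N)/N$ if $\psi$ is principal and $c_\psi = 0$ otherwise (in the non-principal case the zero frequency is proportional to $\sum_{a\bmod N}\bar\psi(a)=0$); higher frequencies contribute $O(e^{-cy^2/v})$. The outer $h_2$-sum equals $2\eta(Nz)$ by the theta identity above, so $\vartheta_0(z,iy) = 2c_\psi\sqrt{6N}\,y\,\eta(Nz) + O(e^{-cy^2/v})$. Substituting into $\Psi_F(iy)$ and recognizing $v^{1/4}\eta(Nz) = \eta_N(z)$ yields the linear growth $c_\infty y\,\langle F,\eta_N\rangle + O(1)$, which gives \eqref{eq:cusps} at $\mathfrak a=\infty$ after absorbing the complex conjugation into $c_\infty$.

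At a general cusp $\mathfrak a = \gamma_\mathfrak a \infty$ of $\Gamma_0(6N)$, I would repeat the analysis after rewriting $\vartheta(z, \gamma_\mathfrak a w)$ via the $SO(2)$-invariance of $f_3$ as a theta series attached to the cusp-adapted lattice $\gamma_\mathfrak a^{-1} L^*$; the decomposition by the new third coordinate and Poisson summation again produce a linear-in-$y$ main term proportional to $\langle \eta_N, F\rangle$, with a cusp-dependent constant $c_\mathfrak a$. The main obstacle is precisely this general-cusp step: one must verify that the weight-$1/2$ theta series emerging at each cusp is genuinely $\eta(Nz)$ (rather than some other weight-$1/2$ theta constant) and identify $c_\mathfrak a$ explicitly. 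The hypothesis that $N$ is squarefree enters here, giving a clean divisor-parametrization of the cusps of $\Gamma_0(6N)$ and ensuring the relevant Gauss sums behave uniformly across cusps.
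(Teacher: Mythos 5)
Your treatment of the cusp at infinity is essentially the paper's argument: split the lattice sum over $L'$ according to $x_3=0$ versus $x_3\neq 0$, discard the latter using the Gaussian decay in $y$ together with the cuspidality of $F$, recognize the $x_2$-sum as $2\eta(Nz)$ via $\eta(\tau)=\sum \left(\tfrac{12}{n}\right)q^{n^2/24}$, and extract the linear-in-$y$ term from the zero Poisson frequency of the $x_1$-sum, which vanishes unless $\psi$ is principal. (One caveat even here: your claim that the $x_3\neq 0$ terms contribute $O(e^{-cy})$ after integration is too quick, since the pointwise bound is of the shape $(v+1/v)\,y\,e^{-c_1vy^2}$ and is not small where $v$ is small in the fundamental domain; the paper leans on the decay of $F$ at the other cusps of $\Gamma_0(N)$, via Cipra's Proposition B.3, to make this $o(1)$.)

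The genuine gap is the general cusp, which you yourself flag as "the main obstacle" and then do not resolve — and that is where the content of the lemma lies, since the conclusion must hold at every cusp of $\Gamma_0(6N)$ for $\Psi_F$ to be a cusp form. The paper closes this by using Atkin--Lehner involutions rather than an arbitrary scaling matrix: since $6N$ is squarefree, every cusp is $W_d\infty$ for some $d\mid 6N$, and the key computation is that $W_d$ \emph{preserves the lattice} $L'$ (this follows from the explicit quadratic action \eqref{eq:x1'}--\eqref{eq:x3'}, because $W_d$ normalizes $\Gamma_0(6N)$ and has the special integral shape). Hence $\vartheta(z,W_dw)$ is again a theta series over $L'$, only with $\bar\psi_1(x)$ replaced by $\bar\psi_1(W_dx)$; on the $x_3=0$ sublattice one has $x_1'\equiv d\alpha^2x_1\pmod N$ and $x_2'\equiv(1+\tfrac{12N}{d}\gamma\beta)x_2\pmod{12}$, so $\bar\psi_1(W_dx)$ is a fixed constant times $\bar\psi(x_1)\left(\tfrac{12}{x_2/N}\right)$ (and vanishes identically unless $d\in\{1,2,3,6\}$). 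Therefore exactly the same factor $\eta(Nz)$ and the same character sum $\sum_{h\bmod N}\bar\psi(h)$ emerge at every cusp, only the constant $c_{\mathfrak a}$ changes. Your proposed route via the "cusp-adapted lattice $\gamma_{\mathfrak a}^{-1}L^*$" would force you to identify an a priori different weight-$1/2$ theta constant at each cusp, precisely the verification you acknowledge not having done; the Atkin--Lehner choice of cusp representatives is the missing idea that makes this identification automatic.
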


\begin{proof}
For each $d\mid 6N$, let $f\mapsto f\sl_0 W_d$ be the  Atkin-Lehner involution \cite[\S 2]{atkin-lehner} given by any matrix $W_d \in \SL_2(\R)$ of the form
\[
	W_d = \pMatrix{\sqrt d \, \alpha}{\beta/\sqrt{d}}{6N\gamma/\sqrt d}{\sqrt d \, \delta}, \qquad \alpha,\beta,\gamma,\delta \in \Z.
\]
Since $N\equiv 1\pmod{24}$ is squarefree, every cusp of $\Gamma_0(6N)$ is of the form $W_d \infty$ for some $d$.
Thus it suffices to establish \eqref{eq:cusps} for $\gamma_{\mathfrak a}=W_d$.

By \eqref{eq:x1'}--\eqref{eq:x3'} we have $W_d L' = L'$,
so
\[
	\vartheta_d(z,w) := \vartheta(z,W_d w) = v^{\frac 12} \sum_{x\in L'} \bar{\psi}_1(W_d x) e\left(\mfrac{u}{2}\langle x,x \rangle\right) f_3\left(\sqrt{v} \, \sigma_w^{-1}x\right).
\]
To determine the asymptotic behavior of $\vartheta_d(z,iy)$ as $y\to\infty$, we follow the method of Cipra \cite[\S 4.3]{cipra}.
We write
\begin{equation} \label{eq:theta-d-z-iy}
	\vartheta_d(z,iy) = v^{\frac 12} \sum_{\substack{x\in L' \\ x_3=0}} \bar{\psi}_1(W_d x) e\Big(\mfrac{z}{24N}x_2^2\Big) \exp\Big(-\mfrac{\pi v}{6Ny^2}x_1^2\Big) + \varepsilon(z,y),
\end{equation}
where
\begin{equation*}
	|\varepsilon(z,y)| \leq v^{\frac 12} \sum_{x_1\in \Z} \exp\Big( \mfrac{-\pi v}{6N y^2} x_1^2 \Big) \sum_{x_2\in \Z} \exp\Big( \mfrac{-\pi v}{12N} x_2^2 \Big) \sum_{x_3\neq 0} \exp\Big( \mfrac{-\pi v y^2}{6N} x_3^2 \Big) \ll \Big(v+\mfrac 1v\Big) y \, e^{-c_1vy^2}
\end{equation*}
for some $c_1>0$ (see \cite[Appendix B]{cipra}).
As in \cite[Proposition~B.3]{cipra}, the contribution of $\varepsilon(z,y)$ to $(\Psi_F\sl_0 W_d)(iy)$ is $o(1)$ as $y\to\infty$.
For $x_3 = 0$ and $W_d x=(x_1',x_2',x_3')$ we have 
\begin{align*}
	x_1' &\equiv d \alpha^2 x_1 \pmod {N}, \\
	x_2' &\equiv \Big(1 + \mfrac{12N}{d} \gamma \beta \Big)x_2 \pmod{12}
\end{align*}
(in particular, $\bar\psi_1(W_d x)=0$ unless $d\in \{1,2,3,6\}$).
Thus for some $c_2$ the main term of \eqref{eq:theta-d-z-iy} equals
\begin{multline*}
	c_2 \, v^{\frac 12} \sum_{x_1\in \Z} \bar\psi(x_1) \exp\Big( -\mfrac{\pi v}{6N y^2} x_1^2 \Big) \sum_{x_2\in N\Z} \pmfrac{12}{x_2} e \Big( \mfrac{z}{24N} x_2^2 \Big) \\ 
	= 2c_2  \, \eta(Nz) \, v^{\frac 12} \sum_{h\bmod N} \bar\psi(h) \theta_1\Big(\mfrac{iv}{12Ny^2},h,N\Big),
\end{multline*}
where $\theta_1(\cdot,h,N)$ is as in \cite[Theorem 1.10(i)]{cipra}.
By the first two assertions of that theorem we have
\[
	v^{\frac 12}\sum_{h\bmod N} \bar\psi(h) \theta_1\Big(\mfrac{iv}{12Ny^2},h,N\Big) = \sqrt{\mfrac 6N} \, y \sum_{h\bmod N} \bar\psi(h) + O_N(\sqrt v).
\]
The latter sum is zero unless $\psi$ is principal.
In that case we have
\[
	(\Psi_F \sl_0 W_d)(iy) = c_3 \, y \int_{\mathcal D} v^{\frac 14} \bar{\eta(Nz)} F(z) \, d\mu + O(1)
\]
for some $c_3\in \C$; otherwise $\Psi_F$ is $O(1)$ at the cusps.
\end{proof}

If the spectral parameter of $F$ is not $i/4$ then $F$ is orthogonal to $\eta_N$.
Since a Maass form that is bounded at the cusps is a cusp form, we obtain the following proposition (recall \eqref{eq:theta-even}).

\begin{proposition} \label{prop:phi-F-maass}
Suppose that $F\in \mathcal{S}_{\frac 12}(N,\bar\psi\ptfrac N\bullet \chi, r)$, where $N\equiv 1\pmod{24}$ is squarefree and $r \neq i/4$. 
Then $\Psi_F$ is an even Maass cusp form in $\mathcal{S}_0(6N,\bar\psi^2,2r)$. 
\end{proposition}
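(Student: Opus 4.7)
The plan is to assemble the ingredients already prepared in this section: the transformation law of $\vartheta$ in both variables, the Casimir-operator identity à la Shintani, and the cuspidal growth bound of Lemma~\ref{lem:cusps}.

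First I would check automorphy. For $\gamma = \pmatrix abcd \in \Gamma_0(6N)$, substituting $w \mapsto \gamma w$ in \eqref{eq:Psi-F-def} and invoking \eqref{eq:theta-w-transform} gives $(\Psi_F \sl_0 \gamma)(w) = \bar\psi^2(d)\, \Psi_F(w)$, which is automorphy of weight $0$ with multiplier $\bar\psi^2$. The reflection symmetry \eqref{eq:theta-even} translates directly into $\Psi_F(-\xi + i\eta) = \Psi_F(\xi + i\eta)$, so $\Psi_F$ is even.

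Next I would establish the Laplace eigenvalue equation. Since $f_3$ is rotation invariant, the integrand in \eqref{eq:Psi-F-def} is independent of $\theta$, and therefore $\Delta_0^{(w)} \Psi_F = D^{(w)} \Psi_F$; passing $D^{(w)}$ under the integral is legitimate by the Schwartz decay of $\vartheta$ and the cuspidal decay of $F$ on $\mathcal D$. Shintani's intertwining relations (Lemmas 1.4 and 1.5 of \cite{shintani}) move $D^{(w)}$ through the Weil-representation kernel to $\Delta_{1/2}^{(z)}$, and the lemma on p.~304 of \cite{sarnak-additive} then delivers
\[
\Delta_0^{(w)} \Psi_F + \left( \mfrac{1}{4} + (2r)^2 \right) \Psi_F = 0,
\]
so $\Psi_F$ is a Maass form of weight $0$ with spectral parameter $2r$.

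Finally I would prove cuspidality. By Lemma~\ref{lem:cusps}, it suffices to verify that either $\psi$ is non-principal, or that $\langle \eta_N, F \rangle = 0$ when $\psi$ is principal. The function $\eta_N(w) = y^{1/4}\eta(Nw)$ is the Maass form coming from the holomorphic cusp form $\eta(Nw)$, so it sits at the bottom of the spectrum of $\Delta_{1/2}$ on $\mathcal{A}_{1/2}\bigl(N, \ptfrac{N}{\bullet}\chi\bigr)$ with Laplace eigenvalue $\lambda_0(1/2) = 3/16$ and spectral parameter $i/4$. The assumption $r \neq i/4$ gives $F$ a different Laplace eigenvalue, so self-adjointness of $\Delta_{1/2}$ forces $F \perp \eta_N$. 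Lemma~\ref{lem:cusps} then yields $(\Psi_F \sl_0 \gamma_{\mathfrak a})(iy) = O(1)$ at each cusp $\mathfrak a$; since $2r \neq i/2$, this bound forces the constant Fourier coefficient at every singular cusp to vanish, so $\Psi_F$ is cuspidal. The main obstacle is this orthogonality step, where the hypothesis $r \neq i/4$ enters essentially to separate $F$ from the unique possible obstruction $\eta_N$.
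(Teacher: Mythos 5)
Your proposal is correct and follows essentially the same route as the paper: automorphy and evenness from the transformation laws \eqref{eq:theta-w-transform} and \eqref{eq:theta-even}, the eigenvalue equation via Shintani's intertwining lemmas and the lemma on p.~304 of \cite{sarnak-additive}, and cuspidality from Lemma~\ref{lem:cusps} together with the orthogonality $F\perp\eta_N$ forced by $r\neq i/4$. Your added remarks (self-adjointness to justify the orthogonality, and $2r\neq i/2$ to pass from boundedness at the cusps to vanishing constant terms) simply make explicit two steps the paper leaves as one-line assertions.
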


With $G$ as in Theorem \ref{thm:shimura} let $F:=G\sl_{\frac 12}w_N$, where
\[
	w_N = \pMatrix{0}{-1/\sqrt N}{\sqrt N}{0}
\]
is the Fricke involution.
To obtain the $t=1$ case of Theorem \ref{thm:shimura} we will compute the Fourier expansion of 
\begin{equation}\label{phidef}
\Phi_F(w):=\Psi_F\(-1/6Nw\).
\end{equation}
Once we show that the coefficients of $\Phi_F$ satisfy the relation \eqref{eq:shim-coeff-relation}, Proposition~\ref{prop:phi-F-maass} completes the proof.
We first show that $F$ defined in this way satisfies the hypotheses of Proposition~\ref{prop:phi-F-maass}.

\begin{lemma} 
Suppose that $N\equiv 1\pmod{24}$. 
If $G\in \mathcal{S}_{\frac 12}(N,\psi\chi,r)$ and $F=G\sl_{\frac 12}w_N$, then
$F\in \mathcal{S}_{\frac 12}\big(N,\bar\psi \ptfrac N\bullet \chi, r\big)$.
\end{lemma}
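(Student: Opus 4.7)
The plan is to verify the three defining properties of a Maass cusp form for $F = G\sl_{1/2} w_N$: the transformation law, the eigenvalue condition, and rapid decay at cusps. The eigenvalue statement is immediate: since $w_N\in\SL_2(\R)$ and $\Delta_{1/2}$ commutes with the slash operator, $F$ is again an eigenfunction of $\Delta_{1/2}$ with spectral parameter $r$. The cuspidal condition follows because $w_N$ normalizes $\Gamma_0(N)$, hence induces a permutation of its cusps; the vanishing of the constant term of $F$ at each cusp $\a$ is equivalent to the vanishing of the constant term of $G$ at the cusp $w_N\a$, which holds since $G$ is cuspidal.

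The heart of the matter is the transformation law. For $\gamma=\pmatrix abcd\in\Gamma_0(N)$ with $c=Nc'$, I compute
\[
\gamma' := w_N\gamma w_N^{-1}=\pMatrix{d}{-c'}{-bN}{a}\in\Gamma_0(N).
\]
By the cocycle property of the slash operator,
\[
F\sl_{1/2}\gamma = G\sl_{1/2}(w_N\gamma) = G\sl_{1/2}(\gamma' w_N) = (\psi\chi)(\gamma')\,F.
\]
The Dirichlet character piece handles itself: $\psi(\gamma')=\psi(a)=\bar\psi(d)$, since $ad\equiv 1\pmod N$. So the proof reduces to showing the identity
\begin{equation}\label{eq:chi-conj-ident}
\chi(\gamma') = \pmfrac{N}{d}\,\chi(\gamma)
\qquad\text{for all }\gamma\in\Gamma_0(N).
\end{equation}

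To establish \eqref{eq:chi-conj-ident} I would appeal to Petersson's explicit formula \eqref{eq:chi-kronecker-symbol}. Since $N$ is odd, the parity of the lower-left entry $-bN$ of $\gamma'$ agrees with that of $b$, so one splits the verification into the cases (c odd, b odd), (c odd, b even), (c even, b odd), (c even, b even), after first reducing to the case $c>0$ by multiplying $\gamma$ by $-I$ if necessary (and handling the analogous sign reduction for $\gamma'$). In each case the Kronecker-symbol ratio collapses to $\pfrac Nd$ via multiplicativity and quadratic reciprocity applied to $\pfrac{-bN}{a}$ versus $\pfrac{c}{d}$, while the exponential factors telescope modulo $24$ using the congruences $N\equiv 1$ and $N^2\equiv 1\pmod{24}$ to absorb the transitions $c\leftrightarrow c'$ and $b\leftrightarrow bN$. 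I expect the main technical obstacle to be this bookkeeping, in particular handling the sign normalization simultaneously for $\gamma$ and $\gamma'$ (their lower-left entries have opposite signs precisely when $b>0$), which forces one to track the factors $\chi(-I)=-i$ and the $j$-cocycle carefully when applying the multiplier axiom (iii) of \S\ref{sec:mult-sys}. Once \eqref{eq:chi-conj-ident} is checked, the four properties combine to place $F$ in $\mathcal S_{1/2}(N,\bar\psi\pfrac N\bullet\chi,r)$ as claimed.
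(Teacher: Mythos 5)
Your proposal is correct in its overall structure and reduces the lemma to exactly the same key identity as the paper, namely $\chi(\gamma')=\pfrac Nd\chi(\gamma)$ for $\gamma'=w_N\gamma w_N^{-1}$; the preliminary observations (the eigenvalue claim, $\psi(\gamma')=\psi(a)=\bar\psi(d)$, cuspidality via the normalizing property of $w_N$) all match. Where you diverge is in how that identity gets proved. You propose a direct verification from Petersson's formula \eqref{eq:chi-kronecker-symbol}, with a case split on the parities of $b$ and $c$, quadratic reciprocity to compare $\pfrac{-bN}{a}$ with $\pfrac cd$, and careful handling of the sign normalization (since \eqref{eq:chi-kronecker-symbol} is only stated for positive lower-left entry, and $c$ and $-bN$ cannot in general be made positive simultaneously) together with the $j$-cocycle in axiom (iii). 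The paper instead argues as in Shimura's Proposition~1.4: writing $w_N=SV$ with $S=\left(\begin{smallmatrix}0&-1\\1&0\end{smallmatrix}\right)$ and $V=\operatorname{diag}(\sqrt N,1/\sqrt N)$, it sets $g:=\tilde\eta\sl_{1/2}V$ (essentially $y^{1/4}\eta(N\tau)$), notes via a single clean application of \eqref{eq:chi-kronecker-symbol} and $N\equiv1\pmod{24}$ that $g$ transforms on $\Gamma_0(N)$ with multiplier $\pfrac N\bullet\chi$, and then reads off $\chi(\gamma')$ by acting with $w_N\gamma w_N^{-1}$ on $\tilde\eta$ itself; the chain $\tilde\eta\sl_{1/2}w_N\gamma w_N^{-1}=\sqrt{-i}\,g\sl_{1/2}\gamma w_N^{-1}=\pfrac Nd\chi(\gamma)\tilde\eta$ telescopes, with all cocycle factors automatically consistent because one is manipulating an actual automorphic function rather than abstract multiplier values. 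The payoff of the paper's route is that it never evaluates $\chi$ at the conjugated matrix and so avoids your parity/sign casework entirely; your route is feasible and more self-contained, but the bookkeeping you flag (especially the simultaneous sign normalization and the cocycle corrections when composing half-integral-weight slashes) is where all the real work would land, and you should expect it to be genuinely tedious rather than routine.
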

\begin{proof}
Since the slash operator commutes with $\Delta_k$, it suffices to show that $F$ transforms with mutliplier $\bar\psi(\tfrac N\bullet)\chi$.
Writing $\gamma=\pmatrix abcd\in \Gamma_0(N)$ and $\gamma'=w_N\gamma w_N^{-1}=\pmatrix d{-c/N}{-bN}a$ we have
\[
	F|_\frac12\gamma=\bar\psi(d) \chi(\gamma') \, F.
\]
Thus it suffices to show that $\chi(\gamma')=\pmfrac Nd \chi(\gamma)$.
For this we argue as in \cite[Proposition~1.4]{shimura}.

Write $w_N=SV$, where
\[
	S=\pMatrix 0{-1}10, \quad V = \pMatrix {\sqrt N}00{1/\sqrt N}.
\]
Let $\tilde \eta(w):= y^{1/4}\eta(w)$ and let $g:=\tilde\eta\sl_{\frac 12}V$.
Then by \eqref{eq:chi-kronecker-symbol}, $g$ transforms on $\Gamma_0(N)$ with multiplier $\pfrac N\bullet \chi$.
We compute
\[
	\chi(w_N\gamma w_N^{-1}) \, \tilde\eta = \tilde\eta \sl_{\frac 12} w_N\gamma w_N^{-1} = \sqrt{-i} \, g \sl_{\frac 12} \gamma w_N^{-1} = \sqrt{-i} \, \pmfrac Nd \chi(\gamma) g \sl_{\frac 12} w_N^{-1} = \pmfrac Nd \chi(\gamma) \, \tilde\eta.
	\qedhere
\]
\end{proof}

We will determine the Fourier coefficients of $\Phi_F$ by computing its Mellin transform $\Omega(s)$ in two ways.
We have
\begin{equation} \label{eq:Omega-def}
	\Omega(s):=\int_0^\infty \Phi_F(iy)y^s\frac{dy}y=(6N)^{-s}\int_0^\infty \Psi_F\(i/y\)y^s\frac{dy}y.
\end{equation}
For $w$ purely imaginary, the sum in \eqref{eq:theta-def} simplifies to
\[
	\vartheta(z,iy) = \vartheta_1(z)\vartheta_2(z,y),
\] 
where $\vartheta_1(z)=2\eta(Nz)$ and
\begin{align*}
	\vartheta_2(z,y) 
	&= v^{\frac 12} \sum_{m,n\in \Z} \bar\psi(m) e(-umn) \exp\left(-\pi v\left(\frac{m^2}{6Ny^2} + 6Ny^2 n^2\right)\right) \\
	&= \frac{1}{y\sqrt{6N}} \sum_{m,n\in \Z} \bar\psi(m) \exp\left(-\frac{\pi}{6Ny^2v}|mz+n|^2\right).
\end{align*}
The latter equality follows from Poisson summation on $n$.

Setting $A:=\frac\pi{6Nv}|mz+n|^2$ for the moment, we find that 
\[\Omega(s)= 2(6N)^{-s-\frac 12} \sum_{m,n} \psi(m) \int_{\mathcal D} v^{\frac 14} \bar{\eta(Nz)} F(z) \int_0^\infty y^s e^{-A y^2}dy \, d\mu.
\]
For $\re(s)>1$ the  inner integral  evaluates to $\frac 12 A^{\frac{-s-1}{2}} \Gamma\pfrac{s+1}{2}$, so we have
\[\Omega(s)= \frac{\Gamma\pfrac{s+1}{2} }{(6N)^\frac s2\pi^\frac{s+1}2}\sum_{m,n} \psi(m) \int_{\mathcal D} v^{\frac 14} \bar{\eta(Nz)} F(z)
\(\frac{v}{|mz+n|^2}\)^\frac{s+1}2 \, d\mu.\]
Replacing $z$ by $w_N z$,  recalling that  $G=F|_\frac12w_N$ and using the fact that $\psi$ is even, we obtain
\[
	\Omega(s) = \frac{\sqrt{i}N^\frac14}{6^\frac s2\pi^\frac{s+1}2}\Gamma\pmfrac{s+1}2
	\sum_{m,n} \psi(m) \int_{\mathcal D} v^{\frac 14} \bar{\eta(z)} G(z) \pfrac{v}{|Nnz+m|^2}^{\frac{s+1}{2}}\, d\mu.
\]
For $(m,N)=1$, write $m=gd$ and $Nn=gc$ with $g=(m,n)=(m,Nn)$ so that
\[
\sum_{m,n} \psi(m) \pfrac{v}{|Nnz+m|^2}^\frac{s+1}2
	= L(s+1,\psi) \sum_{\gamma\in \Gamma_\infty \backslash \Gamma_0(N)} \psi(\gamma) \im(\gamma z)^\frac{s+1}2,
\]
where $\psi(\gamma)=\psi(d)$ for $\gamma=\pmatrix abcd$.
Thus
\begin{align*}
\Omega(s) 
	=  c(s) \sum_{\gamma\in \Gamma_\infty \backslash \Gamma_0(N)}
	\psi(\gamma) \int_{\mathcal D} v^{\frac 14}\bar{\eta(z)} G( z) \im (\gamma z)^{\frac{s+1}2} \, d\mu,
\end{align*}
where
\[
	c(s)=\frac{\sqrt{i}N^\frac14}{6^\frac s2\pi^\frac{s+1}2}\Gamma\pmfrac{s+1}2 L(s+1,\psi).
\]
Since $\psi(\gamma)v^{1/4}\bar{\eta(z)}G(z) = \im(\gamma z)^{1/4}\bar{\eta(\gamma z)}G(\gamma z)$ for $\gamma\in \Gamma$, we have
\begin{align*}
	\Omega(s) = c(s) \int_{\Gamma_\infty\backslash\H} v^{\frac{s+1}2+\frac14}\bar{\eta(z)} G(z) \, d\mu.
\end{align*}

Recall that $G$ has Fourier expansion
\[
	G(z) = \sum_{n\equiv 1(24)} a(n) W_{\frac {\sgn(n)}4, ir} \pmfrac{\pi |n|v}6 e\pmfrac{nu}{24}.
\]
Thus we have 
\[
	\Omega(s) = c(s)  \sum_{n=1}^\infty \pmfrac{12}{n} a(n^2) \int_0^\infty v^{\frac s2-\frac 54} W_{\frac 14, ir}\pmfrac{\pi n^2 v}{6} e^{-\frac{\pi n^2 }{12}v}\, dv.
\]
By \cite[(13.23.4) and (16.2.5)]{nist}, the integral  evaluates to 
\[
	\pfrac{6}{\pi n^2}^{\frac s2-\frac 14} \, \frac{\Gamma\(\frac s2+\frac 14+ir\)\Gamma\(\frac s2+\frac14-ir\)}{\Gamma\(\frac{s+1}2\)},
\]
so we conclude that
\[
	\Omega(s) = \sqrt{i}\pmfrac {\pi N}6^{\frac 14} \pi^{-s-\frac 12} \, \Gamma\(\tfrac s2+\tfrac 14+ir\)\Gamma\(\tfrac s2+\tfrac14-ir\) L(s+1,\psi) \sum_{n=1}^\infty \pmfrac {12}n \frac{a(n^2)}{n^{s-\frac 12}}.
\]

On the other hand, since  $\Psi_F$ is an even Maass cusp form, 
it  follows that $\Phi_F$ is also even.
Since $\Phi_F$ has eigenvalue $\frac 14+(2r)^2$, it has a Fourier expansion of the form
\[
	\Phi_F(w) = 2\sum_{n=1}^\infty b(n) W_{0,2ir}(4\pi n y) \cos(2\pi n x).
\]
By the definition \eqref{eq:Omega-def} of $\Omega(s)$ and \cite[(13.18.9) and (10.43.19)]{nist} we have
\begin{align*}
	\Omega(s) &= 4\sum_{n=1}^\infty \sqrt{n} \, b(n) \int_0^\infty K_{2ir}(2\pi n y) y^{s-\frac 12} \, dy \\
	&= \pi^{-s-\frac 12} \Gamma\(\tfrac s2+\tfrac 14+ir\) \Gamma\(\tfrac s2+\tfrac 14-ir\) \sum_{n=1}^\infty \frac{b(n)}{n^s}.
\end{align*}
So the coefficients $b(n)$ are given by the relation
\begin{equation} \label{eq:phi-F-coeffs}
	\sum_{n=1}^\infty \frac{b(n)}{n^s} = \sqrt{i}\pmfrac{\pi N}{6}^{\frac 14} \, L(s+1,\psi) \sum_{n=1}^\infty \pmfrac {12}n  \frac{a(n^2)}{n^{s-\frac 12}}.
\end{equation}
This proves Theorem \ref{thm:shimura} in the case $t=1$ since $b(n)$ is a constant multiple of the function $b_1(n)$ in \eqref{eq:shim-coeff-relation}.

\subsection{The case $t>1$}

For squarefree $t\equiv 1\pmod{24}$ with $t>1$ we argue as in Section 3 of \cite{niwa}.  
For a function $f$ on $\H$ define $f_t(\tau):=f(t\tau)$.
We apply Theorem \ref{thm:shimura} to 
\[
	G_t(\tau) = \sum_{n\equiv 1(24)} a(n/t) W_{\frac{\sgn(n)}4,ir}\pmfrac{\pi |n| y}{6} e\pmfrac{nx}{24} \in \mathcal{S}_{\frac 12}\left(Nt,\psi\ptfrac{t}{\bullet}\chi,r\right).
\]
The coefficients $c(n)$ of $S_1(G_t) \in \mathcal{S}_0(6Nt,\psi^2,2r)$ are given by
\[
	\sum_{n=1}^\infty \frac{c(n)}{n^s} = L\left(s+1,\psi\ptfrac t\bullet\right) \sum_{n=1}^\infty \pmfrac{12}n \frac{a(n^2/t)}{n^{s-\frac 12}} = L\left(s+1,\psi\ptfrac t\bullet\right)  t^{-s+\frac 12} \sum_{n=1}^\infty \pmfrac{12}n \frac{a(tn^2)}{n^{s-\frac 12}}.
\]
Thus $c(n)=0$ unless $t\mid n$, in which case $c(n)=\sqrt t \, b_t(n/t)$, where $b_t(n)$ are the coefficients of $S_t(G)$.
We conclude that $S_1(G_t) = \sqrt t \, [S_t(G)]_t$.
By a standard argument (see e.g. \cite[Section 3]{niwa}) we have 
\[
	S_t(G)\in \mathcal{S}_0(6N,\psi^2,2r).
\]
This completes the proof of Theorem \ref{thm:shimura}.

%%%%%%%%%%%%%%%%%%%%%%%%%%%%%%%%%%%%%%%%%%%%%%%%%%%%%%%%%%%%%%%%
%%%%%%%%%%%%%%%%%%%%%%%%%%%%%%%%%%%%%%%%%%%%%%%%%%%%%%%%%%%%%%%%
%%%%%%%% K-BESSEL %%%%%%%%%%%%%%%%%%%%%%%%%%%%%%%%%%%%%%%%%%%%%%
%%%%%%%%%%%%%%%%%%%%%%%%%%%%%%%%%%%%%%%%%%%%%%%%%%%%%%%%%%%%%%%%
%%%%%%%%%%%%%%%%%%%%%%%%%%%%%%%%%%%%%%%%%%%%%%%%%%%%%%%%%%%%%%%%

\section{Estimates for a \texorpdfstring{$K$}{K}-Bessel transform}\label{sec:kbessel}
This section contains uniform estimates for the $K$-Bessel transform $\check\phi(r)$ which are required 
in Sections \ref{sec:easyproof} and \ref{sec:hardproof}.
Recall that
\[
	\check\phi(r) = \ch \pi r \int_0^\infty K_{2ir} (u) \phi(u) \, \frac{du}u,
\]
where $\phi$ is a suitable test function (see \eqref{phiproperties}).
Given $a,x,T>0$ with
\[
	T \leq \mfrac x3 \quad \text{ and } \quad T \asymp x^{1-\delta}, \qquad 0<\delta<\mfrac12,
\]
we choose $\phi=\phi_{a,x,T}:[0,\infty)\to[0,1]$ to be a smooth function satisfying
\begin{enumerate}[\hspace{1.5em}(i)]\setlength\itemsep{.4em}
	\item $\phi(t)=1$ for $\mfrac{a}{2x} \leq t \leq \mfrac ax$,
	\item $\phi(t)=0$ for $t\leq \mfrac{a}{2x+2T}$ and $t\geq \mfrac{a}{x-T}$,
	\item $\phi'(t) \ll \left(\mfrac{a}{x-T} - \mfrac ax\right)^{-1} \ll \mfrac{x^2}{aT}$, and
	\item $\phi$ and $\phi'$ are piecewise monotone on a fixed number of intervals (whose number is independent of $a$, $x$, and $T$).
\end{enumerate}

In Theorem \ref{KTFmixedsign}, the function $\check\phi(r)$ is evaluated at the spectral parameters $r_j$ corresponding to 
the eigenfunctions $u_j$ as in \eqref{eq:uj-fix}.
In view of Corollary \ref{cor:shimura}, we  require estimates only for $r \geq 1$.
We will prove the following theorem.

\begin{theorem} \label{thm:phicheck}
Suppose that $a,x,T$, and $\phi=\phi_{a,x,T}$ are as above.
Then
\begin{equation} \label{eq:phi-est}
	\check\phi(r) \ll
	\begin{dcases}
		r^{-\frac 32} e^{-r/2} & \text{ if } \, 1\leq r\leq \mfrac{a}{8x}, \\
		r^{-1} & \text{ if } \, \max\big(\mfrac a{8x},1\big) \leq r\leq \mfrac ax, \\
		\min\Big( r^{-\frac 32}, r^{-\frac 52} \mfrac xT \Big) & \text{ if } \, r\geq \max\big(\mfrac ax, 1\big).
	\end{dcases}	
\end{equation}
\end{theorem}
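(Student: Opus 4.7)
The plan is to exploit the three qualitatively distinct regimes of $K_{2ir}(u)$ as a function of $u/(2r)$: exponential decay for $u \gg 2r$, an Airy-type transition near $u = 2r$, and bounded oscillation with envelope $e^{-\pi r}$ for $u \ll 2r$. Since $\phi = \phi_{a,x,T}$ is supported in $[a/(2x+2T), a/(x-T)] \subseteq [3a/(8x), 3a/(2x)]$ (using $T \le x/3$), the three ranges of $r$ in \eqref{eq:phi-est} correspond exactly to the support of $\phi$ sitting entirely above, straddling, or entirely below the transition point $u = 2r$.

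For the first range $1 \le r \le a/(8x)$, every $u$ in the support satisfies $u \ge 3r$. I apply the Debye saddle-point asymptotic
\[
K_{2ir}(u) \ll u^{-\frac 12}\exp\bigl(-u\cos\phi - 2r\phi\bigr), \qquad \sin\phi = 2r/u,
\]
valid uniformly in this regime. Setting $\lambda = u/r$ and $g(\lambda) = \sqrt{\lambda^2-4} + 2\arcsin(2/\lambda)$, one checks $g'(\lambda) = \sqrt{\lambda^2-4}/\lambda > 0$ and $g(3) = \sqrt 5 + 2\arcsin(2/3) > \pi + 1/2$. Monotonicity of $g$ then gives
\[
\ch(\pi r)\exp\bigl(-rg(u/r)\bigr) \ll e^{-r/2}\exp\bigl(-c(u-3r)\bigr)
\]
for some $c>0$; integrating $u^{-3/2}$ times this bound over $u \ge 3r$ yields $\check\phi(r) \ll r^{-3/2} e^{-r/2}$.

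For the transition range $\max(a/(8x),1) \le r \le a/x$, the point $u = 2r$ lies within the support of $\phi$. Here I use the uniform asymptotic expansion of $K_{i\nu}(u)$ in terms of Airy functions (NIST \S 10.20), which gives
\[
\ch(\pi r)|K_{2ir}(u)| \ll r^{-\frac 12}\bigl|1-(u/2r)^2\bigr|^{-\frac14}\quad\text{for }|u-2r|\gg r^{1/3},
\]
and $\ll r^{-1/3}$ in the Airy window $|u-2r|\lesssim r^{1/3}$, and exponential decay for $u \ge 2r+r^{1/3}$. Since $1/u\asymp x/a\asymp 1/r$, the Airy window contributes $r^{-1/3}\cdot r^{1/3}\cdot r^{-1}=r^{-1}$, while the oscillatory and decaying tails contribute no more (using the integrability of $(1-s^2)^{-1/4}$).

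For the third range $r \ge \max(a/x,1)$, the entire support of $\phi$ lies in the oscillatory regime $u \le 3r/2$, bounded away from the transition by a fixed amount. Here
\[
\ch(\pi r) K_{2ir}(u) = r^{-\frac 12}A(u,r)\cos\bigl(\Phi(u,r)+\tfrac\pi4\bigr) + \text{lower order},
\]
with $A(u,r)\asymp 1$ and $\partial_u\Phi(u,r) = \sqrt{(2r/u)^2-1}$, so $|\partial_u\Phi|\asymp r/u$ on the support of $\phi$. The trivial estimate gives $r^{-1/2}\cdot T/x$; one integration by parts against the oscillation inserts a factor $u/r$ and yields $r^{-3/2}$, while a second integration by parts (using $|\phi'|\ll x^2/(aT)$ and the piecewise monotonicity of $\phi$) produces $r^{-5/2}\cdot x/T$. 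Taking the minimum gives the claimed estimate.

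The main obstacle is ensuring uniform control of the asymptotic expansions and their error terms across the parameter ranges, particularly near $u = 2r$ where the Debye expansion fails and one must invoke the uniform Airy-based asymptotic. A second delicate point is bookkeeping in the integration by parts in case 3: one must verify that the boundary contributions from the irregular endpoints of $\supp\phi$ and the derivatives of $\phi$ (controlled only in variation, by assumption (iv)) do not overwhelm the main term.
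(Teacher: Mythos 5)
Your treatment of the first and third ranges is sound and essentially matches the paper's: in the decaying range the Debye exponent $rg(u/r)$ with $g(3)=\sqrt5+2\arcsin(2/3)>\pi+\tfrac12$ is exactly the paper's $e^{-2r\mu(3/2)}$ computation in different clothing, and in the oscillatory range the two integrations by parts (the second using $\phi'\ll x^2/(aT)$) reproduce the paper's Proposition giving $\min(r^{-3/2},r^{-5/2}x/T)$.

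The gap is in the middle range $\max(a/(8x),1)\le r\le a/x$. There you bound the Airy window correctly (length $r^{1/3}$ times $r^{-1/3}\cdot u^{-1}$ gives $r^{-1}$), but you dispose of the oscillatory tail $u<2r-cr^{1/3}$ by ``integrability of $(1-s^2)^{-1/4}$,'' i.e.\ by taking absolute values. That only yields
\[
r^{-\frac12}\int \bigl(1-s^2\bigr)^{-\frac14}\,\frac{ds}{s}\;\asymp\;r^{-\frac12},
\]
since in this range the $s=u/(2r)$ support of $\phi$ is an interval of length $\asymp 1$ (indeed for $r$ near $a/x$ the \emph{entire} support of $\phi$ lies in the oscillatory regime), and the absolute integral is genuinely of size $r^{-1/2}$, not $r^{-1}$. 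To recover the claimed $r^{-1}$ you must extract cancellation from the phase on the oscillatory portion as well, which is delicate because the phase derivative degenerates as $u\to 2r$. The paper does this by writing $H_{1/3}^{(1)}(x)=M_{1/3}(x)e^{i\theta_{1/3}(x)}$ with $\theta_{1/3}'(x)=2/(\pi x M_{1/3}^2(x))$, proving that $xM_{1/3}^2(x)$ is monotone (their Lemma on $\tilde M$), and then applying the second mean value theorem repeatedly to the integral written against $\bigl(e^{i\theta_{1/3}(2rw(y))}\bigr)'$; the monotone factor $(1-y^2)^{-3/4}\ll r^{1/2}$ then yields $r^{-3/2}\cdot r^{1/2}=r^{-1}$. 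Without some argument of this kind your middle-range bound stops at $r^{-1/2}$.
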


To prove Theorem \ref{thm:phicheck} we require estimates for $K_{iv}(vz)$ which are uniform for $z\in (0,\infty)$ and $v\in [1,\infty)$.

\subsection{Uniform estimates for the $K$-Bessel function}
We estimate $K_{iv}(vz)$ in the following ranges as $v\to \infty$:
\begin{enumerate}[\hspace{1.5em}(A)]\setlength\itemsep{.4em}
 	\item the oscillatory range $0<z \leq 1-O(v^{-\frac23})$,
 	\item the transitional range $1-O(v^{-\frac 23}) \leq z \leq 1+O(v^{-\frac23})$, and
 	\item the decaying range $z \geq 1+O(v^{-\frac 23})$.
\end{enumerate}

Suppose that $c$ is a positive constant.
In the transitional range there is a significant ``bump'' in the $K$-Bessel function.
By \cite[(14) and (21)]{booker-strombergsson-then} we have
\begin{equation} \label{eq:kbessel-est-transition}
	e^{\frac{\pi v}2}K_{iv}(vz) \ll_c v^{-\frac 13} \qquad \text{ for }  z \geq 1-cv^{-\frac 23}.
\end{equation}
In the decaying range the $K$-Bessel function is positive and decreasing.
By \cite[(14)]{booker-strombergsson-then} we have
\begin{equation} \label{eq:kbessel-est-decay}
	e^{\frac{\pi v}2}K_{iv}(vz) \ll_c \frac{e^{-v \mu(z)}}{v^{\frac 12}(z^2-1)^{\frac 14}} \qquad \text{ for } z \geq 1+cv^{-\frac 23},
\end{equation}
where 
\[
	\mu(z):=\sqrt{z^2-1}-\arccos\Big(\mfrac1z\Big).
\]

The oscillatory range is much more delicate. 
Balogh \cite{balogh} gives a uniform asymptotic expansion for $K_{iv}(vz)$ in terms of the Airy function $\Ai$ and its derivative $\Ai'$. 
For $z\in (0,1)$ define
\[
	w(z) := \arccosh\pmfrac 1z - \sqrt{1-z^2}
\]
and define $\zeta$ and $\xi$ by
\[
	\mfrac 23 \zeta^{\frac 32} = -i \, w(z), \qquad \xi = v^{\frac 23} \zeta.
\]
Taking $m=1$ in equation (2) of \cite{balogh} we have
\begin{equation} \label{eq:balogh}
	e^{\frac{\pi v}{2}} K_{iv}(vz) = \frac{\pi\sqrt 2}{v^{\frac 13}} \pfrac{-\zeta}{1-z^2}^{\frac 14} \left\{ \Ai(\xi)\left[1+\frac{A_1(\zeta)}{v^2}\right] + \Ai'(\xi)\frac{B_0(\zeta)}{v^{\frac 43}} + \frac{e^{ivw(z)}}{1+|\xi|^{\frac 14}}O\left(v^{-3}\right) \right\},
\end{equation}
uniformly for $v\in [1,\infty)$, where
\begin{align}
	A_1(\zeta) &:= \frac{455}{10368\,w(z)^2} - \frac{7(3z^2+2)}{1728(1-z^2)^{\frac 32}w(z)} - \frac{81z^4+300z^2+4}{1152(1-z^2)^3}, \\
	B_0(\zeta) &:= \pmfrac 23^{\frac 13} \frac{e^{2\pi i/3}}{w(z)^{\frac 13}} \left( \frac{3z^2+2}{24(1-z^2)^{\frac 32}} - \frac{5}{72w(z)} \right).\label{B0def}
\end{align}
A  computation shows that $A_1(\zeta)$ and $B_0(\zeta)$ have finite limits as $z\to 0^+$ and as $z\to 1^-$, so both functions are $O(1)$ for $z\in(0,1)$.

Note that $\arg \xi=-\frac {\pi}3$.
In order to work on the real line, we apply \cite[(9.6.2-3)]{nist} to obtain
\begin{align*}
 \Ai(\xi) &= \tleg23^{1/6}\mfrac{i}{2^{3/2}} (vw(z))^{\frac 13} H_{\frac 13}^{(1)} (vw(z)), \\
  \Ai'(\xi) &= -\tleg 32^{ 1/6}\mfrac{i}{2^{3/2}} (vw(z))^{\frac 23} H_{\frac 23}^{(1)} (vw(z)),
\end{align*}
where $H_{\frac 13}^{(1)}$ and $H_{\frac 23}^{(1)}$ are Hankel functions of the first kind.
So we have
\begin{multline} \label{eq:balogh-hankel}
	e^{\frac{\pi v}{2}} K_{iv}(vz) = \mfrac{\pi}2 e^{2\pi i/3} \frac{w(z)^{\frac 12}}{(1-z^2)^{\frac 14}} H_{\frac 13}^{(1)}(vw(z))\left[1+\frac{A_1(\zeta)}{v^2}\right] \\
	+ \tleg32^{1/3}\mfrac{\pi}2 e^{-\pi i/3} \frac{w(z)^{\frac 56}}{(1-z^2)^{\frac 14}} H_{\frac 23}^{(1)}(vw(z)) \frac{ B_0(\zeta)}{v} 
	+O\left( \frac{v^{-\frac 72}}{(1-z^2)^{\frac 14}} \right).
\end{multline}

Since $w(z)\to\infty$ as $z\to0$ and $(1-z^2)^{-\frac14}\to\infty$ as $z\to 1$, we derive more convenient expressions for \eqref{eq:balogh-hankel} for $z$ in the intervals $(0,3/4]$ and $[3/16,1-cv^{-\frac 23})$.

\begin{proposition} \label{prop:kbessel-asy-small}
Suppose that $z\in (0, 3/4]$ and that $v\geq 1$. Then
\begin{equation} \label{eq:kbessel-asy-small}
	e^{\frac{\pi v}{2}} K_{iv}(vz) = e^{\pi i/4} \pmfrac{\pi}{2}^{\frac 12} \frac{e^{ivw(z)}}{v^{\frac 12}(1-z^2)^{\frac 14}} \left[ 1-i\frac{3z^2+2}{24v(1-z^2)^{\frac 32}} \right] + O\big( v^{-\frac 52} \big).
\end{equation}
\end{proposition}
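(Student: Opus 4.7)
The plan is to reduce \eqref{eq:balogh-hankel} to its claimed form by expanding the Hankel functions $H_{1/3}^{(1)}$ and $H_{2/3}^{(1)}$ via their standard large-argument asymptotics. The crucial observation is that on the interval $(0,3/4]$, the function $w(z)=\arccosh(1/z)-\sqrt{1-z^2}$ is decreasing (a direct derivative computation gives $w'(z)=-\sqrt{1-z^2}/z$) and hence bounded below by $w(3/4)>0$, while $(1-z^2)^{1/4}$ is bounded away from $0$. So $t:=vw(z)\geq cv$ is uniformly large, and we may apply \cite[(10.17.5)]{nist},
\[
H_\nu^{(1)}(t)=\sqrt{\tfrac{2}{\pi t}}\,e^{i(t-\nu\pi/2-\pi/4)}\Bigl[1+\tfrac{i(4\nu^2-1)}{8t}+O(t^{-2})\Bigr],
\]
uniformly for the two indices $\nu\in\{1/3,2/3\}$ that appear in \eqref{eq:balogh-hankel}.

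First I would collect the leading term. Inserting the leading asymptotic of $H_{1/3}^{(1)}$ into the first summand of \eqref{eq:balogh-hankel} and using $e^{2\pi i/3-5i\pi/12}=e^{i\pi/4}$ yields exactly
\[
e^{i\pi/4}\sqrt{\tfrac{\pi}{2v}}\,\frac{e^{ivw(z)}}{(1-z^2)^{1/4}},
\]
which matches the main term of \eqref{eq:kbessel-asy-small}. No contribution of this size comes from the $H_{2/3}^{(1)}$ summand, since that summand already carries an explicit factor $1/v$.

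The main substantive step is the $1/v$ correction, where two pieces meet. Piece (i) is the subleading term $-5i/(72\,vw(z))$ from the $H_{1/3}^{(1)}$ expansion applied to the first summand of \eqref{eq:balogh-hankel}. Piece (ii) is the leading asymptotic of $H_{2/3}^{(1)}$ applied to the $B_0(\zeta)/v$ term; using the explicit formula \eqref{B0def} for $B_0(\zeta)$ (so that the factors of $w(z)^{5/6}$, $w(z)^{-1/3}$, $w(z)^{-1/2}$, and $(3/2)^{1/3}(2/3)^{1/3}$ all cancel) together with $e^{-i\pi/3-7i\pi/12+2i\pi/3}=-ie^{i\pi/4}$, this contributes
\[
-i\cdot e^{i\pi/4}\sqrt{\tfrac{\pi}{2v}}\,\frac{e^{ivw(z)}}{(1-z^2)^{1/4}}\cdot\frac{1}{v}\Bigl(\tfrac{3z^2+2}{24(1-z^2)^{3/2}}-\tfrac{5}{72w(z)}\Bigr).
\]
I expect the key point to be the exact cancellation of the two $\pm 5i/(72\,vw(z))$ contributions from (i) and (ii), which leaves only $-i(3z^2+2)/(24v(1-z^2)^{3/2})$ as the coefficient of the leading factor, precisely what the proposition asserts.

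Finally I would check the error is uniformly $O(v^{-5/2})$. Four tail contributions must be controlled: the $O(t^{-2})$ tail of the $H_{1/3}^{(1)}$ expansion, giving $O(v^{-5/2}w(z)^{-2})$; the explicit $A_1(\zeta)/v^2$ correction in \eqref{eq:balogh-hankel}, where $A_1(\zeta)$ is bounded on $(0,3/4]$; the $O(t^{-1})$ correction of $H_{2/3}^{(1)}$ multiplied by $B_0(\zeta)/v$, of size $O(v^{-5/2}w(z)^{-1})$; and the intrinsic $O(v^{-7/2}(1-z^2)^{-1/4})$ error already present in \eqref{eq:balogh-hankel}. Since $w(z)$ and $(1-z^2)^{1/4}$ are both bounded away from $0$ on $(0,3/4]$, each of these is uniformly $O(v^{-5/2})$, completing the proof.
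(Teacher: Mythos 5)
Your proposal is correct and follows essentially the same route as the paper: both start from \eqref{eq:balogh-hankel}, insert the large-argument asymptotics of $H_{1/3}^{(1)}$ and $H_{2/3}^{(1)}$ from \cite[(10.17.5)]{nist} (valid since $vw(z)\gg v$ on $(0,3/4]$), and observe the cancellation of the two $\pm 5i/(72vw(z))$ contributions via \eqref{B0def}, with the same bookkeeping of the $A_1(\zeta)/v^2$ and intrinsic error terms. The only cosmetic difference is a harmless slip in the power of $w(z)$ in one of your error bounds ($w(z)^{-2/3}$ rather than $w(z)^{-1}$), which does not affect the conclusion since $w(z)$ is bounded below on the interval.
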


\begin{proof}
Since $(1-z^2) \gg 1$ for $z\leq 3/4$,
the error term in \eqref{eq:balogh-hankel} is $\ll v^{-\frac 72}$.
By \cite[(10.17.5) and \S 10.17(iii)]{nist} we have
\begin{align*}
	H_{\frac 13}^{(1)}(vw(z)) &= \pmfrac{2}{\pi}^{\frac 12} e^{-5\pi i/12} \frac{e^{ivw(z)}}{(vw(z))^{\frac 12}} \left(1 - \frac{5i}{72vw(z)}\right) + O\left( (vw(z))^{-\frac 52} \right), \\
	H_{\frac 23}^{(1)}(vw(z)) &= \pmfrac{2}{\pi}^{\frac 12} e^{-7\pi i/12} \frac{e^{ivw(z)}}{(vw(z))^{\frac 12}} + O\left( (vw(z))^{-\frac 32} \right).
\end{align*}
In particular, this implies that
\[
	\frac{w(z)^{\frac 12}}{(1-z^2)^{\frac 14}} H_{\frac 13}^{(1)}(vw(z)) \frac{A_1(\zeta)}{v^2} \ll v^{-\frac 52},
\]
so by \eqref{eq:balogh-hankel} we have
\[
	e^{\frac{\pi v}{2}} K_{iv}(vz) = e^{\pi i/4} \pmfrac{\pi}{2}^{\frac 12} \frac{e^{ivw(z)}}{v^{\frac 12}(1-z^2)^{\frac 14}} \left[ 1 - \frac{5i}{72vw(z)} + e^{-7\pi i/6} \pmfrac 32^{\frac 13} w(z)^{\frac 13} \frac{B_0(\zeta)}{v} \right] + O\big( v^{-\frac 52} \big).
\]
Using \eqref{B0def}, we obtain \eqref{eq:kbessel-asy-small}.
\end{proof}

We require some notation for the next proposition.
Let $J_\nu(x)$ and $Y_\nu(x)$ denote the $J$ and $Y$-Bessel functions, and define 
\[
	M_{\nu}(x) = \sqrt{J_\nu^2(x)+Y_\nu^2(x)}.
\]
\begin{proposition} \label{prop:kbessel-asy-transition}
Suppose that $c>0$.  
Suppose that $v\geq 1$ and that $\frac 3{16}\leq z\leq 1-cv^{-\frac 23}$. Then
\begin{equation} \label{eq:kbessel-asy-transition}
	e^{\frac{\pi v}{2}} K_{iv}(vz) = 
	\mfrac{\pi}{2} e^{2\pi i/3} \frac{w(z)^{\frac 12}}{(1-z^2)^{\frac 14}} M_{\frac 13}(vw(z)) e^{i\theta_{\frac 13}(vw(z))} + O_c(v^{-4/3}),
\end{equation}
where
$\theta_{\frac 13}(x)$ is a real-valued continuous function satisfying
\begin{equation} \label{eq:theta-1/3-prime}
	\theta_{\frac 13}'(x) = \frac{2}{\pi x M_{\frac 13}^2(x)}.
\end{equation}
\end{proposition}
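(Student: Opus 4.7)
The starting point is Balogh's uniform expansion \eqref{eq:balogh-hankel}, which already holds throughout the oscillatory range. To recover the structure of \eqref{eq:kbessel-asy-transition}, I would invoke the phase-amplitude representation from \cite[\S 10.18]{nist},
\[
  H_\nu^{(1)}(x) = M_\nu(x)\, e^{i\theta_\nu(x)}, \qquad x>0,
\]
where $\theta_\nu$ is a real continuous function satisfying $\theta_\nu'(x) = 2/(\pi x M_\nu(x)^2)$ (this ODE is equivalent to the Wronskian $J_\nu Y_\nu' - J_\nu' Y_\nu = 2/(\pi x)$). Applied with $\nu = 1/3$, this rewrites the leading piece of \eqref{eq:balogh-hankel} as the leading term of \eqref{eq:kbessel-asy-transition}. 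What remains is to show that the two secondary contributions in \eqref{eq:balogh-hankel}---the $A_1(\zeta)/v^2$ correction to the $H_{1/3}^{(1)}$ piece, and the $H_{2/3}^{(1)}$ piece weighted by $B_0(\zeta)/v$---together with the tail $O(v^{-7/2}/(1-z^2)^{1/4})$, each contribute at most $O_c(v^{-4/3})$.

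Next I would collect three uniform facts on the range $3/16 \leq z \leq 1 - cv^{-2/3}$. Since $w(z) \sim (1-z^2)^{3/2}/3$ as $z \to 1^-$ and $w$ is bounded below on compact subsets of $(0,1)$, the quantity $vw(z)$ stays bounded below by a constant $c_0 = c_0(c) > 0$; next, $(1-z^2)^{-1/4} \ll v^{1/6}$; and combining the near-$z=1$ asymptotic with boundedness elsewhere, $w(z)^{1/3}/(1-z^2)^{1/4} = O(1)$. The first fact, paired with the standard behavior $M_\nu(x) = O(1)$ on compacta and $M_\nu(x) \sim \sqrt{2/(\pi x)}$ for large $x$, yields
\[
  |H_\nu^{(1)}(vw(z))| = M_\nu(vw(z)) \ll_c (vw(z))^{-1/2} \qquad \text{for } \nu \in \{1/3,\, 2/3\}.
\]

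Given these bounds together with $A_1(\zeta), B_0(\zeta) = O(1)$, the three remaining pieces of \eqref{eq:balogh-hankel} estimate directly: the $A_1$ correction contributes
\[
  \frac{w(z)^{1/2}\, |H_{1/3}^{(1)}(vw(z))|}{(1-z^2)^{1/4}\, v^2} \ll \frac{1}{(1-z^2)^{1/4}\, v^{5/2}} \ll v^{-7/3};
\]
the $B_0$ term is
\[
  \frac{w(z)^{5/6}\, |H_{2/3}^{(1)}(vw(z))|}{(1-z^2)^{1/4}\, v} \ll \frac{w(z)^{1/3}}{(1-z^2)^{1/4}\, v^{3/2}} \ll v^{-3/2};
\]
and the tail is $O(v^{-7/2+1/6}) = O(v^{-10/3})$. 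All three are dominated by $v^{-4/3}$, completing the proof. The main obstacle will be the $B_0$ estimate: in the boundary regime $z \approx 1 - cv^{-2/3}$ the blowup $(1-z^2)^{-1/4} \asymp v^{1/6}$ is only just absorbed by the combined decay of $w(z)^{1/3}$ and the sharp amplitude bound $M_\nu(x) \ll x^{-1/2}$; replacing the latter by the cruder $|H_{2/3}^{(1)}| = O(1)$ (which is all that is available when $vw(z)$ is bounded) would not suffice.
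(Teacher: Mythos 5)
Your proposal follows the paper's proof essentially verbatim: both start from \eqref{eq:balogh-hankel}, invoke the modulus--phase form $H_\alpha^{(1)}(x)=M_\alpha(x)e^{i\theta_\alpha(x)}$ from \cite[\S 10.18]{nist}, use $w(z)\gg(1-z)^{3/2}$ to get $vw(z)\gg_c 1$ and hence $M_\alpha(vw(z))\ll_c (vw(z))^{-1/2}$, and absorb the $A_1$, $B_0$, and tail contributions into the error. The only (harmless) difference is that you spell out the three error estimates explicitly and, by bounding the quotient $w(z)^{1/3}/(1-z^2)^{1/4}$ rather than the two factors separately, obtain $O_c(v^{-3/2})$ for the $B_0$ term, slightly sharper than the stated $O_c(v^{-4/3})$.
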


\begin{proof}
Since $(1-z^2) \gg_c v^{-\frac 23}$ for $z \leq 1-cv^{-\frac 23}$, the error term in \eqref{eq:balogh-hankel} is $\ll_c v^{-\frac {10}3}$.
The modulus and phase of $H_\alpha^{(1)}(x)$ are given by \cite[\S 10.18]{nist}
\[
	H_\alpha^{(1)}(x) = M_\alpha(x) e^{i\theta_\alpha(x)},
\]
where 
$M_\alpha^2(x)\theta_\alpha'(x)=2/\pi x$.
A straightforward computation shows that $w(z)\gg(1-z)^{\frac 32}$.
It follows that $vw(z) \gg_c 1$ for $z\leq 1-cv^{-\frac 23}$, so by
\cite[(10.18.17)]{nist} we obtain
\[
	M_\alpha(vw(z)) \ll_{\alpha,c} \frac{1}{(vw(z))^{\frac 12}}.
\]
This, together with \eqref{eq:balogh-hankel} and the fact that $A_1(\zeta)$ and $B_0(\zeta)$ are $O(1)$,
gives \eqref{eq:kbessel-asy-transition}.
\end{proof}

\subsection{Estimates for $\check\phi(r)$}
We treat each of the three ranges considered in Theorem~\ref{thm:phicheck}
separately in the following propositions.
We will make frequent use of an integral estimate which is an immediate corollary of the second mean value theorem for integrals.

\begin{lemma} \label{lem:mvt}
	Suppose that $f$ and $g$ are continuous functions on $[a,b]$ and that $g$ is piecewise monotonic on $M$ intervals. Then
	\begin{equation}
		\left|\int_a^b f(x) g(x) \, dx\right| \leq 2M \sup_{x\in [a,b]} \left|g(x)\right| \sup_{[\alpha,\beta]\subseteq[a,b]} \left| \int_\alpha^\beta f(x) \, dx \right|.
	\end{equation}
\end{lemma}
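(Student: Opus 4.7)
The plan is to reduce the lemma to the monotone case and then invoke the second mean value theorem for integrals in Bonnet's form. First I would partition $[a,b]$ into $M$ subintervals $[a_i,b_i]$ on each of which $g$ is monotonic, so that
\begin{equation*}
\int_a^b f(x)g(x) \, dx = \sum_{i=1}^M \int_{a_i}^{b_i} f(x)g(x) \, dx.
\end{equation*}
By the triangle inequality and the fact that any subinterval of some $[a_i,b_i]$ is a subinterval of $[a,b]$, it suffices to establish the estimate with $M=1$, i.e.\ for $g$ monotonic on some $[c,d]\subseteq[a,b]$, with the constant $2$ in place of $2M$.

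Next I would apply Bonnet's form of the second mean value theorem for integrals: since $g$ is monotonic on $[c,d]$, there exists $\xi\in[c,d]$ such that
\begin{equation*}
\int_c^d f(x)g(x) \, dx = g(c)\int_c^\xi f(x) \, dx + g(d)\int_\xi^d f(x) \, dx.
\end{equation*}
Bounding $|g(c)|$ and $|g(d)|$ by $\sup_{[c,d]}|g|\le\sup_{[a,b]}|g|$, and each of the two inner integrals by
\[
\sup_{[\alpha,\beta]\subseteq[c,d]}\left|\int_\alpha^\beta f(x)\, dx\right| \;\le\; \sup_{[\alpha,\beta]\subseteq[a,b]}\left|\int_\alpha^\beta f(x)\, dx\right|,
\]
gives the single-interval estimate with factor $2$. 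Summing over the $M$ monotone pieces yields the claimed constant $2M$.

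The only subtlety is that standard references often state Bonnet's theorem for $g$ monotonic and of one sign. If needed, I would handle a sign change of $g$ on a monotone piece by splitting at the unique zero of $g$, which at worst doubles the number of subintervals; this can be absorbed into the implied constant and in any case the bound above already carries the factor $2$ per piece, so no essential difficulty arises. There is no real obstacle here — the lemma is a quantitative packaging of the second mean value theorem, and the only bookkeeping is tracking the constants through the $M$-fold partition.
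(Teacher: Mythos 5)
Your proof is correct and is exactly the argument the paper has in mind: the paper offers no written proof, stating only that the lemma ``is an immediate corollary of the second mean value theorem for integrals,'' and your reduction to $M$ monotone pieces followed by the two-term form of that theorem (with the factor $2$ coming from the two boundary values $g(c)$, $g(d)$) is the standard way to make that remark precise. The only minor point is that the two-term form you quote already holds for any monotone $g$ regardless of sign, so the caveat in your last paragraph about splitting at a zero of $g$ is unnecessary --- but you correctly observe that it causes no difficulty either way.
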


\begin{proposition} \label{prop:phi-check-small-r}
With the notation of Theorem~\ref{thm:phicheck},
suppose that $1\leq r\leq a/8x$. Then
	\begin{equation}
		\check\phi(r) \ll r^{-\frac 32} e^{-r/2}.
	\end{equation}
\end{proposition}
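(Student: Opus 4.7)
The plan is to exploit the hypothesis $r\leq a/(8x)$ to show that throughout the support of $\phi$, the $K$-Bessel function $K_{2ir}(u)$ is evaluated well inside its exponentially decaying range, so that the $\cosh\pi r$ weight in $\check\phi(r)$ is more than compensated by the exponential decay of $K$.

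First I would substitute $v=2r$ and $u=vz$, rewriting
\[
\check\phi(r)=\ch\pi r\int_0^\infty K_{iv}(vz)\,\phi(vz)\,\frac{dz}{z}.
\]
On the support of $\phi$ we have $u\geq a/(2x+2T)$, and using $T\leq x/3$ this gives $u\geq 3a/(8x)$, hence $z=u/v\geq 3a/(16rx)$. The hypothesis $r\leq a/(8x)$ then forces $z\geq 3/2$ throughout the support. Since $v=2r\geq 2$, the inequality $z-1\geq 1/2\geq \tfrac12 v^{-2/3}$ holds, so we are in the regime $z\geq 1+cv^{-2/3}$ where the decaying-range estimate \eqref{eq:kbessel-est-decay} applies (with, say, $c=\tfrac12$).

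Next I would apply \eqref{eq:kbessel-est-decay} and absorb the $\ch\pi r\ll e^{\pi r}=e^{\pi v/2}$ factor against the $e^{\pi v/2}$ on the left side of that estimate. Using $|\phi|\leq 1$, this yields
\[
\check\phi(r)\ll \frac{1}{v^{1/2}}\int_{3/2}^{\infty}\frac{e^{-v\mu(z)}}{(z^2-1)^{1/4}\,z}\,dz.
\]
To extract the desired exponential factor, I would observe that $\mu'(z)=\sqrt{z^2-1}/z$ is positive and increasing for $z>1$, so for $z\geq 3/2$ we have
\[
\mu(z)\geq \mu(\tfrac32)+\mu'(\tfrac32)\bigl(z-\tfrac32\bigr)=\mu(\tfrac32)+\tfrac{\sqrt 5}{3}\bigl(z-\tfrac32\bigr),
\]
with $\mu(3/2)=\sqrt5/2-\arccos(2/3)>0.277$, so in particular $2\mu(3/2)>1/2$. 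Pulling out $e^{-v\mu(3/2)}\leq e^{-r/2}$ and bounding $(z^2-1)^{-1/4}/z$ by an absolute constant for $z\geq 3/2$,
\[
\int_{3/2}^{\infty}\frac{e^{-v\mu(z)}}{(z^2-1)^{1/4}\,z}\,dz\;\ll\; e^{-r/2}\int_{3/2}^{\infty}e^{-(2\sqrt 5\,r/3)(z-3/2)}\,dz\;\ll\;\frac{e^{-r/2}}{r}.
\]
Combining with the $v^{-1/2}\asymp r^{-1/2}$ prefactor gives $\check\phi(r)\ll r^{-3/2}e^{-r/2}$, as required.

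There is no real obstacle here: once the hypothesis $r\leq a/(8x)$ is translated into the geometric statement $z\geq 3/2$, the problem reduces to the elementary fact that $2\mu(3/2)>1/2$ together with convexity of $\mu$. The only minor subtlety is verifying that the numerical value $\mu(3/2)>1/4$ is what supplies the explicit exponent $1/2$ in $e^{-r/2}$; any $z_0>1$ with $2\mu(z_0)>1/2$ would do, and the choice $z_0=3/2$ is simply convenient given the factor $3/8$ appearing from $T\leq x/3$.
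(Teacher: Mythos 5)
Your proof is correct and follows essentially the same route as the paper: translate the hypotheses $r\leq a/8x$ and $T\leq x/3$ into the statement that the argument of $K_{2ir}$ lies in the decaying range $z\geq 3/2$, apply \eqref{eq:kbessel-est-decay}, and use $2\mu(3/2)>1/2$ to produce the factor $e^{-r/2}$. The only (cosmetic) difference is in extracting the final $r^{-1}$: you use convexity of $\mu$ to bound $e^{-v\mu(z)}$ by a decaying exponential in $z$, whereas the paper writes $e^{-2r\mu(y)}=-\frac{1}{2r\mu'(y)}\bigl(e^{-2r\mu(y)}\bigr)'$ and integrates exactly; both are valid.
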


\begin{proof}
Since $r\leq a/8x$ and $T\leq x/3$, we have $\frac{a}{4(x+T)r} \geq \frac32$.
Taking $c=2^{-1/3}$ in \eqref{eq:kbessel-est-decay} gives
\begin{align*}
	\check\phi(r) = \ch\pi r \int_{\frac{a}{4(x+T)r}}^{\frac{a}{2(x-T)r}} K_{2ir}(2ry) \phi(2ry) \, \frac{dy}y
	\ll r^{-\frac 12} \int_{\frac 32}^{\infty} e^{-2r\mu(y)} \frac{dy}{y(y^2-1)^{\frac 14}}.
\end{align*}
Since $\mu'(y) = \sqrt{y^2-1}/y$ we have
\[
	\check\phi(r) \ll r^{-\frac 32} \int_{\frac 32}^{\infty} \left(-e^{-2r\mu(y)}\right)' \frac{dy}{(y^2-1)^{\frac 34}} \ll r^{-\frac 32} e^{-2r\mu(3/2)},
\]
which, together with $\mu(3/2)\approx .277$ proves the proposition.
\end{proof}

\begin{proposition} \label{prop:phi-check-transition}
With the notation of Theorem~\ref{thm:phicheck},
suppose that $\max(a/8x,1)\leq r\leq a/x$. Then
	\begin{equation}
		\check\phi(r) \ll r^{-1}.
	\end{equation}
\end{proposition}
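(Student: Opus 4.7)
The plan is to substitute $u = 2ry$ in the definition of $\check\phi(r)$ and split the resulting integral into three pieces according to whether $y$ lies in the decaying range $y \geq 1 + cr^{-2/3}$, the transitional range $|y-1| \leq cr^{-2/3}$, or the oscillatory range $y \leq 1 - cr^{-2/3}$. Since $r$ ranges from roughly $a/8x$ up to $a/x$, all three types of contribution can in principle appear, depending on the location of $r$. The transitional piece is handled directly by \eqref{eq:kbessel-est-transition}: after the cancellation $\ch\pi r\cdot e^{-\pi r} \ll 1$ the integrand is $O(r^{-1/3})$, and the transitional interval has length $O(r^{-2/3})$ in $dy/y$, giving $O(r^{-1})$.

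For the decaying piece, \eqref{eq:kbessel-est-decay} together with the substitution $t = 2r\mu(y)$---using $\mu'(y) = \sqrt{y^2-1}/y$ and the local expansion $\mu(y) \asymp (y-1)^{3/2}$ near $y=1$---reduces matters to a tail integral of the shape $r^{-1/2}\int_C^\infty e^{-t}t^{-1/2}\, dt$ with $C \asymp 1$, which again contributes $O(r^{-1})$.

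The main obstacle is the oscillatory range. The hypotheses $r \leq a/x$ and $T \leq x/3$ force $y \geq 3/16$ throughout the support of $\phi(2r\,\cdot)$, so Proposition~\ref{prop:kbessel-asy-transition} applies on the entire oscillatory range; its error term contributes $O(r^{-4/3})$, which is acceptable. For the main term I would substitute $t = 2rw(y)$ and use $w'(y) = -\sqrt{1-y^2}/y$ along with $M_{1/3}(t)\, e^{i\theta_{1/3}(t)} = H_{1/3}^{(1)}(t)$ to bring the oscillatory contribution into the shape $\frac{C}{r}\int g(t)\, H_{1/3}^{(1)}(t)\, dt$, where
\[
g(t) = \frac{w(y)^{1/2}}{(1-y^2)^{3/4}}\, \phi(2ry), \qquad y = y(t).
\]

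The crucial observation is that $g$ is bounded and piecewise monotonic on a bounded number of intervals. Near $y=1$ both $w(y)^{1/2}$ and $(1-y^2)^{3/4}$ behave like $(1-y)^{3/4}$, so their ratio extends continuously across $y=1$; away from $y=1$ both factors are smooth. Combined with hypothesis (iv) on $\phi$, this supplies the hypothesis of Lemma~\ref{lem:mvt}, which reduces the task to showing $\int_\alpha^\beta H_{1/3}^{(1)}(t)\, dt = O(1)$ uniformly on subintervals of $(0,\infty)$. This in turn follows from $H_{1/3}^{(1)}(t) \ll t^{-1/3}$ at the origin (integrable there) together with the large-argument asymptotic $H_{1/3}^{(1)}(t) \sim \sqrt{2/\pi t}\, e^{i(t-5\pi/12)}$, which yields cancellation after a single integration by parts.
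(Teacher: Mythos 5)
Your decomposition into oscillatory, transitional, and decaying ranges matches the paper's, and the transitional and decaying pieces are handled the same way (the paper carries out your $\mu$-substitution as an integration by parts against $(-e^{-2r\mu(y)})'$; note a small bookkeeping slip in your displayed shape for that piece, whose prefactor should be $r^{-1}$ rather than $r^{-1/2}$ — your stated conclusion $O(r^{-1})$ is the one that the correct computation yields, since $(y^2-1)^{3/4}\gg t^{1/2}r^{-1/2}$ under your substitution). The genuine difference is in the oscillatory range. The paper keeps the variable $y$, rewrites $M_{\frac13}e^{i\theta_{1/3}}$ as $(e^{i\theta_{1/3}(2rw(y))})'$ divided by $i\theta_{1/3}'\cdot 2rw'$ using \eqref{eq:theta-1/3-prime}, and then applies Lemma~\ref{lem:mvt} three times: to the factor $\bigl(2rw(y)M_{1/3}^2(2rw(y))\bigr)^{3/2}$, whose boundedness and monotonicity require Lemma~\ref{lem:tilde-M-inc} (itself proved by a somewhat delicate numerical argument with Bessel expansions), to $(1-y^2)^{-3/4}\ll r^{1/2}$, and to $\phi$; the bound $r^{-3/2}\cdot r^{1/2}=r^{-1}$ results. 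You instead substitute $t=2rw(y)$, recombine $w(y)^{1/2}/(1-y^2)^{3/4}$ into a single bounded, piecewise-monotone weight (the two singular factors cancel at $y=1$ since both behave like $(1-y)^{3/4}$), and reduce via Lemma~\ref{lem:mvt} to the uniform boundedness of $\int_\alpha^\beta H_{1/3}^{(1)}(t)\,dt$, which follows from integrability at $0$ and the large-argument asymptotic. Your route is correct and arguably cleaner: it dispenses with Lemma~\ref{lem:tilde-M-inc} entirely and trades it for a standard fact about partial integrals of Hankel functions (applied to real and imaginary parts separately, since Lemma~\ref{lem:mvt} is stated for real integrands); the only point you assert rather than prove is that the fixed analytic weight $w(y)^{1/2}(1-y^2)^{-3/4}$ is piecewise monotone on a bounded number of intervals, which is immediate since it is a fixed function on a compact interval independent of $a$, $x$, $T$, $r$.
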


Before proving Proposition \ref{prop:phi-check-transition}, we require a lemma describing the behavior of the function $M_{\frac 13}(x)$ in Proposition \ref{prop:kbessel-asy-transition}.

\begin{lemma} \label{lem:tilde-M-inc}
	The function $\tilde M(x) := x M_{\frac 13}^2(x)$ is increasing on $[0,\infty)$ with $\lim\limits_{x\to\infty} \tilde M(x) = \frac 2\pi$.
\end{lemma}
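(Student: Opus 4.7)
My plan is to establish both assertions via the Nicholson integral representation
\begin{equation*}
M_\nu^2(x)=\frac{8}{\pi^2}\int_0^\infty K_0(2x\sinh t)\cosh(2\nu t)\,dt,
\end{equation*}
which is valid for $|\nu|<\tfrac12$ and hence applicable to $\nu=\tfrac13$. The limit $\tilde M(x)\to 2/\pi$ as $x\to\infty$ is then immediate from the classical large-argument asymptotic $M_\nu(x)\sim\sqrt{2/(\pi x)}$ recorded in \cite[\S10.17]{nist}.

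For the monotonicity, I would differentiate $\tilde M(x)=(8x/\pi^2)\int_0^\infty K_0(2x\sinh t)\cosh(2t/3)\,dt$ under the integral sign using $K_0'=-K_1$, collect the result as the integrand $[K_0(u)-uK_1(u)]\cosh(2t/3)$ with $u=2x\sinh t$, recognise the bracket as $\frac{d}{du}[uK_0(u)]$, and then integrate by parts in $t$ via $du/dt=2x\cosh t$. Both boundary terms vanish, because $uK_0(u)\to 0$ as $u\to 0^+$ (the logarithmic singularity of $K_0$ is killed by the factor $u$) and as $u\to\infty$ (exponential decay of $K_0$). What remains is
\begin{equation*}
\tilde M'(x)=-\frac{8}{\pi^2}\int_0^\infty uK_0(u)\,\frac{d}{dt}\!\left[\frac{\cosh(2t/3)}{2x\cosh t}\right]\,dt.
\end{equation*}

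Since $uK_0(u)>0$ for $u>0$, it suffices to show that the inner $t$-derivative is strictly negative on $(0,\infty)$. A short computation shows its sign is that of $\tfrac{2}{3}\tanh(2t/3)-\tanh t$, which is strictly negative for $t>0$ by the elementary chain of inequalities
\begin{equation*}
\tfrac{2}{3}\tanh(2t/3)\;<\;\tanh(2t/3)\;<\;\tanh t,
\end{equation*}
the second step using that $\tanh$ is strictly increasing on $(0,\infty)$ and $2t/3<t$. This yields $\tilde M'(x)>0$ for all $x>0$.

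The only mild obstacle I foresee is the routine bookkeeping needed to justify the differentiation under the integral sign and the vanishing of the boundary terms; the requisite uniform dominations follow from $K_1(u)\sim 1/u$ near zero (so that $2\sinh t\cdot K_1(2x\sinh t)$ stays bounded on compact $x$-sets) and from the exponential decay of $K_0$ and $K_1$ at infinity, which handily dominates the $\cosh(2\nu t)$ growth.
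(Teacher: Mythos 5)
Your proof is correct, and it takes a genuinely different route from the paper's. The paper works with $w=\sqrt{\smash[b]{\tilde M}}$ and shows $w''<0$ via the second-order differential equation \cite[(10.18.14)]{nist} satisfied by the modulus function; this reduces the claim to the explicit inequality $\tilde M(x)>\tfrac{12x}{\pi\sqrt{36x^2+5}}$, which is then verified \emph{numerically} on two overlapping ranges, using a truncation of the asymptotic expansion \cite[\S 10.18(iii)]{nist} for $x>2.34$ and the alternating power series of $\tilde M$ at the origin for $x<2.45$. Your argument via Nicholson's formula \cite[(10.9.30)]{nist} is fully analytic: after differentiating under the integral sign, recognising $K_0(u)-uK_1(u)=\frac{d}{du}\bigl[uK_0(u)\bigr]$, and integrating by parts, everything reduces to the elementary inequality $\tfrac23\tanh(2t/3)<\tanh t$, with no numerics at all. (Two minor remarks: Nicholson's formula needs only $\re z>0$, so your restriction $|\nu|<\tfrac12$ is unnecessary for its validity, though it is exactly the condition that makes the final sign come out right; and the "routine bookkeeping" you defer is indeed routine, since $uK_1(u)$ is bounded near $u=0$, $K_0$ has only a logarithmic singularity there, and $K_0(2x\sinh t)$ decays doubly exponentially in $t$.) Your approach buys a cleaner, checkable-by-hand proof that moreover generalises at once to the monotonicity of $xM_\nu^2(x)$ for every $|\nu|<\tfrac12$ — this is essentially the classical argument from Watson's treatise — while the paper's approach stays entirely within the DLMF modulus-function toolkit it already invokes in Proposition~\ref{prop:kbessel-asy-transition}, at the cost of a numerical verification.
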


\begin{proof}
We will prove that $w(x):=\sqrt{\smash[b]{\tilde M(x)}}$ is increasing.
From \cite[(10.7.3-4), (10.18.17)]{nist} we have
\[
	\tilde M(0) = 0, \qquad 0\leq \tilde M(x) \leq \mfrac 2\pi, \quad \text{ and } \quad \lim_{x\to\infty} \tilde M(x) = \mfrac{2}{\pi}.
\]
It is therefore enough to show that  $w''(x)<0$ for all $x>0$.
In view of  the second order differential equation \cite[(10.18.14)]{nist} satisfied by $w$ it will suffice to prove that
\begin{equation} \label{eq:M''-ineq}
	\tilde M(x) > \frac{12x}{\pi\sqrt{36x^2+5}}.
\end{equation}

The inequality \eqref{eq:M''-ineq} can be proved numerically, using the expansions of $\tilde M(x)$ at $0$ and $\infty$.
By \cite[\S 10.18(iii)]{nist} we have
\begin{equation} \label{eq:tilde-M-geq}
	\tilde M(x) \geq \frac{2}{\pi} \left( 1 + \sum_{k=1}^n \frac{1\cdot 3\cdots (2k-1)}{2\cdot 4\cdots(2k)} \frac{(\frac49-1)(\frac49-9)\cdots(\frac49-(2k-1)^2)}{(2x)^{2k}} \right)
\end{equation}
for odd $n\geq 1$.
Taking $n=3$ in \eqref{eq:tilde-M-geq}, we verify numerically that \eqref{eq:M''-ineq} holds for $x>2.34$.

From 
\cite[(10.2.3)]{nist} we have
\[
	\tilde M(x) = \frac{4x}{3} \left( J_{\frac 13}^2(x) - J_{\frac 13}(x)J_{-\frac 13}(x) + J_{-\frac 13}^2(x) \right).
\] 
So by \cite[(10.8.3)]{nist}
it follows that the series for $\tilde M(x)$ at $0$ is alternating,
and that $\tilde M(x)$ is larger than the truncation of this series after the term with exponent $47/3$.
Moreover, this truncation is larger than $\frac{12x}{\pi\sqrt{36x^2+5}}$ for $x<2.45$.
The claim \eqref{eq:M''-ineq} follows.
\end{proof}

\begin{proof}[Proof of Proposition \ref{prop:phi-check-transition}]
Fix $c=\frac 12$ and write
\begin{align*}
	\check\phi(r) 
	&= \check\phi_1(r) + \check\phi_2(r) + \check\phi_3(r)  \\
	&= \left( \int_{\frac{a}{4(x+T)r}}^{1-cr^{-\frac23}} \!\! + \int_{1-cr^{-\frac 23}}^{1+cr^{-\frac 23}} + \int_{1+cr^{-\frac 23}}^\infty \right) \ch \pi r K_{2ir}(2ry) \phi(2ry) \frac{dy}{y}.
\end{align*}
We will show that $\check\phi_i(r)\ll r^{-1}$ for $i=1,2,3$.
Note that $\frac{a}{4(x+T)r}\geq \frac 3{16}$.

By Proposition \ref{prop:kbessel-asy-transition} we have 
\begin{equation} \label{eq:phi-1-check}
	\check\phi_1(r) \ll \Bigg| \int_{\frac{a}{4(x+T)r}}^{1-cr^{-\frac 23}} \!\! e^{i\theta_{\frac 13}(2rw(y))} \frac{M_{\frac 13}(2rw(y))w(y)^{\frac 12}\phi(2ry)}{y(1-y^2)^{\frac 14}} \, dy \Bigg| + 
	r^{-\frac 43}  \int_{\frac 3{16}}^{1} \phi(2ry) \frac{dy}{y}.
\end{equation}
The error term is $O(r^{-{4/3}})$.
Using \eqref{eq:theta-1/3-prime} and the fact that $w'(y)=-\sqrt{1-y^2}/y$, the first term in \eqref{eq:phi-1-check} equals
\begin{align*}
	\frac{\pi}{2(2r)^{\frac 32}} \Bigg| \int_{\frac{a}{4(x+T)r}}^{1-cr^{-\frac 23}} \left( e^{i\theta_{\frac 13}(2rw(y))} \right)' \ \frac{\left(2rw(y)M_{\frac 13}^2(2rw(y))\right)^{\frac 32}}{(1-y^2)^{\frac 34}} \phi(2ry)\, dy \Bigg|.
\end{align*}
Note that $w(y)$ is decreasing, so by Lemma \ref{lem:tilde-M-inc} the function $2rw(y)M_{\frac 13}^2(2rw(y))$ is decreasing.
We apply Lemma \ref{lem:mvt} three times; first with the decreasing function 
\[
	g(y)=\left(2rw(y)M_{\frac 13}(2rw(y))\right)^{\frac 32} \ll 1,
\]
next with the increasing function 
\[
	g(y)=(1-y^2)^{-\frac 34} \ll r^{\frac 12},
\]
and then with $g(y)=\phi(2ry)$.
We conclude that $\check\phi_1(r)\ll r^{-1}$.

For $\check\phi_2(r)$ we apply \eqref{eq:kbessel-est-transition} to obtain
\[
	\check\phi_2(r) \ll r^{-\frac 13} \int_{1-cr^{-\frac 23}}^{1+cr^{-\frac 23}} dy \ll r^{-1}.
\]
For $\check\phi_3(r)$ we argue as in the proof of Proposition \ref{prop:phi-check-small-r} to obtain
\[
	\check\phi_3(r) \ll r^{-\frac 32} \int_{1+cr^{-\frac 23}}^\infty \left(-e^{-2r\mu(y)}\right)' \frac{dy}{(y^2-1)^{\frac 34}}.
\]
For $y\geq 1+cr^{-\frac 23}$ we have $(y^2-1)^{\frac 34}\gg r^{-\frac 12}$, so $\check\phi_3(r)\ll r^{-1}$.
\end{proof}

\begin{proposition}\label{prop:big_r}
	Suppose that $r\geq \max(a/x,1)$. Then
	\begin{equation}
		\check\phi(r) \ll \min\left( r^{-\frac 32}, r^{-\frac 52}\frac{x}{T} \right).
	\end{equation}
\end{proposition}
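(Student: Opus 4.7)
The plan is to invoke Proposition~\ref{prop:kbessel-asy-small} to expand $K_{2ir}(u)$ in the oscillatory regime, and then to estimate the resulting oscillatory integrals by one integration by parts together with the first derivative estimate and Lemma~\ref{lem:mvt}. After the substitution $u=2ry$, the support of $\phi(2ry)$ lies in $[a/(4(x+T)r),\, a/(2(x-T)r)]$; the hypotheses $r\geq a/x$ and $T\leq x/3$ force $y\leq 3/4$ throughout. Applying Proposition~\ref{prop:kbessel-asy-small} with $v=2r$ and using $\ch\pi r=\tfrac12 e^{\pi r}(1+O(e^{-2\pi r}))$ gives
\[
\check\phi(r) = c_1 r^{-1/2} I + c_2 r^{-3/2} I' + O(r^{-5/2}),
\]
where
\[
I := \int e^{2irw(y)}\frac{\phi(2ry)}{y(1-y^2)^{1/4}}\,dy, \qquad I' := \int e^{2irw(y)} \frac{(3y^2+2)\phi(2ry)}{y(1-y^2)^{7/4}}\,dy,
\]
and the $O(r^{-5/2})$ error arises from integrating the $O(r^{-5/2})$ tail of Proposition~\ref{prop:kbessel-asy-small} against $\phi(2ry)\,dy/y$, whose total mass is $O(1)$.

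For the bound $\check\phi(r) \ll r^{-3/2}$ I would integrate $I$ by parts once. Since $w'(y)=-\sqrt{1-y^2}/y$, the ratio $[\phi(2ry)/(y(1-y^2)^{1/4})]/w'(y) = -\phi(2ry)/(1-y^2)^{3/4}$ vanishes at the endpoints, so the boundary term drops out. The two resulting integrands are bounded in $L^1$ by $\int|\phi'(2ry)|\,dy \ll 1/r$ (via $\var\phi = O(1)$) and by $\int y|\phi(2ry)|/r\,dy \ll y_0^2/r$, where $y_0:=a/(rx)\leq 1$; this yields $I\ll 1/r$. Meanwhile $|I'| \ll \int\phi(2ry)\,dy/y = O(1)$ trivially. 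Multiplying by the prefactors produces $\check\phi(r)\ll r^{-3/2}$.

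For the refined bound $\check\phi(r)\ll r^{-5/2}\cdot x/T$ I would exploit the rapid oscillation. On the support, $F'(y):=2rw'(y)$ satisfies $|F'(y)| = 2r\sqrt{1-y^2}/y \gtrsim r/y_0 = r^2 x/a$ and is monotone, so the first derivative estimate gives $\sup_{[\alpha,\beta]}\bigl|\int_\alpha^\beta e^{2irw(y)}\,dy\bigr|\ll a/(r^2x)$. Combined via Lemma~\ref{lem:mvt} with the $L^\infty$-bound $|\phi'(2ry)|\ll x^2/(aT)$ from condition (iii), this controls the $\phi'$-term in the IBP formula for $I$ by $(x^2/(aT))\cdot(a/(r^2 x)) = x/(r^2 T)$. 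The remaining piece of the IBP (amplitude $\ll y_0/r$) and the integral $I'$ (amplitude $\ll 1/y_0$) are handled by the same method, giving $I\ll x/(r^2 T)$ and $I'\ll 1/r$; after multiplication by $r^{-1/2}$ and $r^{-3/2}$, and using $x/T\geq 3$ to absorb the $I'$-contribution, one obtains $\check\phi(r) \ll r^{-5/2}\cdot x/T$. The main technical nuisance will be verifying the piecewise-monotonicity hypothesis of Lemma~\ref{lem:mvt} in each application; this is immediate from condition (iv) on $\phi$ and the smoothness and monotonicity of the auxiliary factors $y^{\pm 1}(1-y^2)^{-\alpha}$ on $(0, 3/4]$.
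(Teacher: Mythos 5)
Your proposal is correct and follows essentially the same route as the paper: restrict the support to $y\le 3/4$, expand via Proposition~\ref{prop:kbessel-asy-small}, bound the two oscillatory integrals by a single integration by parts (using that $\phi$ vanishes at the endpoints) together with the first derivative estimate and Lemma~\ref{lem:mvt}, and extract the extra factor $x/T$ from the $\phi'$-term via $\phi'(2ry)\ll x^2/(aT)$. The only cosmetic differences are that you bound the non-$\phi'$ pieces by $L^1$ norms (via $\var\phi=O(1)$) where the paper reuses Lemma~\ref{lem:mvt}, and you take $I'\ll 1$ trivially for the first bound where the paper shows $I_1\ll r^{-1}$; both suffice.
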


\begin{proof}
Since $r\geq a/x$ and $T\leq x/3$, the interval on which $\phi(2ry)\neq 0$ is contained in $(0, 3/4]$.
So by Proposition \ref{prop:kbessel-asy-small} we have
\begin{equation} \label{eq:phicheck-I0-I1}
	\check\phi(r) \ll r^{-\frac 12} |I_0| + r^{-\frac 32} |I_1| + r^{-\frac 52} \int_{\frac{a}{4(x+T)r}}^{\frac{a}{2(x-T)r}} \frac{dy}{y},
\end{equation}
where
\[
	I_\alpha = \int_{\frac{a}{4(x+T)r}}^{\frac{a}{2(x-T)r}} e^{2irw(y)} \frac{\phi(2ry)(3y^2+2)^\alpha}{y(1-y^2)^{\frac 14+\frac 32\alpha}} \, dy.
\]
The third term in \eqref{eq:phicheck-I0-I1} equals
\[
	r^{-\frac 52}\log \mfrac{2(x+T)}{x-T} 
	\ll r^{-\frac 52}.
\]
For the other terms we use $w'(y)=-\sqrt{1-y^2}/y$ to write
\begin{equation} \label{eq:I-alpha-est}
	I_\alpha \ll r^{-1} \left| \int_{\frac{a}{4(x+T)r}}^{\frac{a}{2(x-T)r}} \left(e^{2irw(y)}\right)' \frac{\phi(2ry)(3y^2+2)^\alpha}{(1-y^2)^{\frac 34+\frac 32\alpha}} \, dy \right|.
\end{equation}
Since $y\leq 3/4$ we have 
\[
	\frac{(3y^2+2)^\alpha}{(1-y^2)^{\frac 34+\frac 32\alpha}} \ll 1,
\]
so by Lemma \ref{lem:mvt} we obtain $I_\alpha \ll r^{-1}$ and therefore $\check\phi(r)\ll r^{-\frac 32}$.

For large $r$, we obtain a better estimate by integrating by parts in \eqref{eq:I-alpha-est} for $\alpha=0$.
Since $\phi(2ry)=0$ at the limits of integration, we find that
\begin{equation}
	I_0 \ll r^{-1} \left| \int_{\frac{a}{4(x+T)r}}^{\frac{a}{2(x-T)r}} e^{2irw(y)} \frac{y\phi(2ry)}{(1-y^2)^{\frac 74}} \, dy \right| + r^{-1} \left| \int_{\frac{a}{4(x+T)r}}^{\frac{a}{2(x-T)r}} e^{2irw(y)} \frac{r\phi'(2ry)}{(1-y^2)^{\frac 34}} \, dy \right| =:J_1+J_2.
\end{equation}
As above, Lemma~\ref{lem:mvt} gives
\[
	J_1 \ll r^{-2} \left|\int_{\frac{a}{4(x+T)r}}^{\frac{a}{2(x-T)r}} \left(e^{2irw(y)}\right)' \frac{y^2\phi(2ry)}{(1-y^2)^{\frac 94}} \, dy \right| \ll r^{-2}.
\]
For $J_2$, we apply Lemma \ref{lem:mvt} with the estimates $\phi'(2ry)\ll x^2/aT$ and $y \ll a/rx$ to obtain
\[
	J_2 = r^{-1}\left| \int_{\frac{a}{4(x+T)r}}^{\frac{a}{2(x-T)r}} \left(e^{2irw(y)}\right)' \frac{y\phi'(2ry)}{(1-y^2)^{\frac 54}} \, dy\right| \ll r^{-2} \frac xT.
\]
So $I_0 \ll r^{-2}x/T$.
This, together with \eqref{eq:phicheck-I0-I1} and our estimate for $I_1$ above, gives \eqref{prop:big_r}.
\end{proof}

%%%%%%%%%%%%%%%%%%%%%%%%%%%%%%%%%%%%%%%%%%%%%%%%%%%%%%%%%%%%%%%%
%%%%%%%%%%%%%%%%%%%%%%%%%%%%%%%%%%%%%%%%%%%%%%%%%%%%%%%%%%%%%%%%
%%%%%%%% EASY PROOF %%%%%%%%%%%%%%%%%%%%%%%%%%%%%%%%%%%%%%%%%%%%
%%%%%%%%%%%%%%%%%%%%%%%%%%%%%%%%%%%%%%%%%%%%%%%%%%%%%%%%%%%%%%%%
%%%%%%%%%%%%%%%%%%%%%%%%%%%%%%%%%%%%%%%%%%%%%%%%%%%%%%%%%%%%%%%%

\section{Application to sums of Kloosterman sums}\label{sec:easyproof}
We are in a position to prove Theorem~\ref{thm:x-1/6},
which asserts that 
for $m>0$, $n<0$ we have
\begin{equation} \label{eq:x-1/6}
	\sum_{c\leq X} \frac{S(m,n,c, \chi)}{c} \ll \left(X^{\frac 16} + |mn|^{\frac 14}\right) |mn|^{\epsilon} \log X.
\end{equation}
This will follow from an estimate for dyadic sums.
\begin{proposition}\label{dyadicpropeasy}
If  $m>0$, $n<0$ and $x>4\pi\sqrt{\tilde m|\tilde n|}$ then
\begin{equation*}
	\sum_{x\leq c\leq 2x} \frac{S(m,n,c,\chi)}c\ll   \(x^{\frac 16}\log x + |mn|^{\frac 14}\)|mn|^\ep
\end{equation*}
\end{proposition}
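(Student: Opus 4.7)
The plan is to apply the Kuznetsov trace formula (Theorem~\ref{KTFmixedsign}) with the test function $\phi = \phi_{a,x,T}$ constructed in Section~\ref{sec:kbessel}, taking $a = 4\pi\sqrt{\tilde m|\tilde n|}$ and $T = x^{1-\delta}$ for a parameter $\delta \in (0,1/2)$ to be optimized. By construction $\phi(a/c) = 1$ on $c \in [x,2x]$ and vanishes outside $c \in [x-T,2x+2T]$, so Theorem~\ref{KTFmixedsign} expresses the target dyadic sum, up to a contribution from the boundary intervals $c \in [x-T,x] \cup [2x,2x+2T]$, as
\[
8\sqrt{i}\,\sqrt{\tilde m|\tilde n|}\sum_{r_j}\frac{\overline{\rho_j(m)}\rho_j(n)}{\ch\pi r_j}\,\check\phi(r_j).
\]
The boundary contribution is bounded by Proposition~\ref{prop:weil-bound} together with $\tau(c) \ll c^\epsilon$, giving $|S(m,n,c,\chi)|/c \ll c^{-1/2+\epsilon}|mn|^\epsilon$ and hence a total of $\ll Tx^{-1/2}|mn|^\epsilon\log x = x^{1/2-\delta}|mn|^\epsilon\log x$.

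On the spectral side, Corollary~\ref{cor:shimura} forces every real $r_j$ to satisfy $r_j > 1.9$, while the exceptional parameter $r = i/4$ coming from $y^{1/4}\eta(\tau)$ contributes nothing since the Fourier coefficients of this form vanish for $n<0$. The hypothesis $x > a$ yields $a/x < 1 < r_j$ for every $j$, placing us in the third regime of Theorem~\ref{thm:phicheck}, so $\check\phi(r_j) \ll \min\bigl(r_j^{-3/2},\,r_j^{-5/2}x^{\delta}\bigr)$. I would next break the spectral sum into dyadic intervals $R/2 < r_j \leq R$, apply Cauchy--Schwarz, and insert the mean-value bounds from Theorem~\ref{thm:mve}:
\[
\tilde m\!\!\sum_{R/2 < r_j \leq R}\!\!\frac{|\rho_j(m)|^2}{\ch\pi r_j} \ll R^{3/2}+m^{1/2+\epsilon},\qquad |\tilde n|\!\!\sum_{R/2 < r_j \leq R}\!\!\frac{|\rho_j(n)|^2}{\ch\pi r_j} \ll R^{5/2}+|n|^{1/2+\epsilon}R^{1/2}.
\]
Expanding $\sqrt{(R^{3/2}+m^{1/2})(R^{5/2}+|n|^{1/2}R^{1/2})}$ produces four cross-terms of sizes $R^2,\,R^{5/4}m^{1/4},\,R|n|^{1/4},\,R^{1/4}(mn)^{1/4}$; multiplying by $|\check\phi(r_j)|$ in the two regimes (with crossover at $R \asymp x^{\delta}$) and summing over dyadic $R$, each term is majorized by $x^{\delta/2} + |mn|^{1/4}$ up to $|mn|^\epsilon$ factors, where $m^{1/4},|n|^{1/4} \leq |mn|^{1/4}$ (valid since $m,|n|\geq 1$) absorbs the mixed contributions.

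Combining the two estimates yields $(x^{\delta/2} + x^{1/2-\delta} + |mn|^{1/4})|mn|^\epsilon\log x$, and the choice $\delta = 1/3$ balances the two $x$-dependent exponents at $1/6$, giving the proposition. The main obstacle is the dyadic bookkeeping on the spectral side: one must verify in each of the two regimes of $\check\phi$ that all four Cauchy--Schwarz cross-terms sum over dyadic $R$ to within the target envelope $x^{\delta/2} + |mn|^{1/4}$. This works out precisely because the crossover $R \asymp x^\delta$ in Theorem~\ref{thm:phicheck} and the optimization $\delta = 1/3$ of the boundary versus spectral error are mutually consistent; any imbalance in the mean-value powers or in the Bessel decay rates would break this delicate balance.
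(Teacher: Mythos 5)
Your proposal is correct and follows essentially the same route as the paper: the paper also applies Theorem~\ref{KTFmixedsign} with the test function of Section~\ref{sec:kbessel}, bounds the boundary intervals $[x-T,x]\cup[2x,2x+2T]$ via Proposition~\ref{prop:weil-bound}, discards $r_0=i/4$ and invokes Corollary~\ref{cor:shimura} to land in the third regime of Theorem~\ref{thm:phicheck}, and then runs the same dyadic Cauchy--Schwarz argument with Theorem~\ref{thm:mve}. The only cosmetic difference is that the paper fixes $T=x^{2/3}$ from the outset rather than carrying $\delta$ and optimizing to $\delta=1/3$ at the end.
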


To obtain Theorem~\ref{thm:x-1/6} from the proposition, we estimate using \eqref{eq:trivial_est}
to  see that the initial segment
$c\leq 4\pi\sqrt{\tilde m|\tilde n|}$ of  \eqref{eq:x-1/6} contributes $O(|mn|^{\frac 14+\epsilon})$.
We break the rest of the sum into 
dyadic pieces $x\leq c\leq 2x$ with $4\pi\sqrt{\tilde m|\tilde n|} < x\leq X/2$.
Estimating each of these with Proposition~\ref{dyadicpropeasy} and summing their contributions gives \eqref{eq:x-1/6}.

 \begin{proof}[Proof of Proposition \ref{dyadicpropeasy}]
Let 
\[a:=4\pi\sqrt{\tilde m|\tilde n|}, \qquad T:=x^\frac23,\]
and suppose that $x>a$.
We apply Theorem~\ref{KTFmixedsign} 
using a test function $\phi$ which satisfies conditions (i)--(iv) of Section~\ref{sec:kbessel}.
 Proposition~\ref{prop:weil-bound} and the mean value bound for the divisor function give
\begin{multline} \label{dyadic0easy}
\left|\sum_{c=1}^\infty \frac{S(m,n,c, \chi)}c\phi\leg ac-\sum_{x\leq c\leq 2x} \frac{S(m,n,c, \chi)}c\right|\\
\leq\sum_{\substack{x-T\leq c\leq x\\ 2x\leq c\leq 2x+2T}}\frac{\left|S(m,n,c, \chi)\right|}c  
 \ll \frac{T\log x}{\sqrt x}|mn|^{\epsilon} 
 \ll |mn|^{\epsilon}x^{\frac 16}\log x.
\end{multline}
Using Theorem   \ref{KTFmixedsign}, it will therefore suffice to obtain the stated estimate for 
 the quantity
\begin{equation}\label{eq:mn-coeff-sum-easy}
\sum_{c=1}^\infty \frac{S(m,n,c,\chi)}c\phi\leg ac= 8\sqrt{i}\sqrt{\tilde m |\tilde n|}\sum_{1<r_j} \frac{\overline{\rho_j(m)}\rho_j(n)}{\ch \pi r_j}\check\phi(r_j),
\end{equation}
where we have used Corollary~\ref{cor:shimura} and the fact that the eigenvalue $r_0=i/4$ makes no contribution since $n$ is negative.

We break the sum \eqref{eq:mn-coeff-sum-easy} into dyadic intervals $A\leq r_j\leq 2A$.
Using the Cauchy-Schwarz inequality together with Theorems~\ref{thm:mve} and 
\ref{thm:phicheck}
(recall that  $a/x<1 < r_j$)
we obtain
\begin{equation*}
\begin{aligned}
	\sqrt{\tilde m|\tilde n|}\sum_{A\leq r_j\leq 2A} &\left| \frac{\bar{\rho_j(m)}\rho_j(n)}{\ch \pi r_j} \check\phi(r_j) \right| \\
	&\ll\min\left(A^{-\frac 32},A^{-\frac 52}x^{\frac 13}\right)  \(\tilde m \sum_{A\leq r_j\leq 2A} \frac{|\rho_j(m)|^2}{\ch \pi r_j}\)^\frac12
	\(|\tilde n|\sum_{A\leq r_j\leq 2A} \frac{|\rho_j(n)|^2}{\ch \pi r_j}\)^\frac12
	\\
	&\ll \min \left(A^{-\frac 32},A^{-\frac 52}x^{\frac 13}\right)    \(A^\frac32+m^{\frac12+\ep}\)^\frac12 \(A^\frac52+|n|^{\frac12+\ep}A^\frac12\)^\frac12 \\
	&\ll \min\left(A^\frac12,A^{-\frac 12}x^{\frac 13}\right)
	 \left(1 + m^{\frac 14+\epsilon}A^{-\frac 34} + |n|^{\frac 14+\epsilon}A^{-1} + |mn|^{\frac 14+\epsilon}A^{-\frac 74}\right).
\end{aligned}
\end{equation*}
Summing the contribution from the dyadic intervals gives
\[
	\sqrt{\tilde m|\tilde n|}\sum_{1<r_j} \frac{\bar{\rho_j(m)}\rho_j(n)}{\ch \pi r_j} \check\phi(r_j) \ll   x^{\frac 16} + |mn|^{\frac 14+\epsilon},
\]
and Proposition~\ref{dyadicpropeasy} follows.
\end{proof}

%%%%%%%%%%%%%%%%%%%%%%%%%%%%%%%%%%%%%%%%%%%%%%%%%%%%%%%%%%%%%%%%
%%%%%%%%%%%%%%%%%%%%%%%%%%%%%%%%%%%%%%%%%%%%%%%%%%%%%%%%%%%%%%%%
%%%%%%%% AVERAGE DUKE %%%%%%%%%%%%%%%%%%%%%%%%%%%%%%%%%%%%%%%%%%
%%%%%%%%%%%%%%%%%%%%%%%%%%%%%%%%%%%%%%%%%%%%%%%%%%%%%%%%%%%%%%%%
%%%%%%%%%%%%%%%%%%%%%%%%%%%%%%%%%%%%%%%%%%%%%%%%%%%%%%%%%%%%%%%%

\section{A second estimate for coefficients of Maass cusp forms}\label{sec:avgduke}

In the case $m=1$ we can improve the estimate of Proposition \ref{dyadicpropeasy} by using a second estimate for the sum of the Fourier coefficients of Maass cusp forms in $\mathcal{S}_\frac12(1,\chi)$.
The next theorem is an improvement on Theorem~\ref{thm:mve} only when $n$ is much larger than $x$.

\begin{theorem}\label{thm:avgduke}
Suppose that $\{u_j\}$ is an orthonormal basis of $\mathcal{S}_\frac12(1,\chi)$ with spectral parameters $r_j$ and coefficients $\rho_j(n)$ as in
\eqref{eq:uj-fix}.
Suppose that the $u_j$ are eigenforms of the Hecke operators $T_{p^2}$ for $p\nmid 6$.
If $24n-23$ is not divisible by $5^4$ or $7^4$ then
\begin{equation} \label{eq:est-fourier-coeffs}
	\sum_{0< r_j\leq x} \frac{\left|\rho_j(n)\right|^2}{\ch\pi r_j} \ll
	 	|n|^{-\frac 47+\epsilon} \, x^{5-\frac{\sgn n}2}.
\end{equation}
\end{theorem}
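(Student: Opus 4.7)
The plan is to modify Duke's argument \cite{duke-half-integral} for pointwise bounds on $|\rho_j(n)|$ so that the spectral averaging is performed inside a Kuznetsov-type formula, rather than by naively summing Duke's estimate (which would overshoot by a factor of $x^2$). The strategy has three steps: Hecke reduction to a squarefree index, transport to weight $0$ via the Shimura lift, and a Kuznetsov--Kloosterman analysis in weight $0$.

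First, by Hecke theory one reduces to $24n - 23 = N$ squarefree. Writing $24n - 23 = v^2 N$ and $n_0 := (N + 23)/24$, the recursion \eqref{eq:hecke_action} for $T_{p^2}$ at primes $p \mid v$ with $p \nmid 6$ expresses $\rho_j(n)$ as a polynomial in the Hecke eigenvalues $\lambda_{j, p}$ times $\rho_j(n_0)$. Since Corollary \ref{cor:shim_hecke_commute} identifies each $\lambda_{j, p}$, up to the sign $\ptfrac{12}{p}$, with a Hecke eigenvalue of a weight-$0$ Maass form on $\Gamma_0(6)$, the Kim--Sarnak bound gives $|\lambda_{j, p}| \ll p^{7/64 + \epsilon}$. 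Iterating the recursion yields $|\rho_j(n)|^2 \ll v^{-2 + \epsilon} |\rho_j(n_0)|^2$, and combined with $|n| \asymp v^2 |n_0|$ this is more than enough to reduce the theorem to the squarefree case
\[
    \sum_{r_j \leq x} \frac{|\rho_j(n_0)|^2}{\ch \pi r_j} \ll N^{-4/7 + \epsilon}\, x^{5 - \sgn(n)/2}.
\]
The hypothesis that $24n - 23$ is not divisible by $5^4$ or $7^4$ handles the primes $\{5, 7\}$, for which no suitable Hecke operator on $\mathcal{S}_{1/2}(1, \chi)$ is available at level $1$.

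Second, for $N$ squarefree, Theorem \ref{thm:shimura} applied with $t = N$ produces $F_j := S_N(u_j) \in \mathcal{S}_0(6, \bm 1, 2r_j)$; reading off the $n = 1$ coefficient of the Dirichlet series identity \eqref{eq:shim-coeff-relation} gives $b_N(1) = \rho_j(n_0)$, where $b_N$ denotes the Fourier coefficients of $F_j$. The task becomes estimating $\sum_{r_j \leq x} |b_N(1)|^2 / \ch \pi r_j$. For this one applies the weight-$0$ Kuznetsov trace formula on $\Gamma_0(6)$ with a test function localised to the spectral window $[0, 2x]$, writing the spectral side as a Weyl main term plus a Kloosterman term of the form $\sum_c S(N, N, c) \Phi(\cdot/c)/c$. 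Duke's amplification argument on the latter---combining the Weil bound \eqref{eq:weil-ordinary} with Burgess-type cancellation on the associated character sums---produces the saving $N^{-4/7 + \epsilon}$. The factor $x^{5 - \sgn(n)/2}$ then arises from Weyl's law ($\sim x^2$ forms with $r_j \leq x$) multiplied by the Whittaker-normalisation factor $r_j^{3 - \sgn(n)/2}$ that appears when transferring Fourier coefficients between weight $0$ and weight $1/2$ through the Shimura lift (compare \eqref{eq:phi-F-coeffs}).

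The main obstacle is the third step: Duke's amplification must be carried out uniformly in the spectral parameter $r_j$, which requires sharp control on the weight-$0$ analogue of the $K$-Bessel transforms of Section \ref{sec:kbessel}, particularly in the transition range $r \sim x$. The crucial saving over a naive term-by-term use of Duke's pointwise bound---which would only yield $|n|^{-4/7 + \epsilon} x^{7 - \sgn(n)/2}$---is the factor $x^2$, and this is gained precisely because the amplification is performed on the already-averaged Kuznetsov sum rather than at each individual $r_j$.
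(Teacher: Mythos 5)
Your first step (Hecke reduction to squarefree $24n-23$ via the $T_{p^2}$ recursion, Corollary \ref{cor:shim_hecke_commute}, and Kim--Sarnak) is essentially the paper's argument, though two details are off: with $\theta=7/64$ the iteration gives $|\rho_j(n)|^2\ll v^{-2+7/32+\epsilon}|\rho_j(n_0)|^2$, not $v^{-2+\epsilon}$ (still ample, since only $v^{-8/7}$ is needed); and the hypothesis on $5^4,7^4$ is \emph{not} because Hecke operators at $5$ and $7$ are unavailable --- $T_{25}$ and $T_{49}$ are defined in the paper --- but because the per-step bound $p^{-57/64}+p^{-71/64}+p^{-3/2}$ exceeds $p^{-4/7}$ by a factor up to $1.25$ when $p\in\{5,7\}$, so the number of iterations at those primes must be bounded.

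The genuine gap is in your second and third steps, where the saving $|n|^{-4/7}$ is supposed to be produced. Passing to weight $0$ via $F_j:=S_N(u_j)$ and running Kuznetsov on $\Gamma_0(6)$ cannot work as described, for two reasons. First, the Shimura lift is not an isometry: with $\|u_j\|=1$ the forms $F_j$ are not orthonormal, and $\|F_j\|^2$ depends on $j$ and on $N$ (by Katok--Sarnak/Baruch--Mao type formulas it is essentially a central $L$-value), so a Kuznetsov bound for an orthonormal basis of $\mathcal S_0(6,\bm 1)$ does not transfer to a bound on $\sum_j|b_N(1)|^2/\ch\pi r_j$ without lower bounds on $\|S_N(u_j)\|$ that are nowhere available. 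Second, and more fatally, $b_N(1)=\rho_j(n_0)$ is the \emph{first} Fourier coefficient of $F_j$; the weight-$0$ Kuznetsov formula for first coefficients involves $S(1,1,c)$, not $S(N,N,c)$, so the parameter $N$ has vanished from the geometric side and there is no mechanism left to extract $N^{-4/7}$. Duke's (really Iwaniec's) cancellation lives on the half-integral-weight side, in sums of Sali\'e-type sums over the modulus, and the averaging that makes it effective is over the \emph{level}, not the spectrum. The paper's Proposition \ref{prop:avg-duke} accordingly stays in half-integral weight: it sends $u_j(\tau)\mapsto u_j(24\tau)$ (and applies $L_{1/2}$ for negative indices, which is where the extra $r_j^2$, hence the $\sgn n$ in the exponent, comes from) to land in $\mathcal S_{1/2}(576,\nu_\theta(\tfrac{12}{\bullet}))$ resp.\ $\mathcal S_{3/2}$, invokes Duke's Theorem 2 with his fixed positive test function $\hat\phi(r)\sim|r|^{k-5}$ (positivity gives both the truncation to $r_j\le x$ and the factor $x^{5-k}$), and then averages the resulting inequality over levels $Q=pN$ with $P<p\le 2P$, $P=n^{1/7}$, to get $V_1(n,n)\ll n^{3/7+\epsilon}$. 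Your closing intuition --- that one must keep the spectral average rather than re-sum Duke's pointwise bound --- is correct, but the concrete mechanism you propose would not deliver the theorem.
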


\begin{remark}
As the proof will show, the exponent $4$ in the assumption on $n$ in Theorem \ref{thm:avgduke} can be replaced by any positive integer $m$.
Increasing $m$ has the effect of increasing the implied constant in \eqref{eq:est-fourier-coeffs}.
\end{remark}

We require an average version of an estimate of Duke \cite[Theorem~5]{duke-half-integral}.
Let $\nu_\theta$ denote the weight $1/2$ multiplier for $\Gamma_0(4)$ defined in \eqref{eq:def-theta-mult}.

\begin{proposition} \label{prop:avg-duke}
Let $D$ be an even fundamental discriminant, let $N$ be a positive integer with $D\mid N$, and let 
\[(k,\nu)=\(\mfrac12,\,\nu_\theta\(\tfrac {|D|} \bullet\)\)\qquad\text{or}\qquad\(\mfrac32,\,\bar\nu_\theta\(\tfrac {|D|}\bullet\)\).\]
Suppose that $\{v_j\}$ is an orthonormal basis of $\mathcal{S}_k(N,\nu)$ with spectral parameters $r_j$ and coefficients $b_j(n)$.
If $n>0$ is squarefree then
\begin{equation} \label{eq:avg-duke}
	 \sum_{0\leq r_j\leq x} \frac{|b_j(n)|^2}{\ch\pi r_j} 
	\ll_{N,D} n^{-\frac 47+\ep} x^{5-k}.
\end{equation}
\end{proposition}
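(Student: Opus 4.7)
The plan is to follow Duke's proof of his pointwise bound (Theorem~5 of \cite{duke-half-integral}) and carry out the estimate in averaged rather than pointwise form. The three essential ingredients are (i) Proskurin's version of the Kuznetsov trace formula in weight $k$ with the given multiplier $\nu$, (ii) a Sali\'e-type evaluation of $S(n,n,c,\nu)$ valid for squarefree $n$, and (iii) the Burgess bound for short character sums.

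I would begin by pairing two Poincar\'e series $\mathcal U_n(\,\cdot\,,s_1,k,\nu)$ and $\mathcal U_n(\,\cdot\,,s_2,-k,\bar\nu)$ and computing their inner product both spectrally (as in Lemma~\ref{innerprodlemma2}) and by unfolding (as in Lemma~\ref{innerprodlemma1}). For $\re s_1, \re s_2 > 1$ this produces an identity of the form
\[
\sum_j \frac{|b_j(n)|^2}{\cosh\pi r_j}\,\Lambda(s_1,s_2,r_j) \;=\; M(s_1,s_2,n) \;+\; \mathcal K(s_1,s_2,n),
\]
where $\Lambda$ is as in \eqref{Lambdadef}, $M$ is a diagonal main term, and $\mathcal K$ is a weighted sum of Kloosterman sums $S(n,n,c,\nu)$. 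Following Duke, I would take $s_1=s_2=\sigma$ with $\sigma$ close to $1-k/2$ so that the gamma factor $\Lambda(\sigma,\sigma,r)$ is bounded below by a positive quantity of size $\asymp r^{-5+2k}$ uniformly for $r\in[1,x]$. After smoothing in $\sigma$ along an interval of length $\asymp 1/\log x$ (or inserting a Mellin-type weight as in the proof of Theorem~\ref{thm:mve-general}) the left side dominates the target sum $\sum_{r_j\leq x}|b_j(n)|^2/\cosh\pi r_j$ multiplied by a factor of size $x^{-5+2k}$. An estimate of shape $O(n^{-4/7+\epsilon})$ on the right-hand side, divided by $x^{-5+2k}$, then yields the claim.

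The main work is the estimate of $\mathcal K$. For squarefree $n$ a Sali\'e-type identity (extended via Lemma~2 of \cite{andersen-singular} to accommodate the twist $\pfrac{|D|}{\bullet}$) expresses $S(n,n,c,\nu)$, for $c$ coprime to $4N|D|$, as a sum over $x\bmod c$ with $x^2\equiv 4n\bmod c$ of a character depending on $D$ and $c$. Splitting the $c$-sum dyadically at $c\asymp C$, the Weil bound \eqref{eq:weil-bound} handles the range $c>C$; for $c\leq C$ I would rearrange the resulting sum into a short character sum of length $\asymp \sqrt C$ modulo $n$ and invoke Burgess' bound, gaining a saving of $n^{-1/2+3/16+\epsilon}$ per dyadic block beyond the trivial estimate. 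Optimizing the cutoff at $C_0\asymp n^{4/7}$ produces Duke's exponent $-2/7$ on $|b_j(n)|$, which squares to the desired $-4/7$ on $|b_j(n)|^2$.

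The principal obstacle is the extension of Sali\'e's identity to the twisted multiplier $\nu$ at general level $N$, particularly at primes dividing $2ND$; this requires a careful case analysis tracking the extra character through the Gauss-sum decomposition of $S(n,n,c,\nu)$. A secondary, more routine, point is the smoothing from a fixed $\sigma$ to a sharp cutoff $r_j\leq x$, which is handled by the same techniques used in Section~\ref{sec:MVE}.
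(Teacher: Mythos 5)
Your overall framework (a pre-Kuznetsov identity, a Sali\'e evaluation, a dyadic treatment of the geometric side) is in the right family, but two of your central steps do not work as described, and the mechanism the paper actually uses to obtain the $n$-saving is absent from your proposal.

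First, the spectral weight. For fixed real $\sigma$ one has $\Lambda(\sigma,\sigma,r)=|\Gamma(\sigma-\tfrac12+ir)|^4\sim(2\pi)^2r^{4\sigma-4}e^{-2\pi r}$, so even after accounting for the $\ch\pi r_j$ normalization the weight attached to $|b_j(n)|^2/\ch\pi r_j$ is $\asymp r^{4\sigma-4}e^{-\pi r}$: it is exponentially small at $r=x$, not bounded below by $r^{-5+2k}$ on $[1,x]$, and smoothing $\sigma$ over a window of length $1/\log x$ cannot remove an $e^{-\pi r}$ factor. The device in Section~\ref{sec:MVE} that produces a sharp-cutoff weight is the integration in $t$ of $\Lambda(\sigma+\tfrac{it}2,\sigma-\tfrac{it}2,r)$, yielding $h_x(r)$; but that route gives exactly Theorem~\ref{thm:mve-general}, whose $n$-dependence is only $n^{1/2+\epsilon}$ (Weil), not $n^{-4/7}$. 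What is needed, and what the paper imports wholesale from Duke's Theorem~2, is the Kuznetsov formula with Duke's specific test function $\phi$, whose transform is positive and satisfies $\hat\phi(r)\sim\frac{1}{2\pi^2}|r|^{k-5}$; positivity of every spectral term then converts the weighted sum $V_1(n,n)=n\sum_j|b_j(n)|^2\hat\phi(r_j)/\ch\pi r_j$ into the truncated sum at cost $x^{5-k}$.

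Second, the source of the exponent $-4/7$. You attribute it to a Burgess bound applied to a short character sum of length $\sqrt C$ modulo $n$ extracted from the Sali\'e sums at the fixed level $N$; this is not the mechanism here, and the claimed numerology is unsupported. The paper, following Duke and Iwaniec, averages the trace formula over the levels $Q=pN$ with $p$ prime, $P<p\le 2P$, $p\nmid 2n$. The crucial structural input missing from your proposal is that an orthonormal basis of $\mathcal{S}_k(N,\nu)$, rescaled by $[\Gamma_0(N):\Gamma_0(Q)]^{-1/2}$, is an orthonormal \emph{subset} of $\mathcal{S}_k(Q,\nu)$, whence $V_1(n,n)\ll P\,V_1^{(Q)}(n,n)$ for each such $Q$ and therefore $V_1(n,n)\ll\log P\sum_Q V_1^{(Q)}(n,n)$. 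The geometric sides, summed over the levels, are then controlled by Iwaniec's bilinear estimates $\sum_Q|S_Q|\ll((n/P)^{1/2}+(nP)^{3/8})n^{\epsilon}$ and $\sum_Q|V_3^{(Q)}|\ll n^{3/7+\epsilon}$; choosing $P=n^{1/7}$ gives $V_1(n,n)\ll n^{3/7+\epsilon}$, i.e.\ the stated $n^{-4/7}$ after dividing by $n$. Without the level average there is no known way to beat the Weil bound on the Kloosterman side in this setting, so a fixed-level argument of the kind you sketch would stall at $n^{1/2+\epsilon}$.
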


\begin{proof}[Sketch of proof]
Let $\phi,\hat\phi$ be as in the proof of Theorem 5 of \cite{duke-half-integral}.
Then $\hat\phi(r)>0$ and
\begin{equation} \label{eq:duke-phi-hat-asymp}
	\hat\phi(r) \sim \mfrac{1}{2\pi^2} |r|^{k-5} \qquad \text{as } |r|\to\infty.
\end{equation}
Set 
\[V_1(n,n):=n \sum_{r_j} \frac{|b_j(n)|^2}{\ch\pi r_j} \hat\phi(r_j).\]
By Theorem 2 of \cite{duke-half-integral} 
we have $|V_1(n,n) +V_2(n,n)|\ll |S_N| + |V_3(n,n)|$,
where $S_N$ is defined in the proof of Theorem 5 and $V_2,V_3$ are defined in Section~3 of that paper.
Since $V_1(n,n)$ and $V_2(n,n)$ are visibly non-negative, we have
\begin{equation}\label{eq:duke_ktf}
	V_1(n,n) \ll |S_N| + |V_3(n,n)|.
\end{equation}

The terms $S_N$ and $V_3(n,n)$ are estimated by averaging over the level. 
For $P>(4\log 2n)^2$ let
\[
	\bar Q = \left\{ pN : p \text{ prime, } P<p\leq 2P, \, p\nmid 2n \right\}.
\]
Summing \eqref{eq:duke_ktf} gives
\begin{equation} \label{eq:V-1-avg-level}
	\sum_{Q\in \bar Q} V_1^{(Q)}(n,n) \ll \sum_{Q\in \bar Q} |S_Q| + \sum_{Q\in \bar Q} |V_3^{(Q)}(n,n)|,
\end{equation}
where $V_1^{(Q)}$, $S_Q$, and $V_3^{(Q)}$ are the analogues of $V_1$, $S_N$, and $V_3$ for $\Gamma_0(Q)$.

For each $Q\in \bar Q$,   the functions $\{[\Gamma_0(N):\Gamma_0(Q)]^{-1/2}u_j(\tau)\}$ form an orthonormal subset of $\mathcal{S}_k(Q,\nu)$.
It follows that
\[
	V_1^{(Q)}(n,n) \geq \frac{n}{[\Gamma_0(N):\Gamma_0(Q)]} \sum_{r_j} \frac{|b_j(n)|^2}{\ch\pi r_j} \hat\phi(r_j) = \frac{V_1(n,n)}{[\Gamma_0(N):\Gamma_0(Q)]}.
\]
Since $[\Gamma_0(N):\Gamma_0(Q)]\leq p+1\ll P$ we find that
\[
	V_1(n,n) \ll P \, V_1^{(Q)}(n,n) \quad \text{ for all }Q\in \bar Q.
\]
Since $|\bar Q|\asymp P/\log P$ we conclude that
\begin{equation} \label{eq:V-1-sum-V-1-Q}
	V_1(n,n) \ll \log P \sum_{Q\in \bar Q} V_1^{(Q)}(n,n).
\end{equation}

In the proof of Theorem 5 of \cite{duke-half-integral}, Duke gives the estimates
\begin{equation} \label{eq:V-3-S-Q-est}
	\sum_{Q\in \bar Q} |V_3^{(Q)}(n,n)| \ll_{N,D} n^{\frac 37+\epsilon} \quad \text{ and } \quad \sum_{Q\in \bar Q} |S_Q| \ll_{N,D} \big( (n/P)^{\frac 12} + (nP)^{\frac 38} \big)n^{\epsilon}
\end{equation}
which follow from work of Iwaniec \cite{iwaniec-fourier-coefficients}.
By \eqref{eq:V-1-avg-level}, \eqref{eq:V-1-sum-V-1-Q}, and \eqref{eq:V-3-S-Q-est} we conclude that
\[
	V_1(n,n) \ll_{N,D} \log P \big( (n/P)^{\frac 12} + (nP)^{\frac 38} + n^{\frac 37} \big)n^\epsilon.
\]
Choosing $P=n^{1/7}$, we find that $V_1(n,n)\ll_{N,D} n^{3/7+\epsilon}$.
By \eqref{eq:duke-phi-hat-asymp} we have
\[
	n\sum_{0\leq r_j\leq x} \frac{|b_j(n)|^2}{\ch\pi r_j} 
	\ll n\sum_{0\leq r_j \leq x} r_j^{5-k} \frac{|b_j(n)|^2}{\ch\pi r_j} \hat\phi(r_j) 
	\ll x^{5-k} \, V_1(n,n),
\]
from which \eqref{eq:avg-duke} follows.
\end{proof}

We turn to the proof of Theorem~\ref{thm:avgduke}.
\begin{proof}[Proof of Theorem~\ref{thm:avgduke}]
Set $\alpha := [\Gamma_0(1):\Gamma_0(24,24)]^{1/2}$.
 With $\{u_j\}$ as in the hypotheses, and recalling \eqref{eq:L-k-norm}, 
define 
\[
\begin{aligned}
v_j(\tau)&:=\tfrac1\alpha u_j(24\tau)=\sum_{n\equiv 1(24)} b_j(n)  W_{\frac 14\sgn n ,ir_j}(4\pi|n|y)e(nx),\\
v_j'(\tau)&:=\tfrac1\alpha \(r_j^2+\tfrac1{16}\)^{-\frac12}\bar{L_\frac12 u_j}(24\tau)=\sum_{n\equiv 23(24)} b_j'(n)  W_{\frac 34\sgn n ,ir_j}(4\pi|n|y)e(nx).
\end{aligned}
\]
By \eqref{eq:576-theta} we have 
\[v_j\in \mathcal{S}_{\frac12}\(576,\nu_\theta\ptfrac{12}{\bullet}\),\qquad
v_j'\in \mathcal{S}_{\frac32}\(576,\bar{\nu_\theta}\ptfrac{12}{\bullet}\).
\]
From  \eqref{eq:inner_prod_def}  and \eqref{eq:L-k-norm} we see that 
 $\{v_j\}$, $\{v_j'\}$ are orthonormal sets which can be extended to orthonormal bases for 
 these spaces. 
Using \eqref{eq:lower_coeff} and \eqref{eq:conj-coeff} and comparing Fourier expansions, we find  for positive $n$ that
\[ 
b_j(n)=\mfrac1\alpha \, \rho_j\pmfrac{n+23}{24},\qquad 
b_j'(n)=\mfrac1\alpha \, \(r_j^2+\tfrac1{16}\)^{-\frac12} \bar{\rho_j\pmfrac{-n+23}{24}}.
 \]
It follows from Proposition~\ref{prop:avg-duke} that for squarefree $n$ we have
\begin{equation} \label{eq:est-fourier-coeffs-sf}
	\sum_{0< r_j\leq x} \frac{\left|\rho_j\pfrac{n+23}{24}\right|^2}{\ch\pi r_j} \ll  
	|n|^{-\frac 47+\epsilon} \, x^{5-\frac{\sgn n}2}.
\end{equation}

To establish \eqref{eq:est-fourier-coeffs-sf} for 
  non-squarefree $n$ we use the assumption that the $\{u_j\}$ are Hecke eigenforms.
Recall the definition of the lift $S_t$ from Theorem~\ref{thm:shimura}.
For each $j$ there exists some squarefree positive $t\equiv 1\pmod{24}$ for which $S_t(u_j)\neq 0$.
Denote by $\lambda_j(p)$ the eigenvalue of $u_j$ under $T_{p^2}$.  From Corollary~\ref{cor:shim_hecke_commute}
it follows that  $\big(\tfrac{12}{p}\big)\lambda_j(p)$ is a Hecke eigenvalue of the weight zero  Maass cusp form $S_t(u_j)$.
For these eigenvalues we have the estimate
\[|\lambda_j(p)|\leq p^{\frac 7{64}} + p^{-\frac 7{64}} \]
due to Kim and Sarnak \cite[Appendix 2]{kim-sarnak}.

The Hecke action \eqref{eq:hecke_action} gives
\begin{equation} \label{eq:hecke-2}
	p\, \rho_j\pmfrac{p^2n+23}{24}=\lambda_j(p)\rho_j\pmfrac{n+23}{24}-p^{-\frac12}\pmfrac{12n}p\rho_j\pmfrac{n+23}{24}-p^{-1}\rho_j\pmfrac{n/p^2+23}{24}.
\end{equation}
Suppose that $p^2 \nmid n$. Then
\[
	\left|\rho_j\pmfrac{p^2n+23}{24}\right| 
	\leq \(p^{-\frac{57}{64}} + p^{-\frac{71}{64}} +p^{-\frac32}\)  \left|\rho_j\pmfrac{n+23}{24}\right|
	\leq 
	\begin{cases}
		1.25 \, p^{-\frac 47} & \text{ if }p=5\text{ or }7, \\
		p^{-\frac 47} & \text{ if }p\geq 11.
	\end{cases}
\]
Thus
\eqref{eq:est-fourier-coeffs-sf} holds whenever $n$ is not divisible by $p^4$ for any $p$.

To treat the remaining cases, assume that $p\geq 11$ and that for $p^2\nmid n$ we have
\[
	\left|\rho_j\pmfrac{p^{2\ell}n+23}{24}\right| \leq p^{-\frac{4\ell}7} \left|\rho_j\pmfrac{n+23}{24}\right|, \qquad \ell\leq m-1.
\]
Then for $m\geq 2$, the relation \eqref{eq:hecke-2} gives
\begin{align*}
	\left|\rho_j\pmfrac{p^{2m}n+23}{24}\right| 
	&\leq \left(p^{-\frac{57}{64}} + p^{-\frac{71}{64}} \right)\left|\rho_j\pmfrac{p^{2m-2}n+23}{24}\right|+p^{-2}\left|\rho_j\pmfrac{p^{2m-4}n+23}{24}\right| \\
	&\leq \left[ \left(p^{-\frac{57}{64}}+ p^{-\frac{71}{64}}\right) p^{\frac 47} + p^{-2+\frac 87} \right] \, p^{-\frac{4m}{7}} \left|\rho_j\pmfrac{n+23}{24}\right|.
\end{align*}
For $p\geq 11$ the quantity in brackets is $<1$.
It follows that \eqref{eq:est-fourier-coeffs-sf} holds for all $n$ which are not divisible by $5^4$ or $7^4$.
\end{proof}

%%%%%%%%%%%%%%%%%%%%%%%%%%%%%%%%%%%%%%%%%%%%%%%%%%%%%%%%%%%%%%%%
%%%%%%%%%%%%%%%%%%%%%%%%%%%%%%%%%%%%%%%%%%%%%%%%%%%%%%%%%%%%%%%%
%%%%%%%% HARD PROOF %%%%%%%%%%%%%%%%%%%%%%%%%%%%%%%%%%%%%%%%%%%%
%%%%%%%%%%%%%%%%%%%%%%%%%%%%%%%%%%%%%%%%%%%%%%%%%%%%%%%%%%%%%%%%
%%%%%%%%%%%%%%%%%%%%%%%%%%%%%%%%%%%%%%%%%%%%%%%%%%%%%%%%%%%%%%%%

\section{Sums of Kloosterman sums in the case \texorpdfstring{$m=1$}{m=1}} \label{sec:hardproof}

Throughout this section, $n$ will denote a negative integer.
In our application to the error term in Rademacher's formula
we  need a estimate for   sums of the  Kloosterman sums
$S(1, n, c, \chi)$ which   improves the bound of Theorem~\ref{thm:x-1/6} with respect to $n$.
We will also make the assumption throughout that
\begin{equation} \label{eq:n-5-7-11}
	24n-23 \text{ is not divisible by $5^4$ or $7^4$ }
\end{equation}
so that we may apply Theorem \ref{thm:avgduke} (see the remark after that theorem).

\begin{theorem}\label{sumofks}
For $0<\delta<1/2$ and $n<0$ satisfying \eqref{eq:n-5-7-11} we have
\begin{equation}\label{eq:sumofks}
\sum_{c\leq X}\frac{S(1, n, c, \chi)}{c}\ll_{\delta} |n|^{\frac{13}{56}+\ep}X^{\frac 34\delta}+\(|n|^{\frac{41}{168}+\ep}+X^{\frac 12-\delta}\) \log X.
\end{equation}
\end{theorem}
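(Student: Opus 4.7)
The plan is to adapt the Sarnak--Tsimerman strategy of \cite{sarnak-tsimerman}: apply Kuznetsov's trace formula (Theorem~\ref{KTFmixedsign}) and then control the resulting spectral sum by combining the mean value estimate (Theorem~\ref{thm:mve}) with the average Duke estimate (Theorem~\ref{thm:avgduke}) proved in Section~\ref{sec:avgduke}. Set $a := 4\pi\sqrt{\tilde m|\tilde n|}$, which satisfies $a \asymp |n|^{1/2}$ since $m=1$, and choose $T := X^{1-\delta}$. Let $\phi := \phi_{a,X,T}$ be a smooth bump of the type constructed in Section~\ref{sec:kbessel}. The error introduced by replacing the sharp cutoff $c\leq X$ by $\phi(a/c)$ is supported on $c\in[X-T,X+T]$; estimating this via Proposition~\ref{prop:weil-bound} and standard divisor sums yields a contribution $O_\epsilon(X^{1/2-\delta}\log X\cdot|n|^\epsilon)$, accounting for the third term in \eqref{eq:sumofks}.

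Applying Theorem~\ref{KTFmixedsign} converts the remaining sum to
\[
8\sqrt{i}\,\sqrt{\tilde m|\tilde n|}\sum_{r_j}\frac{\overline{\rho_j(1)}\,\rho_j(n)}{\ch\pi r_j}\,\check\phi(r_j),
\]
where by Corollary~\ref{cor:shimura} only $r_j>1$ contribute (the eigenvalue $r_0=i/4$ drops out since $n<0$). I split this sum dyadically into $A\leq r_j\leq 2A$ and apply Cauchy--Schwarz. Theorem~\ref{thm:mve} with $m=1$ bounds the $\rho_j(1)$ sum by $O(A^{3/2})$, while for the $\rho_j(n)$ sum I take the minimum of the Kuznetsov bound $|n|^{-1}A^{5/2}+|n|^{-1/2+\epsilon}A^{1/2}$ from Theorem~\ref{thm:mve} and the Duke bound $|n|^{-4/7+\epsilon}A^{11/2}$ from Theorem~\ref{thm:avgduke}. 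The factor $|\check\phi(r)|$ is controlled via Theorem~\ref{thm:phicheck}: in the decay regime one has $|\check\phi(r)|\ll r^{-3/2}$, and in the transitional regime $|\check\phi(r)|\ll r^{-5/2}X^\delta$, with the cross-over at $r\asymp X^\delta$.

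With this setup one uses the Duke bound for $A$ below a cutoff $A_0$ and the Kuznetsov mean value bound for $A\geq A_0$, summing the dyadic contributions and multiplying by the prefactor $\sqrt{\tilde m|\tilde n|}\asymp|n|^{1/2}$. The Duke contribution in the decay regime is of order $|n|^{3/14}A_0^2$, the Kuznetsov secondary term contributes $|n|^{1/4}A_0^{-1/2}$, and the Duke contribution in the transitional regime of $\check\phi$ produces a term of shape $|n|^{3/14}X^\delta A_0$. Optimizing the cutoff $A_0$ against the decay-regime terms balances $|n|^{3/14}A_0^2$ and $|n|^{1/4}A_0^{-1/2}$ to yield the exponent $41/168$, producing the $|n|^{41/168+\epsilon}\log X$ contribution, while optimizing in the transitional regime carries an additional $X^{3\delta/4}$ factor and yields the mixed term $|n|^{13/56+\epsilon}X^{3\delta/4}$.

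The principal obstacle is the bookkeeping: because the Duke-versus-Kuznetsov cutoff and the cross-over between the two Bessel regimes each have their own optimal scale depending on $|n|$, $X$, and $\delta$, the analysis splits into subcases according to the relative sizes of $X$ and $|n|^{1/2}$, and within each subcase one must verify that the contributions from all dyadic ranges of $r_j$ are absorbed into the three stated terms of \eqref{eq:sumofks}. The assumption \eqref{eq:n-5-7-11} is essential, as it is precisely what permits invocation of Theorem~\ref{thm:avgduke}; without it one would be forced back onto the mean value bound alone and would recover only the weaker exponent $|n|^{1/4}$ appearing in Theorem~\ref{thm:x-1/6}.
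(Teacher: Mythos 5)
Your overall strategy---Kuznetsov's formula with a smooth test function, Cauchy--Schwarz on the spectral side, and playing the mean value estimate (Theorem~\ref{thm:mve}) against the average Duke bound (Theorem~\ref{thm:avgduke})---is the paper's strategy. But there is a genuine gap in how you produce the term $|n|^{\frac{41}{168}+\epsilon}\log X$. The test function $\phi_{a,X,T}$ of Section~\ref{sec:kbessel} approximates the indicator of the \emph{dyadic} block $X\leq c\leq 2X$, not of $c\leq X$; the paper accordingly proves a dyadic proposition and then sums over blocks $x\geq|n|^{\alpha}$, handling the initial segment $c\leq|n|^{\alpha}$ separately with Lehmer's pointwise bound \eqref{eq:super_weil}, which contributes $|n|^{\frac{\alpha}{2}+\epsilon}$. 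Your write-up never introduces this initial segment, and without it the argument cannot close: for dyadic $x$ below $a\asymp|n|^{1/2}$ the spectral sum enters the range $a/(8x)<r_j<a/x$, where Theorem~\ref{thm:phicheck} only gives $\check\phi(r_j)\ll r_j^{-1}$, and Cauchy--Schwarz with Theorems~\ref{thm:mve} and~\ref{thm:avgduke} yields a contribution $\ll|n|^{\frac{3}{14}+\epsilon}(a/x)^{5/2}\asymp|n|^{\frac{41}{28}+\epsilon}x^{-5/2}$. The exponent $\frac{41}{168}$ comes from balancing this against the Lehmer bound on the initial segment, i.e.\ $\frac{\alpha}{2}=\frac{41}{28}-\frac{5\alpha}{2}$, so $\alpha=\frac{41}{84}$. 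Your proposed derivation---balancing $|n|^{3/14}A_0^2$ against $|n|^{1/4}A_0^{-1/2}$ over a spectral cutoff $A_0$---gives $|n|^{17/70}$, which is not $|n|^{41/168}$, so that step is numerically and conceptually wrong.

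Relatedly, your description of $\check\phi$ covers only the oscillatory range $r\geq a/x$, where $\check\phi(r)\ll\min\big(r^{-3/2},\,r^{-5/2}x/T\big)$; you omit the transitional range $a/(8x)\leq r\leq a/x$ (where $\check\phi\ll r^{-1}$, the source of the $|n|^{41/28}x^{-5/2}$ term above) and the decaying range $r\leq a/(8x)$, where the paper uses the pointwise consequences of Theorem~\ref{thm:avgduke}, namely $\rho_j(1)/\sqrt{\ch\pi r_j}\ll r_j^{9/4}$ and $\rho_j(n)/\sqrt{\ch\pi r_j}\ll|n|^{-2/7+\epsilon}r_j^{11/4}$, to bound that contribution by $O(|n|^{\frac{3}{14}+\epsilon})$. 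These ranges are nonempty precisely when $x\ll|n|^{1/2}$ and are what produce the improvement over Theorem~\ref{thm:x-1/6} in the $n$-aspect, so they cannot be absorbed into ``bookkeeping.'' The part you do carry out---the range $r_j\geq a/x$ giving $|n|^{\frac{13}{56}+\epsilon}(x/T)^{3/4}\ll|n|^{\frac{13}{56}+\epsilon}x^{3\delta/4}$ and the cutoff error $O(x^{\frac12-\delta}\log x)$---matches the paper.
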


Theorem~\ref{sumofks} will follow from an estimate for  dyadic sums.
\begin{proposition}\label{dyadicprop}
Suppose that $0<\delta<1/2$ and that $n<0$ satisfies \eqref{eq:n-5-7-11}.  Then
\begin{equation} \label{dyadic1}
	\sum_{x\leq c\leq 2x} \frac{S(1,n,c,\chi)}c\ll_{\delta} |n|^{\frac{41}{28}+\ep}x^{-\frac52} +|n|^{\frac{13}{56}+\ep} x^{\frac 34\delta}
	+ x^{\frac 12-\delta}\log x.
\end{equation}
\end{proposition}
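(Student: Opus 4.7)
The plan is to apply Theorem~\ref{KTFmixedsign} with $m=1$ and a test function $\phi=\phi_{a,x,T}$ from Section~\ref{sec:kbessel}, with $a=4\pi\sqrt{\tilde m\,|\tilde n|}\asymp\sqrt{|n|}$ and $T=x^{1-\delta}$. Arguing as in the derivation of \eqref{dyadic0easy}, Proposition~\ref{prop:weil-bound} yields
\[
\sum_{\substack{x-T\le c\le x\\ 2x\le c\le 2x+2T}}\frac{|S(1,n,c,\chi)|}{c}\ll \frac{T\log x}{\sqrt x}\,|n|^\ep\ll x^{\frac12-\delta}|n|^\ep\log x,
\]
accounting for the third term of \eqref{dyadic1}. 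It then suffices, by Theorem~\ref{KTFmixedsign} and Corollary~\ref{cor:shimura}, to bound
\[
\sqrt{\tilde m\,|\tilde n|}\sum_{r_j\ge 1}\frac{\overline{\rho_j(1)}\rho_j(n)}{\ch\pi r_j}\check\phi(r_j).
\]

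I decompose this spectral sum into dyadic windows $r_j\in[A,2A]$ and apply Cauchy--Schwarz, which bounds the contribution of each window by $\sup_{[A,2A]}|\check\phi|\cdot V_1(A)^{1/2}V_n(A)^{1/2}$, where $V_m(A):=\sum_{A\le r_j\le 2A}|\rho_j(m)|^2/\ch\pi r_j$. Theorem~\ref{thm:mve} gives $V_1(A)\ll A^{3/2}$ along with
\[
V_n(A)\ll|n|^{-1}A^{5/2}+|n|^{-\frac12+\ep}A^{\frac12},
\]
while Theorem~\ref{thm:avgduke} (applicable by \eqref{eq:n-5-7-11}) gives $V_n(A)\ll|n|^{-4/7+\ep}A^{11/2}$. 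The essential new ingredient is a geometric-mean combination: in the regime $A\le|n|^{1/4}$, where the MVE bound is dominated by $|n|^{-1/2+\ep}A^{1/2}$, the elementary inequality $\min(B_1,B_2)\le\sqrt{B_1B_2}$ applied to the two bounds yields
\[
V_n(A)\ll|n|^{-\frac{15}{28}+\ep}A^{3},\qquad \sqrt{\tilde m\,|\tilde n|}\,V_1(A)^{\frac12}V_n(A)^{\frac12}\ll|n|^{\frac{13}{56}+\ep}A^{\frac94}.
\]

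Three types of dyadic ranges then appear, controlled by Theorem~\ref{thm:phicheck} with thresholds $\sqrt{|n|}/x$ and $x^\delta$. First, the transition range $A\asymp\sqrt{|n|}/x$ arises only when $|n|\ge x^2$ and has $|\check\phi|\ll A^{-1}$; here the pure Duke bound $V_n(A)^{1/2}\ll |n|^{-2/7+\ep}A^{11/4}$ gives a product factor $|n|^{3/14+\ep}A^{7/2}$, yielding the dyadic contribution $|n|^{3/14+\ep}A^{5/2}\asymp|n|^{41/28+\ep}x^{-5/2}$, the first term of \eqref{dyadic1}. Second, in the range $A\le|n|^{1/4}$ outside the transition, $|\check\phi|\ll A^{-3/2}$ for $A\le x^\delta$ and $|\check\phi|\ll A^{-5/2}x^\delta$ for $A\ge x^\delta$; the geometric-mean bound gives dyadic contributions $|n|^{13/56+\ep}A^{3/4}$ or $|n|^{13/56+\ep}x^\delta A^{-1/4}$, each bounded by $|n|^{13/56+\ep}x^{3\delta/4}$ (at $A\asymp\min(|n|^{1/4},x^\delta)$ and $A=x^\delta$ respectively), contributing the second term of \eqref{dyadic1}. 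Third, for $A>|n|^{1/4}$ the pure MVE bound $V_n(A)^{1/2}\ll|n|^{-1/2}A^{5/4}$ yields product factor $A^2$ and dyadic contribution $\ll A^2|\check\phi(A)|$ summing to $\ll x^{\delta/2}$, absorbed into the second term. Summing over dyadic $A$ produces \eqref{dyadic1}. The main obstacle is the careful bookkeeping across the crossover at $A=|n|^{1/4}$ of the MVE and across the two thresholds appearing in Theorem~\ref{thm:phicheck}; the geometric-mean step is what generates the exponent $13/56$ by interpolating between the $|n|$-saving of Duke and the $A$-saving of the second MVE term.
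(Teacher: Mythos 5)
Your proposal is correct and follows essentially the same route as the paper: smooth with $\phi_{a,x,T}$, apply the mixed-sign trace formula, split the spectral sum according to the ranges of Theorem~\ref{thm:phicheck}, and interpolate between the mean value estimate and the averaged Duke bound by a geometric mean to produce the exponent $13/56$ (the paper performs the identical interpolation after Cauchy--Schwarz via $\min(B,C+D)\le \min(B,C)+\min(C,D)$ and $\min(B,C)\le\sqrt{BC}$, which is equivalent to your explicit split at $A=|n|^{1/4}$). The only discrepancy is in the boundary term: bounding the overcounted moduli with Proposition~\ref{prop:weil-bound} leaves an extra factor $|n|^{\ep}$ on $x^{\frac12-\delta}\log x$, whereas the paper invokes Lehmer's bound \eqref{eq:super_weil}, which is independent of $n$, to obtain the third term of \eqref{dyadic1} exactly as stated.
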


\begin{proof}[Deduction of Theorem~\ref{sumofks} from Proposition~\ref{dyadicprop}]
We break the sum \eqref{eq:sumofks} into 
an  initial segment corresponding to $c\leq |n|^\alpha$ and dyadic intervals of the form \eqref{dyadic1} with $x\geq |n|^\alpha$.
Estimating the initial segment using Lehmer's bound \eqref{eq:super_weil} and applying Proposition \ref{dyadicprop} to each of the dyadic intervals, 
we find that
\[\sum_{c\leq X}\frac{S(1, n, c, \chi)}{c}\ll_{\delta, \alpha} |n|^{\frac\alpha2+\epsilon}+|n|^{\frac{13}{56}+\ep}X^{\frac34\delta}+
\(|n|^{\frac {41}{28}-\frac52\alpha+\ep}+X^{\frac 12-\delta}\)\log X.\]
 Theorem~\ref{sumofks} follows upon 
setting $\alpha=41/84$.
\end{proof}

\begin{proof}[Proof of Proposition~\ref{dyadicprop}]
Let $T$ satisfy 
 \begin{equation} \label{Tdef}
 	T \asymp x^{1-\delta} \quad \text{ and } \quad T\leq \frac x3,
 \end{equation}
where $0<\delta<1/2$ is a parameter to be chosen later, and set
 \[
 	a:= 4\pi \sqrt{|\tilde n|}.
 \]
Arguing as in \eqref{dyadic0easy} (using Lehmer's bound \eqref{eq:super_weil} to remove the dependence on $n$) 
we have
\begin{equation} \label{dyadic0}
\left|\sum_{c=1}^\infty \frac{S(1,n,c,\chi)}c\phi\leg ac-\sum_{x\leq c\leq 2x} \frac{S(1,n,c,\chi)}c\right|
\ll\delta x^{\frac12-\delta} \log x.
\end{equation}
Let $\{u_j\}$ be an orthonormal basis for $\mathcal S_\frac12(1,\chi)$ as in \eqref{eq:uj-fix} which consists of eigenforms for the Hecke operators
$T_{p^2}$ with $p\nmid 6$.
Theorem \ref{KTFmixedsign}   gives
\begin{equation}\label{ktf-kloos}\sum_{c=1}^\infty \frac{S(1,n,c,\chi)}c\phi\leg ac=8\sqrt{i}\sqrt{|\tilde n|}\sum_{r_j} \frac{\overline{\rho_j(1)}\rho_j(n)}{\ch \pi r_j}\check\phi(r_j).
\end{equation}
We break the sum on $r_j$ into   ranges corresponding to the three ranges of the $K$-Bessel function:
\begin{enumerate}[\hspace{1.5em}(i)]\setlength\itemsep{.4em}
\item $r_j\leq \mfrac a{8x}$,
\item $\mfrac a{8x}<r_j<\mfrac ax$,
\item $ r_j\geq \mfrac ax$.
\end{enumerate}
We may restrict our attention to $r_j>1$ by  Corollary~\ref{cor:shimura} (the eigenvalue $r_0=i/4$ does not contribute).

For the first range, 
Theorem~\ref{thm:phicheck} gives
gives
$\check\phi(r_j) \ll r_j^{-3/2} e^{-r_j/2}$.
By Theorem~\ref{thm:avgduke} 
we have
\[\frac{\rho_j(1)}{\sqrt{\ch \pi r_j}}\ll r_j^\frac94,\qquad
\frac{\rho_j(n)}{\sqrt{\ch \pi r_j}}\ll |n|^{-\frac27+\epsilon}r_j^{\frac{11}{4}}.\]
Combining these estimates gives
\begin{equation}\label{smallrange}
\sqrt {|\tilde n|}\sum_{1<r_j\leq\frac{a}{8x}} \left|\frac{\overline{\rho_j(1)}\rho_j(n)}{\ch \pi r_j}\check\phi(r_j)\right|
\ll |n|^{\frac 3{14}+\epsilon} \sum_{1<r_j\leq\frac{a}{8x}} r_j^{\frac72}e^{-r_j/2}\ll |n|^{\frac{3}{14}+\epsilon},
\end{equation}
where  we have used \eqref{eq:weyl_law} to conclude that the latter sum over $r_j$ is $O(1)$.

For the other ranges we need the  mean value estimates of Theorem~\ref{thm:mve}:
\begin{gather}
\label{meanvalpos}
\sum_{1<r_j\leq x}\frac{\left|\rho_j(1)\right|^2}{\ch\pi r_j}\ll x^{\frac 32}, \\
\label{meanvalneg}
|\tilde n|\sum_{1<r_j\leq x}\frac{\left|\rho_j(n)\right|^2}{\ch\pi r_j}\ll x^{\frac 52} + |n|^{\frac 12+\ep} x^{\frac 12} ,
\end{gather}
as well as the average version of Duke's estimate (Theorem~\ref{thm:avgduke}):
\begin{equation}\label{averageDuke}
|\tilde n|\sum_{r_j\leq x}\frac{\left|\rho_j(n)\right|^2}{\ch\pi r_j}\ll |n|^{\frac 37+\ep}x^{\frac{11}2}.
\end{equation}
Using \eqref{meanvalpos} and \eqref{meanvalneg} with the Cauchy-Schwarz inequality, we find that
\begin{equation*} 
\sqrt {|\tilde n|}\sum_{r_j\leq x} \frac{|\overline{\rho_j(1)}\rho_j(n)|}{\ch \pi r_j}\ll x^{\frac 34}\(x^{\frac 52}+|n|^{\frac 12+\ep}x^{\frac 12}\)^{\frac 12}\ll x^2+|n|^{\frac 14+\ep}x.
\end{equation*}
Using \eqref{meanvalpos} and \eqref{averageDuke} we obtain
\begin{equation}\label{CS2}
\sqrt {|\tilde n|}\sum_{r_j\leq x} \frac{|\overline{\rho_j(1)}\rho_j(n)|}{\ch \pi r_j}\ll x^{\frac 34}\(|n|^{\frac 37+\ep} x^{\frac {11}2}\)^{\frac 12}\ll |n|^{\frac 3{14}+\ep}x^{\frac 72}.
\end{equation}

In the second range  
Theorem~\ref{thm:phicheck} gives
$\check\phi(r_j)\ll r_j^{-1}$. 
It follows from \eqref{CS2} (assuming as we may that $a/x\geq 1$) that 
\begin{equation}
\label{medrange}
\begin{aligned}
\sqrt {|\tilde n|}\sum_{\frac{a}{8x}< r_j<\frac{a}{x}} \left|\frac{\overline{\rho_j(1)}\rho_j(n)}{\ch \pi r_j}\check\phi(r_j)\right|&\ll
|n|^{\frac 3{14}+\epsilon} \pfrac{a}{x}^{\frac 52} \ll |n|^{\frac{41}{28}+\epsilon}x^{-\frac 52}.
\end{aligned}
\end{equation}

In the third range Theorem~\ref{thm:phicheck} gives
\[
	\check\phi(r_j)\ll \min\(r_j^{-\frac32}, \ r_j^{-\frac52}\frac xT\).
\]
We use  dyadic sums 
corresponding to intervals $A\leq r_j\leq 2A$ with  $A\geq \max\big(\frac ax,1\big)$.
For such a sum we have 
\begin{equation*}
\begin{aligned}
\sqrt {|\tilde n|}\sum_{A\leq r_j\leq 2A} \left|\frac{\overline{\rho_j(1)}\rho_j(n)}{\ch \pi r_j}\check\phi(r_j)\right|&\ll
\min\(A^{-\frac 32}, \ A^{-\frac 52} \, \frac xT\)\min\(A^2+|n|^{\frac 14+\ep}A,\ |n|^{\frac 3{14}+\ep} A^{\frac 72}\)\\
&\ll \min\(A^\frac12,A^{-\frac12}\frac{x}{T}\)\(1+|n|^{\frac{13}{56}+\ep}A^{\frac 14}\)\\
&\ll |n|^{\frac{13}{56}+\ep}\min\(A^{\frac 34}, A^{-\frac14}\frac xT\),
\end{aligned}
\end{equation*}
where we have used the fact that for positive $B$, $C$, and $D$ we have 
\[
	\min(B, C+D)\leq \min(B, C)+\min(C, D) \qquad \text{and} \qquad \min(B, C)\leq \sqrt{BC}.
\]

Combining the dyadic sums, we find that 
\begin{equation}
\label{bigrange}
\sqrt {|\tilde n|}\sum_{ r_j\geq\frac{a}{x}} \left|\frac{\overline{\rho_j(1)}\rho_j(n)}{\ch \pi r_j}\check\phi(r_j)\right|\ll
|n|^{\frac {13}{56}+\ep}\leg x T^{\frac 34} \ll_\delta |n|^{\frac{13}{56}+\ep} x^{\frac 34\delta}.
\end{equation}
Using \eqref{ktf-kloos} with \eqref{smallrange}, \eqref{medrange}, and \eqref{bigrange} we obtain
\[\sum_{c=1}^\infty \frac{S(1,n,c,\chi)}c\check\phi\leg ac\ll_\delta  |n|^{\frac {41}{28}+\ep}x^{-\frac 52}+|n|^{\frac{13}{56}+\ep} x^{\frac 34\delta}.\]
Proposition~\ref{dyadicprop} follows from this estimate together with   \eqref{dyadic0}.
\end{proof}

%%%%%%%%%%%%%%%%%%%%%%%%%%%%%%%%%%%%%%%%%%%%%%%%%%%%%%%%%%%%%%%%
%%%%%%%%%%%%%%%%%%%%%%%%%%%%%%%%%%%%%%%%%%%%%%%%%%%%%%%%%%%%%%%%
%%%%%%%% PARTITION PROOF %%%%%%%%%%%%%%%%%%%%%%%%%%%%%%%%%%%%%%%
%%%%%%%%%%%%%%%%%%%%%%%%%%%%%%%%%%%%%%%%%%%%%%%%%%%%%%%%%%%%%%%%
%%%%%%%%%%%%%%%%%%%%%%%%%%%%%%%%%%%%%%%%%%%%%%%%%%%%%%%%%%%%%%%%

\section{Application to the remainder term in Rademacher's formula} \label{sec:partitionproof}
 
 In the final section, we use Theorem~\ref{sumofks} to  prove Theorems~\ref{raderror} and \ref{raderror1}.
Recall that we wish to bound $R(n, N)$, where
\begin{equation*}  
	p(n) = \frac{2\pi}{(24n-1)^{\frac34}} \sum_{c=1}^N \frac{A_c(n)}{c} I_{\frac32}\pfrac{\pi\sqrt{24n-1}}{6c}+R(n, N),
\end{equation*}
and
\[A_c(n) = \sqrt{-i}\, S(1,1-n,c,\chi).\]
To match the notation of the previous sections, we  assume that $n< 0$ satisfies \eqref{eq:n-5-7-11} and  we provide a bound for
\begin{equation}\label{eq:Rn_neg}
R(1-n,N)=\frac{2\pi\sqrt{-i}}{24^{\frac34}}|\tilde n|^{-\frac34}\sum_{c>N}\frac{S(1, n, c, \chi)}{c}I_{\frac32}
\(\frac ac\),
\end{equation}
where
\begin{equation} \label{eq:Rn-a-def}
	a:= \sqrt{\mfrac23} \,\pi\sqrt{|\tilde n|}.
\end{equation}

We  will apply partial summation to \eqref{eq:Rn_neg}.
For fixed $\nu,M>0$ and $0\leq z\leq M$, the asymptotic formula \cite[(10.30.1)]{nist} gives
\begin{equation} \label{besselsmallz}
I_\nu(z)\ll_{\nu,M}z^\nu.
\end{equation}
We have a straightforward lemma.
\begin{lemma}  \label{lem:bessel}
Suppose that  $a>0$ and that  $\alpha>0$.  
Then for $\frac ta \geq \alpha$ we have
\[\big(I_{\frac32}(a/t)\big)'\ll_\alpha a^\frac32 t^{-\frac52}.\]
\end{lemma}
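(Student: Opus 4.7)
The plan is to reduce the claim to a bound on $I_{3/2}'(z)$ for $z$ in a bounded interval, via the chain rule. By the chain rule we have
\[
	\big(I_{3/2}(a/t)\big)' = -\frac{a}{t^2} \, I_{3/2}'(a/t),
\]
so it suffices to show that $I_{3/2}'(z) \ll_\alpha z^{1/2}$ for $0 \leq z \leq 1/\alpha$ (the hypothesis $t/a \geq \alpha$ means precisely that $z := a/t$ lies in this interval).

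To bound $I_{3/2}'(z)$ on $[0,1/\alpha]$, I will use the standard recurrence
\[
	I_\nu'(z) = I_{\nu-1}(z) - \frac{\nu}{z} I_\nu(z)
\]
with $\nu = 3/2$, which expresses $I_{3/2}'$ in terms of $I_{1/2}$ and $I_{3/2}/z$. Applying the estimate \eqref{besselsmallz} with $\nu = 1/2$ and $\nu = 3/2$ and with $M = 1/\alpha$, both $I_{1/2}(z)$ and $I_{3/2}(z)/z$ are $\ll_\alpha z^{1/2}$ on this interval, giving $I_{3/2}'(z) \ll_\alpha z^{1/2}$.

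Substituting back, we obtain
\[
	\big(I_{3/2}(a/t)\big)' \ll_\alpha \frac{a}{t^2} \cdot \Big(\frac{a}{t}\Big)^{1/2} = a^{3/2} t^{-5/2},
\]
which is the desired bound. There is no genuine obstacle here: the only mild subtlety is ensuring that the implied constant depends only on $\alpha$ (through the bound $z \leq 1/\alpha$) and not on $a$ or $t$ separately, which is automatic since the estimate \eqref{besselsmallz} is uniform on compact intervals of the argument.
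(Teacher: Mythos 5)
Your proof is correct and follows essentially the same route as the paper: chain rule, a standard recurrence for $I_{3/2}'$, and the small-argument bound \eqref{besselsmallz} on the bounded interval $[0,1/\alpha]$. The only (cosmetic) difference is that you use the recurrence $I_\nu'(z)=I_{\nu-1}(z)-\tfrac{\nu}{z}I_\nu(z)$, whereas the paper uses $I_{3/2}'=\tfrac12(I_{1/2}+I_{5/2})$ and then absorbs the resulting $a^{7/2}t^{-9/2}$ term using $a/t\leq 1/\alpha$; your choice avoids that extra term altogether.
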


\begin{proof}
By \cite[(10.29.1)]{nist} we have
\[I_\frac32'(t)=\tfrac12\big(I_\frac12(t)+I_\frac52(t)\big).\]
It follows that 
\begin{equation} \label{eq:fa-prime}
\big(I_{\frac32}(a/t)\big)'=-\frac{a}{2t^2}\big(I_\frac12(a/t)+I_\frac52(a/t)\big).
\end{equation}
For $\frac ta \geq \alpha$, equations \eqref{besselsmallz} and \eqref{eq:fa-prime} give
\[
	\big(I_{\frac32}(a/t)\big)'\ll_\alpha a^\frac32 t^{-\frac52} + a^\frac72 t^{-\frac92},
\]
and the lemma follows.
\end{proof}
\begin{proof}[Proof of Theorems~\ref{raderror} and \ref{raderror1}]
Let $a$ be as in \eqref{eq:Rn-a-def}.
Then
\begin{equation*}
R(1-n,N)\ll |\tilde n|^{-\frac34}\left|\sum_{c>N}\frac{S(1, n, c, \chi)}{c} I_{\frac32}\(\frac ac\)\right|.
\end{equation*}
Let
\[S(n,X):= \sum_{c\leq X} \frac{S(1, n, c, \chi)}c,\]
so that Theorem~\ref{sumofks} (for $0<\delta<1/2$) gives
\[ 
	S(n, X)\ll_{\delta} |n|^{\frac{13}{56}+\ep}X^{\frac 34\delta}+\(|n|^{\frac{41}{168}+\ep}+X^{\frac 12-\delta}\) \log X.
\]
By partial summation using \eqref{besselsmallz}  we have
\begin{equation*}
\sum_{c>N}\frac{S(1, n, c, \chi)}{c}  I_{\frac32}\(\frac ac\)=-S(n,N)I_{\frac32}\(a/N\)-\int_N^\infty S(n,t)\big(I_{\frac32}(a/t)\big)'\, dt.
\end{equation*}

Let $\alpha>0$ be fixed and take 
\[N=\alpha |n|^{\frac12+\beta}\]
where $\beta\in [0, 1/2]$ is to be chosen.
By \eqref{besselsmallz} we have
\[
	I_{\frac32}\(a/N\)\ll_\alpha |n|^\frac34N^{-\frac32}
\]
and by Lemma \ref{lem:bessel} we have
\[\big(I_{\frac32}(a/t)\big)'\ll_\alpha |n|^{\frac34}t^{-\frac52}\qquad \text{for $t\geq N$.}\]
Thus
\[
	S(n, N)I_{\frac32}\(a/N\)\ll_{\delta, \alpha} |n|^{\frac34}N^{-\frac32}\(|n|^{\frac{13}{56} }N^{\frac 34\delta}+ |n|^{\frac{41}{168} }+N^{\frac 12-\delta}\)|n|^\ep,
\]
and the same bound holds for the integral term. 
 Therefore
\begin{equation}\label{eq:last}
R(1-n,\alpha |n|^{\frac12+\beta})\ll_{\delta, \alpha} 
|n|^{(\frac12+\beta)(\frac34\delta-\frac32)+\frac{13}{56}+\ep}+|n|^{-\frac32(\frac12+\beta)+\frac{41}{168}+\ep}+|n|^{-(\frac12+\beta)(1+\delta)+\ep}.
\end{equation}

When  $\beta=0$, the estimate \eqref{eq:last} becomes
\[R(1-n,\alpha |n|^{\frac12})\ll_{\delta, \alpha} 
|n|^{\frac38\delta-\frac{29}{56}+\ep}+|n|^{-\frac{85}{168}+\ep}+|n|^{-\frac12-\frac12\delta+\ep}.
\]
For any choice of $\delta\in [\frac1{84}, \frac2{63}]$ this gives
\[R(1-n,\alpha |n|^{\frac12})\ll_{\alpha} |n|^{-\frac12-\frac{1}{168}+\ep}.\]
Theorem~\ref{raderror} follows after replacing $n$ by $1-n$.

To optimize, we choose $\beta=\frac{5}{252}$ and $\delta=\frac{4}{131}$, which makes all three exponents in \eqref{eq:last} equal 
to $-\frac12-\frac{1}{28}$.  
This gives Theorem~\ref{raderror1}.
\end{proof}

\nocite{nist-book}

\bibliographystyle{alphanum-2}
\bibliography{kloosterman-bib}

\newcommand{\noop}[1]{}
\begin{thebibliography}{{{LMF}}}

\bibitem[A]{andersen-singular}
N.~Andersen.
\newblock Singular invariants and coefficients of weak harmonic {M}aass forms
  of weight 5/2.
\newblock Preprint, arXiv:1410.7349 [math.NT].

\bibitem[AL]{atkin-lehner}
A.~O.~L. Atkin and J.~Lehner.
\newblock Hecke operators on {$\Gamma _{0}(m)$}.
\newblock {\em Math. Ann.}, 185:134--160, 1970.

\bibitem[Ba]{balogh}
C.~B. Balogh.
\newblock Uniform asymptotic expansions of the modified {B}essel function of
  the third kind of large imaginary order.
\newblock {\em Bull. Amer. Math. Soc.}, 72:40--43, 1966.

\bibitem[BM]{baruch-mao}
E.~M. Baruch and Z.~Mao.
\newblock A generalized {K}ohnen-{Z}agier formula for {M}aass forms.
\newblock {\em J. Lond. Math. Soc. (2)}, 82(1):1--16, 2010.

\bibitem[BO]{berndt-ono}
B.~C. Berndt and K.~Ono.
\newblock Ramanujan's unpublished manuscript on the partition and tau functions
  with proofs and commentary.
\newblock {\em S{\'e}m. Lothar. Combin.}, 42, 1999.
\newblock In \emph{The Andrews Festschrift}, D. Foata and G.-N. Han, eds.,
  Springer-Verlag, Berlin, 2001, pp. 39--110.

\bibitem[Bl]{blomer}
V.~Blomer.
\newblock Sums of {H}ecke eigenvalues over values of quadratic polynomials.
\newblock {\em Int. Math. Res. Not. IMRN}, (16):Art. ID rnn059. 29, 2008.

\bibitem[BS]{booker-strombergsson-stf}
A.~R. Booker and A.~Str{{\"o}}mbergsson.
\newblock Numerical computations with the trace formula and the {S}elberg
  eigenvalue conjecture.
\newblock {\em J. Reine Angew. Math.}, 607:113--161, 2007.

\bibitem[BST]{booker-strombergsson-then}
A.~R. Booker, A.~Str{{\"o}}mbergsson, and H.~Then.
\newblock Bounds and algorithms for the {$K$}-{B}essel function of imaginary
  order.
\newblock {\em LMS J. Comput. Math.}, 16:78--108, 2013.

\bibitem[Br1]{BruggemanIII}
R.~W. Bruggeman.
\newblock Modular forms of varying weight. {III}.
\newblock {\em J. Reine Angew. Math.}, 371:144--190, 1986.

\bibitem[Br2]{bruggeman-survey}
R.~W. Bruggeman.
\newblock Kloosterman sums for the modular group.
\newblock In {\em Number theory in progress, {V}ol. 2
  ({Z}akopane-{K}o{\'s}cielisko, 1997)}, pages 651--674. de Gruyter, Berlin,
  1999.

\bibitem[C]{cipra}
B.~A. Cipra.
\newblock On the {N}iwa-{S}hintani theta-kernel lifting of modular forms.
\newblock {\em Nagoya Math. J.}, 91:49--117, 1983.

\bibitem[DL]{nist}
{NIST Digital Library of Mathematical Functions}.
\newblock http://dlmf.nist.gov/, Release 1.0.10 of 2015-08-07.
\newblock Online companion to \cite{nist-book}.

\bibitem[D]{duke-half-integral}
W.~Duke.
\newblock Hyperbolic distribution problems and half-integral weight {M}aass
  forms.
\newblock {\em Invent. Math.}, 92(1):73--90, 1988.

\bibitem[DFI]{duke-friedlander-iwaniec}
W.~Duke, J.~B. Friedlander, and H.~Iwaniec.
\newblock The subconvexity problem for {A}rtin {$L$}-functions.
\newblock {\em Invent. Math.}, 149(3):489--577, 2002.

\bibitem[FM]{folsom-masri}
A.~Folsom and R.~Masri.
\newblock Equidistribution of {H}eegner points and the partition function.
\newblock {\em Math. Ann.}, 348(2):289--317, 2010.

\bibitem[GSe]{ganguly-sengupta}
S.~Ganguly and J.~Sengupta.
\newblock Sums of {K}loosterman sums over arithmetic progressions.
\newblock {\em Int. Math. Res. Not. IMRN}, (1):137--165, 2012.

\bibitem[GS]{goldfeld-sarnak}
D.~Goldfeld and P.~Sarnak.
\newblock Sums of {K}loosterman sums.
\newblock {\em Invent. Math.}, 71(2):243--250, 1983.

\bibitem[HR]{hardy-ramanujan}
G.~H. Hardy and S.~Ramanujan.
\newblock Asymptotic formul\ae\ in combinatory analysis [{P}roc. {L}ondon
  {M}ath. {S}oc. (2) {\bf 17} (1918), 75--115].
\newblock In {\em Collected papers of {S}rinivasa {R}amanujan}, pages 276--309.
  AMS Chelsea Publ., Providence, RI, 2000.

\bibitem[HB]{heath-brown-survey}
D.~R. Heath-Brown.
\newblock Arithmetic applications of {K}loosterman sums.
\newblock {\em Nieuw Arch. Wiskd. (5)}, 1(4):380--384, 2000.

\bibitem[He]{hejhal-stf2}
D.~A. Hejhal.
\newblock {\em The {S}elberg trace formula for {${\rm PSL}(2,\,{\mathbb R})$}.
  {V}ol. 2}.
\newblock Lecture Notes in Mathematics. Springer-Verlag, Berlin, 1983.

\bibitem[H1]{huxley_kloostermania}
M.~N. Huxley.
\newblock Introduction to {K}loostermania.
\newblock In {\em Elementary and analytic theory of numbers ({W}arsaw, 1982)},
  volume~17 of {\em Banach Center Publ.}, pages 217--306. PWN, Warsaw, 1985.

\bibitem[H2]{huxley}
M.~N. Huxley.
\newblock Exceptional eigenvalues and congruence subgroups.
\newblock In {\em The {S}elberg trace formula and related topics ({B}runswick,
  {M}aine, 1984)}, volume~53 of {\em Contemp. Math.}, pages 341--349. Amer.
  Math. Soc., Providence, RI, 1986.

\bibitem[I1]{iwaniec-fourier-coefficients}
H.~Iwaniec.
\newblock Fourier coefficients of modular forms of half-integral weight.
\newblock {\em Invent. Math.}, 87(2):385--401, 1987.

\bibitem[I2]{iwaniec-spectral}
H.~Iwaniec.
\newblock {\em Spectral methods of automorphic forms}.
\newblock Graduate Studies in Mathematics. American Mathematical Society,
  Providence, RI; Revista Matem{\'a}tica Iberoamericana, Madrid, second
  edition, 2002.

\bibitem[KS]{katok-sarnak}
S.~Katok and P.~Sarnak.
\newblock Heegner points, cycles and {M}aass forms.
\newblock {\em Israel J. Math.}, 84(1-2):193--227, 1993.

\bibitem[Ki]{kim-sarnak}
H.~H. Kim.
\newblock Functoriality for the exterior square of {${\rm GL}_4$} and the
  symmetric fourth of {${\rm GL}_2$}.
\newblock {\em J. Amer. Math. Soc.}, 16(1):139--183, 2003.
\newblock With appendix 1 by D. Ramakrishnan and appendix 2 by Kim and P.
  Sarnak.

\bibitem[K]{knopp}
M.~I. Knopp.
\newblock {\em Modular functions in analytic number theory}.
\newblock Markham Publishing Co., Chicago, Ill., 1970.

\bibitem[Kn]{knowles}
I.~Knowles.
\newblock Integral mean value theorems and the {G}anelius inequality.
\newblock {\em Proc. Roy. Soc. Edinburgh Sect. A}, 97:145--150, 1984.

\bibitem[Ku]{kuznetsov}
N.~V. Kuznetsov.
\newblock The {P}etersson conjecture for cusp forms of weight zero and the
  {L}innik conjecture. {S}ums of {K}loosterman sums.
\newblock {\em Mat. Sb. (N.S.)}, 111(153):334--383, 1980.

\bibitem[L1]{lehmer-series}
D.~H. Lehmer.
\newblock On the series for the partition function.
\newblock {\em Trans. Amer. Math. Soc.}, 43(2):271--295, 1938.

\bibitem[L2]{lehmer-remainders}
D.~H. Lehmer.
\newblock On the remainders and convergence of the series for the partition
  function.
\newblock {\em Trans. Amer. Math. Soc.}, 46:362--373, 1939.

\bibitem[Li]{linnik}
J.~V. Linnik.
\newblock Additive problems and eigenvalues of the modular operators.
\newblock In {\em Proc. {I}nternat. {C}ongr. {M}athematicians ({S}tockholm,
  1962)}, pages 270--284. Inst. Mittag-Leffler, Djursholm, 1963.

\bibitem[LMF]{lmfdb}
{{LMFDB Collaboration}}.
\newblock The {$L$}-functions and modular forms database, \emph{Maass forms on
  $\Gamma_0(6)$}.
\newblock \url{http://www.lmfdb.org/ModularForm/GL2/Q/Maass}, 2013.
\newblock [Online; accessed 1 October 2015].

\bibitem[M1]{maass1}
H.~Maass.
\newblock \"{U}ber eine neue {A}rt von nichtanalytischen automorphen
  {F}unktionen und die {B}estimmung {D}irichletscher {R}eihen durch
  {F}unktionalgleichungen.
\newblock {\em Math. Ann.}, 121:141--183, 1949.

\bibitem[M2]{maass2}
H.~Maass.
\newblock Die {D}ifferentialgleichungen in der {T}heorie der elliptischen
  {M}odulfunktionen.
\newblock {\em Math. Ann.}, 125:235--263, 1952.

\bibitem[N]{niwa}
S.~Niwa.
\newblock Modular forms of half integral weight and the integral of certain
  theta-functions.
\newblock {\em Nagoya Math. J.}, 56:147--161, 1975.

\bibitem[OLBC]{nist-book}
F.~W.~J. Olver, D.~W. Lozier, R.~F. Boisvert, and C.~W. Clark, editors.
\newblock {\em {NIST Handbook of Mathematical Functions}}.
\newblock Cambridge University Press, New York, NY, 2010.
\newblock Print companion to \cite{nist}.

\bibitem[Pr]{pribitkin}
W.~d.~A. Pribitkin.
\newblock A generalization of the {G}oldfeld-{S}arnak estimate on {S}elberg's
  {K}loosterman zeta-function.
\newblock {\em Forum Math.}, 12(4):449--459, 2000.

\bibitem[P1]{proskurin-old}
N.~V. Proskurin.
\newblock The summation formulas for general {K}loosterman sums.
\newblock {\em Zap. Nauchn. Sem. Leningrad. Otdel. Mat. Inst. Steklov. (LOMI)},
  82:103--135, 1979.

\bibitem[P2]{proskurin-new}
N.~V. Proskurin.
\newblock On general {K}loosterman sums.
\newblock {\em Zap. Nauchn. Sem. S.-Peterburg. Otdel. Mat. Inst. Steklov.
  (POMI)}, 302(19):107--134, 2003.

\bibitem[R1]{rademacher-partition-function}
H.~Rademacher.
\newblock On the partition function $p(n)$.
\newblock {\em Proc. London Math. Soc.}, 43(4):241--254, 1936.

\bibitem[R2]{rademacher-partition-series}
H.~Rademacher.
\newblock On the expansion of the partition function in a series.
\newblock {\em Ann. of Math. (2)}, 44:416--422, 1943.

\bibitem[R3]{rademacher-indian}
H.~Rademacher.
\newblock On the {S}elberg formula for {$A_k(n)$}.
\newblock {\em J. Indian Math. Soc. (N.S.)}, 21:41--55 (1958), 1957.

\bibitem[R4]{rademacher-book}
H.~Rademacher.
\newblock {\em Topics in analytic number theory}.
\newblock Springer-Verlag, New York-Heidelberg, 1973.
\newblock Edited by E. Grosswald, J. Lehner and M. Newman, Die Grundlehren der
  mathematischen Wissenschaften, Band 169.

\bibitem[Ro1]{roelcke1}
W.~Roelcke.
\newblock Das {E}igenwertproblem der automorphen {F}ormen in der hyperbolischen
  {E}bene, {I}.
\newblock {\em Math. Ann.}, 167:292--337, 1966.

\bibitem[Ro2]{roelcke2}
W.~Roelcke.
\newblock Das {E}igenwertproblem der automorphen {F}ormen in der hyperbolischen
  {E}bene, {II}.
\newblock {\em Math. Ann.}, 168:261--324, 1966.

\bibitem[Sa]{sarnak-additive}
P.~Sarnak.
\newblock Additive number theory and {M}aass forms.
\newblock In {\em Number theory ({N}ew {Y}ork, 1982)}, volume 1052 of {\em
  Lecture Notes in Math.}, pages 286--309. Springer, Berlin, 1984.

\bibitem[ST]{sarnak-tsimerman}
P.~Sarnak and J.~Tsimerman.
\newblock On {L}innik and {S}elberg's conjecture about sums of {K}loosterman
  sums.
\newblock In {\em Algebra, arithmetic, and geometry: in honor of {Y}u. {I}.
  {M}anin. {V}ol. {II}}, volume 270 of {\em Progr. Math.}, pages 619--635.
  Birkh{\"a}user Boston, Inc., Boston, MA, 2009.

\bibitem[Se1]{selberg-harmonic}
A.~Selberg.
\newblock Harmonic analysis and discontinuous groups in weakly symmetric
  {R}iemannian spaces with applications to {D}irichlet series.
\newblock {\em J. Indian Math. Soc. (N.S.)}, 20:47--87, 1956.

\bibitem[Se2]{selberg-estimation}
A.~Selberg.
\newblock On the estimation of {F}ourier coefficients of modular forms.
\newblock In {\em Proc. {S}ympos. {P}ure {M}ath., {V}ol. {VIII}}, pages 1--15.
  Amer. Math. Soc., Providence, R.I., 1965.

\bibitem[S]{shimura}
G.~Shimura.
\newblock On modular forms of half integral weight.
\newblock {\em Ann. of Math. (2)}, 97:440--481, 1973.

\bibitem[Sh]{shintani}
T.~Shintani.
\newblock On construction of holomorphic cusp forms of half integral weight.
\newblock {\em Nagoya Math. J.}, 58:83--126, 1975.

\bibitem[T]{titchmarsh}
E.~C. Titchmarsh.
\newblock {\em The {T}heory of the {R}iemann {Z}eta-{F}unction}.
\newblock Oxford, at the Clarendon Press, 1951.

\bibitem[W]{weil}
A.~Weil.
\newblock On some exponential sums.
\newblock {\em Proc. Nat. Acad. Sci. U. S. A.}, 34:204--207, 1948.

\bibitem[Wh]{whiteman}
A.~L. Whiteman.
\newblock A sum connected with the series for the partition function.
\newblock {\em Pacific J. Math.}, 6:159--176, 1956.

\end{thebibliography}

\end{document}